\documentclass[11pt]{article}
\usepackage{graphicx} 
\usepackage{subcaption}

\usepackage[a4paper, margin=2cm]{geometry}
\usepackage{mystyle}
\usepackage[colorlinks=true,linkcolor=blue,citecolor=blue,urlcolor=blue]{hyperref}

\newcommand{\OT}{\mathtt{OT}}
\newtheorem{innerassumption}{Assumption}
\newenvironment{assumption}
  {\innerassumption}
  {\endinnerassumption}

\title{Curvature of optimal transport with respect to the cost and applications to inverse optimal transport}
\date{\today}

\newcommand{\KL}{\mathrm{KL}}

\author{
Gabriel Peyr\'e 
\thanks{CNRS and ENS, PSL Universit\'e, Paris, France,
\texttt{gabriel.peyre@ens.fr}}
\and
Clarice Poon \thanks{
Mathematics Institute, University of Warwick, Coventry, UK,
\texttt{clarice.poon@warwick.ac.uk}}
\and
Oscar Tron
\thanks{
CentraleSupélec, Paris, France,
\texttt{oscar.tron@student-cs.fr}}
}

\begin{document}

\maketitle

\begin{abstract}
    We study the inverse optimal transport problem of recovering the ground cost from an optimal transport plan. In discrete settings, this problem reduces to inverse linear programming and is intrinsically ill-posed, exhibiting non-identifiability and flat directions. We show that in the continuous setting, the regularity of the marginals fundamentally alters the structure of the inverse problem.

Assuming smooth positive densities for the source and target measures, we characterize the second variation of the optimal transport functional with respect to the ground cost in Hölder spaces.
In particular, we show that it is non-degenerate modulo the natural transport invariances, yielding a strict curvature property that is absent in discrete transport.

As a consequence, we obtain local identifiability and stability results for inverse optimal transport. For the structured family of bilinear costs (i.e. Mahalanobis parametrizations), the ground cost can be uniquely recovered—up to the intrinsic invariances—from a single optimal coupling under a natural spanning condition. We further show that this identifiability property is generic under arbitrarily small perturbations of the marginals, while settings where the optimal transport map is affine (for instance Gaussian or elliptical marginals) remain degenerate. Finally, we establish precise bounds on the bias and statistical variance of inverse optimal transport under entropic regularization.

These results reveal a structural parallel between forward and inverse optimal transport: regularity of the marginals ensures smooth optimal maps in the forward problem, while non-degeneracy of the induced transport plan yields curvature and local invertibility in the inverse problem.
\end{abstract}
\noindent\textbf{Keywords:}
inverse optimal transport; entropic regularization; cost identifiability; optimal transport stability; Monge--Ampere equation; statistical estimation

\tableofcontents

\section{Introduction}

Optimal transport (OT) is now a central modeling tool~\cite{santambrogio2015optimal} to compare and couple probability distributions, with applications spanning economics, imaging, generative modeling, and computational biology \cite{PeyCut19}. In many modern pipelines, however, the transport cost is not known a priori and must be inferred from data. This leads to \emph{inverse optimal transport} (iOT): recover the ground cost (or metric parameters) from an observed optimal coupling. Existing methodologies for iOT typically exploit entropic regularization due to the availability of efficient numerical algorithms \cite{dupuy2019estimating,carlier2023sista}.

This paper studies iOT in a regime that is both mathematically delicate and practically unavoidable: the entropic regularization level $\varepsilon$ is small (approaching the unregularized OT model), while the coupling is observed through a finite sample of size $n$. A larger $\varepsilon$ improves convexity and numerical stability but introduces bias; a smaller $\varepsilon$ is closer to the true OT geometry but amplifies conditioning and statistical variance. Understanding this \emph{bias--variance geometry} in the joint $(\varepsilon,n)$ regime is the main motivation of our analysis.

\subsection{Background on inverse optimal transport}
\paragraph{Optimal transport}

Let $X,Y\subset\RR^d$. For probability measures $\alpha\in\Pp(X)$ and $\beta\in\Pp(Y)$ and a cost $c\in\Cc(X\times Y)$, optimal transport is defined by
\begin{equation*}
 \OT_{\alpha,\beta}(c)
 =
 \inf_{\pi\in \Pi(\alpha,\beta)} 
 \int c(x,y)\,d\pi(x,y).
\tag{OT}\label{eq:OT}
\end{equation*}
Here $\Pi(\alpha,\beta)$ denotes the set of couplings with marginals $\alpha$ and $\beta$. 

\paragraph{Inverse optimal transport}

In inverse OT (iOT), one observes i.i.d.\ samples
\[
(x_i,y_i) \overset{iid}{\sim} \hat\pi,\quad i=1,\ldots, n,
\]
where $\hat\pi$ is a minimizer to \eqref{eq:OT} for some cost $c$.
The central question of inverse optimal transport is:
\emph{Can one recover the ground metric $c$ from samples from an optimal transport plan $\hat \pi$?}

At first sight, this  problem is fundamentally ill-posed, even if there is a unique minimizer to \eqref{eq:OT}, some immediate issues include
\begin{itemize}
\item Scale invariance: $\OT\pa{\lambda c}=\lambda \OT(c)$, so $c$ is identifiable only up to scaling.
\item Additive invariance: $c(x,y)$ and $c(x,y)+f(x)+g(y)$ induce the same optimal plans.
\item Discrete degeneracy: when $\alpha,\beta$ are discrete, \eqref{eq:OT} is a linear program; the set of costs compatible with a single optimal plan is typically a high-dimensional cone.
\end{itemize}

Thus, without additional structure, identifiability fails.
In this article, 
we focus on the recovery of bilinear costs
\[
c_A(x,y) = -x^\top A y,
\qquad A\in\RR^{d\times d}.
\]
Due to additive invariance, the quadratic cost $\frac12\norm{x-y}^2$ is equivalent to $A=\Id$. Hence, recovering $A$ amounts to recovering a Mahalanobis metric.

\paragraph{Prior works and gap-loss formulations}
Inverse OT can be viewed as a special case of inverse linear optimization \cite{zhang1996calculating,ahuja2001inverse}. In economics and matching models, a central approach, going back to Galichon and collaborators \cite{galichon2010matching,galichon2022cupid,dupuy2019estimating}, is to formulate inverse OT as a maximum-likelihood problem at a fixed entropic temperature $\epsilon>0$. Given $\epsilon>0$, the forward entropic OT value is
\begin{equation*}
   \OT^\epsilon_{\al,\beta}(c)\eqdef 
\inf_{\pi\in\Pi(\alpha,\beta)}
\Big(
\dotp{c}{\pi}
+
\epsilon \mathrm{KL}(\pi|\alpha\otimes\beta)
\Big).
\tag{$\mathrm{OT}_\epsilon$}\label{eq:OT_epsilon}
\end{equation*}
For an observed coupling $\hat\pi$ with marginals $\alpha,\beta$, the corresponding inverse entropic OT objective is the gap loss
\begin{equation*}
\Ll_\epsilon(A;\hat \pi)
=
\dotp{c_A}{\hat \pi}
-
\OT^\epsilon_{\alpha,\beta}(c_A).
\tag{$\Ll_\epsilon$}\label{eq:Leps}
\end{equation*}
Minimizing $\Ll_\epsilon$ is equivalent, up to constants independent of $A$, to maximum likelihood estimation \cite{dupuy2019estimating,andradesparsistency}. This fixed-$\epsilon$ formulation is statistically natural and computationally attractive: $\Ll_\epsilon$ is convex in $A$ since $c\mapsto \OT^\epsilon_{\al,\beta}(c)$ is concave \footnote{Note that for simplicity, we consider the bilinear cost $c_A$ here, but convexity of $\Ll_\epsilon$ is preserved for any cost parameterization that is linear in the unknowns.}, while $\OT^\epsilon$ and its gradients can be evaluated efficiently by Sinkhorn-type solvers \cite{carlier2023sista,liu2019learning,ma2020learning,li2019learning}.
In discrete settings, recent analyses make explicit the geometric degeneracies of this inverse problem \cite{chiu2022discrete}. Statistical guarantees for fixed entropic regularization have been developed for sparsity-promoting inverse OT \cite{andradesparsistency}, while sharpened Fenchel--Young gap losses provide a broader convex framework for inverse problems over measures, including inverse entropic unbalanced OT and inverse JKO learning \cite{andrade2025learning}.

A major open difficulty is the \emph{vanishing-regularization} limit. Fixed-$\varepsilon$ estimators can be stable, but their constants typically deteriorate exponentially as $\varepsilon\downarrow 0$, exactly where one wants to recover unregularized OT structure. Identifiability theory for this limit remains fragmented: one line of work proves nonlinear identifiability from richer observation models (multiple marginals or value data) \cite{gonzalez2024nonlinear}, while another recovers geometric structure on manifolds when many optimal maps are available \cite{zhai2025inverse}. The single-coupling, zero-temperature regime is therefore still largely open, and this is the main gap addressed here.

\subsection{Main contributions}
The zero-temperature counterpart of the entropic gap is
\begin{equation*}
\Ll_0(A;\hat \pi)
\dotp{c_A}{\hat \pi}
-
\OT_{\alpha,\beta}(c_A) =
\dotp{c_A}{\hat \pi}
-
\inf_{\pi\in\Pi(\alpha,\beta)}
\dotp{c_A}{\pi}.
\tag{$\Ll_0$}\label{eq:L0}
\end{equation*}
It is  convex, nonnegative and vanishes exactly when $\hat\pi$ is optimal for $c_A$.  One of our core contributions is to analyze the curvature and zero set of this loss.

Our main contributions are the following.
\begin{itemize}

    \item \textbf{Second-order geometry of OT with respect to the cost.}
    Theorem~\ref{thm:curvature} (Section~\ref{sec:curvature_OT}) proves second-order regularity of the optimal transport value with respect to the ground cost in Hölder spaces, and characterizes precisely the degenerate directions.
    This yields strict curvature in directions transverse to transport invariances.
    \item \textbf{Identifiability from one coupling in smooth regimes.}
    Theorem~\ref{thm:uniqueness} (proof in Section~\ref{section : identifiability}) shows that, under the spanning condition \eqref{hessian spanning symmetric matrices}, the inverse problem is identifiable up to the unavoidable scaling invariance.
    The same theorem also gives a genericity statement: arbitrarily small perturbations of marginals can restore identifiability.

    \item \textbf{Local quadratic stability for bilinear inverse OT.}
    Theorem~\ref{thm:local_curvature_L0} (proof in Section~\ref{sec:proof_bilinear}) gives a quantitative local quadratic stability result for the inverse objective around the identifiable set, and links strict positivity of curvature to the same spanning mechanism.

    \item \textbf{Deterministic and statistical guarantees in the small-entropic regime.}
     Section~\ref{sec:sample_setting} studies the iOT  problem under entropy regularization. Theorem~\ref{thm:bias} proves deterministic consistency and bias control as the entropic parameter vanishes, and Theorem~\ref{thm:statistical} gives non-asymptotic finite-sample guarantees, explicitly exposing the regularization--sampling tradeoff.

    \item \textbf{Explicit degenerate counter-regimes.}
    We show that iOT remains degenerate in two fundamental settings: Elliptical marginals (Section~\ref{sec:Gaussian}, in particular Proposition~\ref{prop : non identifiability elliptic}) and fully discrete marginals (Section~\ref{sec:discrete}).
\end{itemize}

\subsection{Related works}

\paragraph{Forward OT and entropic regularization}
OT provides a variational geometry on probability measures and a mature analytical framework \cite{santambrogio2015optimal}. It is also a central computational toolbox in modern applications, especially in machine learning \cite{PeyCut19}. Entropic regularization was introduced in modern ML practice to make OT scalable through Sinkhorn iterations \cite{CuturiSinkhorn}, building on the original matrix-scaling algorithm \cite{Sinkhorn64}. On the modeling side, this regularization connects OT to Schr\"odinger bridge formulations \cite{leonard2012schrodinger}, and to a broader stochastic-control perspective \cite{nutz2021introduction}. The stabilization induced by this regularization explains why most practical inverse-OT procedures are built from entropic objectives rather than from the raw Kantorovich problem.

\paragraph{Sample complexity of forward OT}
 One of our goals is to understand the discretization of the inverse OT problem using $n$ samples. For the forward OT problem, the sample complexity is now well understood. For unregularized OT costs (i.e. estimation of $\OT_{\alpha,\beta}(c)$ with its $n$-sample empirical counterpart $\OT_{\alpha^n,\beta^n}(c)$ obtained with empirical measures $\alpha^n,\beta^n$), sharp sample-complexity analyses highlight severe high-dimensional effects and the curse of dimensionality \cite{weed2019sharp}. For map estimation, the same curse appears and rates depend on structural assumptions on Brenier potentials \cite{deb2021rates,manole2021plugin}; alternative smooth-map estimators further clarify this bias--variance tradeoff \cite{muzellec2021near}. Entropic regularization modifies this landscape: Under a \textit{fixed} regularization strength $\epsilon>0$, Sinkhorn divergences admit favorable sample complexity \cite{genevay2019sample}, and non-asymptotic/statistical bounds for entropic OT are now well developed \cite{mena2019statistical}. However, for estimation of the map, the constants exhibit exponential dependency on $1/\epsilon$ \cite{rigollet2022sample}, unless under stronger smoothness assumptions on the densities $\alpha,\beta$ \cite{pooladian2021entropic}. In our work, we transpose this forward program to iOT and quantify how these errors propagate to cost recovery as a function of both $n$ and $\varepsilon$.

\paragraph{Quantitative stability}
Obtaining rates in iOT requires sensitivity analysis with respect to the cost parameter. Quantitative stability of the dual potential or the transport map with respect to marginals is comparatively well understood, while sensitivity with respect to the cost variable remains much less explored. 
To our knowledge, the main work studying stability with respect to the cost is a non-quantitative study on the smoothness of optimal transport maps \cite{chen2016stability}. 
In the unregularized map setting, quantitative bounds with respect to the measures $\alpha,\beta$ were established in \cite{merigot2020quantitative,delalande2021quantitative}, and more recent work extends these results via gluing arguments \cite{letrouit2024gluing}. Complementary results address the stability of optimal transport plans directly under measure discretization \cite{li2021quantitative}, and the stability of transport maps and Brenier potentials under smooth density perturbations \cite{caja2026stability}. For entropic OT, Lipschitz bounds are available \cite{carlier2024displacement}, as well as quantitative convergence and algorithmic-stability analyses \cite{eckstein2022quantitative,deligiannidis2024ipfp}. Stability of entropic dual potentials has also been studied from a functional-analytic viewpoint \cite{nutz2023stability}. Finally, sharp small-$\varepsilon$ control for entropic Brenier maps is obtained under stronger regularity assumptions in \cite{divol2024tight}. This corresponds to the regime we study for inverse OT when one wants both computational tractability and faithful recovery of the unregularized model.


\section{Curvature of optimal transport}\label{sec:curvature_OT}

We study the second-order structure of the optimal transport functional
\[
c\mapsto \OT_{\alpha,\beta}(c).
\]
While first-order properties of optimal transport are classical, its second-order structure with respect to the cost variable has received little attention. 
For inverse OT, this Hessian controls whether a perturbation of the cost is
visible from a fixed optimal plan.

\paragraph{Dual formulation of OT}
Define the semi-dual functional  
\begin{equation}\label{eq:semidual}
    F(c,\phi) \coloneqq - \int_X \phi\,d\alpha \;+\;\int_Y \phi^c\,d\beta,
\qquad
\phi^c(y)\;=\;\inf_{x\in X}\{c(x,y)+ \phi(x)\}. \tag{S}
\end{equation}
Then, \eqref{eq:OT} has the dual formulation
$$
\OT_{\alpha,\beta}(c) = \sup_{\phi\in L^1(\alpha)}F(c,\phi),
$$
and an optimal solution $\phi$ is a Kantorovich potential.

To handle uniqueness and regularity of the Kantorovich potential we will work on quotiented Hölder spaces. Given $k\in\NN$ and $\kappa\in (0,1]$, we denote $C^{k,\kappa}(X)$ the space of functions whose derivatives up to order $k$ are $\kappa$-Hölder. Throughout the paper $\kappa$ will be arbitrary and $k$ will be specified in every theorems. When $X$ is compact it is a Banach space equipped with the norm $\norm{f}_{C^{k,\kappa}(X)}=\sum_{i=1}^k\norm{D^if}_X$ where $\norm{f}_X=\sup_X|f|$. When the output space is $\RR^d$ we will write $C^{k,\kappa}(X;\RR^d)$ which is still a Banach space. Due to the additive invariance of $\phi\mapsto F(c,\phi)$ we will work on the space $C^{k,\kappa}(X)/\RR$ of $C^{k,\kappa}$ functions quotiented by the relation $f\sim g \iff f=g+\lambda$ with $\lambda$ being a constant. It is still a Banach space equipped with the norm $\norm{f}_{C^{k,\kappa}(X)/\RR}=\inf_\lambda \norm{f+\lambda}_{C^{k,\kappa}(X)}$. Having defined the spaces of interest we will make the following three assumptions.

\begin{assumption}\label{ass:domain}
$X,Y$ are compact connected uniformly convex domains with $ C^{k+2,\kappa}$ boundary.
\end{assumption}

\begin{assumption}\label{ass:densities}
$\alpha,\beta$ admit $ C^{k,\kappa}$ densities bounded from below defined on the compact sets $X$ and $Y$.
\end{assumption}
As a slight abuse of notation, since $\al,\beta$ are assumed to admit densities, we wrote $\alpha(x)$ and $\beta(y)$ to denote their densities.

We will establish regularity of the maps $c\mapsto \phi_c$ and $c \mapsto\OT(c)$ locally around a cost $c_0$ satisfying the twist condition:
\begin{assumption}\label{ass:twist}
 $c_0 \in  C^{k+2,\kappa}(X\times Y)$ is such that $y\mapsto \partial_x c_0(x,y)$ is injective and $\partial_{x,y}^2 c_0(x,y)$ is  invertible for all $x\in X$ and $y\in Y$.
\end{assumption}

These assumptions are quite classical in the theory of OT regularity. Indeed, \ref{ass:domain} and \ref{ass:densities} appear in \cite[Theorem 12.50 \& 12.51]{villani2008optimal} where Caffarelli's work is stated to transfer the Hölder regularity of the densities to the potential's regularity. 
Uniqueness of OT plans is typically guaranteed through the TWIST condition in Assumption \ref{ass:twist}, that is, the cost $c$ is differentiable with respect to $x$ and $y\mapsto \partial_x c(x,y)$ is injective (or $\partial^2_{x,y} c(x,y)$ is invertible for all $x,y$) \cite[pg. 234]{villani2008optimal}. For the setting of bilinear costs, this corresponds to $A$ being invertible. Moreover, under the twist condition on $c$ and absolutely continuous densities, the Kantorovich potential $\phi$ is uniquely defined up to a constant, so we assume throughout  $\int \phi(x)d\alpha(x) = 0$ \cite[Remark 10.30]{villani2008optimal}.
Finally, the regularity assumptions are also required in order to apply  the implicit function theorem on the Banach space $ C^{k+2,\kappa}(X\times Y)$ to prove regularity with respect to the cost of the OT value and the Kantorovich potential. in these assumptions, we let $k\in\NN_0$ and $\kappa>0$, the precise order will be made explicit in the results.

\paragraph{Main curvature result}

The following theorem establishes twice differentiability of $\OT$ under standard regularity assumptions and gives an explicit characterization of its second variation. 
This result applies to general smooth costs and is of independent interest. In the following, $T^c$ and $\phi_c$ are the transport map and \textit{zero-mean} Kantorovich potential associated with cost $c$, these are known to be uniquely defined in a neighbourhood around $c_0$ thanks to Brenier's theorem.

\begin{thm}[Curvature of OT for general costs] \label{thm:curvature}
Assume \ref{ass:domain}–\ref{ass:densities}, and let 
$c_0$ satisfy \ref{ass:twist} with  $k\in\NN$, $k\ge 1$, and $\kappa\in(0,1)$. 
Let $\phi_0$ denote the mean-zero Kantorovich potential associated to $c_0$. 
Write $T_0=T^{c_0}$ for the associated optimal map.
Assume $\phi_0\in  C^{k+2,\kappa}(X)$ and that there exist $\nu,\mu>0$ such that
\begin{equation}\label{ass:curved_cphi}
    \nu I_d
\preceq
\partial_{xx}c_0(x,T_0(x)) + \nabla^2\phi_0(x)
\preceq
\mu I_d.
\end{equation}
Given a cost $c$, let $T^c$ and $\phi_c$ denote the transport map and associated Kantorovich potential.
Then, the following hold.
\begin{enumerate}
\item[(i)] $c\mapsto\OT_{\alpha,\beta}(c)$ is $ C^2$ in a $C^{k+2,\kappa}$-neighborhood of $c_0$;
\item[(ii)] the map $c \mapsto \phi_c$ is $ C^1$ as a map from this $C^{k+2,\kappa}$-neighborhood to $C^{k+1,\kappa}(X)/\RR$;
\item[(iii)] for any perturbation $\delta c\in C^{k+2,\kappa}(X\times Y)$,
$$
    \partial_{cc}\OT_{\al,\beta}(c)[\delta c,\delta c] = \sup_{\psi\in  C^{k+1,\kappa}_0(\alpha)/\RR}-\int\norm{\partial_x \delta c(x,T^c(x)) + \nabla\psi(x)}^2_{(\partial_{xx} c(x,T^c(x))+\nabla^2\phi_c(x))^{-1}}\,d\alpha(x)
    $$
    In particular,
$\partial_{cc}\OT_{\alpha,\beta}(c)[\delta c,\delta c]=0$ 
if and only if the vector field
$x \mapsto \partial_x \delta c(x,T^c(x))$
is a gradient field on $X$.
\end{enumerate}

\end{thm}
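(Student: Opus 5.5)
The plan is to characterize the Kantorovich potential $\phi_c$ as the zero of a nonlinear elliptic (Monge--Ampère type) operator, and then to apply the implicit function theorem in Hölder spaces. Under the twist condition, the map is $T^c(x)=\mathrm{cexp}_x(\nabla\phi(x))$, defined as the unique $y$ with $-\partial_x c(x,y)=\nabla\phi(x)$. The pushforward condition $T^c_\sharp\alpha=\beta$ reads
\[
G(c,\phi)(x)\eqdef \beta(T^c_\phi(x))\,\big|\det DT^c_\phi(x)\big|-\alpha(x)=0,
\qquad
DT^c_\phi=-(\partial_{xy}c)^{-1}\big(\partial_{xx}c+\nabla^2\phi\big),
\]
supplemented by the second boundary value condition $T^c_\phi(X)=Y$. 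I would define $G$ on a neighbourhood of $(c_0,\phi_0)$ in $C^{k+2,\kappa}(X\times Y)\times C^{k+2,\kappa}(X)/\RR$, taking values in $C^{k,\kappa}(X)$ together with a boundary component. The linearization in $\phi$ at $(c_0,\phi_0)$ is $L\psi=-\mathrm{div}\big(\alpha\,H_0^{-1}\nabla\psi\big)$ up to a positive factor, where $H_0=\partial_{xx}c_0(x,T_0)+\nabla^2\phi_0$. This uses the standard identity that the linearized Monge--Ampère pushforward is a divergence-form operator, with the oblique/Neumann boundary condition coming from $T(X)=Y$. By \eqref{ass:curved_cphi} this operator is uniformly elliptic, and by Assumptions \ref{ass:domain}--\ref{ass:densities} it has $C^{k,\kappa}$ coefficients. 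Schauder theory for the Neumann problem, with the compatibility condition $\int=0$, makes $L$ an isomorphism from $C^{k+2,\kappa}/\RR$ onto the mean-zero part of $C^{k,\kappa}$. The IFT then gives $c\mapsto\phi_c$ of class $C^1$. I expect to lose one derivative, since $G$ involves $\partial_x c$ composed with $T$, and this yields the stated target $C^{k+1,\kappa}/\RR$ in (ii).

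\textbf{The main obstacle} is exactly this IFT setup: showing that $G$ is $C^1$ jointly, given that composition operators $c\mapsto c(\cdot,T(\cdot))$ lose derivatives in Hölder spaces, and handling the nonlinear second boundary condition. I would follow the Delanoë/Urbas approach, writing the boundary condition as $h_Y(T^c_\phi(x))=0$ on $\partial X$ with $h_Y$ a uniformly convex defining function. Its linearization is oblique by uniform convexity of $X$ and $Y$. To accommodate the derivative loss, I would allow the image space of the $\phi$-variable to be one order lower, as in (ii).

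For (i), the envelope theorem gives $\partial_c\OT(c)[\delta c]=\int\delta c(x,T^c(x))\,d\alpha$. Since $c\mapsto T^c$ is $C^1$ by (ii) (composition is fine because we lose one order), $\OT$ is $C^2$. Differentiating once more,
\[
\partial_{cc}\OT[\delta c,\delta c]=\int\partial_y\delta c(x,T)\cdot\dot T\,d\alpha,
\qquad
\dot T=-(\partial_{xy}c)^{-1}\big(\partial_x\delta c(x,T)+\nabla\dot\phi\big),
\]
where $\dot\phi$ solves the linearized equation $\mathrm{div}\big(\alpha H^{-1}(\partial_x\delta c+\nabla\dot\phi)\big)=0$ with the corresponding boundary condition.

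To reach the variational form (iii), I would rather compute the second derivative through the dual: $\OT(c)=\sup_\phi F(c,\phi)$, so $\partial_{cc}\OT=\partial_{cc}F-\partial_{c\phi}F\,(\partial_{\phi\phi}F)^{-1}\partial_{\phi c}F$, which can be written as a sup over $\psi$ of a negative quadratic form. A second-order expansion of $\phi^c(y)=\inf_x\{c+\phi\}$ at the minimizer $x=(T^{c})^{-1}(y)$ gives
\[
F(c+t\delta c,\phi+t\psi)=F+t(\cdots)-\tfrac{t^2}{2}\int\big\|\partial_x\delta c(x,T)+\nabla\psi\big\|^2_{H^{-1}}\,d\alpha+o(t^2).
\]
This holds because the infimum is attained at a nondegenerate minimum with Hessian $H$ (a Schur complement of a strictly convex problem), and changing variables $y=T(x)$ converts the $\beta$-integral into an $\alpha$-integral. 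Taking the sup over $\psi$ yields (iii), after checking that the optimizer is $\psi=\dot\phi$ by the Euler--Lagrange equation above.

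Finally, the quantity in (iii) vanishes iff the infimum of $\int\|\partial_x\delta c+\nabla\psi\|^2_{H^{-1}}\,d\alpha$ over $\psi$ is zero. Because $H^{-1}$ is uniformly positive definite and the infimum is attained (by the Neumann solvability above), this happens iff $\partial_x\delta c(\cdot,T^c(\cdot))=-\nabla\psi$ for some $\psi$, i.e. iff it is a gradient field.
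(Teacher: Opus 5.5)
Your plan for (i)--(ii) follows the paper: the implicit function theorem on the second boundary value problem, the envelope formula $\partial_c\OT(c)[\delta c]=\int\delta c(x,T^c(x))\,d\alpha$, and one derivative lost. However, the implicit-function step does not close as you set it up. At $(c_0,\phi_0)$ the $\phi$-linearization of $\phi\mapsto\big(\beta(T^c_\phi)|\det DT^c_\phi|-\alpha,\;h_Y\circ T^c_\phi|_{\partial X}\big)$ is $\psi\mapsto\big(\mathrm{div}(\alpha H_0^{-1}\nabla\psi),\;b_0\,n_X\cdot H_0^{-1}\nabla\psi\big)$. Here $b_0\neq0$ is defined by $(DT_0)^\top\nabla h_Y(T_0)=b_0\,n_X$ on $\partial X$. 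By the divergence theorem its range lies in the codimension-one subspace $\{(f,g):\int_X f\,dx=\int_{\partial X}(\alpha/b_0)\,g\,dS\}$. So it is injective modulo constants but not surjective, and the implicit function theorem does not apply.

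Passing to ``mean-zero data'' does not repair this. That is the homogeneous Neumann problem, whereas your boundary component is nonlinear and inhomogeneous. Also, the interior component does not stay mean-zero off the solution set: $\int_X\big(\beta(T)|\det DT|-\alpha\big)\,dx=\int_{T(X)}\beta\,dy-1$ need not vanish when $T(X)\neq Y$.

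The missing idea is an extra scalar unknown that absorbs the one-dimensional cokernel. The paper solves $\mathcal{H}(c,\phi,\lambda)=\big(\sigma_0\,\beta\circ T_{c,\phi}\det DT_{c,\phi}-\lambda\alpha,\;\rho_Y\circ T_{c,\phi}|_{\partial X}\big)=0$. Its linearization $(\dot\phi,\dot\lambda)\mapsto\big(\mathrm{div}(\alpha H_0^{-1}\nabla\dot\phi)-\dot\lambda\alpha,\;b_0\,n_X\cdot H_0^{-1}\nabla\dot\phi\big)$ is an isomorphism from $C^{k+1,\kappa}(X)/\RR\times\RR$ onto $C^{k-1,\kappa}(X)\times C^{k,\kappa}(\partial X)$, by Schauder estimates for the conormal problem plus the method of continuity. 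Only afterwards does the paper show $\lambda_c=1$, by integrating the first component once the boundary condition has forced $T_{c,\phi_c}(X)=Y$.

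Separately, the isomorphism from $C^{k+2,\kappa}/\RR$ onto mean-zero $C^{k,\kappa}$ that you invoke is not available under the hypotheses, because $\alpha H_0^{-1}$ is only $C^{k,\kappa}$. The correct pairing is the one above (Remark~\ref{rem:regularity_loss}), and that is where your ``shift down one order'' has to land.

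For (iii), your main route genuinely differs from the paper's. The line-wise expansion of the $c$-transform at a nondegenerate minimum gives $\frac{d^2}{dt^2}F(c+t\delta c,\phi_c+t\psi)\big|_{t=0}=-\int\|\partial_x\delta c(x,T^c(x))+\nabla\psi(x)\|^2_{H^{-1}}\,d\alpha$. Then $\OT(c+t\delta c)\ge F(c+t\delta c,\phi_c+t\psi)$ yields $\partial_{cc}\OT[\delta c,\delta c]\ge$ this quantity for every $\psi$. This explains the sup structure directly, whereas the paper obtains it a posteriori through the Euler--Lagrange equation.

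Equality, however, does not follow from your Schur-complement formula as stated. That formula presupposes joint second differentiability of $F$ on H\"older spaces, which you have not established. Knowing that $\dot\phi$ maximizes the quadratic form does not by itself identify $\partial_{cc}\OT$ with its value.

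You already hold the missing step. Use $\dot T=DT^c\,H^{-1}(\partial_x\delta c+\nabla\dot\phi)$ and $(DT^c)^\top\partial_y\delta c(x,T^c(x))=\nabla_x\big[\delta c(x,T^c(x))\big]-\partial_x\delta c(x,T^c(x))$. Testing the linearized conormal problem against $x\mapsto\delta c(x,T^c(x))$ then turns your formula $\int\partial_y\delta c\cdot\dot T\,d\alpha$ into $-\int\partial_x\delta c\cdot H^{-1}(\partial_x\delta c+\nabla\dot\phi)\,d\alpha$. The paper reaches this expression by differentiating $\int\delta c(S_c(y),y)\,d\beta$ with $S_c=(T^c)^{-1}$. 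Testing against $\dot\phi$ then completes the square.

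Finally, your expansion of $\phi^c$ uses that $S(y)$ is the global nondegenerate minimizer of $x\mapsto c(x,y)+\phi_c(x)$, i.e.\ that the branch produced by the implicit function theorem is $c$-convex. This follows from strict global minimality at $c_0$ by the same perturbation argument as in the corollary following the theorem.
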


The only directions of degeneracy of the second variation correspond precisely to transport-invariant perturbations of the cost. In all transverse directions, $\OT_{\alpha,\beta}$ exhibits strict negative curvature.
Section \ref{sec:uniqueness} applies this characterization to understand the issue of uniqueness and stability for inverse optimal transport.
\begin{cor}[Transport-invariant zero-curvature directions]
Assume the hypotheses of Theorem~\ref{thm:curvature}, and fix a cost
\(c\) in the neighborhood where the theorem applies. Let
\(\delta c\in C^{k+2,\kappa}(X\times Y)\) satisfy
\[
\partial_x\delta c(x,T^c(x))=\nabla\psi(x)
\]
for some \(\psi\). Set \(c_\epsilon=c+\epsilon\delta c\) and
\(\phi_\epsilon=\phi_c-\epsilon\psi\). Then, for all sufficiently small
\(|\epsilon|\), \(\phi_\epsilon\) is \(c_\epsilon\)-convex and \(T^c\)
is an optimal transport map from \(\alpha\) to \(\beta\) for the cost
\(c_\epsilon\).
\end{cor}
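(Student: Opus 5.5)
The plan is to verify the $c_\epsilon$-convexity of $\phi_\epsilon$ directly, through a local-to-global argument on the function
\[
h_\epsilon(x,y)\coloneqq c_\epsilon(x,y)+\phi_\epsilon(x)-\phi_\epsilon^{c_\epsilon}(y),
\]
and then conclude optimality of $T^c$ from the standard characterization: a coupling supported on the $c_\epsilon$-subdifferential of a $c_\epsilon$-convex potential is optimal. Throughout we use the sign conventions of \eqref{eq:semidual}, under which $x=(T^c)^{-1}(y)$ minimizes $x\mapsto c(x,y)+\phi_c(x)$.

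\textbf{Step 1: nondegenerate minimum at $\epsilon=0$.} For each $y$, set $x_y=(T^c)^{-1}(y)$. Then $g_y(x)\coloneqq c(x,y)+\phi_c(x)$ has a critical point at $x_y$, with Hessian $\partial_{xx}c(x_y,y)+\nabla^2\phi_c(x_y)\succeq\nu' I_d$ by \eqref{ass:curved_cphi}. This bound extends to $c$ in the neighborhood by the $C^1$ dependence in Theorem~\ref{thm:curvature}(ii). By the twist condition (Assumption~\ref{ass:twist}) and the uniqueness of the optimal map, $x_y$ is the unique global minimizer, and it lies in the interior after using convexity of the domains. Compactness of $X\times Y$ and continuity then give a quantitative gap:
\[
g_y(x)-g_y(x_y)\ \ge\ \eta\,\min\bigl(|x-x_y|^2,\,r^2\bigr)
\]
for some $\eta,r>0$ that are uniform in $y$.

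\textbf{Step 2: persistence under perturbation.} The perturbed function is
\[
g^\epsilon_y(x)=c(x,y)+\epsilon\,\delta c(x,y)+\phi_c(x)-\epsilon\psi(x).
\]
By the hypothesis $\partial_x\delta c(x,T^c(x))=\nabla\psi(x)$, its $x$-gradient at $x_y$ is unchanged, so $x_y$ is still a critical point. Its Hessian is perturbed by $O(\epsilon)$ in $C^0$, since $\delta c\in C^{k+2,\kappa}$, and we need $\psi\in C^2$. This regularity follows from $\nabla\psi(x)=\partial_x\delta c(x,T^c(x))$ together with $T^c\in C^{k,\kappa}$ and $k\ge1$, taking $k\ge 1$ to give enough smoothness. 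For small $|\epsilon|$, the Hessian of $g^\epsilon_y$ therefore stays $\succeq(\nu'/2)I_d$ on the ball $B(x_y,r)$, so $x_y$ is the unique minimizer there. Outside that ball, the $O(\epsilon)$ uniform perturbation cannot overcome the gap $\eta r^2$. Hence $x_y$ is the unique global minimizer of $g^\epsilon_y$ for every $y$, which gives
\[
\phi_\epsilon^{c_\epsilon}(y)=c_\epsilon(x_y,y)+\phi_\epsilon(x_y).
\]
I expect this uniformity in $y$ to be the main obstacle: boundary points $x_y\in\partial X$ must be handled, and the Hessian bound is only known along the graph of $T^c$. For boundary points I would argue via the first-order condition together with uniform convexity of $X$, or by extending $\phi_c$ slightly beyond $X$.

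\textbf{Step 3: $c_\epsilon$-convexity and optimality.} To get $c_\epsilon$-convexity, note that for each $x$ we have $x=x_{T^c(x)}$. Step 2 then gives
\[
\phi_\epsilon(x)=\phi_\epsilon^{c_\epsilon}(T^c(x))-c_\epsilon(x,T^c(x)),
\]
and $\phi_\epsilon(x)\ge\sup_y\{\phi_\epsilon^{c_\epsilon}(y)-c_\epsilon(x,y)\}$ holds trivially. So $\phi_\epsilon$ equals its double transform, which is $c_\epsilon$-convexity in the present sign convention. Finally, since $(\mathrm{Id},T^c)_\#\alpha\in\Pi(\alpha,\beta)$ and $c_\epsilon(x,T^c(x))=\phi_\epsilon^{c_\epsilon}(T^c(x))-\phi_\epsilon(x)$, integrating gives
\[
\int c_\epsilon\,d\bigl[(\mathrm{Id},T^c)_\#\alpha\bigr]=F(c_\epsilon,\phi_\epsilon)\le \OT_{\alpha,\beta}(c_\epsilon).
\]
By the duality stated after \eqref{eq:semidual}, the primal value of this coupling equals the dual value of $\phi_\epsilon$, so the plan $(\mathrm{Id},T^c)_\#\alpha$ is optimal and $T^c$ is an optimal map for $c_\epsilon$.
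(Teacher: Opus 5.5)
Your proposal is correct and follows essentially the same route as the paper. The hypothesis that $\partial_x\delta c(\cdot,T^c(\cdot))$ is a gradient field keeps $S(y)=(T^c)^{-1}(y)$ a critical point of $x\mapsto c_\epsilon(x,y)+\phi_\epsilon(x)$. Uniform positive-definiteness of the Hessian near the graph of $T^c$ then gives local minimality, a uniform gap away from a ball of radius $r$ gives global minimality, and the resulting double-transform identity gives $c_\epsilon$-convexity.

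The boundary worry you flag is handled exactly as in the paper. The gradient vanishes at $S(y)$ even when $S(y)\in\partial X$, and $B_r(S(y))\cap X$ is convex. Your check that $\psi\in C^2$ and your final duality argument for the optimality of $T^c$ simply make explicit steps that the paper leaves implicit.
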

\begin{proof}
Recall that $T^c$, $c$ and $\phi_c$ are related by
$$
\partial_x  c(x, T^c(x)) = -\nabla \phi_c(x).
$$
If $\partial_x  \delta c(x,T^c(x)) = \nabla \psi(x)$ is a gradient field, then the perturbed cost  $c_\epsilon=  c+ \epsilon \delta c$  and perturbed potential   $\phi_\epsilon = \phi_c -\epsilon \psi$ satisfy
\begin{equation}\label{eq:tmp_opt_alt}
\partial_x  c_\epsilon(x,T^c(x)) = -\nabla \phi_\epsilon(x).
\end{equation}
We now that that $T^c$ remains the transport map for cost $c_\epsilon$. It is enough to show that $\phi_\epsilon$ is $c_\epsilon$-convex in the sign convention of \eqref{eq:semidual}, meaning that there exists a function $\psi_\epsilon$ such that
\[
\phi_\epsilon(x)=\sup_{y\in Y}\{\psi_\epsilon(y)-c_\epsilon(x,y)\}.
\]

To this end, 
define
\[
\psi_\epsilon(y) = \inf_{x\in X}\{\phi_\epsilon(x) + c_\epsilon(x,y)\}.
\]
Let $T=T^c$ and $S=T^{-1}$. At $\epsilon=0$, the Hessian in $x$ of
$x\mapsto \phi_c(x) + c(x,y)$ at $x=S(y)$ is
\[
\nabla^2\phi_c(S(y))+\partial_{xx}c(S(y),y),
\]
which is uniformly positive definite by the assumption \eqref{ass:curved_cphi} on the graph of
$T$. Hence, for all $\epsilon$ small enough, the Hessian in $x$ of
$x\mapsto \phi_\epsilon(x) + c_\epsilon(x,y)$ is uniformly positive
definite in a fixed neighborhood of $x=S(y)$. Moreover, by
\eqref{eq:tmp_opt_alt},
\[
\partial_x c_\epsilon(S(y),y) + \nabla \phi_\epsilon(S(y)) = 0
\]
for all $y\in Y$. It follows that $S(y)$ is a strict local minimizer of
$x\mapsto \phi_\epsilon(x) + c_\epsilon(x,y)$, uniformly in $y$. Choose
$r>0$ small enough so that this local minimality holds on
$B_r(S(y))\cap X$, uniformly in $y$. To see that it is a global minimizer,
notice that
$$
c_\epsilon(x,y) + \phi_\epsilon(x) - (c_\epsilon(S(y),y) + \phi_\epsilon(S(y)))
$$
is strictly bounded away from zero on the compact set
$\enscond{(x,y)\in X\times Y}{\abs{x-S(y)}\geq r}$ at $\epsilon = 0$, so
this remains true for all $\epsilon$ sufficiently small. Thus,
$S(y) = \argmin_{x\in X}\{\phi_\epsilon(x) + c_\epsilon(x,y)\}$.

Hence, for all $x,y\in X\times Y$,
$$
\psi_\epsilon(y) = \phi_\epsilon(S(y)) + c_\epsilon(S(y),y) \leq \phi_\epsilon(x) + c_\epsilon(x,y)
$$
with equality at $y=T(x)$. Therefore,
\[
\phi_\epsilon(x)=\sup_{y\in Y}\{\psi_\epsilon(y)-c_\epsilon(x,y)\},
\]
as required.
\end{proof}

\paragraph{Overview} The remainder of this section is dedicated to the proof of Theorem \ref{thm:curvature}. 
To prove this result, we first provide the first variation formula for OT. Then, we establish that $c\mapsto \phi_c$ is $ C^1$ locally. Finally, in Section \ref{sec:proof_curvature}, we complete the proof of Theorem \ref{thm:curvature}  by deriving the second variation formula for OT with respect to $c$.

\subsection{First order variation}
We  prove in the following proposition that the OT value is Gâteaux differentiable at each Hölder cost under the sole assumption that it generates a unique optimal plan. 
\begin{prop}[First variation]\label{prop:first_variation_ot}
    Let $c_0\in  C^{0,\kappa}(X\times Y)$ be a Hölder continuous cost function such that there exists a unique plan $\pi_0$ for the transport between $\alpha$ and $\beta$. Then, the map $c\mapsto \OT_{\alpha,\beta}(c)$ is Gâteaux differentiable at $c_0$, and for every $\delta c\in C^{0,\kappa}(X\times Y)$, 
    $$
    \partial_c\OT_{\al,\beta}(c_0)[\delta c] = \int \delta c (x,y) d\pi_0(x,y)
    $$
    Moreover, if $\pi_0=(Id,T^{c_0})\#\alpha$ where $T^{c_0}$ is the Brenier map for $c_0$, then 
    $$
    \partial_c\OT_{\al,\beta}(c_0)[\delta c] = \int \delta c (x,T^{c_0}(x)) d\alpha(x)
    $$
\end{prop}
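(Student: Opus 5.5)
The plan is the classical envelope (Danskin-type) argument: $\OT_{\alpha,\beta}(c)=\inf_{\pi\in\Pi(\alpha,\beta)}\langle c,\pi\rangle$ is an infimum of linear functionals of $c$, indexed by a weakly compact set. Fix $c_0$ and $\delta c\in C^{0,\kappa}(X\times Y)$, write $c_t=c_0+t\delta c$ and $g(t)=\OT_{\alpha,\beta}(c_t)$. I will prove that $(g(t)-g(0))/t\to\int\delta c\,d\pi_0$ as $t\to0^\pm$. Throughout, $X,Y$ are compact, every cost considered is continuous, and $\Pi(\alpha,\beta)$ is weakly compact, so minimizers exist for each $t$.

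\emph{Upper bound.} Test $\pi_0$ as a competitor for $c_t$ to get $g(t)\le\langle c_0,\pi_0\rangle+t\langle\delta c,\pi_0\rangle=g(0)+t\int\delta c\,d\pi_0$. For $t>0$ this gives $\limsup_{t\downarrow0}(g(t)-g(0))/t\le\int\delta c\,d\pi_0$. For $t<0$ it gives $\liminf_{t\uparrow0}(g(t)-g(0))/t\ge\int\delta c\,d\pi_0$.

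\emph{Lower bound.} Let $\pi_t$ be optimal for $c_t$. Testing $\pi_t$ in the problem for $c_0$ gives $g(0)\le\langle c_0,\pi_t\rangle=g(t)-t\langle\delta c,\pi_t\rangle$, hence $g(t)-g(0)\ge t\int\delta c\,d\pi_t$. The crux is to show $\pi_t\rightharpoonup\pi_0$ weakly as $t\to0$, which is where uniqueness enters.
\begin{itemize}
\item By compactness, every sequence $t_n\to0$ has a subsequence with $\pi_{t_n}\rightharpoonup\bar\pi\in\Pi(\alpha,\beta)$.
\item Since $\|c_{t_n}-c_0\|_\infty\to0$, we have $\langle c_{t_n},\pi_{t_n}\rangle-\langle c_0,\pi_{t_n}\rangle\to0$.
\item Also $|g(t)-g(0)|\le|t|\,\|\delta c\|_\infty$, so $g$ is continuous.
\item Passing to the limit with $c_0$ continuous gives $\langle c_0,\bar\pi\rangle=g(0)$. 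Thus $\bar\pi$ is optimal for $c_0$, and by uniqueness $\bar\pi=\pi_0$.
\end{itemize}
Since the limit is independent of the subsequence, the whole family converges. Continuity of $\delta c$ then gives $\int\delta c\,d\pi_t\to\int\delta c\,d\pi_0$. Dividing the lower bound by $t$, with the inequality flipping for $t<0$, matches the upper-bound limits from both sides. Hence $g'(0)=\int\delta c\,d\pi_0$.

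\emph{Linearity.} The candidate derivative $\delta c\mapsto\int\delta c\,d\pi_0$ is linear and bounded by $\|\delta c\|_\infty\le\|\delta c\|_{C^{0,\kappa}}$. This gives Gâteaux differentiability.

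\emph{Brenier case.} If $\pi_0=(\mathrm{Id},T^{c_0})_\#\alpha$, the change of variables formula for pushforwards gives $\int\delta c\,d\pi_0=\int\delta c(x,T^{c_0}(x))\,d\alpha(x)$.

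The only delicate step is the stability $\pi_t\rightharpoonup\pi_0$. It relies on three ingredients: weak compactness of $\Pi(\alpha,\beta)$ on compact $X\times Y$, uniform convergence of the costs, and uniqueness of the optimal plan. Without uniqueness, one would only obtain one-sided directional derivatives, given by a min or max over the set of optimal plans.
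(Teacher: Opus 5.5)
Your proposal is correct and follows essentially the same route as the paper: you use the two-sided envelope inequalities $t\int\delta c\,d\pi_t\le \OT(c_t)-\OT(c_0)\le t\int\delta c\,d\pi_0$, and then weak compactness of $\Pi(\alpha,\beta)$ together with uniqueness of $\pi_0$ to get $\pi_t\rightharpoonup\pi_0$. You also spell out why every weak limit is optimal for $c_0$, using uniform convergence of $c_t$ and continuity of $t\mapsto\OT(c_t)$, and you note linearity and boundedness of the derivative; the paper leaves both of these implicit.
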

\begin{proof}
Since $\al,\beta$ are fixed we will simply write $\OT$ in place of $\OT_{\al,\beta}$.
    Take the path $c_t\coloneqq c_0+t\delta c$ for $t\in(-\eta,\eta)$ with $\eta>0$ and define 
    $$
    f : t\mapsto \OT(c_t) = \inf_{\pi\in\Pi(\alpha,\beta)}\int c_t(x,y)\;d\pi(x,y)
    $$
    Let $\pi_t$ be an optimal plan for $c_t$ (which exists since $c_t$ is still Hölder continuous). By optimality of $\pi_t$ we have
    $$
    f(t) = \int c_td\pi_t \le \int c_td\pi_0 = f(0) + t\int \delta cd\pi_0
    $$
    Hence, for $t>0$, 
    $$
    \frac{f(t)-f(0)}{t}\le \int \delta c d\pi_0
    $$
    Conversely, since $\pi_0$ is optimal for $c_0$,
    $$
    f(0) = \int c_0 d\pi_0 \le \int c_0d\pi_t = f(t) - t\int \delta cd\pi_t
    $$
    Therefore, for $t>0$, we have $\displaystyle \frac{f(t)-f(0)}{t}\ge \int \delta c d\pi_t$ and combining both inequalities leads to
    $$
    \int \delta cd\pi_t \le \frac{f(t)-f(0)}{t} \le \int \delta c d\pi_0
    $$
    Since the set $\Pi(\alpha,\beta)$ is weakly compact (see for instance \cite[lemma 4.4]{villani2008optimal}) we can find a converging subsequence $\pi_{t_n}\rightharpoonup \tilde \pi$ as $t_n\downarrow0$. But by uniqueness of $\pi_0$ we know that $\tilde\pi=\pi_0$. Hence $\pi_t\rightharpoonup\pi_0$ and 
    $$
    \int \delta cd\pi_t\to\int\delta cd\pi_0 \quad {\rm as} \quad t\downarrow 0
    $$
    One can check that this is also valid for negative $t$ converging to 0 with the converse inequalities. Therefore, 
    $$
    f'(0) = \int \delta cd\pi_0
    $$
    Using $\pi_0=(\Id,T^{c_0})\#\alpha$ concludes the proof.
\end{proof}
\begin{rem}
    To make the previous result more general one can work on Polish spaces with lower semicontinuous costs that are bounded from below by separate $L^1$ function $c(x,y)\ge a(x) + b(y)$. These assumptions ensure $\pi_t$ to exists and the weak compactness of $\Pi(\alpha,\beta)$ still holds on Polish spaces (see \cite[Theorem 4.1]{villani2008optimal}).
\end{rem}

\subsection{Regularity of Kantorovich potentials}

In this section, we show that $c\mapsto \phi_c$ is $ C^1$ locally.

\begin{prop}[Regularity of Kantorovich potentials]\label{prop:regularity_potential}

     Assume   \ref{ass:domain} and \ref{ass:densities} with $k\in\NN$, $k\ge 1$,  and $\kappa\in(0,1)$, and let $c_0\in  C^{k+2,\kappa}(X\times Y)$ satisfy Assumption \ref{ass:twist}. Let $T_0$ be the optimal map associated with $c_0$. Assume that $\phi_0\coloneqq \arg\max F(c_0,\phi)$ belongs to $ C^{k+2,\kappa}(X)/\RR$ and that there exist $\nu,\mu>0$ such that 
    $$
    \nu I_d\preceq \partial_{xx}c_0(x,T_0(x)) + \nabla^2\phi_0(x)\preceq \mu I_d.
    $$
    There exists $\Vv\subset  C^{k+2,\kappa}(X\times Y)$ a neighborhood of $c_0$ and $\Ww\subset C^{k+1,\kappa}(X)/\RR$ a neighborhood of $\phi_0$ such that for all $c\in\Vv$ there exists a $\phi_c\in\Ww$ such that $\partial_\phi F(c,\phi_c)=0$ and $$c\mapsto \phi_c$$ belongs to $ C^1(\Vv,\Ww)$.
\end{prop}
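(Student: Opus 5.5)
The plan is to apply the implicit function theorem in Banach spaces to a nonlinear first-order optimality map. The natural candidate is the Monge--Ampère-type equation that encodes $\partial_\phi F(c,\phi)=0$. For $\phi$ close to $\phi_0$ in $C^{k+2,\kappa}$ and $c$ close to $c_0$, the assumption $\nu I_d\preceq \partial_{xx}c_0+\nabla^2\phi_0$ together with the twist condition lets us define the map $T_{c,\phi}(x)$ implicitly by
\[
\partial_x c(x,T_{c,\phi}(x))+\nabla\phi(x)=0 .
\]
This uses the finite-dimensional implicit function theorem in $y$, since $\partial_{xy}c$ is invertible. The result is a $C^{k+1,\kappa}$ map depending smoothly on $(c,\nabla\phi)$. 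The stationarity condition $\partial_\phi F(c,\phi)=0$ says exactly $(T_{c,\phi})_\#\alpha=\beta$. By the change of variables formula, this is equivalent to
\[
G(c,\phi)(x)\;:=\;\beta(T_{c,\phi}(x))\,\bigl|\det DT_{c,\phi}(x)\bigr|-\alpha(x)=0,
\]
where $DT_{c,\phi}=-(\partial_{xy}c)^{-1}(\partial_{xx}c+\nabla^2\phi)$. This equation must be supplemented by the second boundary condition $T_{c,\phi}(X)=Y$.

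I would set up $G$ as a map from $\Vv\times\Ww$ into $C^{k-1,\kappa}(X)$ with zero $\alpha$-compatible mean, working with a loss of derivatives. The composition $\beta\circ T$ is only $C^{k,\kappa}$. Since $\nabla^2\phi$ appears, the map must be posed on $\phi\in C^{k+1,\kappa}$ and land in $C^{k-1,\kappa}$. This is also why the target space in the statement is $C^{k+1,\kappa}(X)/\RR$ while the cost lives in $C^{k+2,\kappa}$. The extra derivative on $c$ makes $(c,\phi)\mapsto G$ continuously Fréchet differentiable, which avoids the usual non-differentiability of composition in Hölder spaces.

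Next I would compute $\partial_\phi G(c_0,\phi_0)[h]$. Linearizing $\det DT$ and $\beta\circ T$ gives, after using the Jacobian equation, a divergence-form operator
\[
L h=-\mathrm{div}\!\left(\alpha\,(\partial_{xx}c_0+\nabla^2\phi_0)^{-1}\nabla h\right),
\]
up to a sign and a positive factor. The boundary condition linearizes to an oblique/Neumann condition coming from $T(\partial X)=\partial Y$. By the two-sided bound on $\partial_{xx}c_0+\nabla^2\phi_0$ and the lower bound on $\alpha$, $L$ is uniformly elliptic with $C^{k-1,\kappa}$ coefficients. The oblique derivative condition is strictly oblique thanks to the uniform convexity of $X$ and $Y$ (Assumption~\ref{ass:domain}). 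Schauder theory for oblique derivative problems then shows that $L$ is an isomorphism from $C^{k+1,\kappa}(X)/\RR$ onto the zero-mean subspace of $C^{k-1,\kappa}$. The kernel consists of constants, and the range is fixed by the mass compatibility condition. The implicit function theorem then yields $\Vv,\Ww$ and a $C^1$ map $c\mapsto\phi_c$ with $G(c,\phi_c)=0$, i.e.\ $\partial_\phi F(c,\phi_c)=0$.

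The main obstacle is the boundary condition. A clean way to handle it is to encode $T(X)=Y$ through a defining function $h_Y$ of $Y$, imposing $h_Y(T_{c,\phi}(x))=0$ on $\partial X$. This is the approach used for the second boundary value problem by Urbas and Caffarelli. The full map is then $(c,\phi)\mapsto(G(c,\phi),\,h_Y\circ T_{c,\phi}|_{\partial X})$, and its linearization is $(L, \text{oblique BC})$. I expect the verification of obliqueness to require the most care: one must show $\langle \nabla h_Y(T), (\partial_{xy}c)^{-1}\ldots\nu_X\rangle>0$, which uses the $c$-convexity encoded by $\nu I_d\preceq\cdots$ and the uniform convexity of the domains. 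Also delicate is checking that the compatibility condition matches, so that the range constraint is automatically satisfied by mass balance $\int\alpha=\int\beta=1$.
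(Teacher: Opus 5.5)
Your overall route matches the paper's. You define $T_{c,\phi}$ from the twist relation, encode $T(X)=Y$ through a defining function $\rho_Y$ of $Y$, linearize to a divergence-form operator with a conormal boundary condition, and apply the Banach-space implicit function theorem. However, the functional setup you propose does not meet the hypotheses of that theorem: the linearization has a one-dimensional cokernel that you do not account for.

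With $B=\alpha\,(\partial_{xx}c_0(\cdot,T_0)+\nabla^2\phi_0)^{-1}$, the linearized pair is, up to a nonvanishing boundary factor, $h\mapsto(\mathrm{div}(B\nabla h),\,B\nabla h\cdot n_X)$. It is injective on $C^{k+1,\kappa}(X)/\mathbb{R}$. But by the divergence theorem its range is only the closed hyperplane $\{(f,g):\int_X f\,dx=\int_{\partial X}g\,dS\}$ of $C^{k-1,\kappa}(X)\times C^{k,\kappa}(\partial X)$. Your first formulation, the interior equation alone with a zero-mean codomain, fails the other way. Without a boundary condition $L$ has an infinite-dimensional kernel, and the isomorphism you quote holds only on $\{h: B\nabla h\cdot n_X=0\}$, while the nonlinear boundary term $\rho_Y\circ T_{c,\phi}|_{\partial X}$ is nonzero off the solution set.

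Nor can you restrict the codomain to this hyperplane, or to zero-mean functions. Extending $\beta$ outside $Y$, $\int_X G(c,\phi)\,dx=\int_{T_{c,\phi}(X)}\beta\,dy-1$, which vanishes only once you already know $T_{c,\phi}(X)=Y$, i.e.\ at solutions. Mass balance $\int\alpha=\int\beta$ is a property of solutions. It cannot place the nonlinear map in a fixed subspace before the equation is solved.

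The missing idea is the auxiliary scalar unknown the paper introduces. It solves $\Hh(c,\phi,\lambda)=\bigl(\sigma_0\,\beta\circ T_{c,\phi}\det DT_{c,\phi}-\lambda\alpha,\ \rho_Y\circ T_{c,\phi}|_{\partial X}\bigr)=0$ on $C^{k+2,\kappa}(X\times Y)\times C^{k+1,\kappa}(X)/\mathbb{R}\times\mathbb{R}$. The $(\phi,\lambda)$-derivative $(\dot\phi,\dot\lambda)\mapsto(\mathrm{div}(B\nabla\dot\phi)-\dot\lambda\alpha,\ \text{conormal term})$ is then an isomorphism onto the whole product. For injectivity, integrating gives $\dot\lambda=0$. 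For surjectivity, $\dot\lambda$ is chosen to restore compatibility, and the conormal problem is then solved by the method of continuity with uniform oblique Schauder estimates. Equivalently, this is a Lyapunov--Schmidt reduction with complement spanned by $(\alpha,0)$.

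Your mass-balance intuition enters only afterwards. The boundary equation gives $T_{c,\phi_c}(\partial X)\subset\partial Y$, so by invariance of domain and connectedness $T_{c,\phi_c}$ is a diffeomorphism of $X$ onto $Y$. Integrating the interior equation then forces $\lambda_c=1$ and $(T_{c,\phi_c})_\#\alpha=\beta$. This step, which turns the solved equation into $\partial_\phi F(c,\phi_c)=0$, is also absent from your outline.

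Two smaller corrections:
\begin{itemize}
\item $B$ lies in $C^{k,\kappa}$, not $C^{k-1,\kappa}$. You need that much for $\mathrm{div}(B\nabla\,\cdot)$ to map $C^{k+1,\kappa}$ into $C^{k-1,\kappa}$ with Schauder estimates.
\item Obliqueness is not the delicate point you anticipate. Since $T_0(\partial X)=\partial Y$, we have $DT_0^\top\nabla\rho_Y(T_0)=b_0n_X$ with $b_0\neq0$. So the linearized boundary operator is $(b_0/\alpha)\,B\nabla h\cdot n_X$, which is oblique simply because $n_X^\top B\,n_X>0$.
\end{itemize}
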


\begin{rem}\label{rem:regularity_loss}
The loss of one derivative in the target space for $c\mapsto\phi_c$ is deliberate. With $c_0,\phi_0$ and the densities only at the regularity level stated above, the coefficient
\[
B=\alpha\bigl(\partial_{xx}c_0(x,T_0(x))+\nabla^2\phi_0(x)\bigr)^{-1}
\]
belongs to $C^{k,\kappa}$. Thus the classical divergence-form conormal operator
\[
u\mapsto \bigl(\mathrm{div}(B\nabla u),\,B\nabla u\cdot n_X\bigr)
\]
acts naturally from $C^{k+1,\kappa}(\bar X)/\RR$ to
$C^{k-1,\kappa}(\bar X)\times C^{k,\kappa}(\partial X)$. If one instead insists on a $C^{k+2,\kappa}$-valued potential branch and a $C^{k,\kappa}$ interior residual, then one needs $B\in C^{k+1,\kappa}$, which would lead to an extra derivative on the densities, the base cost, and the base potential.
\end{rem}

The proof follows the classical PDE viewpoint on the second boundary value problem for Monge--Amp{\`e}re type equations \cite{urbas1997second,trudinger2009second}.  In perturbative regimes, Chen and Figalli \cite{chen2016stability} show that smoothness of optimal maps is stable under small perturbations of the cost and of the densities. These results provide the classical regularity background for the base solution and for nearby smooth solutions. What is needed here is slightly different: differentiability of the potential with respect to the cost parameter. For this we use a \textit{Banach-space implicit function theorem} applied directly to the nonlinear transport equation with its natural boundary condition. Our proof is close in spirit to the recent work of Gonz{\'a}lez-Sanz and Sheng \cite{gonzalez_sanz_sheng_2024_linearization}, who prove differentiability of optimal maps with respect to perturbations of the marginals by linearizing the Monge--Amp{\`e}re equation with natural oblique boundary conditions. Our argument uses a similar linearized-oblique-operator mechanism, but the parameter being varied is the cost rather than the target density.

We begin with a lemma that constructs a diffeomorphism $T_{c,\phi}$ given functions $c$ and $\phi$. We will later establish that this map coincides with an optimal transport map when $\phi = \phi_c$ is a Kantorovich potential.
\begin{lem}\label{lem:Tmap}
Let $c_0\in  C^{k+2,\kappa}(X\times Y)$ and assume that there exists a Kantorovich potential $\phi_0$ that is in $C^{k+2,\kappa}(X)$ where $k\ge 1$ and $\kappa\in(0,1)$. Assume that $c_0$ satisfies Assumption \ref{ass:twist}.
Then there exists a neighborhood $\Vv\times\Ww\subset  C^{k+2,\kappa}(X \times Y)\times  C^{k+1,\kappa}(X)$ of $(c_0,\phi_0)$ such that for every $(c,\phi)\in \Vv\times \Ww$, there exists a map  $T_{c,\phi}$ that is a $ C^{k,\kappa}-$diffeomorphism between $X$ and its image in $\RR^d$, such that for all $x\in\bar X$,
$$
\partial_x c(x,T_{c,\phi}(x)) + \nabla \phi(x) = 0.
$$
\end{lem}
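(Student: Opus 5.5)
We will construct $T_{c,\phi}$ pointwise by solving the equation $G(c,\phi,x,y)\coloneqq\partial_x c(x,y)+\nabla\phi(x)=0$ for $y$. The twist condition gives invertibility of $\partial_{xy}c$, and the implicit function theorem does the rest.

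\textbf{Base point.} At $(c_0,\phi_0)$ the optimal map $T_0$ satisfies $\partial_x c_0(x,T_0(x))+\nabla\phi_0(x)=0$ for all $x\in\bar X$. This is the classical first-order condition on the graph of the optimal plan, valid since $\phi_0\in C^{k+2,\kappa}$ is differentiable. Injectivity of $y\mapsto\partial_xc_0(x,y)$ (Assumption~\ref{ass:twist}) makes $T_0(x)$ the unique solution in $Y$.

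\textbf{Local solvability.} Since $\partial_y G=\partial_{xy}c_0(x,T_0(x))$ is invertible, the finite-dimensional implicit function theorem solves $G(c,\phi,x,y)=0$ near each point $(x,T_0(x))$. To obtain uniformity over $\bar X$ and a neighborhood in the Banach parameters, I would instead apply the Banach-space implicit function theorem to
\[
\Phi:(c,\phi,T)\mapsto \partial_x c(\cdot,T(\cdot))+\nabla\phi
\]
as a map into $C^{k,\kappa}(\bar X;\RR^d)$, with $T$ ranging over a $C^{k,\kappa}$-neighborhood of $T_0$.

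\textbf{Differentiability of $\Phi$ (main obstacle).} One must check that $\Phi$ is $C^1$ and that its partial derivative in $T$ is
\[
h\mapsto \partial_{xy}c(\cdot,T(\cdot))\,h .
\]
Composition (Nemytskii) operators on Hölder spaces lose regularity, so this is where care is needed. Because $c\in C^{k+2,\kappa}$, we have $\partial_xc\in C^{k+1,\kappa}$, and composing with $T\in C^{k,\kappa}$ is $C^1$ into $C^{k,\kappa}$, since one spare derivative is available. The $\nabla\phi$ term is linear and continuous from $C^{k+1,\kappa}$ to $C^{k,\kappa}$, which is why $\Ww$ is taken in $C^{k+1,\kappa}$. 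The $T$-derivative is multiplication by a matrix field in $C^{k,\kappa}$ whose inverse is again $C^{k,\kappa}$, hence an isomorphism of $C^{k,\kappa}(\bar X;\RR^d)$. The implicit function theorem then yields neighborhoods $\Vv\times\Ww$ and a $C^1$ map $(c,\phi)\mapsto T_{c,\phi}\in C^{k,\kappa}$ satisfying the equation on $\bar X$. Defining $c$ on a slightly larger open set containing $X\times Y$, via a Whitney/Hölder extension, handles values of $T$ slightly outside $Y$.

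\textbf{Diffeomorphism property.} First, $T_0$ is a $C^{k,\kappa}$ diffeomorphism onto its image. Differentiating the base equation gives
\[
DT_0=-(\partial_{xy}c_0)^{-1}\bigl(\partial_{xx}c_0+\nabla^2\phi_0\bigr),
\]
which is invertible when the bracket is nondegenerate, as in \eqref{ass:curved_cphi}, the setting in which the lemma is used. $T_0$ is injective because it is the optimal map with inverse given by the twist in the other variable. For $T_{c,\phi}$ close to $T_0$, local invertibility persists, but only if $DT_{c,\phi}$ is $C^0$-close to $DT_0$. Here is the subtle point: $\phi$ is only $C^{k+1,\kappa}$-close, so for $k\ge1$ we get $T\in C^{k,\kappa}\subset C^1$ and $C^1$-closeness holds. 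Global injectivity on the compact $\bar X$ then follows from the standard argument that a $C^1$-small perturbation of an injective immersion of a compact set is injective, using a uniform bi-Lipschitz bound on $T_0$. Finally, the inverse function theorem in Hölder classes shows the inverse is $C^{k,\kappa}$.
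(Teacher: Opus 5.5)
Your route is the paper's. You apply the Banach-space implicit function theorem to $\Phi(c,\phi,T)=\partial_xc(\cdot,T(\cdot))+\nabla\phi$ with values in $C^{k,\kappa}(\bar X;\RR^d)$. Here $\partial_T\Phi$ is multiplication by the invertible field $\partial_{xy}c_0(\cdot,T_0(\cdot))$, and $c$ is extended beyond $X\times Y$. On the diffeomorphism part you are more careful than the paper, which simply asserts that $T_0$ is a diffeomorphism. As you note, invertibility of $DT_0=-(\partial_{xy}c_0)^{-1}(\partial_{xx}c_0+\nabla^2\phi_0)$ needs the nondegeneracy \eqref{ass:curved_cphi}. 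That condition is not among the lemma's hypotheses, but it is available where the lemma is used.

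The step you single out as the main obstacle is, however, resolved incorrectly, and the paper passes over it in the same way. One spare derivative suffices for composition operators on $C^k$, but not on $C^{k,\kappa}$: there $T\mapsto g\circ T$ with $g\in C^{k+1,\kappa}$ need not even be Fr\'echet differentiable.

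Here is a counterexample. Take $d=1$, $g^{(k+1)}(t)=|t|^\kappa$ near $0$, $T=\mathrm{id}$, and $h\equiv\delta$. The only possible Fr\'echet derivative is $h\mapsto g'(T)h$, since convergence in $C^{k,\kappa}$ implies pointwise convergence. The $k$-th derivative of the remainder $g(\cdot+\delta)-g-\delta g'$ differs by exactly $\delta^{1+\kappa}$ between the points $-\delta$ and $0$. Hence the remainder has $C^{k,\kappa}$-norm at least $\delta=\|h\|_{C^{k,\kappa}}$, which is not $o(\|h\|)$. Costs such as $c(x,y)=H(x+y)$ with $H^{(k+2)}$ merely $\kappa$-H\"older lie in $C^{k+2,\kappa}$ and reproduce this for $T\mapsto\partial_xc(\cdot,T(\cdot))$. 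So $\Phi$ is not $C^1$ on $C^{k+2,\kappa}\times C^{k+1,\kappa}\times C^{k,\kappa}$ near $(c_0,\phi_0,T_0)$, and the implicit function theorem cannot be invoked verbatim.

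Since the lemma only asks for existence and regularity, the repair is the pointwise route you mention first, made uniform by a contraction.
\begin{itemize}
\item With $M(x)=\partial_{xy}c_0(x,T_0(x))$, the map $y\mapsto y-M(x)^{-1}\bigl(\partial_xc(x,y)+\nabla\phi(x)\bigr)$ is a $\tfrac12$-contraction of $\bar B_r(T_0(x))$ into itself for every $x\in\bar X$. This holds once $r$, $\|c-c_0\|_{C^2}$ and $\|\phi-\phi_0\|_{C^1}$ are small; only uniform continuity of $\partial_{xy}c_0$ is used. Its fixed point defines $T_{c,\phi}$.
\item The finite-dimensional implicit function theorem, applied to the $C^{k,\kappa}$ map $(x,y)\mapsto\partial_xc(x,y)+\nabla\phi(x)$ with invertible $y$-derivative, gives $T_{c,\phi}\in C^{k,\kappa}$.
\item Since $k\ge1$, $\phi$ is $C^2$-close to $\phi_0$, so $DT_{c,\phi}=-(\partial_{xy}c)^{-1}(\partial_{xx}c+\nabla^2\phi)$ is uniformly close to $DT_0$. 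Your injectivity argument, using convexity of $X$ for the mean value inequality, then concludes.
\end{itemize}
If you also want $C^1$ dependence on $(c,\phi)$, which the paper uses later, you must accept a small loss. For example, let $T$ and the target space be $C^{k,\kappa'}$ with $\kappa'<\kappa$.
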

\begin{proof}
Define $\mathcal{F}: C^{k+2,\kappa}(X\times Y) \times C^{k+1,\kappa}(X) \times C^{k,\kappa}(X;\RR^d) \to C^{k,\kappa}(X;\RR^d)$ by
$$
\mathcal{F}(c,\phi,T)(x) = \partial_x c(x,T(x)) + \nabla \phi(x),
$$
where we extend $c$ outside the domain $X\times Y$ with the same Hölder regularity (this is possible by \cite[Lemma 6.38]{gilbarg_trudinger_2001}), since $T$ is not guaranteed to have image contained in $Y$ --- in the end, the extension does not matter as we will show in the next proposition that in the special case of $T_{c,\phi_c}$ where $\phi_c$ is a Kantorovich potential, it is a diffeomorphism onto $Y$.

We know that $\mathcal{F}(c_0,\phi_0,T_0) = 0$. Since $\partial_T \mathcal{F}(c_0,\phi_0,T_0)[\dot T] = \partial_{xy} c_0(x,T_0(x))\dot T$, by Assumption \ref{ass:twist}, $\partial_T \mathcal{F}(c_0,\phi_0,T_0)$ is an isomorphism on $C^{k,\kappa}(X;\RR^d)$. The result follows by the implicit function theorem applied to $\mathcal{F}$. Note also that since $T_0$ is a diffeomorphism, for all $(c,\phi)$ in  a small neighbourhood around $(c_0,\phi_0)$, $T_{c,\phi}$ is also a diffeomorphism  from $X$ onto its image.
\end{proof}

\begin{proof}[Proof of Proposition \ref{prop:regularity_potential}]
    Let $T_{c,\phi}$ be the map defined in Lemma \ref{lem:Tmap}.  Let $\rho_Y:\RR^d\to\RR$ be a $C^{k+2,\kappa}$ convex function defining the set $Y$ by $\mathrm{int}(Y) = \ens{\rho_Y<0 }$ and $\rho_Y(\partial Y) = 0$. Define
    $$
    \Hh:  C^{k+2,\kappa}(X\times Y) \times C^{k+1,\kappa}(X)/\RR \times  \RR \to C^{k-1,\kappa}(X)\times C^{k,\kappa}(\partial X),
    $$
    by
    $$
    \Hh(c,\phi,\lambda) = \left(\sigma_0\beta\circ T_{c,\phi} \det(D T_{c,\phi}) - \lambda \alpha,\; \rho_Y \circ T_{c,\phi}\restriction_{\partial X}\right),
    $$
    where we extend $\beta$ outside $Y$ with the same Hölder regularity. Here, $\sigma_0=\mathrm{sign}\det DT_0$, which is constant on the connected set $X$. After shrinking the neighborhood, $\sigma_0\det DT_{c,\phi}=|\det DT_{c,\phi}|$.
    With this choice of codomain, $\Hh$ is a $C^1$ map: the density component has one fewer derivative because it contains $\det DT_{c,\phi}$.
    Observe that $\Hh(c_0,\phi_{c_0}, 1) = 0$. We will apply the implicit function theorem to obtain a map $c\mapsto (\phi_c, \lambda_c)$, then show that $\lambda_c =1$ and $\phi_c$ coincides with $\argmax_\phi F(c,\phi)$. 
    The use of the defining function $\rho_Y$ will result in an oblique boundary condition, forcing  $T_{c,\phi_c}$ to be a diffeomorphism onto $Y$, this idea goes back to the regularity theory of Urbas and Trudinger--Wang \cite{urbas1997second,trudinger2009second}.

\medskip

\textbf{First variation of $T_{c,\phi}$:}

    Given perturbation $\dot \phi \in \Cc^{k+1,\kappa}(X)$, let $\dot T = \frac{d}  {dt}_{\restriction  t=0}T_{c,\phi+t \dot \phi}$.
    Since $\partial_x c(x,T_{c,\phi + t\dot \phi}(x))  + \nabla (\phi +t\dot \phi)(x) = 0$, we have
    \begin{align*}
        &\partial_{xy} c(x, T_{c,\phi}(x)) \dot T + \nabla \dot \phi(x) = 0\\
        &\partial_{11} c(x, T_{c,\phi}(x)) + \partial_{xy} c(x,T_{c,\phi}(x))  DT_{c,\phi}(x) + \nabla^2  \phi(x) = 0,
    \end{align*}
    where the first equation is obtained by differentiating the stationary equation with respect to $t$
 and the second is by differentiating with respect to $x$. 

 Letting $\Aa_{c,\phi}(x) =\partial_{11} c(x, T_{c,\phi}(x)) + \nabla^2  \phi(x) $, and since this is invertible uniformly on $x$ at $c_0, \phi_0$ , we have
 \begin{equation}\label{eq:dotT}
 \dot T_0 = D T_{c_0,\phi_0} \Aa_{c_0,\phi_0}^{-1} \nabla \dot \phi(x).
 \end{equation}
Denote $V_0(x)\eqdef \Aa_{c_0,\phi_0}^{-1} \nabla \dot \phi(x)$.
 
\medskip

\textbf{First variation of $\Hh$:}
We now compute at $(c_0, \phi_0,\lambda)$,
$$
\partial_{\phi,\lambda}\Hh(c,\phi,\lambda)[\dot \phi,\dot \lambda] = \pa{  \sigma_0\partial_\phi( \beta\circ T_{c,\phi} \det(D T_{c,\phi}) )[\dot \phi] -  \dot \lambda \alpha  ,\; \partial_\phi(\rho_Y \circ T_{c,\phi}\restriction_{\partial X})[\dot \phi] }.
$$

Note that at $c_0, \phi_0$, $T_{c_0,\phi_0} = T_0$ is an optimal transport map and satisfies $T_0(\partial X) = \partial Y$. So, taking any parameterization  $\gamma_X$ of $\partial X$, for all parameter values $s$, $\rho_Y(T_0(\gamma_X(s))) = 0$. Differentiating in $s$ gives
\begin{equation}\label{eq:normal}
    DT_0^\top \nabla \rho_Y(T_0(\gamma_X(s))) \cdot \gamma_X'(s) = 0.
    \end{equation}
So, for all $x\in \partial X$, $DT_0^\top \nabla \rho_Y(T_0(x)) = b_0 n_X(x)$ where $n_X$ is the outward normal to the boundary of $X$ and  $b_0\neq 0$.

 It follows from \eqref{eq:dotT} and \eqref{eq:normal} that
 $$
 \partial_\phi(\rho_Y \circ T_{c,\phi}\restriction_{\partial X})[\dot \phi]  = \dot T^\top \nabla \rho_Y(T_{c,\phi}) = b_0 (\Aa_{c_0,\phi_0}^{-1} \nabla \dot \phi(x))^\top n_x.
 $$

 For the first part of the variation,
 \begin{align*}
     \partial_\phi( \beta\circ T_{c_0,\phi_0} \det(D T_{c_0,\phi_0}) )[\dot \phi] &= \frac{d}{dt}_{\restriction t=0} ( \beta(T_0+ t\dot T) \det(D (T_0+ t\dot T)) )\\
     &= \det(D (T_0))\nabla \beta(T_0) \cdot \dot T + \beta(T_0)\frac{d}{dt}_{\restriction t=0} \det(D (T_0+ t\dot T)) )\\
     &=\det(D (T_0))\nabla \beta(T_0) \cdot \dot T + \beta(T_0) \det(DT_0) \tr(DT_0^{-1} D\dot T),
 \end{align*}
 and in the last line, we can further expand by differentiating   \eqref{eq:dotT} to obtain
 $$
 D \dot T = D^2 T V_0 + DT DV_0.
 $$
	 Finally, recalling that $\alpha = \sigma_0 \beta(T_0) \det(DT_0)$ and differentiating this in direction $V_0$, we have
	 $$
	 \nabla \alpha \cdot V_0 = \sigma_0 \nabla \beta(T_0)\cdot DT_0 V_0 \det(DT_0) + \sigma_0\beta(T_0) \det(DT_0) \tr( DT_0^{-1} D^2 T V_0).
 $$
 It follows that $\sigma_0 \partial_\phi( \beta\circ T_{c_0,\phi_0} \det(D T_{c_0,\phi_0}) )[\dot \phi] = \mathrm{div}(\alpha V_0) = \mathrm{div}(\alpha \Aa_{c_0,\phi_0}^{-1} \nabla \dot\phi)$.

 To summarise,
 $$
	 \partial_{\phi,\lambda}\Hh(c_0,\phi_0,\lambda)[\dot \phi,\dot \lambda] = \pa{ \mathrm{div}(\alpha \Aa_{c_0,\phi_0}^{-1} \nabla \dot\phi) -  \dot \lambda \alpha ,\; b_0 n_x^\top \Aa_{c_0,\phi_0}^{-1} \nabla \dot \phi }.
 $$
 
\medskip
 \textbf{Isomorphism of $\partial_{\phi,\lambda}\Hh(c_0,\phi_0,\lambda)$} 
 
	 It remains to check that $\partial_{\phi,\lambda}\Hh(c_0,\phi_0,\lambda)$ is an isomorphism from $C^{k+1,\kappa}(X)/\RR \times  \RR$ to $C^{k-1,\kappa}(X)\times C^{k,\kappa}(\partial X)$. Since multiplication of the two components by the non-vanishing factor $\alpha/b_0$ is an isomorphism on the target space, it is enough to check the following rescaled operator:
	 \begin{equation}\label{eq:linearized_conormal_operator}
	 \Gg(\dot \phi,\dot \lambda) =\pa{\mathrm{div}(\alpha \Aa_{c_0,\phi_0}^{-1} \nabla \dot\phi) -  \dot \lambda  \alpha ,\; \alpha n_x^\top\Aa_{c_0,\phi_0}^{-1} \nabla \dot \phi }.
	 \end{equation}
	 Put
	 \[
	 B(x)\coloneqq \alpha(x)\Aa_{c_0,\phi_0}(x)^{-1}.
	 \]
	 By the lower and upper bounds on $\Aa_{c_0,\phi_0}$ and on $\alpha$, $B$ is uniformly elliptic. Moreover, by \ref{ass:densities} and the assumed regularity of $c_0$ and $\phi_0$, we have $B\in C^{k,\kappa}(\bar X;\RR^{d\times d})$.
	 \begin{itemize}
	     \item Injectivity. Suppose $\Gg(\dot \phi,\dot\lambda) = 0$. Integrating the first component over $X$ and using the vanishing conormal component gives
	     \[
	     0=\int_X \mathrm{div}(B\nabla\dot\phi)\,dx-\dot\lambda\int_X\alpha\,dx
	     =-\dot\lambda\int_X\alpha\,dx,
	     \]
	     hence $\dot\lambda=0$. Multiplying the first equation by $\dot\phi$ and integrating by parts then yields
	     \[
	     0
	     =-\int_X \nabla\dot\phi^\top B\nabla\dot\phi\,dx
	     +\int_{\partial X}\dot\phi\, B\nabla\dot\phi\cdot n_X\,dS
	     =-\int_X \nabla\dot\phi^\top B\nabla\dot\phi\,dx .
	     \]
	     Uniform ellipticity gives $\nabla\dot\phi=0$, so $\dot\phi$ is constant and therefore vanishes in the quotient $C^{k+1,\kappa}(X)/\RR$.
	     \item Surjectivity. Given $f\in C^{k-1,\kappa}(X)$ and $g\in C^{k,\kappa}(\partial X)$, choose
	 \[
	 s =\frac{ \int_{\partial X} g\,dS - \int_X f\,dx}{\int_X \alpha\,dx}.
	 \]
	 Then the data $(f+s\alpha,g)$ satisfy the compatibility condition
	 \[
	 \int_X (f+s\alpha)\,dx=\int_{\partial X}g\,dS.
	 \]
	 It remains to solve
	     \begin{equation}\label{eq:conormal_problem}
	     \begin{cases}
	                       \mathrm{div}(B\nabla \xi)= f+s\alpha , & \text{in }X,\\
	       B\nabla \xi \cdot n_X  = g, &\text{on } \partial X.
	             \end{cases}
	          \end{equation}
	 We justify this by the method of continuity. For $t\in[0,1]$, set
	 \[
	 B_t=(1-t)I+tB,\qquad L_tu=\mathrm{div}(B_t\nabla u),\qquad N_tu=B_t\nabla u\cdot n_X .
	 \]
	 The matrices $B_t$ are uniformly elliptic and uniformly bounded in $C^{k,\kappa}$, and the boundary fields $B_tn_X$ are uniformly oblique since
	 \[
	 (B_tn_X)\cdot n_X=n_X^\top B_t n_X\ge \theta>0
	 \]
	 with $\theta$ independent of $t$. The Schauder estimates \cite[Theorem 6.30]{gilbarg_trudinger_2001}, see also \cite{agmon1959estimates}, for uniformly oblique boundary problems therefore give, uniformly in $t$,
	 \begin{equation}\label{eq:uniform_schauder}
	 \|u-\bar u\|_{C^{k+1,\kappa}(\bar X)}
	 \le C\left(
	 \|L_tu\|_{C^{k-1,\kappa}(\bar X)}
	 +
	 \|N_tu\|_{C^{k,\kappa}(\partial X)}
	 \right),
	 \end{equation}
	 where $\bar u=|X|^{-1}\int_Xu\,dx$. Indeed, the usual estimate contains an additional $\|u\|_{C^0}$ term; on the quotient by constants this term is removed by the standard compactness argument,  see for example \cite[Theorem 4.1]{nardi2015schauder} or \cite[Lemma 3]{peetre1961another}, because the homogeneous problem $L_tu=0$, $N_tu=0$ has only constants in its kernel. At $t=0$, solvability is precisely the Neumann problem for the Laplacian, which follows from \cite{nardi2015schauder}. The standard method of continuity now applies: openness follows from the bounded inverse theorem since $t\mapsto(L_t,N_t)$ is continuous in operator norm, and closedness follows from the uniform estimate \eqref{eq:uniform_schauder}. Therefore \eqref{eq:conormal_problem} is solvable at $t=1$, uniquely modulo constants. Hence $\Gg(\xi,s)=(f,g)$.
	 \end{itemize}
 By the implicit function theorem, there exists a $C^1$ map $c\mapsto (\phi_c, \lambda_c)$  from a neighbourhood of $c_0$ to a neighbourhood of $(\phi_0, 1)$ such that 
 $$
 \Hh(c, \phi_c,\lambda_c) = 0.
 $$
 Recall that by Lemma \ref{lem:Tmap}, for all $c$ in a small neighbourhood around $c_0$, $T_{c,\phi}$ is a diffeomorphism onto its image, and by the second part of $\Hh$, we now have $\rho_Y(T_{c,\phi_c}(x))=0$ for all $x\in \partial X$, it thus follows that $T_{c,\phi_c}(\partial X) = \partial Y$. So, by topological invariance of the domains and connectedness of $X$ and $Y$,  $T_{c,\phi_c}$ is a diffeomorphism from $X$ to $Y$. By uniqueness of the IFT map, $\phi_c$ is precisely the maximizer of the semidual functional \eqref{eq:semidual} for a given cost $c$. Finally, since $T_{c,\phi_c}$ must be a pushforward from $\alpha$ to $\beta$, integrating the first component of $\Hh$ reveals that $\lambda_c =1$.
 \end{proof}

\subsection{Second order variation}\label{sec:proof_curvature}
\begin{proof}[Proof of Theorem \ref{thm:curvature}]
Recall from Proposition \ref{prop:first_variation_ot} that
$$
\partial_c \OT(c_0)[h] = \int h(x,T_0(x)) d\alpha(x).
$$
By Lemma \ref{lem:Tmap} and Proposition \ref{prop:regularity_potential}, $c\mapsto T_c$ is $C^1$ and it follows that $\OT$ is twice differentiable. So, $\partial_{cc}\OT(c_0)$ exists.
Let $S_c = T_c^{-1}$ and write $S_0 = S_{c_0}$.
To work out the variational formula, 
\begin{align*}
    \partial_{cc}\OT(c_0)[h,h] = \int \partial_x h(S_0(y),y) \dot S(y) d\beta(y) 
\end{align*}
where $\dot S = \frac{d}{dt}_{\restriction t=0} S_{c_0+th}$. By differentiating with respect to $t$, $\partial_x c(S_{c+th}(y),y) + \nabla \phi_{c+th}(S_{c+th}(y)) = 0$, we obtain
$$
\dot S(y) = -\Aa_{c_0,\phi_0}^{-1}(\nabla \dot \phi(S_0(y)) + \partial_x h(S_0(y),y)),
$$
where $\dot \phi = \frac{d}{dt}_{\restriction t=0} \phi_{c_0+th}$. Plugging this into the expression for $\partial_{cc}\OT$, we obtain
\begin{align*}
    \partial_{cc}\OT(c_0)[h,h] &= -\int \partial_x h(S_0(y),y)  \Aa_{c_0,\phi_0}^{-1}(\nabla \dot \phi(S_0(y)) + \partial_x h(S_0(y),y)) d\beta(y) \\
    &=-\int \partial_x h(x,T_0(x))  \Aa_{c_0,\phi_0}^{-1}(\nabla \dot \phi(x) + \partial_x h(x,T_0(x))) d\alpha(x)
\end{align*}

Now, recall the IFT function $\Hh$ from the proof of Proposition \ref{prop:regularity_potential} that in a neighbourhood around $c_0$,
$$
\Hh(c, \phi^c, 1) = 0.
$$
Differentiating this with respect to $c$ in direction $h$ at $c_0$ yields
$$
\partial_c \Hh(c_0,\phi_0,1)[h] + \partial_\phi \Hh(c_0,\phi_0,1)[\dot \phi] = 0.
$$
By following the argument in the proof of Proposition \ref{prop:regularity_potential}, this implies that
\begin{equation}\label{eq:linearized_pde}
    \begin{cases}
    \mathrm{div}(\alpha \Aa_{c_0,\phi_0}^{-1} (\partial_x h(x,T_0(x)) + \nabla \dot \phi(x))) = 0 &\text{in }X,\\
    n_x \cdot \Aa_{c_0,\phi_0}^{-1} (\partial_x h(x,T_0(x)) + \nabla \dot \phi(x)) = 0 &\text{on }\partial X.
    \end{cases}
\end{equation}
Thus, multiplying the interior PDE equation by $\dot \phi$ and integrating on $X$ yields
$$
\partial_{cc}\OT(c_0)[h,h] =  -\int \norm{\nabla \dot \phi(x) + \partial_x h(x,T_0(x))}_{\Aa_{c_0,\phi_0}^{-1}}^2 d\alpha(x).$$
Finally, to see the equivalence to the variational form, the Euler Lagrange equation for
$$
\inf_{\psi\in H^1/\RR} \int \norm{\nabla  \psi(x) + \partial_x h(x,T_0(x))}_{\Aa_{c_0,\phi_0}^{-1}}^2 d\alpha(x)
$$
is, for all $\xi \in H^1(X)/\RR$,
$$
\int  \nabla \xi \cdot \pa{ \Aa_{c_0,\phi_0}^{-1}(\nabla  \psi(x) + \partial_x h(x,T_0(x)))} d\alpha(x) = 0
$$
and we know  from \eqref{eq:linearized_pde},  that it has weak solution $\dot \phi$.
\end{proof}

\section{Well-posedness of iOT solutions for the bilinear cost}\label{sec:uniqueness}

We now address the inverse optimal transport problem of recovering a bilinear cost.
For a given $A$ and the bilinear cost $c_A(x,y) = -x^\top A y$, let
\[
\pi_0(A)= 
\mathop{\argmin}_{\pi\in\Pi(\alpha,\beta)} 
\int c_A\,d\pi.
\]
Given $\hat\pi\in \pi_0(\hat A)$,  the admissible set of cost matrices is defined as
\begin{equation}
    \Ss_0(\hat \pi)
    \eqdef
    \{A : \hat \pi \in \pi_0(A) \}.
    \tag{$\Ss_0$}
\end{equation}
Note that since scaling does not change optimal transport plans, $\{\lambda \hat A:\lambda>0\}\subset\Ss_0(\hat\pi).
$
In this section, we show that if the induced transport map has sufficient curvature, then
\begin{enumerate}
    \item the bilinear cost is recoverable up to a constant, i.e. $\Ss_0(\hat\pi)
=
\{\lambda \hat A:\lambda>0\}$. See Theorem \ref{thm:uniqueness}.
\item the set $\Ss_0(\hat \pi)$ is stable. See Theorem  \ref{thm:local_curvature_L0} and Corollary \ref{cor:stabilty_S0}.
\end{enumerate}
In general, $\pi_0(A)$ is a set, but in Section \ref{sec:brenier}, we will recall conditions under which the transport plan is unique and hence $\pi_0(A)$ is a singleton. Our discussion on the uniqueness of the iOT problem will be specialized to the setting where the forward problem has unique plans.

\subsection{Uniqueness}

Identifiability of the cost matrix $\hat A$ is delicate: in the setting when $\al,\beta$ are finite discrete measures, identifiability fails generically and is unstable under perturbations due to the polyhedral nature of \eqref{eq:OT}. For smooth continuous densities, e.g. when $\alpha,\beta$ are Gaussian, there are situations where $\Ss_0(\hat\pi)$ is  a $d$-dimensional cone (see Section \ref{sec:Gaussian}).
The main result of this section is Theorem \ref{thm:uniqueness}, which shows that in the smooth setting, these degeneracies are rare, in the sense that identifiability up to scale is generic among smooth densities.

\begin{thm}[Identifiability]\label{thm:uniqueness}
 Suppose \ref{ass:domain} holds and  $\al,\beta$ satisfy  \ref{ass:densities} with $k\in\NN_0$. Given an invertible cost matrix $\hat A$ and the corresponding Kantorovich potential $\phi_{\hat A}$, assume as well that the spanning condition holds, ie 
\begin{equation}\label{hessian spanning symmetric matrices}
\Span\{\nabla^2 \phi_{\hat A}(x):x\in\Spt\alpha\}
=
\mathbb{S}_d \eqdef \enscond{B\in\RR^{d\times d}}{B^\top  = B}.
\tag{U}
\end{equation}

Then $\Ss_0(\hat\pi)=\{\lambda \hat A:\lambda>0\}.
$
Moreover, for every $\delta>0$ there exists a probability measure $\beta_\delta\in \Pp(Y)$ with Wasserstein-2 error
$\Ww_2(\beta,\beta_\delta)\lesssim \delta$
such that \eqref{hessian spanning symmetric matrices} holds for the transport between $\alpha$ and $\beta_\delta$ with cost $c_{\hat A}(x,y)  = -x^\top \hat A y$.
\end{thm}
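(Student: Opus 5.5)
Both claims rest on the first-order optimality relation for the bilinear cost. For $c_A(x,y)=-x^\top Ay$ with $A$ invertible, the optimal plan is $\hat\pi=(\Id,T)\#\alpha$, and the potential satisfies $\partial_x c_A(x,T(x))+\nabla\phi_A(x)=0$, i.e. $A\,T(x)=\nabla\phi_A(x)$. The transport map is unique by the twist condition, and the potential is convex in the Brenier sense after the change of variables $y\mapsto Ay$.

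\textbf{Identifiability.} Let $A\in\Ss_0(\hat\pi)$. Then the same map $T=\hat T$ is optimal for $c_A$, so there is a convex potential $\phi_A$ with $\nabla\phi_A=A\hat T$ $\alpha$-a.e. on $\Spt\alpha$. We also have $\nabla\phi_{\hat A}=\hat A\hat T$, hence $\hat T=\hat A^{-1}\nabla\phi_{\hat A}$ and
\[
\nabla\phi_A = M\nabla\phi_{\hat A},\qquad M\eqdef A\hat A^{-1}.
\]
By \ref{ass:densities} and Caffarelli regularity, $\phi_{\hat A}$ is $C^2$ on $X$ (at least when $k\ge0$, $\kappa>0$), so $\nabla\phi_A$ is differentiable. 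The Jacobian of a gradient is symmetric, so $M\nabla^2\phi_{\hat A}(x)$ is symmetric for every $x$. Equivalently, $MH=HM^\top$ for every $H$ in the span of the Hessians, which by \eqref{hessian spanning symmetric matrices} is all of $\mathbb S_d$. Testing with $H=E_{ii}$ and $H=E_{ij}+E_{ji}$ gives a linear-algebra lemma: $MH=HM^\top$ for all symmetric $H$ forces $M=\lambda I$.

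It remains to get $\lambda>0$ and the reverse inclusion. The inclusion $\{\lambda\hat A:\lambda>0\}\subset\Ss_0(\hat\pi)$ holds by scaling invariance. For $\lambda\le0$ we argue as follows. The case $\lambda=0$ would make $A=0$, which is excluded, or which we treat separately since every plan is then optimal. If $\lambda<0$, then $\phi_A=\lambda\phi_{\hat A}+\text{const}$ would have to be $c_A$-convex, meaning $x\mapsto -x^\top A\hat T(x)$ is monotone in the wrong direction. Concretely, optimality for $c_A$ requires $\hat T$ to be $A$-cyclically monotone, while $\hat T$ is $\hat A$-cyclically monotone. Unless $\hat\pi$ is degenerate, $\nabla^2\phi_{\hat A}$ is positive on a set of positive measure, and negating it yields a strict improvement by local rearrangement. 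This is essentially the semiconvexity constraint $\nabla^2\phi_A\succeq0$ for bilinear costs. Hence $\lambda>0$.

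\textbf{Genericity.} We perturb the potential rather than the measure. Fix a mollified bump $\eta$ supported in a small ball $B\subset\mathrm{int}\,X$, and choose points $x_1,\dots,x_m$ with $m=d(d+1)/2$ and quadratic bumps $q_j(x)=\tfrac12 (x-x_j)^\top S_j(x-x_j)\eta_j(x)$, where $\{S_j\}$ is a basis of $\mathbb S_d$. Set $\phi_\delta=\phi_{\hat A}+\delta\sum_j q_j$. The Hessian of $\phi_\delta$ at $x_j$ is $\nabla^2\phi_{\hat A}(x_j)+\delta S_j$. We then check that the span condition holds for generic small $\delta$. The determinant of the linear map sending these Hessians, together with Hessians at nearby points, to their coordinates in $\mathbb S_d$ is a nonzero polynomial in $\delta$: its leading coefficient is $\delta^m\det(S_1,\dots,S_m)\neq0$. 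It therefore vanishes for only finitely many $\delta$, and we take $\delta$ small and outside this set.

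For $\delta$ small, $\nabla^2\phi_\delta\succ0$ still holds on $\Spt\alpha$, since $\phi_{\hat A}$ is uniformly convex by Caffarelli. So $\phi_\delta$ is convex, and by Brenier/McCann the map $T_\delta=\hat A^{-1}\nabla\phi_\delta$ is the $c_{\hat A}$-optimal map from $\alpha$ to $\beta_\delta\eqdef T_\delta\#\alpha$, with potential $\phi_\delta$. This gives
\[
\Ww_2(\beta,\beta_\delta)\le\|T_\delta-\hat T\|_{L^2(\alpha)}\le\|\hat A^{-1}\|\,\delta\,\Big\|\sum_j\nabla q_j\Big\|_\infty\lesssim\delta.
\]

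\textbf{Main obstacle.} The delicate step is the regularity needed to differentiate $\nabla\phi_A=M\nabla\phi_{\hat A}$. We only know $\phi_A$ to be convex, so we use that $\nabla\phi_A$ equals a $C^1$ map a.e. and hence is $C^1$ with a symmetric Jacobian. The sign argument excluding $\lambda\le0$ is also delicate. In addition, $\beta_\delta$ must stay supported in $Y$; this holds if the bumps are supported in the interior of $X$, since then $T_\delta=\hat T$ near $\partial X$.
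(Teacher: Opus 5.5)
Your proof is correct and has the same skeleton as the paper's. For identifiability, both proofs derive a commutation relation $MH=HM^\top$ on the spanned Hessians and then add a sign argument. For genericity, both perturb the potential so that the Hessians at $d(d+1)/2$ points have a basis as leading $\delta$-coefficient, and conclude because a nonzero polynomial in $\delta$ has finitely many roots. Several of your choices differ from the paper's, and they are improvements.

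\textbf{The relation $A=\lambda\hat A$.} The paper differentiates $\nabla\Psi_A(A^\top x)=\nabla\Psi_{\hat A}(\hat A^\top x)$ and forms $A^{-1}\hat A$. This tacitly assumes $A$ is invertible and takes $C^2$ regularity of $\Psi_A$ from Caffarelli. You use only that the convex potential of any admissible $A$ satisfies $\nabla\phi_A=M\nabla\phi_{\hat A}$ a.e., and the regularity of $\phi_A$ is then inherited (a Lipschitz function with continuous weak gradient is $C^1$). Your route therefore also rules out singular $A\neq 0$. What you actually prove is $\Ss_0(\hat\pi)=\{\lambda\hat A:\lambda\ge 0\}$. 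Note that $A=0$ is not excluded by the definition of $\Ss_0(\hat\pi)$, since every plan is optimal for the zero cost. So the stated identity holds only after removing $0$; the paper's proof bypasses this case by assuming $A$ invertible.

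\textbf{The sign.} The paper observes that $\lambda<0$ would make $\hat\pi$ both a minimizer and a maximizer of a linear functional on $\Pi(\alpha,\beta)$. Every coupling would then be optimal, contradicting uniqueness. Your version is equally short: $\phi_A$ is a supremum of affine functions, hence convex, while $\nabla^2\phi_A=\lambda\nabla^2\phi_{\hat A}\prec 0$. That one line is the whole argument, and the ``local rearrangement'' discussion can be dropped.

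\textbf{Genericity.} The paper's globally convex ridge perturbation $\sum_k g_k(u_k^\top x)$ keeps convexity for every $\delta>0$ without using uniform convexity of $\phi_{\hat A}$. However, its gradient does not vanish near $\partial X$, so nothing guarantees $\beta_\delta\in\Pp(Y)$ as the statement requires. In addition, the cross terms $g_\ell''(u_\ell^\top\hat A^\top x_k)$ for $\ell\neq k$ must be arranged to vanish. Your disjointly supported bumps in $\mathrm{int}X$ avoid both issues:
\begin{itemize}
\item They give the Hessians $\nabla^2\phi_{\hat A}(x_j)+\delta S_j$ exactly.
\item They keep $\Spt\beta_\delta\subset Y$, because $\hat T(\bar B_j)$ lies at positive distance from $\partial Y$ and $T_\delta=\hat T$ off the bumps.
\end{itemize}
The price is that $\delta$ must be small. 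This is harmless, since for any prescribed $\delta>0$ you may use a smaller admissible value outside the finite exceptional set. The ``Hessians at nearby points'' you mention are not needed: the $m$ Hessians at the $x_j$ already span $\mathbb{S}_d$.
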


In the case where $\al,\beta$ are quadratics, $\phi_{\hat A}$ is a quadratic function and hence, \eqref{hessian spanning symmetric matrices} cannot hold since $\nabla^2 \phi_{\hat A}$ is a constant. However, our result demonstrates that arbitrarily small perturbations of $\beta$ will restore identifiability. See Section  \ref{sec:Gaussian} for further details on the Gaussian setting and numerical illustrations of this result.
The degeneracy issue in the discrete setting is covered in Section \ref{sec:discrete}. The rest of this subsection is dedicated to the proof of Theorem \ref{thm:uniqueness}.

Note that assumptions \ref{ass:domain} and \ref{ass:densities} are enough to guarantee that $\phi_{\hat A}$ is $C^2$.

\subsubsection{Preliminaries on the Brenier map}\label{sec:brenier}

We begin by recalling some well-known results about regularity properties of optimal transport.

In the following theorem, we recall that
for the classical Euclidean setting (i.e.  $A=\Id)$,  Brenier's theorem (see for instance \cite[Theorem 10.28]{villani2008optimal}) guarantees that the optimal transport plan $\pi$ is supported on the graph of $T=\nabla\Psi$ where $\Psi$ is a convex function unique up to an additive constant, under the assumption that  $\alpha$ has a density wrt Lebesgue.  We will make use of an analogous statement for an invertible matrix $A$, which is a restatement of Brenier's theorem by a simple change of variables:
\begin{prop}[Existence]\label{prop:Brenier map}
    Assume that $A$ is invertible and that $\alpha$ has a density wrt Lebesgue. Then there exists $\Psi_A$, a convex function, such that the optimal transport map for the cost $c_A$ is $T_A(x)=\nabla\Psi_A(A^\top x)$. Moreover, $\Psi_A$ is unique up to an additive constant. We will call $T_A$ the Brenier map and $\Psi_A$ a Brenier potential.
\end{prop}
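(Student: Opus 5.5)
The plan is to reduce to the classical Brenier theorem by a linear change of variables on the source space. Set $L(x)\eqdef A^\top x$, which is a linear isomorphism of $\RR^d$ because $A$ is invertible. The key identity is
\[
c_A(x,y) = -x^\top A y = -(A^\top x)^\top y = -\dotp{L(x)}{y}.
\]
So the cost $c_A$ becomes the standard bilinear cost $-\dotp{u}{y}$ in the variables $(u,y)=(L(x),y)$. Define $\alpha' \eqdef L_\#\alpha$. Its density is $\alpha(A^{-\top}u)\,|\det A|^{-1}$, so $\alpha'$ is absolutely continuous with respect to Lebesgue measure.

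First, I would check that $\pi\mapsto (L\times\Id)_\#\pi$ is a bijection from $\Pi(\alpha,\beta)$ to $\Pi(\alpha',\beta)$, with inverse $(L^{-1}\times\Id)_\#$. Under this bijection,
\[
\int c_A\,d\pi=\int -\dotp{u}{y}\,d\big((L\times \Id)_\#\pi\big)(u,y).
\]
Hence optimal plans correspond exactly. Next, $X$ and $Y$ are compact, so all second moments are finite. The cost $-\dotp{u}{y}$ therefore differs from $\tfrac12\norm{u-y}^2$ only by the separable term $\tfrac12\norm{u}^2+\tfrac12\norm{y}^2$, whose integral is the same for every coupling. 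The two problems thus have the same optimal plans. Brenier's theorem \cite[Theorem 10.28]{villani2008optimal}, applied to $(\alpha',\beta)$, then gives a convex $\Psi_A$ such that the unique optimal plan is $(\Id,\nabla\Psi_A)_\#\alpha'$. Pulling back by $L^{-1}\times\Id$ yields the unique optimal plan $(\Id,\nabla\Psi_A\circ L)_\#\alpha$ for $c_A$. This is exactly the graph of $T_A(x)=\nabla\Psi_A(A^\top x)$.

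For uniqueness of $\Psi_A$, suppose $\Psi$ and $\tilde\Psi$ are two convex functions whose gradients induce an optimal map. By uniqueness of the optimal plan, $\nabla\Psi=\nabla\tilde\Psi$ holds $\alpha'$-a.e. Under Assumptions \ref{ass:domain} and \ref{ass:densities}, the support of $\alpha'$ is $L(X)$. This set is convex and connected, and $\alpha'$ has positive density on it. So the gradients agree Lebesgue-a.e. on the open convex set $\mathrm{int}\,L(X)$. Convex functions are locally Lipschitz, so $\Psi-\tilde\Psi$ is locally Lipschitz with zero gradient a.e. on a connected open set, hence constant there. This passes to the closure by continuity.

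I expect this last step to be the only delicate point. Uniqueness "up to an additive constant" genuinely needs connectedness of the support of $\alpha$. Under the bare hypothesis "$\alpha$ has a density" one only gets uniqueness of $T_A$ $\alpha$-a.e., and of $\Psi_A$ up to constants on each connected component of the interior of the support. I would state the result with that precision, and note that in the paper's setting, where $X$ is convex and the density is bounded below, one gets a single constant.
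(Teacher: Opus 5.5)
Your proof is correct and follows the paper's argument: the linear change of variables $x\mapsto A^\top x$, the bijection $\Pi(\alpha,\beta)\to\Pi(A^\top\#\alpha,\beta)$, Brenier's theorem for the Euclidean cost, and pulling back to get $T_A(x)=\nabla\Psi_A(A^\top x)$. Your extra argument for uniqueness of $\Psi_A$ up to a single constant is a real refinement, since the paper only invokes uniqueness of the map via the twist condition; your point that one constant needs connected support, which holds under Assumptions~\ref{ass:domain}--\ref{ass:densities}, is accurate.
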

\begin{proof}
    The idea is to make the change of variable $A^\top\#\alpha$ to reduce the problem to the Euclidean one. Indeed we know that $\pi\to (A^\top,\Id)\#\pi$ is a bijection from $\Pi(\alpha,\beta)$ to $\Pi(A^\top\#\alpha,\beta)$, therefore : 
    $$
    \inf_{\pi\in\Pi(\alpha,\beta)}-\int x^\top Ay\; d\pi(x,y) = \inf_{\pi\in \Pi(A^\top\#\alpha,\beta)}-\int x^\top y\; d\pi(x,y)
    $$
    From Brenier's theorem for the Euclidean cost, since $A^\top\#\alpha$ has a density wrt Lebesgue, there exists a convex function $\Psi_A$ such that 
    $$
    \inf_{\pi\in \Pi(A^\top\#\alpha,\beta)}-\int x^\top y\; d\pi(x,y) = -\int x^\top \nabla\Psi_A(x)\;d(A^\top\#\alpha)(x) = -\int x^\top A\nabla \Psi_A(A^\top x)\;d\alpha(x).
    $$
    Therefore the Brenier map is $T_A(x)=\nabla \Psi_A(A^\top x)$ by identification. Indeed, the Brenier map is still unique with $c_A$ through the TWIST condition. \cite[pg. 234]{villani2008optimal}.
\end{proof}
\begin{rem}
    \label{building an OT via convex function}
    We can also use this change of variable for building an optimal transport: given any convex function $\Psi$, the map $T_A(x)\coloneqq \nabla \Psi(A^\top x)$ is the optimal transport map from  $\alpha$ to $ T_A\#\alpha$ with respect to the cost $c_A$.
    
\end{rem}


\begin{prop}[Regularity]\label{prop:regularity_OT}
    Suppose \ref{ass:domain} hold and  $\al,\beta$ satisfy  \ref{ass:densities} with $k\in\NN_0$.
    Given an invertible matrix $A_0$, the Brenier potential $\Psi_{A_0}$ satisfies the following properties
    \begin{itemize}
        \item[i)] $\Psi_{A_0}\in\Cc^{k+2,\kappa}(A^\top X)$ ,
        \item[ii)] $\nu_{A_0}I_d\preceq \nabla^2\Psi_{A_0}(A_0^\top x)\preceq \mu_{A_0} I_d$ for all $x\in X$ where $\nu_{A_0},\mu_{A_0}>0$ .
    \end{itemize}
    
\end{prop}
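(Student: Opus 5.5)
The plan is to reduce everything to the Euclidean quadratic-cost case through the change of variables of Proposition~\ref{prop:Brenier map}, and then invoke Caffarelli's regularity theory. Set $\tilde\alpha \eqdef A_0^\top\#\alpha$. Since $A_0$ is invertible and linear, $\tilde\alpha$ is supported on $\tilde X \eqdef A_0^\top X$. Its density is
\[
\tilde\alpha(z)=|\det A_0|^{-1}\alpha(A_0^{-\top}z).
\]
This density is again $C^{k,\kappa}$ and bounded below. Moreover, $\tilde X$ is compact, connected and uniformly convex with $C^{k+2,\kappa}$ boundary, because invertible linear maps preserve all of these properties: uniform convexity changes only by constants depending on the singular values of $A_0$. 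Thus the pair $(\tilde\alpha,\beta)$ on $(\tilde X,Y)$ satisfies \ref{ass:domain}--\ref{ass:densities}. By the proof of Proposition~\ref{prop:Brenier map}, $\Psi_{A_0}$ is exactly the Euclidean Brenier potential from $\tilde\alpha$ to $\beta$, so $\nabla\Psi_{A_0}\#\tilde\alpha=\beta$.

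Next I apply Caffarelli's global regularity theorem as recalled in \cite[Theorems 12.50 \& 12.51]{villani2008optimal}. For uniformly convex smooth domains and densities that are $C^{k,\kappa}$ and bounded above and below, it gives $\Psi_{A_0}\in C^{k+2,\kappa}(\tilde X)$ up to the boundary. This is item i). The same theorem also gives that $\Psi_{A_0}$ is uniformly convex on $\overline{\tilde X}$. For $k=0$ this is the $C^{2,\kappa}$ statement, and the higher-$k$ case follows by the standard bootstrap on the Monge--Amp\`ere equation.

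For item ii), I use the Monge--Amp\`ere equation
\[
\det \nabla^2\Psi_{A_0}(z)=\frac{\tilde\alpha(z)}{\beta(\nabla\Psi_{A_0}(z))},
\]
which holds classically once $\Psi_{A_0}\in C^2$. Its right-hand side is bounded in $[m,M]$ with $m,M>0$ by the two-sided density bounds. The upper bound $\mu_{A_0}$ follows from continuity of $\nabla^2\Psi_{A_0}$ on the compact set $\overline{\tilde X}$. For the lower bound, all eigenvalues are nonnegative by convexity and bounded above by $\mu_{A_0}$. So the smallest eigenvalue $\lambda_{\min}$ satisfies
\[
\lambda_{\min}\ge \det\nabla^2\Psi_{A_0}\,\mu_{A_0}^{-(d-1)}\ge m\,\mu_{A_0}^{1-d}\eqdef\nu_{A_0}>0.
\]
Evaluating at $z=A_0^\top x$ with $x\in X$ gives ii).

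The main obstacle is the boundary regularity in the first step. Interior estimates alone do not give bounds valid for all $x\in X$, and the global $C^{2,\kappa}$ estimate genuinely needs uniform convexity of both $\tilde X$ and $Y$ (Caffarelli, Urbas). I would therefore check carefully that uniform convexity, meaning boundary curvature bounded below, survives the linear map $A_0^\top$. It does, with constants depending on $\|A_0\|$ and $\|A_0^{-1}\|$. The rest of the argument is a direct quotation of that theorem.
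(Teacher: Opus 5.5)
Your proposal follows the paper's proof. You reduce to the Euclidean cost via $A_0^\top\#\alpha$, quote Caffarelli's global regularity \cite[Theorem 12.50]{villani2008optimal} for i), and use the Monge--Amp\`ere equation with two-sided density bounds for ii). Your version is slightly more complete: you check that the transformed domain and density still satisfy the assumptions, and you state explicitly that the upper eigenvalue bound (from continuity of $\nabla^2\Psi_{A_0}$ on a compact set) is what turns the determinant bounds into the lower bound $\nu_{A_0}$, a step the paper's ``product of eigenvalues'' remark leaves implicit.
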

\begin{proof}
For i), we said in Proposition \ref{prop:Brenier map} that the map $\Psi_{A_0}$ is a Kantorovich potential between $A_0^\top\#\alpha$ and $\beta$ for the euclidean cost. Under the assumptions \ref{ass:domain} and \ref{ass:densities} we know from \cite[Theorem 12.50]{villani2008optimal} that $\Psi_{A_0}$ is $C^{k+2,\kappa}$ on the compact set $A_0^\top X$.

For ii), the Monge-Ampere equation for $T_A\#\alpha=\beta$ with $T_{A_0}(x) = \nabla\Psi_{A_0}(A_0^\top x)$ leads to 
$$
|\det A_0|\det\nabla^2\Psi_{A_0}(A_0^\top x) = \frac{\alpha(x)}{\beta(\nabla\Psi_{A_0}(A_0^\top x))}
$$
Since the densities are bounded from above and below, we get $C\ge \det\nabla^2\Psi_{A_0}(A_0^\top x)\ge c$ for all $x\in X$ where $C,c>0$.  The result now follows by observing that $ \det\nabla^2\Psi_{A_0}(A_0^\top x) = \prod_{i=1}^d\lambda_i(x)$ where $\lambda_i(x)$ are the eigenvalues of $\nabla^2\Psi_{A_0}(A_0^\top x)$.

\end{proof}
\begin{rem}\label{rem: link Kanto-Brenier}
    One can show that the Kantorovich and the Brenier potentials are linked through the equality $\phi_A(x)=\Psi(A^\top x)$. Therefore, the results of proposition \ref{prop:regularity_OT} are also valid with $\phi_A$, meaning that 
    \begin{itemize}
        \item[i)] $\phi_{A_0}\in\Cc^{k+2,\kappa}(X)$ ,
        \item[ii)] $\nu_{A_0}'I_d\preceq \nabla^2\phi_{A_0}(x)\preceq \mu_{A_0}' I_d$ for all $x\in X$ where $\nu_{A_0}',\mu_{A_0}'>0$ .
    \end{itemize}
\end{rem}

\begin{rem}
    
For bilinear costs, $\partial_{xx} c_A = 0$, so that Proposition \ref{prop:regularity_OT} establishes nondegeneracy of the Kantorovich potentials, and the conditions of Theorem \ref{thm:curvature} hold under \ref{ass:domain} and \ref{ass:densities}.
\end{rem}

%

\subsubsection{Proof of Theorem \ref{thm:uniqueness}} \label{section : identifiability}
 The proof is divided into two results: we first establish a sufficient condition on $\alpha$ and $\beta$ for the uniqueness property. We then show that this sufficient assumption can be verified under arbitrarily small perturbations of continuous densities, thus, in the continuous setting, non-identifiability is an unstable property.

\begin{prop}
Suppose \ref{ass:domain} hold and  $\al,\beta$ satisfy  \ref{ass:densities} with $k\in\NN_0$. Assume moreover that $\hat A$ is invertible. Then, the spanning condition \eqref{hessian spanning symmetric matrices} implies that 
    $
    \Ss_0(\hat \pi) = \{\lambda \hat A,\; \lambda>0\}.
    $
    \label{uniqueness of cost}
\end{prop}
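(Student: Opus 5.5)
The plan is to read off the first-order optimality condition of $\hat\pi$ with respect to a candidate cost $c_A$, and to compare it with the one for $c_{\hat A}$. This produces a linear relation between gradients. The spanning condition \eqref{hessian spanning symmetric matrices} then turns that relation into a commutation identity, which forces $A$ to be proportional to $\hat A$.

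\emph{Step 1 (what is known about $\hat\pi$).} By Proposition~\ref{prop:Brenier map}, $\hat\pi=(\Id,\hat T)\#\alpha$, where $\hat T=T_{\hat A}$. The Kantorovich potential $\phi_{\hat A}$ satisfies $\partial_x c_{\hat A}(x,\hat T(x))=-\nabla\phi_{\hat A}(x)$, that is,
\[
\hat A\,\hat T(x)=\nabla\phi_{\hat A}(x)\quad\text{for all } x\in X .
\]
By Remark~\ref{rem: link Kanto-Brenier}, $\phi_{\hat A}\in C^{2}$ and $\nabla^2\phi_{\hat A}\succeq\nu' I_d$. Hence $\hat T=\hat A^{-1}\nabla\phi_{\hat A}$ is $C^1$.

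\emph{Step 2 (what is known about $A\in\Ss_0(\hat\pi)$).} Let $A\in\Ss_0(\hat\pi)$. Kantorovich duality for the continuous cost $c_A$ on compact sets gives a $c_A$-convex potential
\[
\phi(x)=\sup_{y\in Y}\{\psi(y)+x^\top A y\}.
\]
This $\phi$ is convex and Lipschitz, and $\phi(x)=\psi(\hat T(x))+x^\top A\hat T(x)$ for $\alpha$-a.e.\ $x$. At every interior point where $\phi$ is differentiable (Lebesgue-a.e., hence $\alpha$-a.e.), the supremum is attained at $y=\hat T(x)$. Therefore
\[
\nabla\phi(x)=A\hat T(x)=M\,\nabla\phi_{\hat A}(x),\qquad M\eqdef A\hat A^{-1}.
\]
The right-hand side is a $C^1$ vector field and it equals a.e.\ the distributional gradient of the Lipschitz function $\phi$. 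So $\phi$ is $C^2$ on the interior of $X$, and its Jacobian $D(M\nabla\phi_{\hat A})=M\nabla^2\phi_{\hat A}(x)$ must be symmetric. This gives
\[
M H = H M^\top\quad\text{for all } H\in\{\nabla^2\phi_{\hat A}(x):x\in\Spt\alpha\}.
\]
The identity extends to boundary points of $\Spt\alpha$ by continuity. Since it is linear in $H$, it also extends to the span, which is all of $\mathbb S_d$ by \eqref{hessian spanning symmetric matrices}.

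\emph{Step 3 (linear algebra and the sign).} Taking $H=I_d$ gives $M=M^\top$, so $MH=HM$ for every symmetric $H$. Taking $H=e_ie_j^\top+e_je_i^\top$ and $H=e_ie_i^\top$ forces $M$ to be diagonal with equal diagonal entries. Hence $M=\lambda I_d$ and $A=\lambda\hat A$.

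To fix the sign, note that $\phi=\lambda\phi_{\hat A}+\mathrm{const}$ and $\phi$ is convex. Since $\phi_{\hat A}$ is strictly convex ($\nabla^2\phi_{\hat A}\succeq\nu' I_d$), this rules out $\lambda<0$. The case $\lambda=0$ is the trivial cost $A=0$, for which every coupling is optimal. It has to be excluded by convention: one restricts $\Ss_0$ to nonzero (equivalently invertible, or normalized) matrices, as is implicit in the statement. Conversely, every $\lambda\hat A$ with $\lambda>0$ lies in $\Ss_0(\hat\pi)$ by scale invariance.

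\emph{Main obstacle.} The main delicate point is Step 2. There we only know a priori that $\phi$ is a Lipschitz convex potential, possibly for a non-invertible $A$, and we must justify that the relation $\nabla\phi=A\hat T$ holds pointwise a.e.\ and upgrades to $C^1$ regularity. Only then can the symmetry of the Jacobian (equivalently, that $M\nabla\phi_{\hat A}$ is curl-free) be used. My first attempt is the argument above: Rademacher's theorem for $\phi$, plus the fact that a distribution whose gradient is a continuous field is $C^1$. This avoids any appeal to Caffarelli-type regularity for the unknown cost $A$.
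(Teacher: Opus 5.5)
Your proof is correct, and its skeleton matches the paper's: equality of the transport maps, differentiation, symmetry of a Hessian giving $MH=HM^\top$ on the Hessian family, linear algebra, then the sign. The genuine difference is how you get regularity on the side of the unknown cost.

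\textbf{Regularity for the unknown cost.} The paper assumes without comment that $A$ is invertible. It then invokes Proposition~\ref{prop:regularity_OT} (Caffarelli regularity after the change of variables $A^\top\#\alpha$) to get $\Psi_A\in C^2$, and differentiates $\nabla\Psi_A(A^\top x)=\nabla\Psi_{\hat A}(\hat A^\top x)$ directly. You use only a Lipschitz convex $c_A$-potential, complementary slackness, Rademacher, and the fact that a Lipschitz function whose a.e.\ gradient is a $C^1$ field is $C^2$. This needs no regularity theory for $c_A$ and no invertibility of $A$. So your argument shows that every nonzero element of $\Ss_0(\hat\pi)$ is a positive multiple of $\hat A$, which is exactly the step the paper takes for granted. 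It also makes explicit that $A=0$ must be excluded from the literal statement.

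\textbf{A wording point.} Attainment of the supremum at $\hat T(x)$ holds only for $\alpha$-a.e.\ $x$, not at every point where $\phi$ is differentiable. You should intersect that full-measure set with the differentiability set; the conclusion is unchanged.

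\textbf{The sign.} Your argument compares convexity of $\phi=\lambda\phi_{\hat A}+\mathrm{const}$ with strict convexity of $\phi_{\hat A}$. The paper instead argues that $\hat\pi$ would be a minimizer and a maximizer at once. Both are fine.

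\textbf{The linear algebra.} Testing first with $H=I_d$ to get $M=M^\top$ is more robust than the paper's traceless tests. In $d=2$, the relations $BE_{1,2}=E_{1,2}B^\top$ and $BF_{1,2}=F_{1,2}B^\top$ only give $B=aI_2+bJ$, with $J$ the rotation by $\pi/2$. A matrix of nonzero trace in the span, available under \eqref{hessian spanning symmetric matrices}, is then needed to force $b=0$.
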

\begin{rem}
    Let $\phi_{\hat A}(x)=\Psi_{\hat A}(\hat A^\top x)$ to be a Kantorovich potential for $c_{\hat A}$. Observe that the condition (\ref{hessian spanning symmetric matrices}) is equivalent to 
    $$
    \Span\left\{\nabla^2 \Psi_{\hat A}(\hat A^\top x),\; x\in\Spt\alpha\right\}= \mathbb{S}_d.
    $$
    Indeed, from chain rule we know that  $\nabla^2\phi_{\hat A}(x)= \hat A\nabla^2\Psi_{\hat A}(\hat A^\top x)\hat A^\top$ and since $\hat A$ is invertible, $M\mapsto \hat AM\hat A^\top$ is an isomorphism of $\mathbb{S}_d$. 
\end{rem}
\begin{rem}\label{remark: zero trace symmetric matrices}
    In the proof of Theorem \ref{thm:uniqueness}, one  actually shows a weaker condition that 
    $$
    \Span\left\{\nabla^2\phi_{\hat A}(x),\; x\in\Spt \alpha\right\} \supseteq \mathbb{S}_0^d, \qwhereq \mathbb{S}_0^d\coloneqq\{ B\in\RR^{d\times d},\; B^\top=B \;{\rm and }\; \Tr B=0\}
    $$
    implies $\Ss_0(\hat\pi) = \enscond{\lambda\hat A}{\lambda>0}$.
    However, for simplicity, we work with   $\mathbb{S}_d$ in \eqref{hessian spanning symmetric matrices} and sometimes exploit the fact that $\mathbb{S}_d$ is invariant under the transformation $M\mapsto \hat AM\hat A^\top$.
\end{rem}
\begin{proof}
Given $A\in \Ss_0(\hat\pi)$, since $A$ and $\hat A$ are invertible, proposition \ref{prop:regularity_OT} yields that the Brenier potentials $\Psi_A$ and $\Psi_{\hat A}$ are at least $C^2$. By definition, the optimal transport plans induced by
$c_A$ and $c_{\hat A}$ coincide, and since a Brenier map exists and is unique
under our assumptions, we have
\[
T_A = T_{\hat A}\qquad \alpha\text{-a.e.}
\]
Recalling that $T_A(x)=\nabla\Psi_A(A^\top x)$ and
$T_{\hat A}(x)=\nabla\Psi_{\hat A}(\hat A^\top x)$, this yields
\begin{equation}\label{eq:grad-equality}
\nabla\Psi_A(A^\top x)=\nabla\Psi_{\hat A}(\hat A^\top x)
\qquad \text{for all }x\in\supp(\alpha).
\end{equation}

Since $T_{\hat A}$ is assumed to be $\mathcal C^1$, we may differentiate
\eqref{eq:grad-equality} with respect to $x$, obtaining by the chain rule
\[
\nabla^2\Psi_A(A^\top x)A^\top
=
\nabla^2\Psi_{\hat A}(\hat A^\top x)\hat A^\top,
\qquad x\in\supp(\alpha).
\]
Define
$B \coloneqq A^{-1}\hat A$.
Then after transposing we get
\begin{equation}\label{eq:commutation}
B\,\nabla^2\Psi_{\hat A}(\hat A^\top x)
=
\nabla^2\Psi_{\hat A}(\hat A^\top x)\,B^\top ,
\qquad x\in\supp(\alpha).
\end{equation}

By assumption \eqref{hessian spanning symmetric matrices},
the span of the matrices $\nabla^2\Psi_{\hat A}(\hat A^\top x)$, as $x$ ranges over
$\supp(\alpha)$, contains $\mathbb S_0^d$ of symmetric matrices with zero trace.
Equation \eqref{eq:commutation} therefore implies that
\[
B S = S B^\top \qquad \text{for all } S\in\mathbb S_0^d .
\]
Let $e_i$ be the canonical vector with entry $i$ as 1 and all other entries as 0.
 We have $E_{i,j}\eqdef e_i e_i^\top - e_j e_j^\top \in \mathbb{S}_0^d$ for all $i,j$. It follows that $B E_{i,j} = E_{i,j}B^\top$, which implies that $B$ is diagonal.
    On the other hand, considering $F_{i,j} = e_ie_j^\top + e_j e_i^\top \in\mathbb{S}_0^d$, we have $B F_{i,j} = F_{i,j}B^\top$ implies that $B$ is constant on the diagonal.
    %
Consequently, $A=\lambda^{-1}\hat A$ for some $\lambda \in\mathbb{R}\setminus \{0\}$.

It remains to show that $\lambda>0$. If $\lambda<0$, then
\[
\int x^\top(\lambda\hat A)y\,d\pi
=
\lambda \int x^\top \hat A y\,d\pi,
\]
so minimizing $-\int x^\top(\lambda\hat A)y\,d\pi$ over $\Pi(\alpha,\beta)$
is equivalent to maximizing $-\int x^\top\hat A y\,d\pi$.
Therefore,
\[
\argmin_{\pi\in\Pi(\alpha,\beta)}
\Bigl(-\int x^\top(\lambda\hat A)y\,d\pi\Bigr)
=
\argmax_{\pi\in\Pi(\alpha,\beta)}
\Bigl(-\int x^\top\hat A y\,d\pi\Bigr).
\]
Since $\hat\pi$ is assumed to minimize the latter functional, it must
simultaneously minimize and maximize it, which is only possible if
every coupling in $\Pi(\alpha,\beta)$ is optimal. This contradicts
the existence of a unique Brenier map in the present setting.
Hence $\lambda>0$.
We conclude that
$
\Ss_0(\hat\pi)=\{\lambda\hat A:\ \lambda>0\}$
as claimed.
\end{proof}

In the following, we show that identifiability is a generic property when dealing with continuous densities. In particular, it holds under perturbations of the marginal distributions.
\begin{prop}[Generic identifiability by arbitrarily small perturbations]
\label{prop:generic_identifiability}
Let $\alpha$ be a probability measure on $\mathbb R^d$ admitting a density with respect to Lebesgue measure and such that $\Spt(\alpha)$ contains a nonempty open set $\Uu$. Let $A\in\mathbb R^{d\times d}$ be invertible, and assume that the optimal transport from $\alpha$ to $\beta\coloneqq T_A\#\alpha$ for the cost
$c_A(x,y)=\tfrac12\norm{Ax-y}^2$
is induced by a Brenier map of the form
$T_A(x)=\nabla\Psi_A(A^\top x)$,
where $\Psi_A\in C^2(\mathbb R^d)$ is convex.

Then there exists a convex function $\tilde\Psi\in C^2(\mathbb R^d)$ such that, defining
\[
\tilde T_A(x)\coloneqq \nabla\tilde\Psi(A^\top x),
\qquad
\beta_\delta \coloneqq (T_A+\delta\tilde T_A)\#\alpha,
\]
the following properties hold for almost every $\delta>0$:
\begin{enumerate}
    \item The map $T_A+\delta\tilde T_A$ is the optimal transport from $\alpha$ to $\beta_\delta$ for the cost $c_A$.
    \item The family of Hessians
    \[
    \left\{\nabla^2(\Psi_A+\delta\tilde\Psi)(A^\top x)\;:\;x\in \Spt(\alpha)\right\}
    \]
    spans the space $\mathbb{S}_d$ of symmetric $d\times d$ matrices (identifiability condition).
\end{enumerate}
Moreover, $\tilde T_A$  belongs to $L^p(\alpha)$ for all $p\ge 1$, and the Wasserstein-2 distance between $\beta$ and $\beta_\delta$  satisfies
\[
\Ww_2(\beta,\beta_\delta)\;\lesssim\;\delta\,\|\tilde T_A\|_{L^2(\alpha)}.
\]
\end{prop}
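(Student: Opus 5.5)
The plan is to build $\tilde\Psi$ explicitly as a quadratic-plus-localized perturbation. Each term will contribute one prescribed direction of $\mathbb{S}_d$ to the Hessian family. Genericity in $\delta$ will then come from a polynomial-determinant argument.

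\textbf{Optimality and the Wasserstein bound.} Whatever convex $\tilde\Psi\in C^2$ we choose, $\Psi_A+\delta\tilde\Psi$ is convex for every $\delta>0$. Hence, by Remark~\ref{building an OT via convex function}, the map $T_A+\delta\tilde T_A=\nabla(\Psi_A+\delta\tilde\Psi)(A^\top\cdot)$ is optimal from $\alpha$ to its pushforward $\beta_\delta$ for the bilinear cost, which is equivalent to $c_A$ up to additive invariance. For the distance, the coupling $(T_A,T_A+\delta\tilde T_A)\#\alpha\in\Pi(\beta,\beta_\delta)$ gives
\[
\Ww_2^2(\beta,\beta_\delta)\le\int\norm{\delta\tilde T_A}^2d\alpha=\delta^2\norm{\tilde T_A}^2_{L^2(\alpha)}.
\]
Membership in $L^p(\alpha)$ for all $p$ will be immediate if $\nabla\tilde\Psi$ has at most linear growth, since then $\tilde T_A$ is bounded on the compact $\Spt\alpha$, or more generally controlled by moments. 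I will therefore choose $\tilde\Psi$ to be a quadratic plus a compactly supported smooth function, made convex by adding a large multiple of $\frac12\norm{z}^2$.

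\textbf{Construction.} Let $m=\dim\mathbb{S}_d=d(d+1)/2$ and fix a basis $S_1,\dots,S_m$ of $\mathbb{S}_d$. Since $A^\top$ is invertible, $A^\top\Uu$ is open. Pick $m$ distinct points $z_1,\dots,z_m\in A^\top\Uu$ with disjoint small balls $B_r(z_i)\subset A^\top\Uu$. Let $\chi_i$ be a smooth bump equal to $1$ near $z_i$ and supported in $B_r(z_i)$, and set
\[
\tilde\Psi(z)=\sum_i\chi_i(z)\tfrac12(z-z_i)^\top S_i(z-z_i)+\tfrac{M}{2}\norm{z}^2 ,
\]
with $M$ large enough that $\tilde\Psi$ is convex; this is possible since the bump part has bounded Hessian. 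Then $\nabla^2\tilde\Psi(z_i)=S_i+MI$. Writing $x_i=A^{-\top}z_i\in\Uu\subset\Spt\alpha$, we obtain
\[
H_i(\delta)\eqdef\nabla^2(\Psi_A+\delta\tilde\Psi)(z_i)=\nabla^2\Psi_A(z_i)+\delta(S_i+MI).
\]

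\textbf{Spanning for a.e. $\delta$.} This is the main step. Consider $D(\delta)=\det[\mathrm{vec}(H_1(\delta)),\dots,\mathrm{vec}(H_m(\delta))]$, computed in coordinates of $\mathbb{S}_d$. It is a polynomial in $\delta$ of degree at most $m$, whose $\delta^m$ coefficient is $\det[\mathrm{vec}(S_i+MI)]_i$. The obstacle is that $S_i+MI$ need not be a basis. I would fix this by choosing the $S_i$ from the start so that $\{S_i+MI\}$ is a basis: take any basis $\{R_i\}$ of $\mathbb{S}_d$ with a uniform Hessian bound, fix $M$ accordingly, and set $S_i=R_i-MI$. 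Because $M$ must dominate $\norm{S_i}$, it is cleaner to instead scale the bumps by a small factor $\eta$, i.e.\ use $\eta S_i$ with $S_i=R_i$, so that $M=1$ suffices. Then $\{\eta R_i+I\}$ is a basis for all but finitely many $\eta$, since the corresponding determinant is a nonzero polynomial in $\eta$ (it is nonzero as $\eta\to\infty$ after rescaling). With this choice the leading coefficient of $D$ is nonzero, so $D\not\equiv0$ and has finitely many roots. Hence for all but finitely many $\delta>0$, and in particular almost every $\delta$, the matrices $H_i(\delta)$ span $\mathbb{S}_d$, which gives property 2.
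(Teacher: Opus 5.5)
The optimality step, the coupling bound for $\Ww_2$, and the spanning argument all work. (The spanning argument uses disjoint bumps giving $\nabla^2\tilde\Psi(z_i)=\eta R_i+I$, then a determinant in $\delta$ with nonzero leading coefficient.) The gap is the integrability claim, which does not follow from the stated hypotheses.

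Because of the convexifying term $\tfrac12\|z\|^2$, your perturbation has $\nabla\tilde\Psi(z)=z+O(1)$. So $\tilde T_A(x)=A^\top x+b(x)$ with $b$ bounded, i.e.\ it grows linearly. The proposition assumes only that $\alpha$ has a density on $\mathbb{R}^d$ and that its support contains an open set: no compact support, no moments. Your appeal to ``the compact $\Spt\alpha$, or more generally \dots\ moments'' therefore uses hypotheses you do not have.

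Concretely, take $A=I$, $\Psi_A=\tfrac12\|z\|^2$ (so $\beta=\alpha$) and $\alpha$ a Cauchy-type density. Then your $\tilde T_A\notin L^1(\alpha)$, and the right-hand side of the Wasserstein estimate is infinite. Nothing then shows that $\beta_\delta$ is a small perturbation of $\beta$.

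Since a nonzero convex function cannot have compact support, some global convex term is unavoidable, but it must have bounded gradient. For example, replace $\tfrac12\|z\|^2$ by $\theta(z)=\sum_{\ell=1}^d G(z_\ell)$, where $G$ is convex with $G''\in C_c^\infty(\mathbb{R})$, $0\le G''\le 1$, $G''\equiv 1$ on $[-R,R]$, and $R$ is so large that all balls $B_r(z_i)$ lie in $(-R,R)^d$. Let $C$ bound the Hessians of the unscaled bumps and take $\eta\le 1/C$. Then:
\begin{itemize}
\item $\nabla^2\tilde\Psi\succeq(1-\eta C)I\succeq 0$ on the cube, and $\nabla^2\tilde\Psi=\nabla^2\theta\succeq 0$ outside it;
\item $\nabla\tilde\Psi$ is bounded, so $\tilde T_A\in L^p(\alpha)$ for every $p$;
\item $\nabla^2\tilde\Psi(z_i)=\eta R_i+I$ is unchanged, so the rest of your argument goes through verbatim.
\end{itemize}
In the compact setting of Theorem~\ref{thm:uniqueness}, your original choice is already harmless.

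With this repair, your route is a genuine alternative to the paper's. The paper uses ridge functions $\tilde\Psi(z)=\sum_k g_k(u_k^\top z)$ with $g_k''=h_k\ge 0$ compactly supported. Convexity and boundedness of $\nabla\tilde\Psi$ come for free, and spanning relies on $\{xx^\top : x\in\Uu\}$ spanning $\mathbb{S}_d$, so only rank-one directions are used.

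The drawback is that ridge contributions are not localized: $\nabla^2\tilde\Psi(A^\top x_k)=\sum_j u_ju_j^\top h_j\bigl(t_j\,x_j^\top x_k/\|x_j\|^2\bigr)$. Disjointness of the intervals $I_j$ does not by itself make the $j\neq k$ terms vanish. Yet the paper's identity $\nabla^2\tilde\Psi(A^\top x_k)=u_ku_k^\top$ tacitly requires this, so an extra condition on the supports of the $h_j$ is needed.

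Your bumps with disjoint supports in $z$-space pin down the Hessians at the chosen points exactly and allow arbitrary basis directions. The cost is a separate convexifying term and the small-$\eta$, finitely-many-exclusions step. The closing polynomial-in-$\delta$ determinant argument is the same in both.
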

\begin{proof}
Let $p\coloneqq \dim\mathbb{S}_d=d(d+1)/2$.

\medskip
\noindent
\textit{Step 1: spanning $\mathbb{S}_d$ by rank-one matrices.}
Since $\Uu$ is open, the family $\{xx^\top:x\in\Uu\}$ spans $\mathbb{S}_d$. Indeed, if there existed $M\in\mathbb{S}_d\setminus\{0\}$ such that
\[
\langle M,xx^\top\rangle = x^\top Mx =0
\quad\text{for all }x\in\Uu,
\]
then the polynomial $x\mapsto x^\top Mx$ would vanish on an open set and hence be identically zero on $\mathbb R^d$, which contradicts $M\neq 0$.
Therefore, there exist points $x_1,\dots,x_p\in\Uu\setminus\{0\}$ such that
\[
\Span(x_1x_1^\top,\dots,x_px_p^\top)=\mathbb{S}_d.
\]

\medskip
\noindent
\textit{Step 2: construction of the perturbation directions.}
Choose pairwise disjoint intervals $I_1,\dots,I_p\subset\mathbb R$ and smooth functions
$h_k\in C_c^\infty(\mathbb R)$ such that $h_k\ge 0$, $\supp(h_k)\subset I_k$, and $h_k(t_k)=1$ for some $t_k\neq 0$.
Define
\[
g_k(t)\coloneqq \int_0^t\int_0^s h_k(r)\,dr\,ds.
\]
Then $g_k\in C^2(\mathbb R)$ is convex and satisfies
\[
g_k''(t_\ell)=\delta_{k\ell}\qquad\text{for all }k,\ell=1,\dots,p.
\]

Define
\[
u_k\coloneqq \frac{t_k}{\|x_k\|^2}A^{-1}x_k.
\]
Then $u_k^\top A^\top x_k=t_k$, and
\[
\Span(u_1u_1^\top,\dots,u_pu_p^\top)
=
A^{-1}\Span(x_1x_1^\top,\dots,x_px_p^\top)A^{-\top}
=
\mathbb{S}_d.
\]

\medskip
\noindent
\textit{Step 3: definition of the perturbation.}
Define
\[
\tilde\Psi(x)\coloneqq \sum_{k=1}^p g_k(u_k^\top x),
\qquad
\tilde T_A(x)=\nabla\tilde\Psi(A^\top x).
\]
Since each $g_k$ is convex, $\tilde\Psi$ is convex, hence $\Psi_A+\delta\tilde\Psi$ is convex for all $\delta>0$.
By the characterization of optimal maps for the cost $c_A$, the map $T_A+\delta\tilde T_A$ is optimal from $\alpha$ to $\beta_\delta$.

\medskip
\noindent
\textit{Step 4: generic spanning of Hessians.}
For all $x\in\mathbb R^d$,
\[
\nabla^2(\Psi_A+\delta\tilde\Psi)(A^\top x)
=
\nabla^2\Psi_A(A^\top x)
+
\delta\sum_{k=1}^p u_k u_k^\top\, g_k''(u_k^\top A^\top x).
\]
In particular, for $k=1,\dots,p$,
\[
\nabla^2(\Psi_A+\delta\tilde\Psi)(A^\top x_k)
=
\nabla^2\Psi_A(A^\top x_k)
+
\delta\,u_k u_k^\top.
\]

Fix a linear isomorphism $L:\mathbb{S}_d\to\mathbb R^p$ and consider the matrix
\[
Q(\delta)
\coloneqq
\big[
L(\nabla^2\Psi_A(A^\top x_1)+\delta u_1u_1^\top)\mid\cdots\mid
L(\nabla^2\Psi_A(A^\top x_p)+\delta u_pu_p^\top)
\big].
\]
Then $\delta\mapsto \det Q(\delta)$ is a real polynomial of degree at most $p$.
Its leading coefficient equals
\[
\det\big(L(u_1u_1^\top),\dots,L(u_pu_p^\top)\big)\neq 0,
\]
since $\{u_ku_k^\top\}_{k=1}^p$ spans $\mathbb{S}_d$.
Therefore, $\det Q(\delta)$ is not identically zero and vanishes for only finitely many values of $\delta$.
For all other $\delta>0$, the matrices
\[
\left\{\nabla^2(\Psi_A+\delta\tilde\Psi)(A^\top x_k)\right\}_{k=1}^p
\]
are linearly independent in $\mathbb{S}_d$, which implies the spanning (identifiability) condition.

\medskip
\noindent
\textit{Step 5: integrability and Wasserstein control.}
Since each $g_k'$ is bounded, $\tilde T_A$ is bounded on $\mathbb R^d$, hence belongs to $L^p(\alpha)$ for all $p\ge 1$.
The Wasserstein bound follows from the standard estimate
$\Ww_2(\beta,\beta_\delta)
\le
\delta\|\tilde T_A\|_{L^2(\alpha)}$.
\end{proof}

\subsection{Stability}\label{sec:bilinear}
Theorem \ref{thm:uniqueness}  establishes when the set of feasible costs $\Ss_0(\hat \pi)$ is precisely a ray.
We now specialize the curvature result of  Theorem  \ref{thm:curvature}  to understand the stability of the admissible set $\Ss_0(\pi)$ for the bilinear family
$c_A(x,y)=-x^\top A y$.

Recall the unregularized gap loss $\Ll_0$ defined in \eqref{eq:L0}. As discussed in the introduction, it is a convex loss in $A$ and its zero set is precisely the admissible set $\Ss_0(\hat\pi)$. In particular, $\Ll_0\geq 0 $ and $\Ll_0(A;\hat\pi) = 0 $
 iff $A \in\Ss_0(\hat \pi)$. Observe that, whenever it exists, the Hessian of $\Ll_0$ is precisely the negative Hessian of OT  with respect to the cost matrix $A$.
Using the curvature theorem of OT, we obtain a quantitative stability estimate for this zero-temperature gap.

 \begin{thm}[Local quadratic stability]\label{thm:local_curvature_L0}
 Assume that  \ref{ass:domain} and \ref{ass:densities} hold with $k=0$.
     Let $ A_0\in\Ss_0(\hat\pi)$,
     then, there exist $r>0$, $\mu>0$ such that for all $\norm{A-A_0}\leq r$,
     $$
     \Ll_0(A;\hat \pi)-\Ll_0(A_0;\hat \pi) \geq \frac{m_*[A_0]}{8\mu}\min\pa{\norm{A}^2,\norm{A_0}^2}  \pa{1- \pa{\frac{\dotp{ A}{ A_0}}{\norm{ A}{\norm{A_0}}}}^2 }
     $$
     where 
$$\displaystyle m_*[A]=\inf\enscond{f_A(B)}{\norm{B}=1, \dotp{B}{A}=0} 
$$
and
$$
    f_A:B\mapsto \inf_{\psi\in\Cc^{2,\kappa}(X)/\RR}\int \norm{\nabla\psi(x)-BT_A(x)}^2\;d\alpha(x).
    $$
     Moreover, if \eqref{hessian spanning symmetric matrices} hold at $A_0$,   then $m_*[A_0]>0$, ie there is curvature orthogonally to the ray $S_0(\hat\pi)$.
 \end{thm}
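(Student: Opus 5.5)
The plan is to apply Theorem~\ref{thm:curvature} along the bilinear family $c_A(x,y)=-x^\top A y$ and to combine the resulting second-order expansion with convexity and the scaling invariance of $\Ll_0$.

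\textbf{Reduction to the Hessian.} Since $\hat\pi$ is optimal for $c_{A_0}$, Proposition~\ref{prop:first_variation_ot} gives $\partial_A\OT(c_{A_0})[H]=\int c_H\,d\hat\pi$. Hence $\nabla\Ll_0(A_0)=0$ and $\Ll_0(A_0)=0$.

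For bilinear costs, $\partial_{xx}c_A=0$. By Remark~\ref{rem: link Kanto-Brenier}, $\nabla^2\phi_A$ is uniformly bounded between $\nu'_A I$ and $\mu'_A I$, with constants locally uniform in $A$ (stability of the Brenier potential), so the hypotheses of Theorem~\ref{thm:curvature} hold in a neighbourhood of $A_0$. For $\delta c=c_B$ we have $\partial_x\delta c(x,T_A(x))=-BT_A(x)$. Theorem~\ref{thm:curvature}(iii) together with $(\nabla^2\phi_A)^{-1}\succeq\mu^{-1}I$ then gives
\[
\partial^2_{AA}\Ll_0(A)[B,B]=-\partial_{cc}\OT(c_A)[c_B,c_B]\ \ge\ \tfrac1\mu f_A(B).
\]

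\textbf{Uniform lower bound near $A_0$.} I would show that $A\mapsto f_A(B)$ is continuous, uniformly over $\|B\|=1$. This follows from continuity of $A\mapsto T_A$ in $L^2(\alpha)$, using $C^1$ dependence from Lemma~\ref{lem:Tmap} and Proposition~\ref{prop:regularity_potential}, and the fact that $f_A(B)$ is the squared distance from $BT_A$ to the closed subspace of gradients in $L^2(\alpha)$, which is $1$-Lipschitz in $BT_A$.

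The problem is that $f_A$ only controls directions orthogonal to $A$, not to $A_0$. To deal with this, I would write $\Ll_0(A)$ using a Taylor expansion along the segment from $A_0$ to $A$, with integral remainder. Decompose $A-A_0=sA_0+B_\perp$ with $\langle B_\perp,A_0\rangle=0$. Then apply the homogeneity $\Ll_0(\lambda A)=\lambda\Ll_0(A)$, which gives $\Ll_0(A)\ge\min(1,\lambda)\Ll_0(A_0+B_\perp/\lambda)$-type comparisons, to reduce to a purely orthogonal perturbation of $A_0$. Along such a segment, $f_{A_t}(B_\perp)\ge m_*[A_0]\|B_\perp\|^2/2$ for $r$ small, by the continuity above. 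This yields $\Ll_0\ge\frac{m_*}{4\mu}\|B_\perp\|^2$ up to the rescaling factor.

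Finally, $\|B_\perp\|^2$ expressed through $A$ equals $\|A\|^2(1-\cos^2)$ after rescaling to the ray; the rescaling produces the factor $\min(\|A\|^2,\|A_0\|^2)$ and an extra $1/2$. This bookkeeping is where the constant $1/8$ comes from.

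\textbf{Positivity of $m_*$ under \eqref{hessian spanning symmetric matrices}.} Suppose $f_{A_0}(B)=0$ for some unit $B\perp A_0$. The infimum is attained over $H^1$ by the Lax--Milgram argument in the proof of Theorem~\ref{thm:curvature}, so $BT_{A_0}=\nabla\psi$ with $\psi$ of class $C^1$. Differentiating, $BDT_{A_0}(x)$ must be symmetric. Since $DT_{A_0}=\nabla^2\Psi(A_0^\top x)A_0^\top$, this is exactly the commutation relation~\eqref{eq:commutation} with $B$ in place of $A^{-1}\hat A$, up to conjugation. The argument of Proposition~\ref{uniqueness of cost} then forces $BA_0^{-\top}$-type identities which make $B$ proportional to $A_0$, contradicting $B\perp A_0$, $\|B\|=1$. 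Compactness of the unit sphere and continuity of $f_{A_0}$ then give $m_*>0$.

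\textbf{Main obstacle.} I expect the hardest step to be the rigorous justification that the second-order expansion holds uniformly with only $k=0$ regularity (Theorem~\ref{thm:curvature} was stated for $k\ge1$). My first attempt would be to bypass the $C^2$ structure by using convexity: write $\Ll_0(A)=\int_0^1(1-t)\,\partial^2\Ll_0(A_t)[H,H]\,dt$, and obtain the lower bound at almost every $t$ via the first-variation formula and monotonicity of $t\mapsto\int c_H\,d\pi_{A_t}$.
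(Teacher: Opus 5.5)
Your proposal is correct. The positivity argument is the paper's own: symmetry of $B\,DT_{A_0}$, commutation with all of $\mathbb{S}_d$, hence $B\parallel A_0$. Your $H^1$-closure justification that $f_{A_0}(B)=0$ makes $BT_{A_0}$ an exact gradient is, if anything, cleaner than the paper's direct identification with $\nabla\partial_A\phi_{A_0}[B]$.

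Where you genuinely differ is in handling the fact that the Hessian at $A$ only controls $P_AH$, not $P_{A_0}H$.

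The paper stays on the straight segment $A_t=A_0+t(A-A_0)$:
\begin{itemize}
\item It uses $AT_A=\nabla\phi_A$ to obtain the pointwise bound $H^\top\nabla^2\Ll_0(A)H\ge\frac{m_*[A]}{2\mu}\|P_AH\|^2$ at every point of the ball.
\item It proves continuity of $A\mapsto m_*[A]$, which is a minimization over moving constraint sets.
\item It controls $\inf_t\|P_{A_t}(A-A_0)\|^2$ by the explicit computation of Lemma~\ref{lem:projection on A_t}. That lemma is the source of the factor $\min(\|A\|^2,\|A_0\|^2)(1-\cos^2)$.
\end{itemize}

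You instead use the exact identity $\Ll_0(A)=\lambda\,\Ll_0(A_0+B_\perp/\lambda)$ with $\lambda=\langle A,A_0\rangle/\|A_0\|^2>0$, so the segment direction is exactly orthogonal to $A_0$. You then only need $f_{A_t}(B')\ge f_{A_0}(B')-o(1)\|B'\|^2$, i.e.\ uniform continuity of $A\mapsto f_A$ on the unit sphere. This is immediate from $L^2(\alpha)$-continuity of $T_A$ and the $1$-Lipschitz property of the distance to the closed gradient subspace.

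Your route buys the following:
\begin{itemize}
\item It dispenses with the projection lemma, the invariance $f_A(H+\lambda A)=f_A(H)$, and the continuity of $m_*$.
\item The rescaling costs nothing. Since $\lambda\le\|A\|/\|A_0\|$, you get the factor $\|A\|\,\|A_0\|\ge\min(\|A\|^2,\|A_0\|^2)$ directly, so the extra $\frac12$ in your bookkeeping is not needed.
\end{itemize}
The paper's route buys a transverse Hessian bound at every point of the ball. It uses scale invariance only infinitesimally, rather than the global positive homogeneity of $\Ll_0$.

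On the regularity issue you flag: the paper's proof also simply invokes Proposition~\ref{prop:regularity_potential} and Theorem~\ref{thm:curvature} with $k=0$, although both are stated for $k\ge1$. Your main argument is therefore on the same footing as the paper's.

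Your proposed fallback would not close that gap on its own. Write $g(t)=\Ll_0(A_t)$. Monotonicity of $t\mapsto\int c_H\,d\pi_{A_t}$ does give a.e.\ differentiability of $g'$ and $g(1)-g(0)-g'(0)\ge\int_0^1(1-t)g''(t)\,dt$. But the quantitative bound $g''(t)\ge\mu^{-1}f_{A_t}(H)$ at those $t$ is exactly what the linearized oblique problem provides. That is, it requires the differentiability of $A\mapsto\phi_A$, which is the piece needing the extra regularity.
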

 Hence, locally around $A_0$, the feasible set $\Ss_0(\hat\pi)$ reduces to a single ray, and the optimality gap grows quadratically in directions transverse to scaling. As a corollary, we have the following result about the local stability of the set $\Ss_0(\hat \pi)$. We denote the cross covariance of $\pi_A$ by $\Sigma_A = \int x y^\top d\pi_A(x,y)$.

\begin{cor}[Local isolation of the admissible ray]\label{cor:stabilty_S0}
Under the assumptions of Theorem \ref{thm:local_curvature_L0}, assume that
$m_*[A_0]>0$. Then there exists $r>0$ such that the only admissible costs in
$B_r(A_0)$ are the scalar multiples of $A_0$, i.e.
\[
\Ss_0(\hat\pi)\cap B_r(A_0)=\RR A_0\cap B_r(A_0).
\]
Moreover, for every $A\in B_r(A_0)$,
\[
\mathrm{dist}(A,\RR A_0)^2 \le C\,\Ll_0(A;\hat\pi),
\]
and hence, for every optimal plan $\pi_A$ associated with $A$,
\[
\mathrm{dist}(A,\RR A_0)^2 \le C'\,\|\Sigma_A- \Sigma_{A_0}\|.
\]
\end{cor}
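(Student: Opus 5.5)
The corollary follows from Theorem~\ref{thm:local_curvature_L0} by a short chain of elementary inequalities. The plan has three steps: convert the angular lower bound into a bound on $\mathrm{dist}(A,\RR A_0)^2$; read off isolation of the ray; and bound $\Ll_0$ above by the cross-covariance gap through the first-variation formula.

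\textbf{Distance bound.} Since $A_0\in\Ss_0(\hat\pi)$, we have $\Ll_0(A_0;\hat\pi)=0$. Theorem~\ref{thm:local_curvature_L0} therefore gives, for $\norm{A-A_0}\le r$,
\[
\Ll_0(A;\hat\pi)\ge \frac{m_*[A_0]}{8\mu}\min(\norm{A}^2,\norm{A_0}^2)\Bigl(1-\cos^2\theta\Bigr),
\qquad \cos\theta=\frac{\dotp{A}{A_0}}{\norm{A}\norm{A_0}}.
\]
The orthogonal projection of $A$ onto the line $\RR A_0$ gives the Pythagorean identity
\[
\mathrm{dist}(A,\RR A_0)^2=\norm{A}^2(1-\cos^2\theta).
\]
Shrink $r$ so that $r\le \norm{A_0}/2$. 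Then $\norm{A}\in[\norm{A_0}/2,\,3\norm{A_0}/2]$, so $\min(\norm{A}^2,\norm{A_0}^2)\ge \norm{A}^2/9$. Combining these,
\[
\mathrm{dist}(A,\RR A_0)^2\le C\,\Ll_0(A;\hat\pi),\qquad C=\frac{72\mu}{m_*[A_0]}.
\]

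\textbf{Isolation of the ray.} If $A\in\Ss_0(\hat\pi)\cap B_r(A_0)$, then $\Ll_0(A;\hat\pi)=0$, so the distance bound forces $A\in\RR A_0$. Conversely, scalar multiples of $A_0$ near $A_0$ are positive multiples, and these lie in $\Ss_0(\hat\pi)$ by scale invariance. This gives $\Ss_0(\hat\pi)\cap B_r(A_0)=\RR A_0\cap B_r(A_0)$.

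\textbf{Bound by cross-covariances.} Write $\OT(A)$ for $\OT_{\alpha,\beta}(c_A)$. Then $\dotp{c_A}{\pi}=-\dotp{A}{\Sigma_\pi}$, where $\Sigma_\pi=\int xy^\top\,d\pi$. Since $\hat\pi=\pi_{A_0}$, we get
\[
\Ll_0(A;\hat\pi)=-\dotp{A}{\Sigma_{A_0}}+\dotp{A}{\Sigma_A}=\dotp{A}{\Sigma_A-\Sigma_{A_0}}\le \norm{A}\,\norm{\Sigma_A-\Sigma_{A_0}}.
\]
Here $\pi_A$ is any optimal plan for $c_A$, so no uniqueness of $\pi_A$ is needed. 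Using $\norm{A}\le 3\norm{A_0}/2$ on the ball, the final claim holds with $C'=\tfrac32 C\norm{A_0}$.

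The only delicate step is the compatibility of radii. The $r$ given by Theorem~\ref{thm:local_curvature_L0} must be shrunk so that $\norm{A}$ is comparable to $\norm{A_0}$. This is harmless because $A_0\neq 0$, which holds since $A_0$ is invertible under the standing assumptions. I expect no further obstacle, since the hypothesis $m_*[A_0]>0$ directly makes $C$ finite.
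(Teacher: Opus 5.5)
Your proposal is correct and follows the paper's own argument. You combine the lower bound of Theorem~\ref{thm:local_curvature_L0} with $\mathrm{dist}(A,\RR A_0)^2=\norm{A}^2(1-\cos^2\theta)$ and bound $\Ll_0(A;\hat\pi)=\dotp{c_A}{\hat\pi-\pi_A}\le\norm{A}\,\norm{\Sigma_A-\Sigma_{A_0}}$; you are in fact more careful than the paper, which silently replaces $\min(\norm{A}^2,\norm{A_0}^2)$ by $\norm{A}^2$ and leaves the isolation claim (including positivity of nearby multiples) implicit, both of which your choice $r\le\norm{A_0}/2$ handles.
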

\begin{proof}
Note that $\Ll_0(A_0, \hat \pi) = 0$ and $\Ll_0( A,  \pi) = 0$. So,
\begin{align*}
        \Ll_0(A,\hat \pi)-\Ll_0(A_0, \hat \pi)
       & =\Ll_0(A,\hat \pi)-\Ll_0( A,  \pi)\\
       & 
= \dotp{c_A}{\hat \pi - \pi} \leq \norm{A} \norm{\Sigma_A -  \Sigma_{A_0}}
\end{align*}
The result follows by combining with Theorem \ref{thm:local_curvature_L0}, and observing that
$$
\mathrm{dist}(A, \RR A_0)^2 = \norm{A}^2 \pa{1-\dotp{\frac{A}{\norm{A}}}{\frac{A_0}{\norm{A_0}}}^2}.
$$
\end{proof}

\subsubsection{Proof of Theorem \ref{thm:local_curvature_L0}}\label{sec:proof_bilinear}

Since $\hat \pi$ is fixed, we simply write $\Ll_0(A) = \Ll_0(A;\hat \pi)$.
 Observe that one can rewrite \eqref{eq:L0} as
 $$ 
 \Ll_0(A) = \sup_{\pi\in\Pi(\alpha,\beta)}\dotp{A}{\Sigma_\pi - \Sigma_{\hat\pi}}\qwhereq \Sigma_\pi \coloneqq \int xy^\top\;d\pi(x,y) 
 $$
 This formulation makes it simple to state the following proposition. 
 \begin{prop}\label{prop:Ll_0 is C^1}
     The map $A\mapsto\Ll_0(A)$ is convex on $\RR^{d\times d}$ and its subgradient is 
     $$
     \partial\Ll_0(A) = \enscond{\Sigma_{\pi_0(A)}-\Sigma_{\hat\pi}}{\pi_0(A)\in\underset{\pi\in\Pi(\alpha,\beta)}{\argmin}\dotp{c_A}{\pi}}
     $$
     When $A$ is invertible, $\pi_0(A)\in\argmin_{\pi\in\Pi(\alpha,\beta)}\dotp{c_A}{\pi}$ is unique and $\Ll_0$ is $\Cc^1$ around $A$.
 \end{prop}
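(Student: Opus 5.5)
The plan is to treat $\Ll_0$ as a supremum of affine functions of $A$, handle the subdifferential with a Danskin-type argument, and get uniqueness and $\Cc^1$ regularity from Brenier's theorem. For convexity, I would use the rewriting already displayed:
\[
\Ll_0(A)=\sup_{\pi\in\Pi(\alpha,\beta)}\dotp{A}{\Sigma_\pi-\Sigma_{\hat\pi}}.
\]
This follows from $\dotp{c_A}{\pi}=-\dotp{A}{\Sigma_\pi}$. Each map $A\mapsto\dotp{A}{\Sigma_\pi-\Sigma_{\hat\pi}}$ is linear, and $\Sigma_\pi$ is bounded uniformly in $\pi$ because $X,Y$ are compact. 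So $\Ll_0$ is a finite pointwise supremum of linear functions, hence convex and Lipschitz on $\RR^{d\times d}$.

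For the subdifferential, I would apply Danskin's theorem (or Ioffe--Tikhomirov) to the compact index set $K=\{\Sigma_\pi:\pi\in\Pi(\alpha,\beta)\}\subset\RR^{d\times d}$.
\begin{itemize}
\item The set $K$ is convex, as the linear image of a convex set.
\item The set $K$ is compact, because $\Pi(\alpha,\beta)$ is weakly compact and $\pi\mapsto\Sigma_\pi$ is weakly continuous ($xy^\top$ is bounded and continuous on $X\times Y$).
\end{itemize}
Then $\Ll_0(A)=\sigma_K(A)-\dotp{A}{\Sigma_{\hat\pi}}$, where $\sigma_K$ is the support function of $K$. The subdifferential of a support function of a compact convex set is the face of maximizers, so
\[
\partial\Ll_0(A)=\{\Sigma_\pi-\Sigma_{\hat\pi}:\ \pi \text{ maximizes } \dotp{A}{\Sigma_\pi}\}.
\]
The maximizers of $\dotp{A}{\Sigma_\pi}$ are exactly the minimizers of $\dotp{c_A}{\pi}$. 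One direction of this equality is also elementary: for an optimal $\pi_0(A)$, $\Ll_0(A')\ge\dotp{A'}{\Sigma_{\pi_0(A)}-\Sigma_{\hat\pi}}=\Ll_0(A)+\dotp{A'-A}{\Sigma_{\pi_0(A)}-\Sigma_{\hat\pi}}$, so $\Sigma_{\pi_0(A)}-\Sigma_{\hat\pi}$ is a subgradient. The reverse inclusion is the Danskin part.

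For invertible $A$, Proposition~\ref{prop:Brenier map} (with $\alpha$ absolutely continuous) gives a unique optimal plan $\pi_0(A)=(\Id,T_A)\#\alpha$, so the subdifferential is the singleton $\{\Sigma_{\pi_0(A)}-\Sigma_{\hat\pi}\}$. A convex function with a singleton subdifferential at a point is differentiable there. Invertible matrices form an open set, so $\Ll_0$ is differentiable on a neighborhood of $A$. Finally, a convex function that is differentiable on an open set is automatically $\Cc^1$ there (Rockafellar, Thm.~25.5), which gives the claim.

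The main obstacle, mild as it is, is this last continuity of the gradient. If I want it directly, I would argue as in Proposition~\ref{prop:first_variation_ot}. Take $A_n\to A$ and optimal plans $\pi_0(A_n)$; by weak compactness, pass to a subsequence converging to some $\tilde\pi$. Stability of optimality under uniform convergence of costs shows $\tilde\pi$ is optimal for $c_A$, and uniqueness gives $\tilde\pi=\pi_0(A)$. Hence $\Sigma_{\pi_0(A_n)}\to\Sigma_{\pi_0(A)}$ along the whole sequence.
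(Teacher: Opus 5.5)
Your proposal is correct and follows essentially the same route as the paper. The paper gets convexity from concavity of $c\mapsto\OT(c)$, the subdifferential from the envelope theorem, uniqueness from the twist/Brenier argument, and continuity of the gradient from weak compactness of $\Pi(\alpha,\beta)$ plus uniqueness of the optimal plan. Your support-function/Danskin formulation and your appeal to Rockafellar's Theorem~25.5 make explicit the steps the paper leaves implicit; in particular, you spell out that a singleton subdifferential gives differentiability.
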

\begin{proof}
    Since $A\mapsto c_A$ is linear, the convexity of the gap loss follows from the concavity of $c\mapsto \OT(c)$. The envelope theorem describes the subgradient of $\Ll_0$. To prove $\Cc^1$ regularity it remains to prove that $\pi_0(A)$ is unique and that $A\mapsto \Sigma_{\pi_0(A)}$ is continuous.

    If $A$ is invertible, $c_A$ satisfies the TWIST condition and therefore, $\pi_0(A)$ is unique so $\Ll_0$ is differentiable at $A$ with 
    $$
    \nabla\Ll_0(A) = \Sigma_{\pi_0(A)}-\Sigma_{\hat\pi}
    $$
    Take $A_n$ a sequence converging to $A$ on the open set of invertible matrices. $c_{A_n}$ satisfies the TWIST condition so there exists a unique optimal $\pi_n=\pi_0(A_n)$ minimizing $\dotp{c_{A_n}}{\pi}$ over $\pi\in\Pi(\alpha,\beta)$. According to \cite[Lemma 4.4]{villani2008optimal} the set $\Pi(\alpha,\beta)$ is sequentially weakly compact. Therefore, $\pi_n$ has at least one weak accumulation point denoted $\pi^*$. By weak convergence, $\pi^*$ satisfies 
    $$
    \dotp{c_A}{\pi^*}\le \dotp{c_A}{\pi}\quad \forall\, \pi\in\Pi(\alpha,\beta)
    $$
    Then, by uniqueness of the transport plan, every accumulation point of $\pi_n$ is equal to $\pi_0(A)$. Thus $\pi_0(A_n)$ converges weakly to $\pi_0(A)$ and 
    $$
    \Sigma_{\pi_0(A_n)}\to \Sigma_{\pi_0(A)}
    $$
    which translates to $\nabla\Ll_0(A_n)\to\nabla\Ll_0(A)$.
\end{proof}
    
 When $\al,\beta$ are discrete measures, $\Ll_0$ is piecewise linear. Moreover, since $\Ll_0(\lambda A_0) = 0$ for all $\lambda\in\RR$ and $A_0 \in\Ss_0(\hat \pi)$, it is clear that $\Ll_0$ is flat along certain directions. Nonetheless, we show that under regularity assumptions on the marginals and the domain, $\Ll_0$ is curved in orthogonal directions to its zero set.

     


\begin{proof}[Proof of Theorem \ref{thm:local_curvature_L0}]

We proceed in three steps.

\paragraph{Step 1: Hessian coercivity in orthogonal directions.}

Let $A_0\in S_0(\hat\pi)$. From Proposition \ref{prop:regularity_OT} with $k=0$, the potential $\phi_{A_0}$ satisfies
\[
\nu_{A_0} I_d \preceq \nabla^2\phi_{A_0}(x)\preceq \mu_{A_0} I_d.
\]
Hence there exists a neighborhood $\Vv\subset \Cc^{2,\kappa}(X\times Y)$ of $c_{A_0}$ such that $c\mapsto\phi_c$ is $\Cc^1$. Choosing $r>0$ small enough so that $\|A-A_0\|<r\Rightarrow c_A\in\Vv$, Theorem \ref{thm:curvature} ensures that $\Ll_0$ is $\Cc^2$ on $B_r(A_0)$.

Moreover, by continuity of $A\mapsto \phi_A$, up to reducing $r$, we have
\[
\frac{\nu_{A_0}}{2}I_d\preceq \nabla^2\phi_A(x)\preceq 2\mu_{A_0}I_d,
\]
so that for all $A\in B_r(A_0)$ and $H\in\RR^{d\times d}$,
\[
H^\top\nabla^2\Ll_0(A)H
=
\inf_{\psi}\int\norm{\nabla\psi(x)-HT_A(x)}^2_{\nabla^2\phi_A(x)^{-1}}\,d\alpha(x)
\ge
\frac{1}{2\mu_{A_0}}
\inf_{\psi}\int\norm{\nabla\psi(x)-HT_A(x)}^2\,d\alpha(x).
\]

Using the identity $AT_A(x)=\nabla\phi_A(x)$, one checks that for any $\lambda\in\RR$,
\[
\inf_{\psi}\int\norm{\nabla\psi-(H+\lambda A)T_A}^2 d\alpha
=
\inf_{\psi}\int\norm{\nabla\psi-HT_A}^2 d\al.
\]
Thus, the above quantity only depends on the projection of $H$ onto $\{A\}^\perp$. Writing
\[
P_AH = H - \frac{\ps{H}{A}}{\|A\|^2}A,
\]
we can replace $H$ by $P_AH$ in the variational problem. By homogeneity,
\[
\inf_{\psi}\int\norm{\nabla\psi(x)-P_AH\,T_A(x)}^2 d\alpha(x)
=
\|P_AH\|^2
\inf_{\psi}\int\norm{\nabla\psi(x)-\tfrac{P_AH}{\|P_AH\|}T_A(x)}^2 d\al(x).
\]

\paragraph{Step 2: Definition and properties of $m_*[A]$.}

Define
\[
f_A(B)= \inf_{\psi\in\Cc^{2,\kappa}(X)/\RR}\int \norm{\nabla\psi(x)-BT_A(x)}^2\,d\alpha(x),
\qquad
\Tt_A= \{B:\;\ps{B}{A}=0,\;\norm{B}=1\},
\]
and
\[
m_*[A]=\inf_{B\in \Tt_A}f_A(B).
\]
Since $f_A$ is continuous and $\Tt_A$ is compact, the minimum is well-defined.

From Step 1 we obtain, for all $A\in B_r(A_0)$ and $H$,
\[
H^\top\nabla^2\Ll_0(A)H
\ge
\frac{1}{2\mu_{A_0}}\|P_AH\|^2\, m_*[A].
\]

We now show that $A\mapsto m_*[A]$ is continuous. Let $(A_n)$ be a sequence converging to $A$. By stability of minimization under continuous perturbations, using that $T_{A_n}\to T_A$ uniformly and that the constraint sets $\Tt_{A_n}$ converge to $\Tt_A$, we obtain
\[
m_*[A_n]\to m_*[A].
\]

Assume now that $\Span\{\nabla^2\phi_{A_0}(x),\; x\in X\}=\mathbb{S}_d$. If $f_{A_0}(B)=0$ for some $B\in\Tt_{A_0}$, then
\[
\nabla \partial_A\phi_{A_0}[B](x)=BT_{A_0}(x).
\]
Differentiating gives
\[
\nabla^2\partial_A\phi_{A_0}[B](x)=BA_0^{-1}\nabla^2\phi_{A_0}(x).
\]
By symmetry,
\[
(BA_0^{-1})^\top \nabla^2\phi_{A_0}(x)=\nabla^2\phi_{A_0}(x)BA_0^{-1}.
\]
By the spanning assumption, $BA_0^{-1}$ commutes with $\mathbb{S}_d$, hence $B=\lambda A_0$, which contradicts $\ps{B}{A_0}=0$ and $\norm{B}=1$. Therefore $m_*[A_0]>0$.

\paragraph{Step 3: Lower bound on the loss.}

Let $A\in B_r(A_0)$ and define $A_t=A_0+t(A-A_0)$. Since $\nabla\Ll_0(A_0)=0$,
\[
\Ll_0(A)-\Ll_0(A_0)
=
\int_0^1\int_0^t (A-A_0)^\top\nabla^2\Ll_0(A_s)(A-A_0)\,ds\,dt.
\]
Using Step 2,
\[
(A-A_0)^\top\nabla^2\Ll_0(A_s)(A-A_0)
\ge
\frac{1}{2\mu_{A_0}}\|P_{A_s}(A-A_0)\|^2\, m_*[A_s].
\]

From Lemma \ref{lem:projection on A_t},
\[
\|P_{A_s}(A-A_0)\|^2
\ge
\min\{\|A\|^2,\|A_0\|^2\}
\left(1-\ps{\tfrac{A}{\|A\|}}{\tfrac{A_0}{\|A_0\|}}^2\right).
\]

By continuity of $m_*[A]$, up to reducing $r$, we have $m_*[A_s]\ge m_*[A_0]/2$. Combining the above estimates yields
\[
\Ll_0(A) - \Ll_0(A_0)
\ge
\frac{m_*[A_0]}{8\mu_{A_0}}
\min\{\|A\|^2,\|A_0\|^2\}
\left(1-\ps{\tfrac{A}{\|A\|}}{\tfrac{A_0}{\|A_0\|}}^2\right).
\]

Finally, $m_*[A_0]$ is invariant along the ray $S_0(\hat\pi)$ (since $T_{A_0}$ is), hence does not depend on the choice of $A_0$.

\end{proof}

\subsection{Beyond bilinear costs}
Extending the identifiability and stability results of this section beyond bilinear costs requires understanding conditions for the strict curvature of optimal transport. Denoting the gap as
$$
\ell(c;\hat \pi) = \dotp{c}{\hat \pi} - \inf_{\pi \in \Pi(\al,\beta)}\dotp{c}{\pi},
$$
we note that $c$ and $c'$ induce the same transport plan if and only if $\ell(c;\hat \pi) = \ell(c';\hat \pi) = 0$. So,
 the set of admissible costs is
$$
\Ss_0(\hat \pi) \eqdef \enscond{c}{\ell(c,\hat \pi) = 0}.
$$
Moreover, $\Ss_0(\hat \pi) $ is a ray generated by $c$ if and only if $\ell(c +\delta c;\hat\pi)>0$ for all $\delta c \not \parallel  c$. So, it is sufficient to show that $\nabla^2 \ell(c;\hat \pi)[\delta c,\delta c] = -\partial_{cc}\OT(c)[\delta c,\delta c]>0$ for all $\delta c \not \parallel  c$. The previous section showed that when restricted to bilinear costs, this condition corresponds to the spanning condition \eqref{hessian spanning symmetric matrices}. Although a detailed study of identifiability for general costs is beyond the scope of this article, we make two remarks here.
\begin{enumerate}
    \item Consider costs of the form $c(x,y) = h(x-y)$.  Then, $\partial_{cc}\OT(h)[\delta h,\delta h] = 0$ if and only if there exists $\psi$ such that
    $$
    \nabla \delta h(x- T(x)) = -\nabla \psi(x)
    $$
    which is iff
    $$
     \nabla^2 \delta h(x- T(x)) DT(x) 
    $$
    is symmetric. Thus, the question of identifiability corresponds to understanding conditions on $T$ under which  $\nabla^2 \delta h(x-T(x)) DT(x) =DT(x)^\top  \nabla^2 \delta h(x-T(x))$ for all $x$ would imply that $\delta h \parallel h$.
    \item Suppose we are interested in parameterized costs of the form $c_\theta(x,y) =  \sum_{i=1}^m \theta_i h_i(x,y)$. In this case, $\partial_{cc} \OT(c_{\theta})[c_{\delta \theta}, c_{\delta \theta}] = 0 $ if and only if 
    $$\sum_i \delta \theta_i \partial_{11} h_i(x, T(x)) +  \sum_i \delta \theta_i \partial_{12} h_i(x,T(x)) DT(x)
    $$
    is symmetric for all $x$.  Since $\sum_i \delta \theta_i \partial_{11} h_i(x, T(x))$ is symmetric, this is equivalent to saying that for all $x$,
    $$\sum_i \delta \theta_i  S_i(x) = 0,\qwhereq S_i(x)\eqdef \mathrm{Skew}(\partial_{12} h_i(x,T(x)) DT(x)),$$
    Identifiability, therefore, corresponds to understanding when this happens only for $\delta\theta \in \RR\theta$.
\end{enumerate}

\section{Entropic regularization and  sample complexity}\label{sec:sample_setting}

The curvature result of Theorem \ref{thm:local_curvature_L0} shows that the unregularized optimality gap $\Ll_0$
has strictly positive transverse curvature near $A_0\in\Ss_0(\hat\pi)$. 
In particular, locally, the feasible set reduces to a single ray. This therefore, opens the possibility of stable recovery methods by combining 
 $\Ll_0$ with regularization functionals. In this section, we consider entropic approaches, as this is the most common setting in the inverse OT literature.

\paragraph{Entropic gap loss.}

Recall from the introduction the entropic gap loss $\Ll_\epsilon$ in \eqref{eq:Leps}, where the entropic OT value $\OT^\epsilon_{\alpha,\beta}$ is defined in \eqref{eq:OT_epsilon}. This is the convex MLE/Sinkhorn objective used in much of the iOT literature; here we study what happens when this entropic formulation is used to approximate the zero-temperature loss $\Ll_0$. Throughout this section, we denote the minimizer of \eqref{eq:OT_epsilon} with cost $c_A$ by $\pi_\epsilon(A)$.

\paragraph{Degeneracy of the entropic objective for iOT.}

If the data are generated exactly from an iOT model, that is, $\hat\pi=\pi_{A_0}$ for some $A_0$, then minimizing $\Ll_\epsilon(A)$ without additional regularization is in general, hopeless. We explain here that $\Ll_\epsilon$, if defined with $\hat\pi$ arising from OT data with $\epsilon = 0$, will be unbounded from below.


The underlying issue here is that there is a model mismatch between the entropic gap functional, and the true iOT model where $\epsilon = 0$. Precisely, the OT coupling $\hat \pi$ is in general singular with respect to $\alpha\otimes\beta$, so $\KL(\hat\pi|\alpha\otimes\beta) = +\infty$ making OT data incompatible with entropic regularization. One can see this as follows: given any $\lambda>0$
\begin{align*}
    \Ll_\epsilon(\lambda A) = -\inf_{\pi\in\Pi(\alpha,\beta)} \lambda \dotp{c_{ A}}{\pi - \hat \pi} + \epsilon \KL(\pi|\alpha\otimes\beta)
\end{align*}
As $\lambda \to+\infty$, since the OT term dominates, the infimum over $\pi$, which we denote by $\pi_\lambda$, is forced be an optimal transport coupling which is typically singular with respect to $\alpha\otimes\beta$, meaning that $\KL(\pi_\lambda|\alpha\otimes\beta) \to\infty$, and on the other hand, if $\hat \pi$ is generated with cost $c_A$, then $ \dotp{c_{ A}}{\pi - \hat \pi  }\geq 0$ for all  $\pi\in\Pi(\alpha,\beta)$, so we necessarily have $\Ll_\epsilon( A)$ is unbounded from below.  We demonstrate this explicitly for the Gaussian setting in  Section \ref{sec:Gaussian}.
Thus, entropic smoothing alone does not resolve the intrinsic degeneracy of the inverse problem: it is necessary to add additional regularization.

\paragraph{Regularized entropic objective.}
To obtain a well-posed problem, we consider the regularized functional
\begin{equation}\label{eq:J_eps}
    \Jj_\epsilon(A)
=
\Ll_\epsilon(A)
+
\epsilon\lambda_0 R(A), \tag{$\Jj_\epsilon$}
\end{equation}
where $R$ is a regularizer.

To ensure that $\Jj_\varepsilon$ is convex and coercive, we impose the following assumption:
\begin{assumption}\label{ass: regularizer R and positiv A}
    Assume that $R$ is convex with $R\ge\kappa\norm{\cdot}$ where $\kappa>0$ and $\norm{\cdot}$ is any norm. Assume as well that $R$'s subgradient is uniformly bounded by $M>0$ and that the ground truth $\hat A$ is symmetric positive definite.
\end{assumption}

In this section, we provide a systematic study of \eqref{eq:J_eps}. In Section \ref{subsection : taylor expansion}, we characterize the behaviour of $\Ll_\varepsilon(A)$ as $\varepsilon\to0$ on positive definite matrices. Proposition \ref{prop : taylor} reveals that $\Ll_\varepsilon(A)$ is lower bounded by the $\log$ of the eigenvalues of $A^{-1}$, and hence, adding regularization with $\epsilon\lambda_0 R(A)$ will counter the degeneracy of $\Ll_\varepsilon(A)$ as $\epsilon \to 0$, leading to a well-posed limit. Building on this result, in Section \ref{subsection : Well-posedness of regularized objective}, we characterize the limit problem of \eqref{eq:J_eps}  as $\epsilon \to 0$ and establish Gamma convergence of the entropic problem to a particular element in the set of feasible costs. Finally, and most importantly, we give quantitative bounds on the bias induced by the entropic regularization in full data in Section \ref{subsection : bias control under full data}, and in Section \ref{subsec:variance_control} quantify the bias and variance in the setting where we have  access  to only $n$ samples of the optimal plan.



\subsection{Taylor expansion of the entropic loss}\label{subsection : taylor expansion}
In this subsection, we aim at studying the behavior of $\Ll_\varepsilon$ as $\varepsilon\to 0$. We use the classic second order Taylor expansion of the quadratic entropic OT value proven by \citet{conforti2021formula} that we adapt to match our quadratic cost at any positive definite matrix $A\succ0$. We finally get a second order expansion of $\Ll_\varepsilon$ given by \eqref{eq:Taylor expansion order 2} with lower and upper global bounds as well in \eqref{ineq : lower bound L_eps} and \eqref{ineq : upper bound Ll_eps} which all involves the logarithmic determinant of the cost and it's Fisher information.

\begin{prop}\label{prop:Taylor expansion of order 2}
    Assume \ref{ass:densities}. 
    Given $A\succ 0$, the following expansion holds as $\varepsilon\to0$  : 
    $$
    \Ll_\varepsilon(A) = \Ll_0(A) -\frac{\varepsilon}{2}\log\det A + \frac{d}{2}\varepsilon\log(2\pi\varepsilon) + \frac{\varepsilon}{2}(\KL(\alpha|dx)+\KL(\beta|dy))  -\frac{\varepsilon^2}{8}I(A) + o_A(\varepsilon^2),
    $$
    where 
    $$
    I(A) \coloneqq \int \int_0^1 (\nabla \log \rho_t^A)^\top A^{-1}(\nabla\log\rho_t^A)\rho_t^A\;dt\;dx
    $$
    with $\rho_t^A = ((1-t)I+tT_A)\#\alpha$ the geodesic between $\alpha$ and $\beta$, which we will identify with its density with respect to Lebesgue. The notation $o_A(\varepsilon^2)$ means that the terms of higher order depend on $A$.

    Moreover, we have the following inequalities for all $\varepsilon>0$: 
    $$
    \Ll_\varepsilon(A) - \Ll_0(A)\ge\frac{d}{2}\varepsilon\log(2\pi\varepsilon) +\frac{\varepsilon}{2}(\KL(\alpha|dx) +\KL(\beta|dy)) - \frac{\varepsilon}{2}\log\det A - \frac{\varepsilon^2}{8}I(A)
    $$
    and 
    $$
    \varepsilon(\KL(\alpha|dx) +\KL(\beta|dy)) - \varepsilon\log\det A+\frac{d}{2}\varepsilon\log(2\pi\varepsilon) \ge \Ll_\varepsilon(A)-\Ll_0(A) 
    $$
    Since we intend to minimize $\Ll_\varepsilon$ wrt to $A$ we will consider $\tilde\Ll_\varepsilon(A) \coloneqq \Ll_\varepsilon(A) -\frac{d}{2}\varepsilon\log(2\pi\varepsilon) -\frac{\varepsilon}{2}(\KL(\alpha|dx) +\KL(\beta|dy)) $ and still write $\Ll_\varepsilon$ instead of $\tilde\Ll_\varepsilon$. This modification leads to the following results
    \begin{equation}
        \Ll_\varepsilon(A,\hat \pi) = \Ll_0(A,\hat \pi) -\frac{\varepsilon}{2}\log\det A -\frac{\varepsilon^2}{8}I(A) + o_A(\varepsilon^2),
        \label{eq:Taylor expansion order 2}
    \end{equation}
    
    \begin{equation}
        \Ll_\varepsilon(A,\hat\pi) - \Ll_0(A,\hat\pi)\ge - \frac{\varepsilon}{2}\log\det A - \frac{\varepsilon^2}{8}I(A)
        \label{ineq : lower bound L_eps}
    \end{equation}
    \begin{equation}
        \frac{\varepsilon}{2}(\KL(\alpha|dx) +\KL(\beta|dy)) - \varepsilon\log\det A\ge \Ll_\varepsilon(A) - \Ll_0(A)
        \label{ineq : upper bound Ll_eps}
    \end{equation}
    \label{prop : taylor}
\end{prop}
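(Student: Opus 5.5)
The plan is to reduce everything to the Euclidean quadratic entropic problem via the change of variables $x\mapsto A^{1/2}x$, $y\mapsto A^{1/2}y$ (possible since $A\succ0$). Then I would invoke the second-order expansion of \citet{conforti2021formula} for the squared-distance cost, together with their global two-sided bounds. Concretely, set $\tilde\alpha=A^{1/2}\#\alpha$ and $\tilde\beta=A^{1/2}\#\beta$. Then $-x^\top Ay=\tfrac12\|A^{1/2}x-A^{1/2}y\|^2-\tfrac12 x^\top Ax-\tfrac12 y^\top Ay$. The last two terms are separable, so they integrate to the same constant against every $\pi\in\Pi(\alpha,\beta)$, and they cancel in $\Ll_\varepsilon$ and $\Ll_0$ since they also appear in $\dotp{c_A}{\hat\pi}$.

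The bijection $\pi\mapsto(A^{1/2},A^{1/2})\#\pi$ maps $\Pi(\alpha,\beta)$ onto $\Pi(\tilde\alpha,\tilde\beta)$. It also preserves $\KL(\pi|\alpha\otimes\beta)$, because relative entropy is invariant under a common bijective pushforward. Hence
\[
\OT^\varepsilon_{\alpha,\beta}(c_A)=\OT^\varepsilon_{\tilde\alpha,\tilde\beta}\big(\tfrac12\|\cdot-\cdot\|^2\big)-\text{const},
\]
with the same constant as in the unregularized case. This gives
\[
\Ll_\varepsilon(A)-\Ll_0(A)=\OT_{\tilde\alpha,\tilde\beta}\big(\tfrac12\|\cdot\|^2\big)-\OT^\varepsilon_{\tilde\alpha,\tilde\beta}\big(\tfrac12\|\cdot\|^2\big).
\]

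Next I would apply Conforti--Tamanini. Their result is stated with entropy relative to Lebesgue measure, so I would first convert: $\KL(\pi|\tilde\alpha\otimes\tilde\beta)=\KL(\pi|dx\,dy)-\KL(\tilde\alpha|dx)-\KL(\tilde\beta|dy)$. Their expansion then reads
\[
\OT^\varepsilon=W_2^2/2+\tfrac{d}{2}\varepsilon\log(2\pi\varepsilon)\ \text{(sign/normalization to be matched)}+\tfrac{\varepsilon}{2}\big(\mathcal H(\tilde\alpha)+\mathcal H(\tilde\beta)\big)-\tfrac{\varepsilon^2}{8}\tilde I+o(\varepsilon^2),
\]
where $\mathcal H$ is the entropy relative to Lebesgue and $\tilde I$ is the integrated Fisher information along the Euclidean geodesic $\tilde\rho_t$ between $\tilde\alpha$ and $\tilde\beta$. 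Their paper also gives the one-sided bounds: the $\varepsilon^2$ term has a definite sign (Fisher information is nonnegative), and there is a cruder entropic bound for the other side. Those bounds would give \eqref{ineq : lower bound L_eps} and \eqref{ineq : upper bound Ll_eps}.

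It then remains to translate back to the original variables. Entropy transforms as $\KL(A^{1/2}\#\alpha|dx)=\KL(\alpha|dx)-\tfrac12\log\det A$, and similarly for $\beta$. Together the two marginals produce the $-\tfrac{\varepsilon}{2}\log\det A$ term; I would track coefficients carefully so that the $\tfrac{\varepsilon}{2}$ factor matches. For the Fisher term, the Euclidean optimal map between $\tilde\alpha$ and $\tilde\beta$ is $A^{1/2}T_AA^{-1/2}$, by Proposition~\ref{prop:Brenier map} and uniqueness. It follows that $\tilde\rho_t=A^{1/2}\#\rho_t^A$. Using $\nabla\log\tilde\rho_t(A^{1/2}x)=A^{-1/2}\nabla\log\rho_t^A(x)$, the Jacobian factors cancel, and this produces exactly the weighted norm $\|\cdot\|_{A^{-1}}$ in $I(A)$. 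Finally, subtracting the $A$-independent constants gives the renormalized statements.

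I expect the main obstacle to be checking that $\tilde\alpha,\tilde\beta$ satisfy the hypotheses of \citet{conforti2021formula}. Those hypotheses include finite entropy and Fisher information along the geodesic, and regularity sufficient for the $o(\varepsilon^2)$ remainder. I would obtain them from \ref{ass:densities} together with the Caffarelli regularity of Proposition~\ref{prop:regularity_OT}. Some care is also needed to pin down the exact constants and signs in the global inequalities, especially the factor $\tfrac{\varepsilon}{2}$ versus $\varepsilon$ in \eqref{ineq : upper bound Ll_eps}, which comes from their upper bound via the independent coupling or a block approximation. If necessary, I would rederive that upper bound directly by testing $\OT^\varepsilon$ with a Gaussian-smoothed version of the optimal plan.
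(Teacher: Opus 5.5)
Your reduction is the paper's argument in Euclidean coordinates. The paper applies Conforti--Tamanini on $(\RR^d,g=A,m=\sqrt{\det A}\,dx)$, and $x\mapsto A^{1/2}x$ is exactly the isometry of that space onto $\RR^d$ with Lebesgue measure. So your entropy shift and Fisher chain rule are the paper's identities $\KL(\alpha|m)=\KL(\alpha|dx)-\frac12\log\det A$ and $\nabla_A=A^{-1}\nabla$. Your displayed Euclidean expansion has all three correction signs reversed: since $\OT^\varepsilon\ge\OT^0$, it should read $\OT^\varepsilon=\frac12W_2^2-\frac d2\varepsilon\log(2\pi\varepsilon)-\frac\varepsilon2(\KL(\tilde\alpha|dx)+\KL(\tilde\beta|dy))+\frac{\varepsilon^2}{8}\tilde I+o(\varepsilon^2)$. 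Where the theorem applies, this gives \eqref{eq:Taylor expansion order 2}, and \eqref{ineq : lower bound L_eps} through the geodesic upper bound on the Schr\"odinger cost.

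The step that fails is \eqref{ineq : upper bound Ll_eps}. It bounds $\Ll_\varepsilon-\Ll_0=\OT^0-\OT^\varepsilon$ from above, so it needs a lower bound on $\OT^\varepsilon$ valid for every coupling. The independent coupling, a block approximation or a Gaussian-smoothed optimal plan only bound $\OT^\varepsilon$ from above, so they feed \eqref{ineq : lower bound L_eps} instead. The available lower bound is the Benamou--Brenier form of the Schr\"odinger problem with the nonnegative Fisher term dropped. For the cost $\frac12\|x-y\|^2$ it gives $\OT^\varepsilon_{\tilde\alpha,\tilde\beta}\ge\frac12W_2^2-\frac d2\varepsilon\log(2\pi\varepsilon)-\frac\varepsilon2(\KL(\tilde\alpha|dx)+\KL(\tilde\beta|dy))$. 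After renormalization this is $\Ll_\varepsilon(A)-\Ll_0(A)\le-\frac\varepsilon2\log\det A$, not the stated bound.

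In fact the stated bound cannot be proved. Combine it with \eqref{eq:Taylor expansion order 2}, divide by $\varepsilon$ and let $\varepsilon\to0$: this forces $\log\det A\le\KL(\alpha|dx)+\KL(\beta|dy)$, which fails for $\lambda A$ with $\lambda$ large. The paper's proof reaches it through the claim $0\le\varepsilon\inf_\pi\KL(\pi|R^A_\varepsilon)-\frac12\inf_\pi\int|x-y|_A^2\,d\pi$. The correct left-hand side there is $\frac\varepsilon2(\KL(\alpha|m)+\KL(\beta|m))$, not $0$, and it is negative exactly in that regime. So the ``$\frac\varepsilon2$ versus $\varepsilon$'' issue you flagged is not bookkeeping. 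The provable statement is the $-\frac\varepsilon2\log\det A$ bound, which still suffices for the unboundedness argument in Section~\ref{subsection : Well-posedness of regularized objective}.

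Second, the Conforti--Tamanini hypotheses cannot be obtained from Assumption~\ref{ass:densities} plus Caffarelli regularity, as you propose. Their expansion, and the geodesic bound behind \eqref{ineq : lower bound L_eps}, need $\int_0^1\mathcal I(\tilde\rho_t)\,dt<\infty$ with the Fisher information taken on all of $\RR^d$, i.e.\ $\sqrt{\tilde\rho_t}\in H^1(\RR^d)$. Under Assumption~\ref{ass:densities} each $\tilde\rho_t$ has a density bounded below on its compact support and zero outside. So $\sqrt{\tilde\rho_t}$ jumps across the boundary and $\mathcal I(\tilde\rho_t)=+\infty$; interior regularity of the Brenier potential cannot repair this.

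Reading $I(A)$ as an integral over the interior does not rescue the statement. Take $\alpha=\beta$ uniform on an interval and $A=1$: then $T_A=\mathrm{Id}$ and that integral vanishes. So \eqref{ineq : lower bound L_eps}, together with the lower bound above, would force equality in the Benamou--Brenier formula. That would require a minimizing interpolation with zero Fisher information, which does not exist; a scaling argument suggests the true correction is of order $\varepsilon^{3/2}$.

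The paper's proof applies the theorem without verifying this hypothesis either, so this gap is shared rather than introduced by your change of variables. As it stands, neither argument establishes \eqref{eq:Taylor expansion order 2} or \eqref{ineq : lower bound L_eps} under Assumption~\ref{ass:densities} alone.
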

\begin{proof}
    
    To match the results from \citet{conforti2021formula} we will consider the triplet $(\RR^d,g,m)$ where $g$ is the metric tensor $g=A$ and $m(dx)=\mathrm{vol}_g(dx)=\sqrt{\det A}\;dx$. 

    Let's write as well the standard Gaussian density used in the Schrödinger bridge 
    $$
    \frac{dR_\varepsilon^A}{dm\otimes m} (x,y) = (2\pi \varepsilon)^{-d/2}\exp\left(-\frac{1}{2\varepsilon}|x-y|_A^2\right)
    $$
    Write $\alpha =\tilde f\;dm$ and $\beta = \tilde g\;dm$ where $\tilde f=f/\sqrt{\det A}$ and $\tilde g=g/\sqrt{\det A}$.

    A classic chain rule of entropy leads for every $\pi\in\Pi(\alpha,\beta)$
    $$
    \varepsilon\KL(\pi|R_\varepsilon^A) = \frac{1}{2}\int |x-y|_A^2\;d\pi + \varepsilon\KL(\pi|\alpha\otimes\beta) + \frac{d}{2}\varepsilon\log(2\pi\varepsilon) + \varepsilon(\KL(\alpha|m) +\KL(\beta|m))
    $$
    The result of Theorem 1.6 from \citet{conforti2021formula} is 
    $$
    \varepsilon\inf_{\pi\in\Pi(\alpha,\beta)}\KL(\pi|R_\varepsilon^A) = \frac{1}{2}\inf_{\pi\in\Pi(\alpha,\beta)}\int |x-y|_A^2\;d\pi + \frac{\varepsilon}{2}(\KL(\alpha|m) +\KL(\beta|m)) + \frac{\varepsilon^2}{8}I(\alpha,\beta) + o(\varepsilon^2)
    $$
    Where 
    $$
    I(\alpha,\beta) = \int\int_0^1|\nabla_A\log\tilde\rho_t|_A^2\tilde\rho_t\;dt\;dm
    $$
    is the integrated Fisher information along the geodesic $d\tilde \rho_t = ((1-t)I + T_A)\#\tilde fdm$. We will write $\tilde \rho_t$ as its density with respect to $m$.

    Considering $\rho_t^A$ the density of $\tilde \rho_t$ wrt to Lebesgue we have $\rho_t^A=\tilde \rho_t/\sqrt{\det A}$.

    Moreover $\nabla_A=A^{-1}\nabla$, $\nabla \log\tilde \rho_t=\nabla\log\rho_t^A$ because $\log\det A$ is constant and finally $\tilde\rho_tdm = \rho_t^Adx$.
    
    So 
    $$
    I(\alpha,\beta) = \int \int_0^1(\nabla\log\rho_t^A)^\top A^{-1}(\nabla\log\rho_t^A) \tilde\rho_t \;dm\;dt \coloneqq I(A)
    $$
    Therefore, taking the infimum on both sides gives, after simplification of the terms involving $x^\top Ax$ and $y^\top Ay$ leads to 
    $$
    \OT_{\alpha,\beta}^\epsilon(A)=\OT_{\alpha,\beta}^0(A) -\frac{d}{2}\varepsilon\log(2\pi\varepsilon) - \frac{\varepsilon}{2}(\KL(\alpha|m) +\KL(\beta|m)) + \frac{\varepsilon^2}{8}I(A) +o(\varepsilon^2)
    $$
    Moreover $\KL(\alpha|m)=\KL(\alpha|dx)-\frac{1}{2}\log\det A$ so the result is proved. 

    Moreover from  \cite[Theorem 1.6]{conforti2021formula} we know that 
    $$
    0\le\varepsilon\inf_{\pi\in\Pi(\alpha,\beta)}\KL(\pi|R_\varepsilon^A)-\frac{1}{2}\inf_{\pi\in\Pi(\alpha,\beta)}\int |x-y|_A^2\;d\pi\le \frac{\varepsilon}{2}(\KL(\alpha|m) +\KL(\beta|m)) + \frac{\varepsilon^2}{8}I(A)
    $$
    Therefore using the same chain rule to expand $\displaystyle \varepsilon\inf_{\pi\in\Pi(\alpha,\beta)}KL(\pi|R_\varepsilon^A)$ leads to 
    $$
    \OT_{\alpha,\beta}^\epsilon(A) - \OT_{\alpha,\beta}^0(A)\le-\frac{d}{2}\varepsilon\log(2\pi\varepsilon) - \frac{\varepsilon}{2}(\KL(\alpha|dx) +\KL(\beta|dy)) + \frac{\varepsilon}{2}\log\det A + \frac{\varepsilon^2}{8}I(A)
    $$
    and
    $$
    -\varepsilon(\KL(\alpha|dx) +\KL(\beta|dy)) + \varepsilon\log\det A-\frac{d}{2}\varepsilon\log(2\pi\varepsilon)\le \OT_{\alpha,\beta}^\epsilon(A) - \OT_{\alpha,\beta}^0(A).
    $$
    Which leads to the desired result.
\end{proof}

\subsection{Well-posedness and limit of the regularized objective}\label{subsection : Well-posedness of regularized objective}

Even though the entropic iOT loss $\Ll_\varepsilon$ is more convenient computationally speaking, it lacks of lower bounds and can even diverge in some direction.  
Indeed, from the upper bound \eqref{ineq : upper bound Ll_eps} if one chose $A_0\in \Ss(\hat\pi)$ and $\lambda>0$, we get
$$
\Ll_\varepsilon(\lambda A_0) \le -\varepsilon d\log\lambda -\varepsilon\log\det A_0 + \frac{\varepsilon}{2}(\KL(\alpha|dx) +\KL(\beta|dy))
$$
Then having $\lambda\to+\infty$ leads to $\Ll_\varepsilon(\lambda A_0)\to -\infty$. Therefore, we need to add a regularizer to counterpart the logarithmic growth in the Taylor expansion of $\Ll_\varepsilon$. 

Given a regularizer $R$ satisfying \ref{ass: regularizer R and positiv A}, define the regularized loss $\Jj_\varepsilon$ as 
$$
\Jj_\epsilon(A)
=
\Ll_\epsilon(A)
+
\epsilon\lambda_0 R(A)
$$
We show that unlike $\Ll_\varepsilon$, the regularized loss $\Jj_\varepsilon$ admits a unique minimizer.
\begin{prop}\label{prop:uniqueness  A_eps}
    Assume \ref{ass:domain}, \ref{ass:densities} and \ref{ass: regularizer R and positiv A}. 
    Then $\Jj_\varepsilon$ is coercive, locally strongly convex, and thus admits a unique global minimizer $A_\varepsilon$. Moreover, $\Ll_\varepsilon$ is $\Cc^2$ and $A_\varepsilon$  satisfies 
    $$
    \norm{\Sigma_{\hat\pi}-\Sigma_{\pi_\epsilon(A_\varepsilon)}}_F\le \lambda_0\varepsilon M
    $$
\end{prop}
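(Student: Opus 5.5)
The plan is to study $\Jj_\varepsilon$ through its two pieces: the convex gap $\Ll_\varepsilon$, whose Hessian is available in closed form, and the coercive regularizer $\varepsilon\lambda_0R$.

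\textbf{Smoothness and convexity.} By Sinkhorn duality, $\OT^\varepsilon_{\alpha,\beta}(c_A)$ is a smooth concave function of the cost. On compact $X,Y$ the Schrödinger potentials depend smoothly on $A$; one can see this by the implicit function theorem applied to the Schrödinger system, whose linearization is invertible modulo constants. Hence $A\mapsto \OT^\varepsilon(c_A)$ is $\Cc^2$, and since $A\mapsto c_A$ is linear,
\[
\nabla \Ll_\varepsilon(A)=\Sigma_{\pi_\varepsilon(A)}-\Sigma_{\hat\pi}
\]
by the envelope theorem, in the same way as Proposition~\ref{prop:Ll_0 is C^1}. The Hessian is $\varepsilon^{-1}$ times a conditional covariance. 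Concretely, $H^\top\nabla^2\Ll_\varepsilon(A)H$ equals $\varepsilon^{-1}\inf_{f,g}\|x^\top Hy-f(x)-g(y)\|^2_{L^2(\pi_\varepsilon(A))}$, i.e. $\varepsilon^{-1}$ times the variance of $x^\top Hy$ after removing additive functions. Because $\pi_\varepsilon(A)$ has a positive continuous density on $X\times Y$, this quantity vanishes only if $x^\top Hy=f(x)+g(y)$ on $X\times Y$. Taking a mixed derivative forces $H=0$ (the interiors are nonempty). So $\nabla^2\Ll_\varepsilon(A)\succ0$ for every $A$, and by continuity it is uniformly positive on compact sets. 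This gives local strong convexity of $\Ll_\varepsilon$, and therefore of $\Jj_\varepsilon$, since $R$ is convex. In particular $\Jj_\varepsilon$ is strictly convex.

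\textbf{Coercivity.} This is the main obstacle, because $\Ll_\varepsilon$ can tend to $-\infty$ along rays, as shown above. I will use the lower bound \eqref{ineq : lower bound L_eps} only on positive definite matrices. For general $A$ I need a crude global lower bound instead. Taking $\pi=\hat\pi$ in the infimum is useless because $\KL(\hat\pi|\alpha\otimes\beta)=\infty$. Instead I take a smoothed coupling $\pi_\eta$: a fixed coupling with $\KL(\pi_\eta|\alpha\otimes\beta)\le C\log(1/\eta)$ and $W_\infty(\pi_\eta,\hat\pi)\le\eta$, obtained by blurring $\hat\pi$ at scale $\eta$ and correcting the marginals. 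This gives
\[
\Ll_\varepsilon(A)\ge -\|A\|\,\mathrm{diam}\cdot\eta-\varepsilon C\log(1/\eta).
\]
Choosing $\eta=\varepsilon/\|A\|$ yields $\Ll_\varepsilon(A)\ge -C\varepsilon(1+\log(1+\|A\|))$. Since $R\ge\kappa\|\cdot\|$, the term $\varepsilon\lambda_0\kappa\|A\|$ dominates this logarithm, so $\Jj_\varepsilon(A)\to\infty$. A continuous coercive function on $\RR^{d\times d}$ attains its minimum, and strict convexity makes the minimizer $A_\varepsilon$ unique. If constructing $\pi_\eta$ rigorously is delicate, my fallback is the block approximation of $\hat\pi$ on a grid of mesh $\eta$, whose entropy is bounded by $d\log(1/\eta)+C$ thanks to the density lower bounds in \ref{ass:densities}.

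\textbf{Optimality condition.} Fermat's rule for the convex function $\Jj_\varepsilon$ gives
\[
0\in \Sigma_{\pi_\varepsilon(A_\varepsilon)}-\Sigma_{\hat\pi}+\varepsilon\lambda_0\partial R(A_\varepsilon).
\]
Since every subgradient of $R$ has norm at most $M$ by \ref{ass: regularizer R and positiv A}, this yields $\|\Sigma_{\hat\pi}-\Sigma_{\pi_\varepsilon(A_\varepsilon)}\|_F\le\lambda_0\varepsilon M$, provided the subgradient bound is taken in the Frobenius norm. If it is stated in another norm, the same bound holds up to a norm-equivalence constant.
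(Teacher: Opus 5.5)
Your proof is correct, and on its two substantive points it takes a genuinely different route from the paper.

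\textbf{Convexity and smoothness.} The paper cites \cite{andradesparsistency} for local strong convexity of $\Ll_\varepsilon$ and takes the $\Cc^2$ regularity for granted. You derive both: smoothness from the Schr\"odinger system, and convexity from the Hessian formula $H\mapsto\varepsilon^{-1}\inf_{f,g}\norm{x^\top Hy-f(x)-g(y)}^2_{L^2(\pi_\varepsilon(A))}$. This is self-contained. One refinement lets you pass from ``the infimum vanishes'' to an actual identity $x^\top Hy=f(x)+g(y)$, and also avoids appealing to continuity of the Hessian. Note that $d\pi_\varepsilon(A)/d(\alpha\otimes\beta)\ge m>0$, locally uniformly in $A$. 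Projecting onto additive functions in $L^2(\alpha\otimes\beta)$ then gives $H^\top\nabla^2\Ll_\varepsilon(A)H\ge\frac{m}{\varepsilon}\norm{C_\alpha^{1/2}HC_\beta^{1/2}}_F^2$, where $C_\alpha,C_\beta$ are the (nondegenerate) covariance matrices of $\alpha,\beta$.

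\textbf{Coercivity.} The paper works on $A\succ0$ only. It combines \eqref{ineq : lower bound L_eps} with the Fisher bound of Corollary~\ref{cor: homogeneousness of Fisher} to get $\Jj_\varepsilon(A)\ge\varepsilon F(A)-O(\varepsilon^2/\norm{A})$, with $F=\lambda_0R-\frac12\log\det$. This ties coercivity to the limit functional $F$ used later in Proposition~\ref{prop:limit of A_eps} and Theorem~\ref{thm:bias}. However, it rests on regularity of Brenier maps, and the bound cannot hold near singular matrices. Indeed, since $\abs{\OT^\varepsilon_{\alpha,\beta}(c)-\OT^\varepsilon_{\alpha,\beta}(c')}\le\norm{c-c'}_\infty$, $\Jj_\varepsilon$ is continuous on all of $\RR^{d\times d}$. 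So it stays bounded as $\det A\to0$ with $\norm{A}$ fixed, while $\varepsilon F(A)\to+\infty$.

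Your primal-competitor bound $\Ll_\varepsilon(A)\ge-C_\varepsilon(1+\log(1+\norm{A}))$ needs no regularity and no positive definiteness. The constant picks up a $\log(1/\varepsilon)$ from the choice $\eta=\varepsilon/\norm{A}$, which is harmless at fixed $\varepsilon$. Use the block approximation as the competitor. Its entropy bound does not even need the density lower bounds: since $\hat\pi(Q_i\times R_j)\le\alpha(Q_i)$, you get $\KL(\pi_\eta|\alpha\otimes\beta)\le\sum_j\beta(R_j)\log\frac{1}{\beta(R_j)}\le\log N_Y$, where $N_Y\lesssim\eta^{-d}$ is the number of cells.

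\textbf{Domain of minimization.} Minimizing over all of $\RR^{d\times d}$ is what makes your Fermat rule legitimate with no normal-cone term. Keep in mind, though, that your $A_\varepsilon$ need not be positive definite. Theorem~\ref{thm:bias} minimizes over positive semidefinite matrices, and there a minimizer on the boundary would add a normal-cone term to the stationarity condition.
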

\begin{proof}
    We first show that $\Jj_\varepsilon$ is coercive. Indeed from the lower bound \eqref{ineq : lower bound L_eps}, one gets for $A\succ0$
    $$
    \Ll_\varepsilon(A) \ge \Ll_0(A) -\frac{\varepsilon}{2}\log\det A-\frac{\varepsilon^2}{2}I(A)
    $$
    Therefore, 
    $$
    \Jj_\varepsilon(A) \ge \varepsilon F(A)- \frac{\varepsilon^2}{2}\frac{C}{\norm{A}} \qwhereq F(A)\coloneqq \lambda_0 R(A)-\frac{1}{2}\log\det A
    $$
    where we used the fact that $\Ll_0$ is non-negative and Corollary \ref{cor: homogeneousness of Fisher} through $I(A)\le C/\norm{A}$ with $C>0$. The result follows from the coercivity of $F$ given by Lemma \ref{lem:strong convexity of logdet}.
    
    By \cite{andradesparsistency} we know that $\Ll_\varepsilon$ is locally strongly convex and hence, $\Jj_\varepsilon$ is  locally strongly convex and coercive: it admits a unique minimizer $A_\varepsilon$. Moreover,
    $$
    \nabla \Ll_\varepsilon(A_\varepsilon) = \Sigma_{\pi_\epsilon(A_\varepsilon)}-\Sigma_{\hat\pi}
    $$
    Therefore $A_\varepsilon$ satisfies 
    $$
    \Sigma_{\hat\pi}-\Sigma_{\pi_\epsilon(A_\varepsilon)} = \lambda_0\varepsilon \xi \qwhereq \xi\in\partial R(A_\varepsilon)
    $$
    By assumption, $\partial R(A_\varepsilon)$ is uniformly bounded by a constant $M>0$, so 
    $$
    \norm{\Sigma_{\hat\pi}-\Sigma_{\pi_\epsilon(A_\varepsilon)}}_F\le \lambda_0\varepsilon M.
    $$
\end{proof}

Thus we know that the problem \eqref{eq:J_eps} admits a unique solution. We show that it converges as $\varepsilon\to0$ to a selection of a feasible cost matrix $A_0$ that is the unique solution of the following optimization problem :
 \begin{equation}\label{limit prob}
        \min_{A\succ 0} \lambda_0R(A) - \frac{1}{2}\log\det A \quad s.t.\quad A\in \Ss_0(\hat\pi)
        \tag{$\Jj_0$}
\end{equation}

\begin{prop}
    Assume \ref{ass:domain}, \ref{ass:densities} and \ref{ass: regularizer R and positiv A}. Then \eqref{limit prob} admits a unique solution $A_0$ and the minimizer $A_\varepsilon$ of $\Jj_\varepsilon$ converges toward $A_0$ as $\varepsilon\to0$.
    Moreover, we have the following convergence in energy: 
    $$
    \frac{\Jj_\varepsilon(A_\varepsilon)}{\varepsilon}\to F(A_0)
    $$
    
    \label{prop:limit of A_eps}
\end{prop}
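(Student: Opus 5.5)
This is a $\Gamma$-convergence type argument for the rescaled functionals $\Jj_\varepsilon/\varepsilon$, in three steps: existence and uniqueness for \eqref{limit prob}, uniform bounds on $A_\varepsilon$, and identification of limit points.

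\textbf{Step 1: the limit problem.} The feasible set $\Ss_0(\hat\pi)\cap\{A\succ0\}$ is convex, since it is the zero set of the convex nonnegative function $\Ll_0$. It is also nonempty, as it contains $\hat A$. Set $F(A)=\lambda_0R(A)-\frac12\log\det A$. Then $F$ is strictly convex on $\{A\succ0\}$, because $-\log\det$ is strictly convex. It is coercive by Lemma~\ref{lem:strong convexity of logdet}: it blows up both as $\|A\|\to\infty$, since $R\ge\kappa\|\cdot\|$, and as $A$ approaches the boundary of the cone. The feasible set is also closed in the relative topology of $\{A\succ0\}$, by continuity of $\Ll_0$. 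Hence a minimizer $A_0$ exists and is unique.

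\textbf{Step 2: upper bound.} Use $A_0$ as a competitor. Inequality \eqref{ineq : upper bound Ll_eps} together with $\Ll_0(A_0)=0$ gives
\[
\frac{\Jj_\varepsilon(A_\varepsilon)}{\varepsilon}\le\frac{\Jj_\varepsilon(A_0)}{\varepsilon}\le F(A_0)+C_0,
\]
where $C_0$ collects the KL constants. This is a uniform bound, but it is not yet sharp. For sharpness, apply the expansion \eqref{eq:Taylor expansion order 2} at the fixed point $A_0$. This yields $\Jj_\varepsilon(A_0)/\varepsilon=F(A_0)+O(\varepsilon)$, and therefore
\[
\limsup_{\varepsilon\to0}\frac{\Jj_\varepsilon(A_\varepsilon)}{\varepsilon}\le F(A_0).
\]
For this to work, the renormalization of $\Ll_\varepsilon$ must be handled consistently between the expansion and the bounds, so that the constant terms cancel.

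\textbf{Step 3: lower bound and compactness.} From \eqref{ineq : lower bound L_eps} and $I(A)\le C/\|A\|$ (Corollary~\ref{cor: homogeneousness of Fisher}) we get
\[
\frac{\Jj_\varepsilon(A)}{\varepsilon}\ge\frac{\Ll_0(A)}{\varepsilon}+F(A)-\frac{C\varepsilon}{\|A\|}.
\]
Combined with the upper bound, this shows that $F(A_\varepsilon)$ is bounded. By coercivity of $F$, the $A_\varepsilon$ then stay in a compact subset of the open cone $\{A\succ0\}$. In particular $\|A_\varepsilon\|$ is bounded below, so the Fisher term is $O(\varepsilon)$. The same inequality also gives $\Ll_0(A_\varepsilon)\le C\varepsilon\to0$.

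Take any limit point $\bar A$. By continuity, $\Ll_0(\bar A)=0$, so $\bar A$ is feasible. Lower semicontinuity of $F$ and nonnegativity of $\Ll_0$ give
\[
F(\bar A)\le\liminf_{\varepsilon\to0}\frac{\Jj_\varepsilon(A_\varepsilon)}{\varepsilon}\le F(A_0).
\]
Uniqueness from Step 1 forces $\bar A=A_0$, so the whole family converges. Since the liminf and limsup now coincide, we obtain the energy convergence $\Jj_\varepsilon(A_\varepsilon)/\varepsilon\to F(A_0)$.

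\textbf{Main obstacle.} The hard part is uniformity in Step 3: the lower bound on $\Jj_\varepsilon$ must be valid globally in $A$, not only asymptotically. The explicit global inequality \eqref{ineq : lower bound L_eps} is what provides this. A secondary point is tracking the constants from the modified $\tilde\Ll_\varepsilon$, so that the upper and lower bounds match at order $\varepsilon$.
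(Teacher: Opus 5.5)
Your route is the paper's. The appendix proves $\Gamma$-convergence of $\Jj_\varepsilon/\varepsilon$ to $F+\iota_{\Ss_0(\hat\pi)}$: the liminf comes from \eqref{ineq : lower bound L_eps}, the constant recovery sequence from \eqref{eq:Taylor expansion order 2}, boundedness of $A_\varepsilon$ from coercivity of $F$ plus Corollary~\ref{cor: homogeneousness of Fisher}, and uniqueness from Lemma~\ref{lem:strong convexity of logdet}. Your Steps 2 and 3 are those two halves written out by hand.

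The problem is that your compactness step is circular. What \eqref{ineq : lower bound L_eps}, the Fisher bound and your upper bound actually give is
\[
F(A_\varepsilon)\le K+\frac{C\varepsilon}{\|A_\varepsilon\|}.
\]
This bounds $F(A_\varepsilon)$ only if you already know $\|A_\varepsilon\|\gtrsim\varepsilon$, and you then derive that from the boundedness of $F(A_\varepsilon)$. This is not a technicality. Since $T_{\lambda A}=T_A$, one has exactly $I(\lambda A_1)=I(A_1)/\lambda$, so along a ray your lower bound reads $F(\lambda A_1)-\varepsilon I(A_1)/(8\lambda)$. Here $F(\lambda A_1)=\frac d2\log(1/\lambda)+O(1)$ is overwhelmed by $\varepsilon/\lambda$ once $\lambda\ll\varepsilon$ (say $\lambda=\varepsilon^2$). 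So nothing you wrote excludes $A_\varepsilon\to0$, or more generally a limit point on the boundary of the cone. Your final step (``$\bar A$ feasible, $F$ l.s.c.'') silently needs $\bar A\succ0$.

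The paper's proof has the same hole: it only rules out $\|A_\varepsilon\|\to\infty$, then applies a liminf inequality proved only for limits $G\succ0$. So your instinct to aim for a compact subset of the open cone is right; you just need a lower bound that does not degenerate as $A\to0$. One is free. Testing $\pi=\alpha\otimes\beta$ in \eqref{eq:OT_epsilon} gives $\OT^\varepsilon_{\alpha,\beta}(c_A)\le\langle c_A,\alpha\otimes\beta\rangle$, hence $\Ll_\varepsilon(A)\ge -C_1\|A\|$ before renormalization. After subtracting the constants that define $\tilde\Ll_\varepsilon$,
\[
\frac{\Jj_\varepsilon(A)}{\varepsilon}\ge\frac d2\log\frac{1}{2\pi\varepsilon}-C_2-\frac{C_1\|A\|}{\varepsilon}.
\]
This contradicts $\Jj_\varepsilon(A_\varepsilon)/\varepsilon\le K$ whenever $\|A_\varepsilon\|\le\varepsilon$ and $\varepsilon$ is small. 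On $\{\|A\|\ge\varepsilon\}$ the Fisher term is at most $C/8$, so $\sup_\varepsilon F(A_\varepsilon)<\infty$ and coercivity of $F$ gives compactness inside $\{A\succ0\}$. The rest of your argument then goes through.

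Be aware that this still relies on Corollary~\ref{cor: homogeneousness of Fisher} holding uniformly on $\{\|A\|=1,\ A\succ0\}$. The paper justifies that only by continuity on a set that is not compact among invertible matrices. Since $A^{-1}$ enters $I(A)$, it is doubtful near singular $A$: for $\alpha=\beta$ one has $T_A=\Id$ for every $A\succ0$, so $I(A)=\int\nabla\log\alpha^\top A^{-1}\nabla\log\alpha\,d\alpha$, which is unbounded there. Degeneration toward singular matrices of unit norm would therefore need a separate argument as well.
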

The proof of Proposition \ref{prop:limit of A_eps} is displayed in the Appendix \ref{proof:limit of A_eps}. We first show the Gamma convergence of \eqref{eq:J_eps} toward \eqref{limit prob} and then that the limit problem admits a unique solution through local strong convexity of $-\log\det$ on the convex cone $\Ss_0(\hat\pi)$.

\subsection{Bias control under full data}\label{subsection : bias control under full data}

We have proven in Proposition \ref{prop:limit of A_eps} that the entropic regularization selects a representative of the feasible ray. The next theorem gives quantitative bounds on the bias introduced by the entropic regularization. We show that the entropic bias is of order $\varepsilon$ in direction by taking advantage of the curvature of $\Ll_0$ orthogonally to the ray described in Theorem \ref{thm:local_curvature_L0}. And we show as well that the bias in norm is of order $\sqrt{\varepsilon}$ by using the strong convexity of $-\log\det$ which appear in the Taylor expansion in Proposition \ref{prop:Taylor expansion of order 2}.

\begin{thm}[Deterministic consistency and bias]\label{thm:bias}  Assume
 \ref{ass:domain}, \ref{ass:densities} with $k=1$ and \ref{ass: regularizer R and positiv A}. Assume also that \eqref{hessian spanning symmetric matrices} holds at $\hat A$. Consider
$$
A_\epsilon \in \argmin_{A\in\mathbb{S}^{d\times d}_+}\Jj_\epsilon(A).
$$
Then, $A_\epsilon$ is unique and there exists a non-trivial $A_0 \in \Ss_0(\hat\pi)$ such that $\lim_{\epsilon\to 0} A_\epsilon = A_0$.  Moreover, there exists a constant $C>0$ such that
\[
1-\pa{\frac{\langle A_\epsilon, A_0\rangle}{\norm{A_\epsilon}\norm{A_0}}}^2
\le C\epsilon^2,
\qquad
\norm{A_\epsilon-A_0}\le C\sqrt{\epsilon}.
\]
\end{thm}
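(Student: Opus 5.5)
Uniqueness of $A_\epsilon$ and convergence $A_\epsilon\to A_0$, where $A_0$ is the unique solution of \eqref{limit prob}, are already given by Propositions~\ref{prop:uniqueness  A_eps} and \ref{prop:limit of A_eps}. Under \eqref{hessian spanning symmetric matrices}, Theorem~\ref{thm:uniqueness} says $\Ss_0(\hat\pi)$ is the ray $\{\lambda\hat A\}$, so $A_0=\lambda_*\hat A$ with $\lambda_*>0$ the minimizer of $\lambda\mapsto \lambda_0R(\lambda\hat A)-\tfrac d2\log\lambda$; in particular $A_0$ is non-trivial. It remains to prove the two rates. I will compare $\Jj_\epsilon(A_\epsilon)\le \Jj_\epsilon(A_0)$ using the two-sided bounds \eqref{ineq : lower bound L_eps}--\eqref{eq:Taylor expansion order 2} and the transverse quadratic growth of $\Ll_0$ from Theorem~\ref{thm:local_curvature_L0}.

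\textbf{Step 1 (basic inequality).} Since $\Ll_0(A_0)=0$, the Taylor expansion \eqref{eq:Taylor expansion order 2} at the fixed matrix $A_0$ gives $\Jj_\epsilon(A_0)=\epsilon F(A_0)+O(\epsilon^2)$, where $F=\lambda_0R-\tfrac12\log\det$. On the other side, \eqref{ineq : lower bound L_eps} together with $I(A)\le C/\norm{A}$ (bounded for $A$ near $A_0$, which holds for small $\epsilon$ since $A_\epsilon\to A_0$) gives $\Jj_\epsilon(A_\epsilon)\ge \Ll_0(A_\epsilon)+\epsilon F(A_\epsilon)-C\epsilon^2$. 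Combining the two,
\[
\Ll_0(A_\epsilon)+\epsilon\bigl(F(A_\epsilon)-F(A_0)\bigr)\le C\epsilon^2 .
\]

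\textbf{Step 2 (split into radial and transverse parts).} Let $P$ be the orthogonal projection onto the ray, $PA_\epsilon=\lambda_\epsilon\hat A$. Then $F(A_\epsilon)-F(A_0)=[F(A_\epsilon)-F(PA_\epsilon)]+[F(PA_\epsilon)-F(A_0)]$. The second bracket is $\ge c|\lambda_\epsilon-\lambda_*|^2$, by minimality of $A_0$ on the ray and strong convexity of $-\log\det$ there (Lemma~\ref{lem:strong convexity of logdet}). The first bracket is $\ge -L\,\mathrm{dist}(A_\epsilon,\RR A_0)$, since $F$ is locally Lipschitz ($R$ has subgradients bounded by $M$, and $\log\det$ is smooth near $A_0\succ0$). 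Theorem~\ref{thm:local_curvature_L0} (with $m_*[A_0]>0$ by \eqref{hessian spanning symmetric matrices}) and Corollary~\ref{cor:stabilty_S0} give $\Ll_0(A_\epsilon)\ge c'\,\mathrm{dist}(A_\epsilon,\RR A_0)^2$. Writing $\delta=\mathrm{dist}(A_\epsilon,\RR A_0)$, the inequality of Step 1 becomes
\[
c'\delta^2-\epsilon L\delta+c\epsilon|\lambda_\epsilon-\lambda_*|^2\le C\epsilon^2 .
\]
This yields $\delta\lesssim\epsilon$, hence $1-\cos^2=\delta^2/\norm{A_\epsilon}^2\le C\epsilon^2$, which is the directional rate. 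It also yields $|\lambda_\epsilon-\lambda_*|^2\lesssim\epsilon$, so $\norm{A_\epsilon-A_0}\le\delta+|\lambda_\epsilon-\lambda_*|\norm{\hat A}\lesssim\sqrt\epsilon$.

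\textbf{Main obstacle.} The delicate point is uniformity of the constants. The $o_{A}(\epsilon^2)$ remainder is only needed at the fixed $A_0$, so that is fine. But the lower bound must hold uniformly for $A$ in a neighbourhood of $A_0$; this needs the uniform Fisher bound $I(A)\le C/\norm{A}$ and the fact that $A_\epsilon$ stays in the ball $B_r(A_0)$ where Theorem~\ref{thm:local_curvature_L0} applies. Both follow from the qualitative convergence in Proposition~\ref{prop:limit of A_eps}. A further check is that $A_\epsilon\succ0$, so that $\log\det$ is defined; this holds eventually because $A_0\succ0$.
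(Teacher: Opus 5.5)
Your argument is correct, but it reaches the directional rate by a different mechanism than the paper.

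\textbf{The paper's route.} It proves the transverse rate first, from first-order optimality of $A_\epsilon$. It combines $\|\Sigma_{\hat\pi}-\Sigma_{\pi_\epsilon(A_\epsilon)}\|\le\lambda_0\epsilon M$ from Proposition~\ref{prop:uniqueness  A_eps} with the entropic-versus-unregularized cross-covariance estimate of Lemma~\ref{lem:entropic estimation of cross variances}. That lemma imports an external $O(\epsilon)$ entropic-map bound. Together they give $\|\nabla\Ll_0(A_\epsilon)\|=O(\epsilon)$. Convexity of $\Ll_0$ and the quadratic growth of Theorem~\ref{thm:local_curvature_L0} then yield $c\,\delta^2\le\Ll_0(A_\epsilon)\le\langle\nabla\Ll_0(A_\epsilon),A_\epsilon-B_\epsilon\rangle\le C\epsilon\delta$. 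Only afterwards does the paper run the energy comparison $\Jj_\epsilon(A_\epsilon)\le\Jj_\epsilon(A_0)$. There it discards $\Ll_0(A_\epsilon)\ge 0$ and feeds in the already-known $\delta=O(\epsilon)$ to get $\|B_\epsilon-A_0\|^2\lesssim\epsilon$.

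\textbf{Your route.} You keep $\Ll_0(A_\epsilon)$ in that same energy inequality. You observe that the Lipschitz loss $-\epsilon L\delta$ is dominated by the transverse quadratic growth $c'\delta^2$. A single quadratic inequality then gives both $\delta\lesssim\epsilon$ and $|\lambda_\epsilon-\lambda_*|\lesssim\sqrt{\epsilon}$.

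\textbf{What each buys.} Your approach is more self-contained. It never uses stationarity or differentiability of $\Ll_\epsilon$, nor the cross-covariance lemma and its external input. It uses only what the paper already needs for the radial part:
\begin{itemize}
\item the lower bound \eqref{ineq : lower bound L_eps} uniformly near $A_0$;
\item the $O(\epsilon^2)$ Taylor remainder at the fixed $A_0$;
\item strong convexity of $F$ on the ray;
\item the transverse growth of $\Ll_0$.
\end{itemize}
The paper's route gives, as a by-product, the moment-matching statement $\|\Sigma_{\hat\pi}-\Sigma_{\pi_0(A_\epsilon)}\|=O(\epsilon)$. That is, $A_\epsilon$ is an $O(\epsilon)$-approximate stationary point of $\Ll_0$. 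Its transverse bound also does not depend on the Taylor remainder or on the Lipschitz constant of $F$.
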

This result essentially follows from curvature properties of $\Ll_0$ when it is assumed that \eqref{hessian spanning symmetric matrices} holds at $\hat A$
\begin{proof}[Proof of theorem \ref{thm:bias}] For a matrix $A$, we denote its normalization by  $\bar A\coloneqq A/\norm{A}$.
    \begin{itemize}
        \item[\textup{(i)}] We combine convexity of $\Ll_0$ and Theorem \ref{thm:local_curvature_L0} with $A_\varepsilon$ and $B_\varepsilon\coloneqq \ps{A_\varepsilon}{\bar A_0}\bar A_0\in S_0(\hat \pi)$ the orthogonal projection of $A_\varepsilon$ to $S_0(\hat\pi)=\{\lambda A_0,\;\lambda>0\}$. Since $A_\varepsilon$ and $B_\varepsilon$ both converge to $A_0$, we can apply Theorem \ref{thm:local_curvature_L0} on a neighborhood of $A_0$ to have the following gradient inequality
        \begin{equation}\label{eq:grad L_0 > angle}
        \ps{\nabla\Ll_0(A_\varepsilon)}{A_\varepsilon-B_\varepsilon} \geq \frac{m_*}{8\mu_{A_0}}\min\pa{\norm{A_\varepsilon}^2,\norm{B_\varepsilon}^2} (1-\dotp{\bar A_\varepsilon}{\bar B_\varepsilon}^2)
        \end{equation}
        First, we show that $\norm{\nabla\Ll_0(A_\varepsilon)}=O(\varepsilon)$. Indeed, according to proposition \ref{prop:Ll_0 is C^1}
        \begin{align*}
            \norm{\nabla\Ll_0(A_\varepsilon)} &= \norm{\Sigma_{\hat\pi}-\Sigma_{\pi_0(A_\varepsilon)}} \\
            &\le \norm{\Sigma_{\hat\pi}-\Sigma_{\pi_\epsilon(A_\varepsilon)}}+\norm{\Sigma_{\pi_\varepsilon(A_\varepsilon)}-\Sigma_{\pi_0(A_\varepsilon)}}
        \end{align*}
        Using Proposition \ref{prop:uniqueness  A_eps} on the left term and Lemma \ref{lem:entropic estimation of cross variances} on the right one leads to the desired result.
        
        Now, notice that 
        $$
        \dotp{\bar A_\varepsilon}{\bar B_\varepsilon}^2 = \dotp{\bar A_\varepsilon}{\bar A_0}^2
        $$
        Therefore, using  $\norm{\nabla\Ll_0(A_\varepsilon)}=O(\varepsilon)$ and Cauchy-Schwartz inequality in \eqref{eq:grad L_0 > angle} gives
        \begin{equation}\label{eq:angle < eps*norm}
        1-\dotp{\bar A_\varepsilon}{\bar A_0}^2\lesssim \varepsilon\norm{A_\varepsilon-B_\varepsilon}
        \end{equation}
        But, since $B_\varepsilon$ is the orthogonal projection of $A_\varepsilon$ on the ray $S_0(\hat\pi)=\{\lambda A_0,\;\lambda>0\}$, we have 
        $$
        \norm{A_\varepsilon}^2 = \dotp{A_\varepsilon}{\bar A_0}^2 + \norm{A_\varepsilon - B_\varepsilon}^2
        $$
        So
        \begin{equation}\label{eq:link between A_eps and B_eps}
        1-\dotp{\bar A_\varepsilon}{\bar A_0}^2= \frac{\norm{A_\varepsilon-B_\varepsilon}^2}{\norm{A_\varepsilon}^2}
        \end{equation}
        Therefore, plugging \eqref{eq:link between A_eps and B_eps} into (\ref{eq:angle < eps*norm}) leads to
        $$
        \norm{P_{A_0}A_\varepsilon}=\norm{A_\varepsilon-B_\varepsilon}=O(\varepsilon) \qandq 1-\dotp{\bar A_\varepsilon}{\bar A_0}^2=O(\varepsilon^2)
        $$
        \item[\textup{(ii)}] Now, 
        According to \eqref{ineq : lower bound L_eps} $\Jj_\varepsilon(A_\varepsilon)$ satisfies
    $$
    \Jj_\varepsilon(A_\varepsilon)\ge \Ll_0(A_\varepsilon) + \varepsilon F(A_\varepsilon) - \frac{\varepsilon^2}{8}I(A_\varepsilon).
    $$
    We know that $\Ll_0(A_\varepsilon)\ge0$ and moreover, from lemma \ref{lem:diff of Fisher}, that $I(A_\varepsilon)\le C$ for all $\varepsilon>0$. Adding $\Jj_\varepsilon(A_0)$ to the previous inequality leads to
    $$
    \Jj_\varepsilon(A_\varepsilon)-\Jj_\varepsilon(A_0)\ge \varepsilon F(A_\varepsilon)-\varepsilon^2C - \Jj_\varepsilon(A_0).
    $$
    From  \eqref{eq:Taylor expansion order 2} we know that 
    $$
    \Jj_\varepsilon(A_0) = \underbrace{\Ll_0(A_0)}_{=0}  +\varepsilon F(A_0) + O_{A_0}(\varepsilon^2)
    $$
    So there exists a constant $C_{A_0}>0$ depending on $A_0$ such that for small $\varepsilon$ 
    $$
    |\Jj_\varepsilon(A_0)-\varepsilon F(A_0)|\le \varepsilon^2C_{A_0}.
    $$
    
    Moreover, by minimality of $A_\varepsilon$ we have $\displaystyle  \Jj_\varepsilon(A_\varepsilon)-\Jj_\varepsilon(A_0)\le 0$. Therefore, 
    \begin{equation}\label{ineq:first step in estimating A_eps}
    0\ge\varepsilon(F(A_\varepsilon)-F(A_0)) - \varepsilon^2(C+C_{A_0}) 
    \end{equation}
    Now we know from Lemma \ref{lem:strong convexity of logdet} that because $B_\varepsilon\in S_0(\hat\pi)$, we have
    $$
    F(B_\varepsilon)-F(A_0)\ge \mu\norm{B_\varepsilon-A_0}^2
    $$
    But $F$ is also locally $L$-Lipschitz so 
    \begin{align*}
        F(A_\varepsilon)-F(A_0) &= F(A_\varepsilon)-F(B_\varepsilon) + F(B_\varepsilon)-F(A_0) \\
        &\ge -L\norm{A_\varepsilon-B_\varepsilon} + \mu\norm{B_\varepsilon-A_0}^2
    \end{align*}
    Using this last inequality in \eqref{ineq:first step in estimating A_eps} leads to 
    $$
    0\ge -L\varepsilon\norm{A_\varepsilon-B_\varepsilon} + \mu\varepsilon\norm{B_\varepsilon-A_0}^2 -\varepsilon^2(C+C_{A_0)}
    $$
    So 
    $$
    \norm{B_\varepsilon-A_0}^2\lesssim \norm{A_\varepsilon-B_\varepsilon}+\varepsilon
    $$
    But, $\norm{A_\varepsilon-B_\varepsilon}=\norm{P_{\Tt^\perp}A_\varepsilon}\lesssim\varepsilon$ so we finally get 
    $$
    \norm{B_\varepsilon-A_0}^2\lesssim \varepsilon
    $$
    \end{itemize}
\end{proof}

\subsection{Variance control under sampled data}\label{subsec:variance_control}

We consider the case where we observe $n$ iid samples $(x_i,y_i)$ from $\hat \pi$.
The sampled version of \eqref{eq:Leps} is 
\begin{equation}\label{eq:eps_loss_n}
\Ll_{n,\epsilon}(A) = \dotp{c_A}{\hat \pi_n} - \inf_{\pi\in\Pi(\al_n,\beta_n)} \dotp{c_A}{\pi} + \epsilon \mathrm{KL}(\pi|\al_n\otimes\beta_n), \tag{$\Ll_{n,\epsilon}$}
\end{equation}
where \[
\hat\pi_n=\frac1n\sum_{i=1}^n \delta_{(x_i,y_i)}, \quad \hat \alpha_n = \frac1n \sum_{i=1}^n \delta_{x_i}, \quad \hat \beta_n = \frac1n \sum_{i=1}^n \delta_{y_i}.
\]

To obtain a relevant estimator, we consider the regularized functional 
\begin{equation}\label{eq:J_eps,n}
    \Jj_{n,\varepsilon}(A) = \Ll_{n,\epsilon}(A) + \lambda_0 \epsilon R(A). \tag{$\Jj_{n,\varepsilon}$}
\end{equation}

We obtain quantitative bounds describing the interaction between entropic bias and sampling error.

\begin{thm}[Statistical efficiency]\label{thm:statistical}
Assume
 \ref{ass:domain}, \ref{ass:densities} with $k=1$ and \ref{ass: regularizer R and positiv A}. Assume also that \eqref{hessian spanning symmetric matrices} holds at $\hat A$. Consider
$$
A_\epsilon^n \in \argmin_{A\in \mathbb{S}^{d\times d}_+} \Jj_{n,\varepsilon}(A)
$$
Then, $A_\epsilon^n$ is unique and
with probability at least $1-\delta$,
\[
1-\pa{\frac{\dotp{ A_\varepsilon^n}{ A_0}}{\norm{A_\varepsilon^n}\norm{A_0}}}^2
\lesssim \frac{C_{\varepsilon,\delta}}{\sqrt n}+\varepsilon,
\qquad
\norm{A_\varepsilon^n-A_0}
\lesssim
\frac{C_{\varepsilon,\delta}^{1/4}}{n^{1/8}}
+
\varepsilon^{1/4}.
\]
The constant  $C_{\epsilon,\delta}$ is polynomial in $1/\epsilon$ with exponent proportional to the dimension $d$ and logarithmic in $1/\delta$.
\end{thm}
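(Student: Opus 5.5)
The plan is to mirror the proof of Theorem~\ref{thm:bias}, with the population first-order condition replaced by its empirical counterpart and an extra sampling error term. Uniqueness of $A_\varepsilon^n$ follows as in Proposition~\ref{prop:uniqueness  A_eps}. In the discrete setting, $\Ll_{n,\varepsilon}$ is smooth and convex, and it is strictly convex on the relevant directions once the empirical cross-moment matrices are nondegenerate, which holds almost surely. Adding $\lambda_0\varepsilon R$ with $R\ge\kappa\norm{\cdot}$ gives coercivity. The optimality condition then reads
\[
\Sigma_{\hat\pi_n}-\Sigma_{\pi^n_\varepsilon(A_\varepsilon^n)}=\lambda_0\varepsilon\xi_n,\qquad \xi_n\in\partial R(A_\varepsilon^n),\quad \norm{\xi_n}\le M,
\]
where $\pi^n_\varepsilon(A)$ is the entropic plan between $\hat\alpha_n,\hat\beta_n$.

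\textbf{Step 1 (bounding $\nabla\Ll_0$ at $A_\varepsilon^n$).} I would split
\[
\norm{\nabla\Ll_0(A_\varepsilon^n)}\le \norm{\Sigma_{\hat\pi}-\Sigma_{\hat\pi_n}}+\norm{\Sigma_{\hat\pi_n}-\Sigma_{\pi^n_\varepsilon(A^n_\varepsilon)}}+\norm{\Sigma_{\pi^n_\varepsilon(A^n_\varepsilon)}-\Sigma_{\pi_\varepsilon(A^n_\varepsilon)}}+\norm{\Sigma_{\pi_\varepsilon(A^n_\varepsilon)}-\Sigma_{\pi_0(A^n_\varepsilon)}}.
\]
Each term is then controlled separately. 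The first term is $O(\sqrt{\log(1/\delta)/n})$ by Hoeffding's inequality on the compact $X\times Y$. The second is $\le\lambda_0 M\varepsilon$ by the optimality condition. The fourth is $O(\varepsilon)$ by Lemma~\ref{lem:entropic estimation of cross variances}, applied uniformly on a compact neighbourhood of $A_0$. The third is the sample complexity of entropic OT plans tested against the smooth function $xy^\top$. I would invoke the known results (\cite{genevay2019sample,mena2019statistical,rigollet2022sample}), which give an error of order $C_{\varepsilon,\delta}/\sqrt n$ with $C_{\varepsilon,\delta}$ polynomial in $1/\varepsilon$ of degree of order $d$ and logarithmic in $1/\delta$. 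These bounds must hold uniformly over $A$ in a compact set of positive definite matrices, which I would obtain by a covering argument using Lipschitz dependence in $A$. Altogether,
\[
\norm{\nabla\Ll_0(A^n_\varepsilon)}\lesssim C_{\varepsilon,\delta}/\sqrt n+\varepsilon.
\]

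\textbf{Step 2 (localization).} Theorem~\ref{thm:local_curvature_L0} is local, so I first need $A^n_\varepsilon$ to lie in $B_r(A_0)$. The intended route is to compare $\Jj_{n,\varepsilon}$ with $\Jj_\varepsilon$ uniformly on compacts, with error again $C_{\varepsilon,\delta}/\sqrt n$. Coercivity of $F=\lambda_0R-\tfrac12\log\det$, together with Proposition~\ref{prop:limit of A_eps}, then confines the minimizer near $A_\varepsilon$, and hence near $A_0$, when $\varepsilon$ is small and $n$ is large. I expect this step, together with the uniform empirical entropic bound in Step 1, to be the main obstacle. The difficulty is that the constants blow up as $\varepsilon\to0$, so the neighbourhood must be secured in a regime where $C_{\varepsilon,\delta}/\sqrt n$ is small. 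If uniform convergence is awkward, I would fall back on convexity: a convex function whose gradient along the sphere $\partial B_r(A_0)$ points outward has its minimizer inside the ball.

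\textbf{Step 3 (angle and norm).} With $B=\langle A^n_\varepsilon,\bar A_0\rangle\bar A_0$, I apply the gradient-monotonicity inequality \eqref{eq:grad L_0 > angle} together with Cauchy--Schwarz. Exactly as in part (i) of the proof of Theorem~\ref{thm:bias}, this gives $\norm{A^n_\varepsilon-B}\lesssim C_{\varepsilon,\delta}/\sqrt n+\varepsilon$. By \eqref{eq:link between A_eps and B_eps} the angle term is bounded by the square of this quantity, which is in particular $\lesssim C_{\varepsilon,\delta}/\sqrt n+\varepsilon$, as claimed.

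For the norm, I would rerun part (ii) with $\Jj_{n,\varepsilon}$. Minimality gives $\Jj_{n,\varepsilon}(A^n_\varepsilon)\le\Jj_{n,\varepsilon}(A_0)$. I transfer this to the population functional at a cost of $C_{\varepsilon,\delta}/\sqrt n$, and use the lower bound \eqref{ineq : lower bound L_eps} together with the strong convexity of $F$ on the ray (Lemma~\ref{lem:strong convexity of logdet}). This yields
\[
\varepsilon\mu\norm{B-A_0}^2\lesssim \varepsilon L\norm{A^n_\varepsilon-B}+\varepsilon^2+C_{\varepsilon,\delta}/\sqrt n .
\]
Dividing by $\varepsilon$ and absorbing the extra $1/\varepsilon$ factor into $C_{\varepsilon,\delta}$ gives $\norm{B-A_0}^2\lesssim C_{\varepsilon,\delta}/\sqrt n+\varepsilon$. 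Taking square roots only yields $C_{\varepsilon,\delta}^{1/2}n^{-1/4}+\varepsilon^{1/2}$, which is stronger than the stated rate. The stated exponents, $\norm{A^n_\varepsilon-A_0}\lesssim C_{\varepsilon,\delta}^{1/4}n^{-1/8}+\varepsilon^{1/4}$, arise if the transfer error enters only through the square root of the angle bound, so this is a weaker bound I can certainly reach. The final estimate then follows from the triangle inequality with $\norm{A^n_\varepsilon-B}$.
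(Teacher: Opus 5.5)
Your Step~3 is sound once its inputs are available. Treating the angle by a first-order argument, as in part (i) of the proof of Theorem~\ref{thm:bias}, is a genuinely different route from the paper's. But two inputs are missing.

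\textbf{Localization.} You flag this yourself, and neither suggested fix works as stated.
\begin{itemize}
\item Theorem~\ref{thm:uniform proba bound of OT cost} is a supremum over $\norm{A}\le B$, so it says nothing about $A^n_\varepsilon$ until you know $A^n_\varepsilon$ is bounded.
\item Proposition~\ref{prop:limit of A_eps} is only qualitative.
\item Along the ray, $\Jj_\varepsilon$ grows only like $\varepsilon\mu\norm{A-A_0}^2$.
\end{itemize}
The ``outward gradient on $\partial B_r(A_0)$'' fallback cannot be verified with first-order information. At $A=A_0+r\bar A_0$ you have $\nabla\Jj_\varepsilon(A)=(\Sigma_{\pi_\varepsilon(A)}-\Sigma_{\pi_0(A)})+\lambda_0\varepsilon\nabla R(A)$. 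The $-\tfrac{\varepsilon}{2}\log\det$ component of the restoring force $\varepsilon\nabla F$ is hidden in the first term, which Lemma~\ref{lem:entropic estimation of cross variances} only bounds by $C\varepsilon$. The resulting unsigned $O(\varepsilon r)$ error is not small against $\varepsilon\dotp{\nabla F(A)}{A-A_0}\approx\varepsilon\mu r^2$. Only function values are known to $O(\varepsilon^2)$ (Proposition~\ref{prop : taylor}).

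The missing idea is the paper's. Take $\tilde A^n_\varepsilon=\arg\min_{\norm{A-A_0}\le r}\Jj_{n,\varepsilon}(A)$, which is bounded by construction, so the uniform value bound applies. Then $\Ll_0(\tilde A^n_\varepsilon)\le\Jj_\varepsilon(\tilde A^n_\varepsilon)+O(\varepsilon)\le\Jj_{n,\varepsilon}(A_\varepsilon)+O(C_{\varepsilon,\delta}/\sqrt n+\varepsilon)\le\Jj_\varepsilon(A_\varepsilon)+O(C_{\varepsilon,\delta}/\sqrt n+\varepsilon)$, with $\Jj_\varepsilon(A_\varepsilon)=O(\varepsilon)$. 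Theorem~\ref{thm:local_curvature_L0} turns this into an angle bound, and the norm follows as in your Step~3. Once the resulting radius is $<r$, the point is interior, hence a critical point of the convex $\Jj_{n,\varepsilon}$, hence equal to $A^n_\varepsilon$.

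Your gradient argument survives this device. At a boundary point the optimality condition gains a normal-cone term $-t(\tilde A^n_\varepsilon-A_0)$ with $t\ge0$. Since $B-A_0\perp\tilde A^n_\varepsilon-B$, this term contributes $-t\norm{\tilde A^n_\varepsilon-B}^2\le 0$. It therefore only improves the upper bound on $\dotp{\nabla\Ll_0(\tilde A^n_\varepsilon)}{\tilde A^n_\varepsilon-B}$ that you compare with the curvature lower bound.

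\textbf{The third term of Step~1.} The paper only proves value-level concentration (Theorem~\ref{thm:uniform proba bound of OT cost}). The other references you cite are value-level, and the paper itself notes that the map-estimation bounds of \cite{rigollet2022sample} carry constants exponential in $1/\varepsilon$. So a uniform bound $\norm{\Sigma_{\pi^n_\varepsilon(A)}-\Sigma_{\pi_\varepsilon(A)}}\lesssim C_{\varepsilon,\delta}/\sqrt n$ with polynomial $C_{\varepsilon,\delta}$ is not available as you state it.

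You can derive a weaker one from the value bound. The maps $A\mapsto\OT^\varepsilon_{\hat\alpha_n,\hat\beta_n}(c_A)$ and $A\mapsto\OT^\varepsilon_{\alpha,\beta}(c_A)$ are concave, with gradients $-\Sigma_{\pi^n_\varepsilon(A)}$ and $-\Sigma_{\pi_\varepsilon(A)}$. The latter is $L$-smooth with $L\lesssim 1/\varepsilon$ on compact supports. A uniform value gap $\eta$ on a ball then forces a gradient gap of at most $2\sqrt{L\eta}$ at points at distance $\ge 2\sqrt{\eta/L}$ from its boundary. With $\eta=C_{\varepsilon,\delta}/\sqrt n$, your Step~3 gives an angle $\lesssim C_{\varepsilon,\delta}/(\varepsilon\sqrt n)+\varepsilon^2$. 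This is within the statement after absorbing $1/\varepsilon$ into $C_{\varepsilon,\delta}$.

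\textbf{Comparison with the paper.} With both repairs your route is valid and sharper in $\varepsilon$. The paper bounds the angle through values, obtaining only $C_{\varepsilon,\delta}/\sqrt n+\varepsilon$. Hence $\norm{\tilde A^n_\varepsilon-B}$ is only of the order of its square root, which is the source of the $\varepsilon^{1/4}$. Your first-order argument makes $\norm{A^n_\varepsilon-B}$ linear in the gradient error and yields $\varepsilon^{1/2}$ in the norm, consistent with Theorem~\ref{thm:bias}. The price is gradient-level concentration, whereas the paper's zeroth-order route uses nothing beyond Theorem~\ref{thm:uniform proba bound of OT cost}.
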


\begin{proof}[Proof of Theorem \ref{thm:statistical}]
The proof builds upon the sample-complexity estimates of \citet{mena2019statistical}, who established expectation bounds of the form
$$
\EE_{\alpha,\beta}|\OT_\varepsilon(\alpha,\beta)-\OT_\varepsilon(\hat \alpha_n,\hat \beta_n)|\lesssim C_\varepsilon n^{-1/2}
$$
We  make use of the corresponding uniform high-probability estimates of this result, stated  in Theorem~\ref{thm:uniform proba bound of OT cost} and proven in the appendix. These bounds allow us to pass from the discrete to the continuous setting, where we  exploit the curvature properties of $\Jj_\varepsilon$ in directions transverse to the ray $\Ss_0(\hat\pi)$.
    \begin{itemize}
        \item[\textup{(i)}] We know from \cite{andradesparsistency} that the discrete gap loss function is convex, coercive and locally strongly convex. This ensures the existence and uniqueness of $A_\varepsilon^n$.
        \item[\textup{(ii)}] Write $r>0$ such that Theorem \ref{thm:local_curvature_L0}  can be applied on the open ball centered in $A_0$ of radius $2r$. Consider the problem
        $$
        \tilde A_\varepsilon^n\coloneqq \underset{\norm{A-A_0}\le r}{\argmin} \Jj_\varepsilon^n(A)
        $$
        and take a $\varepsilon>0$ small enough such that $\norm{A_\varepsilon-A_0}\le r$.
        
        We will show bounds on $\tilde A_\varepsilon^n$ and then show that $\tilde A_\varepsilon^n = A_\varepsilon^n$. 

        By Theorem \ref{thm:local_curvature_L0}, we obtain
        $$
        1-\dotp{\bar{\tilde A}_\varepsilon^n}{\bar A_0}^2\lesssim \Ll_0(\tilde A_\varepsilon^n)
        $$
        Using (\ref{ineq : lower bound L_eps}) and adding $\lambda_0\varepsilon R(\tilde A_\varepsilon^n)$ gives
        $$
        1-\dotp{\bar{\tilde A}_\varepsilon^n}{\bar A_0}^2\lesssim \underbrace{\frac{\varepsilon^2}{8}I(\tilde A_\varepsilon^n)-\varepsilon F(\tilde A_\varepsilon^n)}_{=O(\varepsilon)} + \Jj_\varepsilon(\tilde A_\varepsilon^n)
        $$
        
        The first term is indeed of order $\varepsilon$ since $I(\tilde A_\varepsilon^n)$ and $F(\tilde A_\varepsilon^n)$ are uniformly bounded in $n$ and $\varepsilon$ by continuity of $I$ and $F$ and boundedness of $\tilde A_\varepsilon^n$.

        For the term $\Jj_\varepsilon(\tilde A_\varepsilon^n)$, we apply the sample complexity of the entropic gap loss (Theorem \ref{thm:uniform proba bound of OT cost}): with probability higher than $1-\delta$, 
        $$
        \Jj_\varepsilon(\tilde A_\varepsilon^n)=\Jj_\varepsilon^n(\tilde A_\varepsilon^n) + O\pa{\frac{C_{\varepsilon,\delta}}{\sqrt{n}}}
        $$
        But 
        $$
        \Jj_\varepsilon^n(\tilde A_\varepsilon^n) = \underbrace{\Jj_\varepsilon^n(\tilde A_\varepsilon^n)-\Jj_\varepsilon^n(A_\varepsilon)}_{\le0} + \underbrace{\Jj_\varepsilon^n(A_\varepsilon)-\Jj_\varepsilon(A_\varepsilon)}_{O\pa{C_{\varepsilon,\delta}/\sqrt{n}}} + \underbrace{\Jj_\varepsilon(A_\varepsilon)}_{O(\varepsilon)}
        $$
        where the first term is positive because $\norm{A_\varepsilon-A_0}\le r$ and the last term is due to the energy estimate of Proposition \ref{prop:limit of A_eps}.
        
        Putting all that together gives, with a probability higher than $1-\delta$, 
        $$
        1-\dotp{\bar{\tilde A}_\varepsilon^n}{\bar A_0}^2\lesssim \frac{C_{\varepsilon,\delta}}{\sqrt{n}}+\varepsilon
        $$
        \item[\textup{(iii)}] Let $B_\varepsilon^n\coloneqq \dotp{\tilde A_\varepsilon^n}{\bar A_0}\bar A_0$ denote the projection of $\tilde A_\varepsilon^n$ onto the ray $S_0(\hat\pi)$. Then, similarly as in \eqref{eq:link between A_eps and B_eps}, we know that $1-\dotp{\bar{\tilde A}_\varepsilon^n}{\bar A_0}^2=\norm{\tilde A_\varepsilon^n-B_\varepsilon^n}^2/\norm{\tilde A_\varepsilon^n}^2$. Therefore,
        $$
        \norm{\tilde A_\varepsilon^n-B_\varepsilon^n}\lesssim\sqrt{\frac{C_{\varepsilon,\delta}}{\sqrt{n}}+\varepsilon}
        $$
        But according to \eqref{ineq : lower bound L_eps} $\Jj_\varepsilon(\tilde A^n_\varepsilon)$ satisfies
        $$
        \Jj_\varepsilon(\tilde A^n_\varepsilon)\ge \Ll_0(\tilde A^n_\varepsilon) + \varepsilon F(\tilde A^n_\varepsilon) - \frac{\varepsilon^2}{8}I(\tilde A^n_\varepsilon).
        $$
        Subtracting $\Jj_\varepsilon(A_0)$, using the positiveness of $\Ll_0$ and Lemma \ref{lem:diff of Fisher} as $I(\tilde A_\varepsilon^n)\le C$ gives
        
        $$
        \Jj_\varepsilon(\tilde A^n_\varepsilon)-\Jj_\varepsilon(A_0)\ge \varepsilon F(\tilde A^n_\varepsilon)-\varepsilon^2C - \Jj_\varepsilon(A_0).
        $$
        From  \eqref{eq:Taylor expansion order 2} we know that 
        $$
        \Jj_\varepsilon(A_0) = \underbrace{\Ll_0(A_0)}_{=0}  +\varepsilon F(A_0) + O_{A_0}(\varepsilon^2)
        $$
        So there exists a constant $C_{A_0}>0$ depending on $A_0$ such that for small $\varepsilon$ 
        $$
        |\Jj_\varepsilon(A_0)-\varepsilon F(A_0)|\le \varepsilon^2C_{A_0}.
        $$
        Therefore, 
        $$
        \Jj_\varepsilon(\tilde A^n_\varepsilon)-\Jj_\varepsilon(A_0)\ge\varepsilon(F(\tilde A_\varepsilon^n)-F(A_0)) - \varepsilon^2(C+C_{A_0})
        $$
        Now, switching from the continuous loss to the discrete one yields the following inequality with probability at least $1-\delta$,
        $$
        \Jj_\varepsilon(\tilde A^n_\varepsilon)-\Jj_\varepsilon(A_0) = \underbrace{\Jj_\varepsilon^n(\tilde A^n_\varepsilon)-\Jj_\varepsilon^n(A_0)}_{\le0} + O\pa{\frac{C_{\varepsilon,\delta}}{\sqrt{n}}}
        $$
        So \begin{equation}\label{eq:interm1}
        \Oo\pa{\frac{C_{\varepsilon,\delta}}{\sqrt{n}}}\ge \varepsilon(F(\tilde A_\varepsilon^n)-F(A_0)) - \varepsilon^2(C+C_{A_0}).
        \end{equation}
        We now add and subtract $F(B_\varepsilon^n)$ to obtain
        \begin{equation}\label{eq:interm2}
        F(\tilde A_\varepsilon^n)-F(A_0) = F(\tilde A_\varepsilon^n)-F(B_\varepsilon^n) + F(B_\varepsilon^n)-F(A_0).
        \end{equation}
        By Lemma \ref{lem:strong convexity of logdet}, because $B_\varepsilon^n\in S_0(\hat\pi)$, we have
        $$
        F(B_\varepsilon^n)-F(A_0)\ge \mu\norm{B_\varepsilon^n-A_0}^2
        $$
        and  $F(\tilde A_\varepsilon^n)-F(B_\varepsilon^n)\ge-L\norm{\tilde A_\varepsilon^n-B_\varepsilon^n}$ because $F$ is  locally $L$-Lipschitz. Applying this to \eqref{eq:interm2} yields
        \begin{equation}\label{eq:interm3}
            F(\tilde A_\varepsilon^n)-F(A_0) \ge -L\norm{\tilde A_\varepsilon^n-B_\varepsilon^n} + \mu\norm{B_\varepsilon^n-A_0}^2.
        \end{equation}
        Using \eqref{eq:interm1} and \eqref{eq:interm3} leads to
        $$
        \Oo\pa{\frac{C_{\varepsilon,\delta}}{\sqrt{n}}}\ge -L\varepsilon\norm{\tilde A_\varepsilon^n-B_\varepsilon^n} + \mu\varepsilon\norm{B_\varepsilon^n-A_0}^2 -\varepsilon^2(C+C_{A_0})
        $$
        So 
        $$
        \norm{B_\varepsilon^n-A_0}^2\lesssim \norm{\tilde A_\varepsilon^n-B_\varepsilon^n}+\varepsilon + \frac{1}{\varepsilon}\frac{C_{\varepsilon,\delta}}{\sqrt{n}}
        $$
        Using (i) gives with probability at least $1-\delta$ 
        $$
        \norm{B_\varepsilon^n-A_0}\lesssim \varepsilon^{1/4} + \frac{C_{\varepsilon,\delta}^{1/4}}{n^{1/8}}
        $$
        \item[\textup{(iv)}] To sum up, we have with probability at least $1-\delta$, 
        $$
        \norm{\tilde A_\varepsilon^n -A_0}\le \varepsilon^{1/4} + \frac{C_{\varepsilon,\delta}^{1/4}}{n^{1/8}}\coloneqq r_\varepsilon^n
        $$
        In particular, if $(n,\varepsilon)$ is chosen such that $r_\varepsilon^n<r$, then $\tilde A_\varepsilon^n$ lies in the interior of the ball centered in $A_0$ and of radius $r$. So, $\tilde A_\varepsilon^n$ is a critical point of $\Jj_\varepsilon^n$ and therefore $\tilde A_\varepsilon^n=A_\varepsilon^n$ by strong convexity of $\Jj_\varepsilon^n$.

        So finally, with probability at least $1-\delta$, 
        $$
        \norm{ A_\varepsilon^n -A_0} \le \varepsilon^{1/4} + \frac{C_{\varepsilon,\delta}^{1/4}}{n^{1/8}}
        $$
    \end{itemize}
\end{proof}

\section{Degenerate settings of iOT}

In this section, we discuss two key degenerate settings of iOT. First, in the discrete setting, the set of admissible costs is a polyhedral set, and we show in Proposition \ref{prop:local-invariance} that the set of admissible costs is always a cone of dimension larger than one when the underlying dimension $d$ is large with respect to the number of samples (see also Remark \ref{rem:finite_dim}).  Second, when $\alpha$ and $\beta$ are two elliptical distributions with the same base,   the set of admissible costs is a $d$-dimensional cone  (Proposition \ref{prop : non identifiability elliptic}).  These are two contrasting settings, where issues identified in Proposition \ref{prop:local-invariance} are resolved as the number of samples $n\to \infty$, and on the other hand, for elliptical distributions, even as we let $n\to\infty$, the problem remains degenerate. 
\subsection{The discrete setting} \label{sec:discrete}

Let $\alpha=\sum_{i=1}^p a_i\delta_{x_i}$ and $\beta=\sum_{j=1}^q b_j\delta_{y_j}$, and let
\[
\mathcal U(a,b) := \{P\in\RR_+^{p\times q}: P\mathbf 1=a,\ P^\top\mathbf 1=b\}
\]
be the transportation polytope. For a bilinear cost $c_A(x,y)=-x^\top A y$, define the cost matrix
\[
C(A)_{ij} := -x_i^\top A y_j,
\qquad
\langle C(A),P\rangle := \sum_{i=1}^p\sum_{j=1}^q C(A)_{ij}P_{ij}.
\]
In this case, \eqref{eq:OT} can be written as  the linear program
\begin{equation}\label{eq:primal-ot}
\min_{P\in\mathcal U(a,b)}\ \langle C(A),P\rangle.
\end{equation}

\paragraph{Dual formulation and certificates.}
The dual of \eqref{eq:primal-ot} reads
\begin{equation}\label{eq:dual-ot}
\max_{f\in\RR^p,\ g\in\RR^q}\ a^\top f + b^\top g
\qquad\text{s.t.}\qquad
f_i + g_j \le C(A)_{ij}\quad \forall i,j.
\end{equation}
A plan $\hat P\in\mathcal U(a,b)$ is optimal for cost $C(A)$ if and only if there exist dual potentials $(f,g)$ such that
\begin{equation}\label{eq:kkt-ot}
\begin{cases}
f_i + g_j \le C(A)_{ij} & \forall i,j,\\
f_i + g_j = C(A)_{ij} & \forall (i,j)\in\supp(\hat P),
\end{cases}
\end{equation}
where $\supp(\hat P):=\{(i,j):\hat P_{ij}>0\}$.


Given an optimal coupling $\hat P \in \RR^{p\times q}$,
the set of admissible costs for which $\hat P$ is optimal is
\begin{equation}\label{eq:S0-def}
\mathcal \Ss_0(\hat P)
:=
\{A\in\RR^{d\times d}:\ \hat P\in\mathop{\argmin}_{P\in\Pi(a,b)}\langle C(A),P\rangle\}.
\end{equation}
We characterize the non-identifiability below,  giving a direct specialization of the inverse-LP fact that the set of objectives making a fixed primal solution optimal is the (polyhedral) normal cone of the feasible polytope at that solution; intersecting that cone with the bilinear parameterization $A \mapsto C(A)$ yields non-identifiability whenever the restriction map $A\mapsto (C(A))_{i,j\in\Supp(\hat P)}$ has nontrivial kernel.

\begin{prop}[Local invariance of optimality under support-orthogonal perturbations]\label{prop:local-invariance}
Fix marginals $a\in\RR_+^p$, $b\in\RR_+^q$ be probability vectors and let $\hat P\in\mathcal U(a,b)$.  
Assume there exist $A\in\RR^{d\times d}$ and dual potentials $(f,g)\in\RR^p\times\RR^q$ such that
\begin{equation}\label{eq:kkt-hatP}
\begin{cases}
f_i+g_j < C(A)_{ij} & \forall (i,j) \not\in\Supp(\hat P)\\
f_i+g_j = C(A)_{ij} & \forall (i,j)\in\Supp(\hat P),
\end{cases}
\end{equation}
where $C(A)_{ij}=-x_i^\top A y_j$.  We in particular suppose  \emph{strict complementarity holds off the support}.

Let $M_{ij}:=x_i y_j^\top$. If
\begin{equation}\label{eq:finite_dim_cond}
    \mathrm{dim} \Span\enscond{M_{i,j}}{(i,j)\in\Supp(\hat P)} < d^2,
\end{equation}
then $\mathrm{dim}(\Ss_0(\hat P))\geq 2$.

\end{prop}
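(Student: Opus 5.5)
Since \eqref{eq:finite_dim_cond} holds, the linear map $\Phi:\RR^{d\times d}\to\RR^{\Supp(\hat P)}$, $\Phi(B)=(\langle B, M_{ij}\rangle)_{(i,j)\in\Supp(\hat P)}$, has rank $<d^2$, because the $M_{ij}$ span a proper subspace. Hence there is a nonzero $B$ with $x_i^\top B y_j=0$ for every $(i,j)\in\Supp(\hat P)$, i.e.\ $C(B)_{ij}=0$ on the support. The plan is to show that both $A$ and $A+tB$, for all sufficiently small $|t|$, lie in $\Ss_0(\hat P)$. Then $\Ss_0(\hat P)$ contains a segment through $A$ in direction $B$, in addition to the ray $\{\lambda A:\lambda>0\}$.

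\textbf{Step 1: $A+tB$ is admissible.} We keep the same dual potentials $(f,g)$. On $\Supp(\hat P)$ we have $C(A+tB)_{ij}=C(A)_{ij}+tC(B)_{ij}=f_i+g_j$, so equality persists. Off the support, strict complementarity gives a slack
\[
\eta:=\min_{(i,j)\notin\Supp(\hat P)}\bigl(C(A)_{ij}-f_i-g_j\bigr)>0,
\]
which is a minimum over finitely many positive numbers. For $|t|\,\max_{ij}|x_i^\top B y_j|<\eta$ the inequalities $f_i+g_j<C(A+tB)_{ij}$ therefore still hold. The certificate \eqref{eq:kkt-ot} is thus satisfied for cost $C(A+tB)$, so $\hat P$ is optimal and $A+tB\in\Ss_0(\hat P)$. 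If $\max_{ij}|x_i^\top By_j|=0$, every $t$ works.

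\textbf{Step 2: dimension count.} We need a direction genuinely independent of $A$, so that the admissible set is not just the line $\RR A$. If the kernel of $\Phi$ contains some $B\notin\RR A$, then $A$ and $A+tB$ with $t\neq0$ are linearly independent. Moreover, $\Ss_0(\hat P)$ is a convex cone: it is closed under positive scaling, and it is convex because the optimality conditions are linear in $A$ once one allows the potentials to vary with $A$. The cone therefore contains the two-dimensional wedge $\{\lambda A+\mu(A+tB):\lambda,\mu\ge0\}$, which gives $\dim\Ss_0(\hat P)\ge2$.

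It remains to exclude $\ker\Phi\subseteq\RR A$. Since $\ker\Phi\neq0$, this could only happen if $\ker\Phi=\RR A$ exactly, which means $C(A)$ vanishes on the support. In that case $f_i+g_j=0$ on $\Supp(\hat P)$. We then use the freedom to perturb the potentials instead: consider $A+tA'$ for an arbitrary $A'$ together with potentials $(f+tf',g+tg')$, where $(f',g')$ solve $f'_i+g'_j=C(A')_{ij}$ on the support. This is a linear system over the bipartite support graph, and the strict slack again handles the off-support inequalities for small $t$. Solvability of this system for some $A'\notin\RR A$ would finish the argument. This degenerate case is the main obstacle, and I expect the cleanest fix is a dimension count on the linear map $(A',f',g')\mapsto (C(A')_{ij}-f'_i-g'_j)_{(i,j)\in\Supp(\hat P)}$. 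Its kernel has dimension at least $d^2+p+q-|\Supp(\hat P)|$. After quotienting by the trivial directions, namely the constant shift $(0,\mathbf 1,-\mathbf 1)$ and $(A,f,g)$ itself, the remainder still yields an $A'\notin\RR A$ whenever the $M_{ij}$ span has codimension at least one. If this count is not sharp enough, I would simply add to the statement the mild requirement that $\ker\Phi\not\subseteq\RR A$, i.e.\ $C(A)$ is not identically zero on the support.
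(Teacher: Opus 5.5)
Your Step 1 is exactly the paper's entire proof. It picks $H\neq 0$ orthogonal to the $M_{ij}$ on the support, keeps $(f,g)$, and absorbs $t\langle H,M_{ij}\rangle$ into the finite off-support slack. The paper stops there. You are right that this does not yet give $\dim\Ss_0(\hat P)\ge 2$: the segment $A+tH$ only adds a dimension if $H\notin\RR A$. However, your proposed dimension count for the degenerate case $\ker\Phi=\RR A$ cannot succeed, because the proposition is false in that case.

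\textbf{Counterexample.} Take $d=2$, $p=q=3$, and $x_1=y_1=e_1$, $x_2=y_2=2e_1$, $x_3=y_3=e_2$. Let $\hat P$ put mass $1/8$ on every cell except $(3,3)$, and let $A=-e_2e_2^\top$. Then $C(A)_{ij}=(x_i)_2(y_j)_2$ equals $1$ at $(3,3)$ and $0$ elsewhere.
\begin{itemize}
\item $f=g=0$ satisfies \eqref{eq:kkt-hatP} with strict slack at $(3,3)$.
\item The $M_{ij}$ on the support span $\Span\{e_1e_1^\top,e_1e_2^\top,e_2e_1^\top\}$, of dimension $3<4$. So all hypotheses hold, and $\ker\Phi=\RR A$.
\item For $B=(b_{kl})$, optimality of $\hat P$ forces $C(B)_{ij}=f_i+g_j$ on the support. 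Hence the alternating sums of $C(B)$ over the $4$-cycles on rows/columns $\{1,2\}\times\{1,2\}$, $\{1,2\}\times\{1,3\}$, $\{1,3\}\times\{1,2\}$ must vanish. These sums are $-b_{11}$, $b_{11}-b_{12}$, $b_{11}-b_{21}$, so $b_{11}=b_{12}=b_{21}=0$.
\item Then $C(B)$ vanishes on the connected support, so $f\equiv c$ and $g\equiv -c$. Dual feasibility at $(3,3)$ reads $0\le -b_{22}$.
\end{itemize}
Thus $\Ss_0(\hat P)=\{-\lambda e_2e_2^\top:\lambda\ge 0\}$ is a single ray. Consistently, your count gives $d^2+p+q-|\Supp(\hat P)|-2=4+6-8-2=0$ extra directions.

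In general, the admissible directions at $A$ are the $H$ with $C(H)|_{\Supp(\hat P)}\in W:=\{(u_i+v_j)_{(i,j)\in\Supp(\hat P)}\}$. When $\ker\Phi=\RR A$, the extra directions come only from $W\cap\mathrm{Im}\,\Phi$, and this intersection can be $\{0\}$ once the support has enough cycles. Conversely, for forest supports (e.g.\ the nondegenerate basic plans of Remark~\ref{rem:finite_dim}), $W$ is all of $\RR^{\Supp(\hat P)}$. There your idea of perturbing the potentials shows $\Ss_0(\hat P)$ is a neighbourhood of $A$, hence full-dimensional.

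So the statement needs an extra hypothesis, and your fallback is the right repair, with one correction. What the argument needs is $A\neq 0$ together with $\ker\Phi\not\subseteq\RR A$. The condition ``$C(A)\not\equiv 0$ on $\Supp(\hat P)$'' is a convenient sufficient condition for both, since it forces $A\neq 0$ and $A\notin\ker\Phi$. It is not an equivalent reformulation, as your ``i.e.'' suggests. Under this condition, your Step 1 gives two linearly independent elements $A$ and $A+tB$ of $\Ss_0(\hat P)$, which completes the proof. The paper's proof has the same gap, since it never checks that $H$ is not collinear with $A$.
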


\begin{proof}
From \eqref{eq:finite_dim_cond}, there exists a non-trivial $H\in\RR^{d\times d}$ such that
\begin{equation}\label{eq:H-orth}
\langle H,M_{ij}\rangle = 0 \qquad \forall (i,j)\in\supp(\hat P).
\end{equation}
We will show that $\hat P$ remains optimal for the perturbed cost $C(A+tH)$ for all $t$ sufficiently small.

For $(i,j)\in\supp(\hat P)$, \eqref{eq:H-orth} implies
\[
C(A+tH)_{ij}=-\langle A+tH,M_{ij}\rangle=-\langle A,M_{ij}\rangle = C(A)_{ij},
\]
so the equalities $f_i+g_j=C(A+tH)_{ij}$ remain valid on $\supp(\hat P)$.

For $(i,j)\notin\supp(\hat P)$, define the (positive) slack $s_{ij}:= C(A)_{ij}-(f_i+g_j)>0$.
Since there are finitely many such indices, let
\[
s_{\min}:=\min_{(i,j)\notin\supp(\hat P)} s_{ij} \;>\;0
\qandq
L_{\max}:=\max_{(i,j)\notin\supp(\hat P)} |\langle H,M_{ij}\rangle| \;<\;\infty.
\]
If $L_{\max}=0$ the dual inequalities remain feasible for all $t$. Otherwise, choose
$\varepsilon := \frac{s_{\min}}{2L_{\max}}.$
Then for any $|t|\le \varepsilon$ and any $(i,j)\notin\supp(\hat P)$,
\[
C(A+tH)_{ij}-(f_i+g_j)
= s_{ij} - t\langle H,M_{ij}\rangle
\ge s_{\min} - |t|L_{\max}
\ge \frac{s_{\min}}{2}
>0,
\]
so $f_i+g_j \le C(A+tH)_{ij}$ holds for all off-support indices as well. Hence $(f,g)$ is dual-feasible for $C(A+tH)$ and satisfies complementary slackness with $\hat P$. By strong duality, $\hat P$ is optimal for $C(A+tH)$.
\end{proof}

\begin{rem}[Non-identifiability.]\label{rem:finite_dim}
    In general, a basic feasible plan has
$|\supp(P)|\le p+q-1$.
 Indeed, a nondegenerate basic feasible plan corresponds to a spanning tree in the bipartite graph on $p+q$ nodes which has exactly $p+q-1$ edges/support. Thus, when $d$ is moderate with  $d^2>p+q-1$, we cannot expect identifiability due to \eqref{eq:finite_dim_cond}.
\end{rem}

\begin{rem}[Identifiability is unstable when it occurs.]
    In principle, uniqueness up to scaling could occur if $\{M_{ij}\}_{(i,j)\in\supp(\hat P)}$ spans $\RR^{d\times d}$ and if the active-set structure in \eqref{eq:kkt-ot} is nondegenerate (e.g.\ strict inequalities off-support). However, both properties are unstable under perturbations of the empirical plan: small changes in $\hat P$ (or its support pattern) can change which dual inequalities bind, thereby changing the face of the normal cone associated with $\hat P$. As a result, the set-valued map $\hat P\mapsto\mathcal S_0(\hat P)$ is generally discontinuous, and any procedure that selects a single representative $\hat A$ from $\mathcal S_0(\hat P)$ (e.g.\ by normalization or tie-breaking) may exhibit \emph{argmin jumps} under arbitrarily small perturbations of $\hat P$.
\end{rem}

We provide in the next subsection (Figure \ref{fig:discrete L_0}) a numerical illustration of the polyhedral nature of the loss $\Ll_0$ for a discrete measure with a low number of atoms. We observe that even if the continuous distributions satisfy \eqref{hessian spanning symmetric matrices}, the problem remains degenerate with a low number of samples.

\subsection{The continuous setting}\label{sec:Gaussian}
\subsubsection{Elliptical distributions}
In this section, we consider the setting where $\al$ and $\beta$ are elliptical distributions, where the iOT problem can be studied explicitly. 

An elliptical distribution of mean $\mu$, variance $\Omega\Omega^\top\coloneqq \Sigma$ and base $g$ is written $\Ee(\mu,\Omega,g)$ and has a density $f$ with respect to Lebesgue of the form
$$
f(x)=|\Sigma|^{-1/2}g((x-\mu)^\top\Sigma^{-1}(x-\mu)) \qwhereq \Sigma=\Omega\Omega^\top
$$
and where $g$ is a centered radial density, i.e.
$$
g(Rx)=g(x) \quad \forall \, R\in \textbf{O}_d \; , \quad \EE_{Z\sim g}[Z]=0\qandq \EE_{Z\sim g}[ZZ^\top]=\Id
$$
A typical example of radial base is $g_{gauss}(x)=(2\pi)^{-d/2}e^{-\norm{x}^2/2}$. In this case $\Ee(\mu,\Omega,g_{gauss})=\Nn(\mu,\Sigma)$ where $\Omega\Omega^\top\coloneqq \Sigma$.

Equivalently we say that $X\sim\Ee(\mu,\Omega,g)$ if 
$$
X=\mu + \Omega Z \qwhereq Z\sim g
$$
In this subsection we assume that $\alpha$ and $\beta$ are elliptical with the same base distribution $g$ i.e.
$$
\alpha = \Ee(\mu_1,\Omega_1,g) \qandq  \beta = \Ee(\mu_2,\Omega_2,g)
$$
Equivalently we say that $X\sim\alpha$ and $Y\sim\beta$ if and only if
$$
X=\mu_1 + \Omega_1Z \qandq Y=\mu_2+ \Omega_2Z \quad \text{with}\quad Z\sim g
$$
\begin{prop}\label{prop : non identifiability elliptic}
    Assume that $\alpha$ and $\beta$ have the same base $g$. Given $\hat A$ an invertible matrix, the OT map $T_{\hat A}$ for the cost $c_{\hat A}$ is affine with
    $$
    T_{\hat A}(x) = \mu_2 + \Omega_2 \hat V\hat U^\top\Omega_1^{-1}(x-\mu_1) \qwhereq \Omega_1^\top\hat A\Omega_2 \overset{SVD}{=}\hat U\hat S\hat V^\top
    $$
    Moreover
    $$
    S_0(\hat\pi) = \left\{A: \Omega_1^\top A\Omega_2 =\hat U \diag(x_1,\dots,x_d)\hat V^\top, \; x_i>0\right\}
    $$
\end{prop}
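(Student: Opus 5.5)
The plan is to reduce, by affine changes of variables, to optimal transport between the radial base $g$ and itself for a bilinear cost with a positive diagonal matrix. In that reduced problem the optimal map is the identity, and both claims of Proposition~\ref{prop : non identifiability elliptic} follow by undoing the changes of variables. The inclusion $\subseteq$ for $\Ss_0(\hat\pi)$ is the delicate step.

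\emph{Step 1 (standardization).} Write $x=\mu_1+\Omega_1 z$ and $y=\mu_2+\Omega_2 z'$, where $z,z'\sim g$. Then
\[
-x^\top A y=-z^\top(\Omega_1^\top A\Omega_2)z'+a(z)+b(z'),
\]
with $a,b$ affine. By the additive invariance of optimal plans, and since affine bijections transport couplings to couplings as in the proof of Proposition~\ref{prop:Brenier map}, the problem becomes OT from $g$ to $g$ with cost $-z^\top M z'$, where $M=\Omega_1^\top A\Omega_2$. For $A=\hat A$, write $\hat M=\hat U\hat S\hat V^\top$. Substituting $z=\hat U u$ and $z'=\hat V v$ leaves both marginals equal to $g$, by rotation invariance, and turns the cost into $-u^\top \hat S v$.

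\emph{Step 2 (the reduced map is the identity).} Take $\Psi(w)=\tfrac12 w^\top \hat S^{-1}w$, which is convex because $\hat S\succ0$. Then $\nabla\Psi(\hat S^\top u)=u$. By the remark following Proposition~\ref{prop:Brenier map} (building an OT map from a convex function), the identity is optimal from $g$ to $g$ for the cost $-u^\top\hat S v$. Uniqueness comes from the twist condition, since $\hat S$ is invertible. Undoing the substitutions gives $z'=\hat V\hat U^\top z$, which is the stated affine formula for $T_{\hat A}$.

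\emph{Step 3 ($\supseteq$).} If $\Omega_1^\top A\Omega_2=\hat U\,\mathrm{diag}(x_1,\dots,x_d)\,\hat V^\top$ with every $x_i>0$, Steps 1--2 apply verbatim with $\hat S$ replaced by $\mathrm{diag}(x)$. This yields the same map, hence the same plan, so $A\in\Ss_0(\hat\pi)$.

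\emph{Step 4 ($\subseteq$, the main obstacle).} Let $A\in\Ss_0(\hat\pi)$ and set $M=\Omega_1^\top A\Omega_2$. By Steps 1--2 run in reverse, $z\mapsto Rz$ with $R=\hat V\hat U^\top$ must be the Brenier map for the cost $-z^\top M z'$. Proposition~\ref{prop:Brenier map} then requires $Rz=\nabla\Psi(M^\top z)$ for some convex $\Psi$. The map $w\mapsto RM^{-\top}w$ is linear, and a linear map is the gradient of a convex function only if its matrix is symmetric; so $H:=RM^{-\top}$ must be symmetric positive semidefinite. Invertibility of $M$ upgrades this to $H\succ0$.

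I would then attempt to conclude that $M$ is diagonal in the bases $(\hat U,\hat V)$. The first thing to try is to write $M=H^{-1}R$, which I believe gives $M=\hat U\hat V^\top(\hat V\hat U^\top H^{-1}\hat U\hat V^\top)$ after transposition, and then take an SVD of $M$. This is the crux, and I expect it to fail as written: symmetry and positivity of $H$ alone only give $M=P\,\hat U\hat V^\top$ with $P\succ0$, which is a larger family than the stated one. To land exactly on the stated $d$-parameter family, with $\hat U$ and $\hat V$ held fixed, one needs an extra relation forcing $P$ to commute with $\hat U\hat S\hat U^\top$. I do not see where such a relation would come from in this setting. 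The most promising reading is that the statement intends $\hat U,\hat V$ to be singular vectors shared by $M$, so that $M=\hat U D\hat V^\top$ with $D$ diagonal and positive.

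Even under that reading, I only see how to confirm that such $M$ lie in $\Ss_0(\hat\pi)$, which is Step 3, not that every admissible $M$ has this form. Closing the reverse inclusion for the stated family is the open gap in this plan, and it is where I would focus first.
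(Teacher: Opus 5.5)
\textbf{Status of the proposal.} Your Steps 1--3 are correct. They give the formula for $T_{\hat A}$ and the inclusion $\supseteq$, and they are more self-contained than the paper's route. The paper first invokes Gelbrich's result to know a priori that the map is affine, and then solves the Procrustes problem $\max_{Q\in\mathbf O_d}\Tr(\Omega_1^\top\hat A\Omega_2 Q)$.

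\textbf{The gap in Step 4 cannot be closed, because $\subseteq$ is false.} The necessary condition you derived is also sufficient. If $M=\Omega_1^\top A\Omega_2=P\hat U\hat V^\top$ with $P$ symmetric positive definite, then $H=RM^{-\top}=\hat V\hat U^\top P^{-1}\hat U\hat V^\top\succ 0$. So $\Psi(w)=\tfrac12 w^\top Hw$ is convex with $\nabla\Psi(M^\top z)=Rz$, and the remark following Proposition~\ref{prop:Brenier map} makes $z\mapsto Rz$ optimal for the cost $-z^\top Mz'$. Hence, among invertible $A$, one has $A\in\Ss_0(\hat\pi)$ if and only if the orthogonal polar factor of $\Omega_1^\top A\Omega_2$ is $\hat U\hat V^\top$. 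This is a cone of dimension $d(d+1)/2$, not $d$.

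Concrete counterexample: take $d=2$, $\alpha=\beta=\mathcal N(0,I_2)$, $\hat A=\diag(2,1)$. The singular values are distinct, so the stated set is exactly $\{\diag(x_1,x_2):x_i>0\}$, and $T_{\hat A}=\Id$. Now take $A=\left(\begin{array}{cc}2&1\\1&2\end{array}\right)\succ0$. The pointwise bound $x^\top Ay\le\tfrac12(x^\top Ax+y^\top Ay)$ gives $\int x^\top Ay\,d\pi\le\Tr A$ for every $\pi\in\Pi(\alpha,\beta)$, with equality at $\hat\pi$. So $A\in\Ss_0(\hat\pi)$ although it is not diagonal.

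\textbf{The paper's proof has the same flaw.} It infers $\subseteq$ from the fact that $T_{\hat A}$ ``is determined by the singular vectors and not the singular values''. But the map depends on $(\hat U,\hat V)$ only through $\hat V\hat U^\top$. Sharing that polar factor does not force $\Omega_1^\top A\Omega_2$ to have singular vectors $\hat U,\hat V$; this is precisely the non-implication you ran into.

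There are two further defects in the statement:
\begin{itemize}
\item $A=0$, and any singular $A$ with $\Omega_1^\top A\Omega_2=\hat U\diag(x)\hat V^\top$, $x_i\ge0$, also lie in $\Ss_0(\hat\pi)$, since $\hat\pi$ is then optimal though not unique.
\item When singular values repeat, $\hat U,\hat V$ are not unique. For example, for $\hat A=I_2$ with standard Gaussians, $\Ss_0(\hat\pi)$ contains every symmetric positive definite matrix, not just the diagonal ones claimed in the paper's numerical illustration.
\end{itemize}

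\textbf{What to do instead.} What can actually be proved is the map formula, the inclusion $\supseteq$, and the corrected characterization $\Ss_0(\hat\pi)\cap\{A\text{ invertible}\}=\{A:\Omega_1^\top A\Omega_2=P\hat U\hat V^\top,\ P\succ0\}$. Rather than searching for the missing commutation relation, complete Step 4 with the sufficiency argument above and replace the stated equality by this one.
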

\begin{proof}
    Assume that $\hat A=\Id$ then we know from \cite{Gelbrich1990OnAFellipticalwasserstein} that $T_{\Id}$ is affine with
    $$
    T_{\Id}(x) = \mu_2 + \Omega_2\Omega_1^{-1}(x-\mu_1)
    $$
    Now, for a given invertible $\hat A$, after performing the linear change of variable $\hat A^\top\#\alpha$ one recovers the Euclidean transport where the Brenier map is affine. Since the change of variable is linear, $T_{\hat A}$ is also affine with $T_{\hat A}(x)=b+Mx$. The map $T_{\hat A}$ solves the following optimization problem
    $$
    \max_{T\#\alpha=\beta}\EE[X^\top \hat A T(X)]
    $$
    Matching first moments gives $T(x) = \mu_2 + M(x-\mu_1)$. Matching second moments gives $M\Sigma_1 M^\top=\Sigma_2$ where $\Sigma_i=\Omega_i\Omega_i^\top$
    After canceling the constant terms, the optimization problem becomes,
    $$
    \max_{M\Sigma_1 M^\top=\Sigma_2}\Tr(\hat AM\Sigma_1)
    $$
    Write $M=\Omega_2 Q\Omega^{-1}_1$ for some matrix $Q$. Then the constraint becomes 
    $$
    M\Sigma_1 M^\top=\Sigma_2\iff Q\in \textbf{O}_d
    $$
    Rewriting the optimization in term of $Q\in\textbf{O}_d$ gives
    $$
    \max_{Q\in\textbf{O}_d} \Tr(\Omega_1^\top \hat A\Omega_2 Q)
    $$
    which is solved by 
    $$
    Q^*=\hat V\hat U^\top \qwhereq \Omega_1^\top\hat A\Omega_2 \overset{SVD}{=}\hat U\hat S\hat V^\top
    $$
    Therefore 
    $$
    T_{\hat A}(x) = \mu_2 + \Omega_2 \hat V\hat U^\top\Omega_1^{-1}(x-\mu_1) \qwhereq \Omega_1^\top\hat A\Omega_2 \overset{SVD}{=}\hat U\hat S\hat V^\top
    $$
    Hence, the Brenier map $T_{\hat A}$ and thus $\hat\pi$ is uniquely determined by the singular vectors of $\Omega_1^\top\hat A\Omega_2$ and not its singular values. We conclude that 
    $$
    S_0(\hat\pi) = \left\{A:\; \Omega_1^\top A\Omega_2 = \hat U\diag(x_1,\dots,x_d)\hat V^\top , \; x_i>0\right\}
    $$
 \end{proof}

Proposition \ref{prop : non identifiability elliptic} demonstrates that the elliptical setting is fundamentally ill-posed when the bases are the same in the sense that the set of admissible costs is a $ d$-dimensional cone. Nonetheless, as exposed by Theorem \ref{thm:uniqueness}, non-uniqueness is an unstable property for iOT in the continuous setting, and we present numerical illustrations of this effect --  small perturbations to the elliptical densities restore uniqueness of the OT cost.

\paragraph{Numerical illustration}

\begin{figure}[ht]
    \centering

    \begin{subfigure}{0.24\textwidth}
        \centering
        \includegraphics[width=\linewidth]{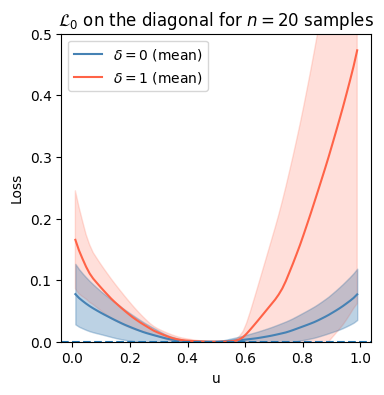}
        \caption{$n=20$ samples}
    \end{subfigure}
    \begin{subfigure}{0.24\textwidth}
        \centering
        \includegraphics[width=\linewidth]{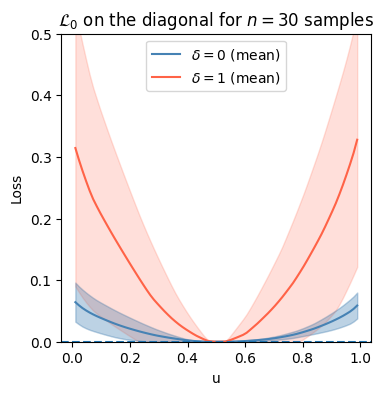}
        \caption{$n=30$ samples}
    \end{subfigure}
    \begin{subfigure}{0.24\textwidth}
        \centering
        \includegraphics[width=\linewidth]{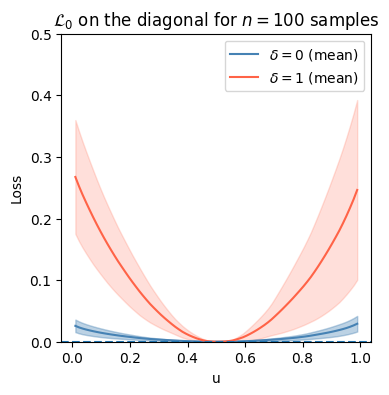}
        \caption{$n=100$ samples}
    \end{subfigure}
    \begin{subfigure}{0.24\textwidth}
        \centering
        \includegraphics[width=\linewidth]{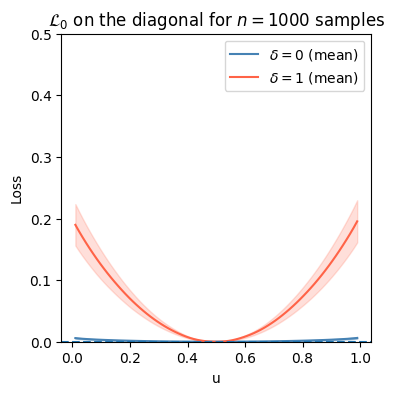}
        \caption{$n=1000$ samples}
    \end{subfigure}
    \caption{Graph of $u\mapsto \Ll_0^n({\rm diag}(u,1-u))$ for Gaussian to Gaussian (blue) and Gaussian to Perturbed Gaussian (red) iOT\label{fig:L_0} }
\end{figure}
\begin{figure}[ht]
    \centering

    \begin{subfigure}{0.24\textwidth}
        \centering
        \includegraphics[width=\linewidth]{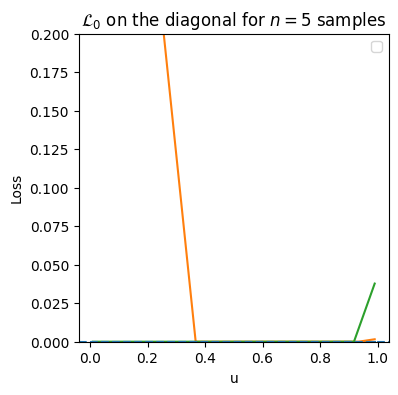}
        \caption{$n=5$ samples}
    \end{subfigure}
    \begin{subfigure}{0.24\textwidth}
        \centering
        \includegraphics[width=\linewidth]{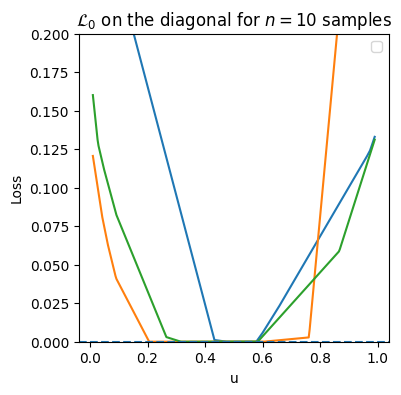}
        \caption{$n=10$ samples}
    \end{subfigure}
    \begin{subfigure}{0.24\textwidth}
        \centering
        \includegraphics[width=\linewidth]{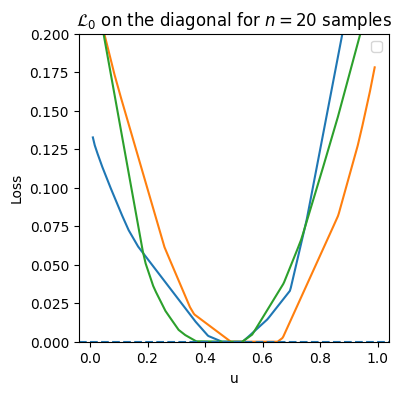}
        \caption{$n=20$ samples}
    \end{subfigure}
    \begin{subfigure}{0.24\textwidth}
        \centering
        \includegraphics[width=\linewidth]{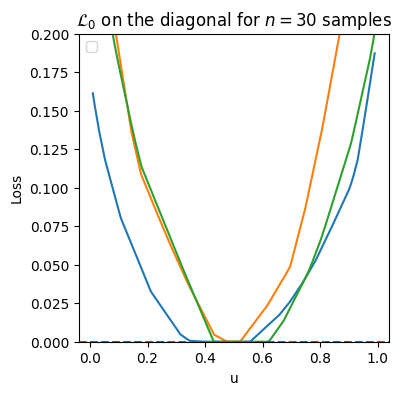}
        \caption{$n=30$ samples}
    \end{subfigure}
    \caption{$u\mapsto \Ll_0^n({\rm diag}(u,1-u))$ on the diagonal for the Gaussian to Perturbed Gaussian setting. \label{fig:discrete L_0} }
\end{figure}

In Figure \ref{fig:L_0},
we illustrate the degeneracy of iOT when $\alpha$ and $\beta$ are Gaussians and how a perturbation of $\beta$ ensures identifiability. 

Assume for simplicity that $\alpha,\beta=\Ee(0,\Id,g_{\mathrm{gauss}})=\Nn(0,I_2)$ (where $g_{\mathrm{gauss}}(x)=(2\pi)^{-d/2}e^{-\norm{x}^2/2}$) and assume also that the ground truth cost is $\hat A= I_2$. Then, according to Proposition \ref{prop : non identifiability elliptic}, the set of feasible costs is 
$$
S_0(\hat\pi)=\{ \diag(x,y),\; x,y>0\} \qandq \Ll_0(\diag(x,y))=0\quad \forall\; x,y>0
$$
Since $\Ll_0$  is positively 1-homogeneous, it is sufficient to plot it on the diagonal $\{(u,1-u),\; u\in[0,1]\}$.

For different values of $n$,  consider the empirical versions $\displaystyle \alpha_n=\frac{1}{n}\sum_{i=1}^n\delta_{X_i}$ and $\displaystyle\beta_n = \frac{1}{n}\sum_{i=1}^n\delta_{Y_i}$ of $\alpha$ and $\beta$ respectively. Define as well the discrete loss as 
$$
\Ll_0^n:A\mapsto -\frac{1}{n}\sum_{i=1}^nX_i^\top AY_i - \OT^0_{\alpha_n,\beta_n}(A)
$$

The blue graph in Figure \ref{fig:L_0} displays the mean and standard deviation of $u\mapsto\Ll_0^n(\diag(u,1-u))$ for 10 different draws of $\alpha_n$ and $\beta_n$. 

The red graph represents the mean and standard deviation of $u\mapsto\Ll_0^n(\diag(u,1-u))$ for the same draws but with $\beta_{n,\delta}$ drawn from the perturbed Gaussian $\beta_\delta $ defined as 
$$
\beta_\delta\coloneq (\Id+\delta \nabla\Psi)\#\alpha \qwhereq \Psi(x,y)=x^2y+xy^2
$$
One can check that for any $\delta>0$ the spanning condition \eqref{hessian spanning symmetric matrices} : $\Span\{\nabla^2\Psi(x,y),\; x,y\in [0,1]\}=\mathbb{S}_2$ is satisfied. Moreover, since $\frac{1}{2}\norm{x}^2+\delta\Psi(x)$ is convex on $[0,1]^2$ for $\delta$ small enough, it is a Brenier potential and $\Id + \delta\nabla\psi$ is a Brenier map.  Therefore, according to Theorem \ref{thm:uniqueness}, identifiability holds for $\alpha$ and $\beta_\delta$.

In Figure \ref{fig:discrete L_0}, we display $\Ll_0^n$ on the same diagonal for 3 different draws of $\alpha_n$ and $\beta_{n,\delta}$ to highlight issues of the finite-dimensional setting characterized by Proposition \ref{prop:local-invariance}.  Observe that the loss is piecewise linear due to the polyhedral nature of the gap function (or $\OT$) when $\al,\beta$ are discrete measures, and thus, there are large flat regions. In this case, this degeneracy disappears as the number of samples increases: the zero set becomes increasingly concentrated around the true cost.

\subsubsection{Radial densities}
Here we consider $\alpha,\beta$ to have the same mean and variance but \textit{different} bases. Proposition \ref{prop : radial densities} states that when the transport map between the two radial densities is \textit{non-linear}, then \eqref{hessian spanning symmetric matrices} holds and identifiability is satisfied.
\begin{prop}[Identifiability for radial densities with distinct bases]\label{prop : radial densities}
    Let $\alpha = \Ee(0,\Id,f)$ and $\beta =\Ee(0,\Id,g)$ have radial densities $f$ and $g$ and suppose $\hat A=\Id$. Then, the OT map satisfies 
    $$
    T(x)=b(\norm{x})\frac{x}{\norm{x}}
    $$
    If $\alpha$ has full support and $b$ is not linear, then $\Ss_0(\hat \pi) = \enscond{\lambda \Id}{\lambda>0}$. 
\end{prop}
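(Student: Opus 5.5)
The plan is to reduce the statement to the spanning condition \eqref{hessian spanning symmetric matrices} (in the weaker trace-free form of Remark~\ref{remark: zero trace symmetric matrices}) and then invoke Proposition~\ref{uniqueness of cost}. Alternatively, I would run the commutation argument from its proof directly, which avoids any regularity issue at the origin.

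\textbf{Step 1: form of the map.} By rotation invariance of $f$ and $g$, together with uniqueness of the Brenier map (Proposition~\ref{prop:Brenier map} with $\hat A=\Id$), we have $T(Rx)=RT(x)$ for every $R\in\mathbf O_d$. Indeed, $R^{-1}\circ T\circ R$ is again the gradient of a convex function and pushes $\alpha$ to $\beta$. Hence $T(x)=b(\norm{x})x/\norm{x}$ and $T=\nabla\Psi$ with $\Psi(x)=B(\norm{x})$, where $B'=b$ and $b\ge 0$ is nondecreasing.

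\textbf{Step 2: Hessian.} Away from the origin, with $r=\norm{x}$ and $\theta=x/r$,
\[
\nabla^2\Psi(x)=b'(r)\,\theta\theta^\top+\frac{b(r)}{r}\,(\Id-\theta\theta^\top).
\]
Now let $A\in\Ss_0(\hat\pi)$ and set $M=A^{-1}$. Following the proof of Proposition~\ref{uniqueness of cost}, we obtain $M\nabla^2\Psi(x)=\nabla^2\Psi(x)M^\top$ for $\alpha$-a.e. $x$. This uses that $T_A=T$ and differentiates the identity $\nabla\Psi_A(A^\top x)=\nabla\Psi(x)$, which requires $\Psi$ to be $C^2$ off the origin. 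I would take this from the regularity of radial densities, or else work with the a.e.\ Alexandrov Hessian. Since $b$ is not linear, there is some $r_0$ with $b'(r_0)\neq b(r_0)/r_0$. Otherwise $(b/r)'=0$ everywhere, so $b$ would be linear.

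\textbf{Step 3: the commutation identity forces $M$ to be a scalar.} The identity $M\nabla^2\Psi(x)=\nabla^2\Psi(x)M^\top$ holds at every point of the sphere of radius $r_0$, using full support of $\alpha$ and continuity. Writing $\lambda=b(r_0)/r_0$ and $\mu=b'(r_0)-\lambda\neq 0$, we get $\nabla^2\Psi(x)=\lambda\Id+\mu\theta\theta^\top$ there. This gives $\lambda(M-M^\top)+\mu(M\theta\theta^\top-\theta\theta^\top M^\top)=0$ for all unit $\theta$. Averaging over $\theta$, using that the mean of $\theta\theta^\top$ is $\Id/d$, yields $(\lambda+\mu/d)(M-M^\top)=0$. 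I expect to handle the degenerate case $\lambda+\mu/d=0$ by instead using two radii, or simply by noting that the span of $\{\theta\theta^\top\}$ equals $\mathbb S_d$. More directly, varying $\theta$ shows that the span of the Hessians on that sphere contains $\Id$ (take differences of $\lambda\Id+\mu\theta\theta^\top$ and sums) together with all $\theta\theta^\top$, hence all of $\mathbb S_d$. So \eqref{hessian spanning symmetric matrices} holds, and the diagonal and off-diagonal argument with $E_{i,j}$ and $F_{i,j}$ from the proof of Proposition~\ref{uniqueness of cost} yields $M=\lambda'\Id$. Positivity of the scaling follows exactly as in that proof.

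The main obstacle is regularity. The paper's Proposition~\ref{uniqueness of cost} assumes compact uniformly convex domains and Hölder densities, whereas here $\alpha$ has full support on $\RR^d$. I would therefore avoid that proposition and use only the commutation step. This needs $T_A=T$ to be differentiable on an open set meeting the sphere of radius $r_0$, which holds once $b$ is $C^1$ near $r_0$. If $b$ is merely monotone, I would use a.e.\ differentiability (Alexandrov) and choose $r_0$ among the differentiability points where $b'(r_0)\neq b(r_0)/r_0$. Such points exist, since otherwise $b/r$ would be constant a.e. with vanishing derivative, and monotonicity would then force linearity.
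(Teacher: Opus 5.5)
Your main line is the paper's argument. Rotation equivariance gives $T=b(r)\theta$, one computes $DT=\frac{b}{r}\Id+\bigl(b'-\frac{b}{r}\bigr)\theta\theta^\top$, full support at a radius where $b'\neq b/r$ puts every $\theta\theta^\top-\theta'\theta'^\top$ in the span of the Hessians, and the commutation argument of Proposition~\ref{uniqueness of cost} finishes. The paper, too, tacitly assumes $b$ is differentiable.

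Two of your refinements are real improvements. First, you run the commutation step directly, which is right because the compact/H\"older hypotheses of Proposition~\ref{uniqueness of cost} fail on $\RR^d$. Second, you secure $\Id$ in the span, which matters in $d=2$: there trace-free matrices alone do not force a scalar, since $J=e_1e_2^\top-e_2e_1^\top$ anticommutes with every element of $\mathbb S_0^2$, so $M=s\Id+tJ$ satisfies $MS=SM^\top$ on $\mathbb S_0^2$. Summing $\nabla^2\Psi(r_0e_i)=\lambda\Id+\mu e_ie_i^\top$ over $i$ gives $(d\lambda+\mu)\Id$, and $d\lambda+\mu=(d-1)b(r_0)/r_0+b'(r_0)>0$ whenever $\mu\neq0$ (as $b,b'\ge0$). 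So $\Id$ is in the span, and the degenerate case $\lambda+\mu/d=0$ you worried about never occurs. (Like the paper, you also take $A$ invertible when writing $M=A^{-1}$; strictly, $A=0$ always lies in $\Ss_0(\hat\pi)$.)

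What fails is your fallback for merely monotone $b$. You claim some differentiability point $r_0$ with $b'(r_0)\neq b(r_0)/r_0$ must exist, but $(b/r)'=0$ a.e.\ does not make $b/r$ constant, and monotonicity of $b$ does not help. Here is a counterexample:
\begin{itemize}
\item Let $\mathcal C$ be the Cantor function, $\mathcal K$ the Cantor set, $h=1$ on $[0,1]$, $h=1+\mathcal C(\cdot-1)$ on $[1,2]$, $h=2$ on $[2,\infty)$, and $b(r)=r\,h(r)$.
\item Then $\Psi(x)=\int_0^{\norm{x}}b(s)\,ds$ has $\Psi-\frac12\norm{x}^2$ convex. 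So $T=\nabla\Psi$ has a $1$-Lipschitz inverse, and $\beta=T\#\alpha$ is again a radial density (rescale $b$ to normalise the covariance).
\item $b$ is not linear, yet it is differentiable exactly off $1+\mathcal K$, and there $b'=h=b/r$.
\end{itemize}
Wherever the classical or Alexandrov Hessian of $\Psi$ exists, it equals $h(\norm{x})\Id$, so the pointwise commutation identity is vacuous. The non-linearity lives entirely in the singular part of $D(b/r)$. There is a second problem: even when good differentiability radii exist, a single sphere is Lebesgue-null, so without continuity of $b'$ an $\alpha$-a.e.\ identity cannot be transferred to it.

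The conclusion still holds in this example, but only through a distributional argument. Since $AT$ is a gradient, $A\,D^2\Psi$ is a symmetric matrix-valued measure, where
\[
D^2\Psi=\frac{b}{r}\,\Id\,dx+r\,\theta\theta^\top\,D(b/r)\quad\text{(radially lifted)},
\]
and $D(b/r)\neq0$ if and only if $b$ is not linear. Testing with $\eta(r)\zeta(\theta)$ where $\int\zeta\,d\sigma=0$ shows that $A\theta\theta^\top-\theta\theta^\top A^\top$ is independent of $\theta$, so $A-c\Id$ is skew for some $c$. Testing with $\zeta\equiv1$ then kills the skew part. Alternatively, assume $b\in C^1$ as the paper implicitly does, and your main line is complete.
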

\begin{proof}
    Since $\alpha,\beta$ have radial densities, $f(Rx)=f(x)$ and $g(Rx)=g(x)$ for all $R\in\textbf{O}_d$. Given a map $T$ that pushes forward $\alpha$ to $\beta$, for all $R\in\textbf{O}_d$ the map $\Tilde{T}_R(x)=R^{-1}T(Rx)$ pushes $\alpha$ to $\beta$ as well. 
    Moreover, $\tilde T_R$ doesn't perturb the cost, ie 
    $$
    -\int x^\top T(x) d\alpha(x) = -\int x^\top \tilde T_R(x) d\alpha(x)
    $$
    
    Since the OT map is unique, 
    $$
    RT(x)=T(Rx)
    $$
    It follows that $T(x)=b(\norm{x})x/\norm{x}$.

    Given $x\in\RR^d$, let $r=\norm{x}$ and $u=x/r$. Then,
    $$
    \nabla T(x)=b'(r)uu^\top + \frac{b(r)}{r}(\Id-uu^\top) = \left(b'(r)-\frac{b(r)}{r}\right)uu^\top+\frac{b(r)}{r}Id.
    $$
    Note that $rb'(r)=b(r)$ (i.e. $b(r)=e^Cr$ for $C\in\RR$) implies that $\nabla T$ is constant and hence \eqref{hessian spanning symmetric matrices} fails. 

    On the other hand, if $b$ is not linear, then $\nabla T(x) = a(r)uu^\top + b(r)/r \Id$ for some function $a$ that is not identically equal to zero. If $\alpha$ has full support then for all $u,v$ on the sphere, 
    $$
    uu^\top-vv^\top \in \Span\{\nabla T(x),\;x\in\RR^d\}
    $$
    which guarantees that the only costs matrices $A$ generating the same transport map are  of the form $\lambda \Id$ for $\lambda>0$, by Remark \ref{remark: zero trace symmetric matrices}.
\end{proof}

\paragraph{Numerical illustration } Figure \ref{fig:L_0 with annuli} illustrates how we can have identifiability with two radial distributions. We plot, as before, the loss $\Ll_0$ having a ground cost equal to the Euclidean metric, i.e. $\hat A=I_2$. The blue graph is the average loss between samples of two standard Gaussian $\alpha=\beta=\Nn(0,I_2)$. The red graph is the average loss between samples drawn from a Gaussian and a uniform measure on the annulus of center $(0,0)$ with inner radius 0.8 and outer radius 0.85. This Gaussian/annulus setting corresponds to two radial densities with different bases. So,  Proposition \ref{prop : radial densities} implies curvature in the graph of $\Ll_0$ when $n$ is sufficiently large. Numerically, we indeed observe a loss that converges to a curve with a zero at 0.5 as $n$ increases, whereas the loss for the Gaussian to Gaussian setting flattens as $n$ increases.

\begin{figure}[ht]
    \centering

    \begin{subfigure}{0.24\textwidth}
        \centering
        \includegraphics[width=\linewidth]{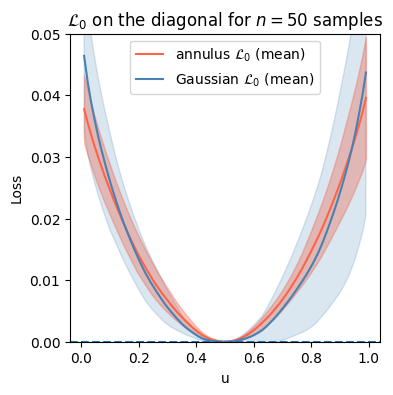}
        \caption{$n=50$ samples}
    \end{subfigure}
    \begin{subfigure}{0.24\textwidth}
        \centering
        \includegraphics[width=\linewidth]{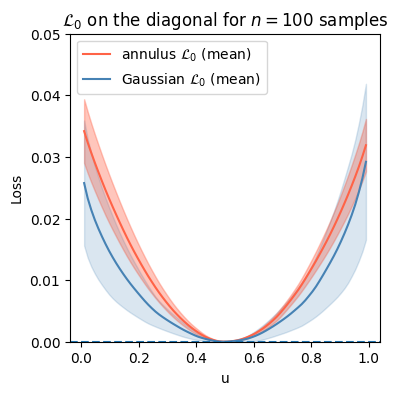}
        \caption{$n=100$ samples}
    \end{subfigure}
    \begin{subfigure}{0.24\textwidth}
        \centering
        \includegraphics[width=\linewidth]{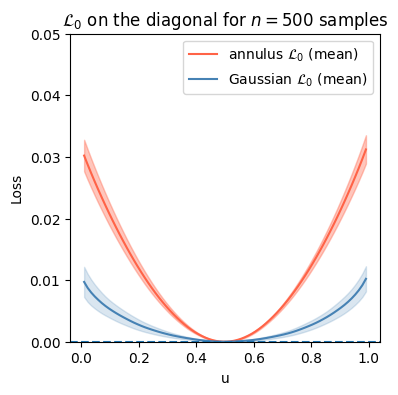}
        \caption{$n=500$ samples}
    \end{subfigure}
    \begin{subfigure}{0.24\textwidth}
        \centering
        \includegraphics[width=\linewidth]{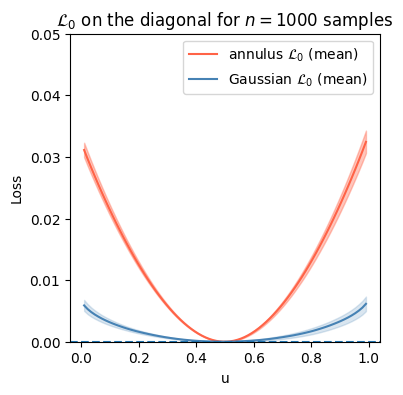}
        \caption{$n=1000$ samples}
    \end{subfigure}
    \caption{$\Ll_0^n$ on the diagonal for Gaussian/Gaussian (blue) and Gaussian/annulus (red) iOT \label{fig:L_0 with annuli} }
\end{figure}
\begin{figure}[ht]
    \centering
    \includegraphics[width=\linewidth]{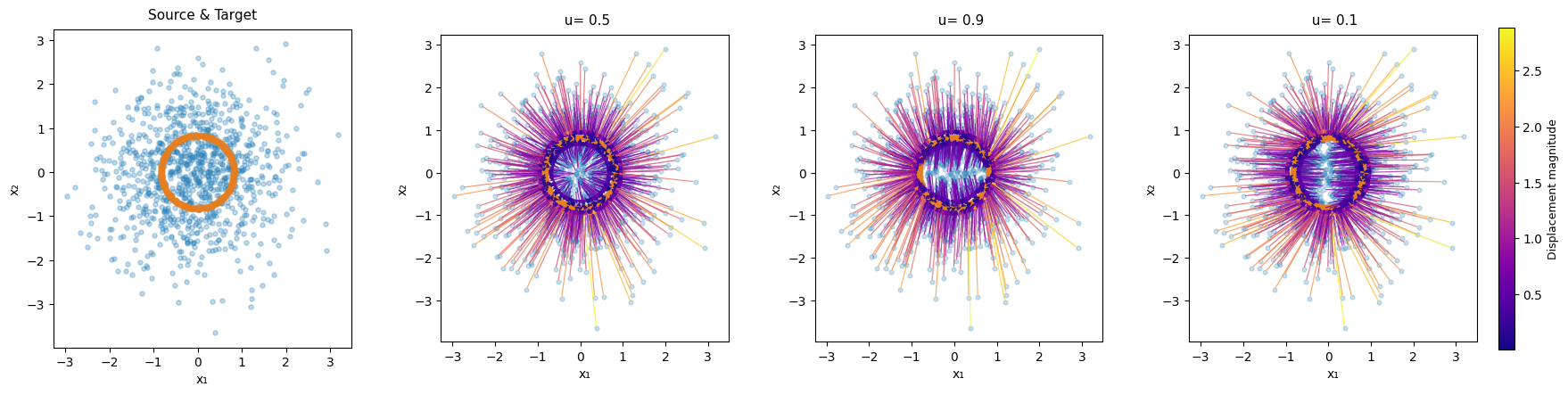}
    \caption{Transport map $T_A$ for different matrices $A=\diag(u,1-u)$ for Gaussian/annulus OT with $n=1000$ samples.}
    \label{fig:OT map annulus}
\end{figure}
Figure \ref{fig:OT map annulus} displays the transport map $T_A$ for the previous Gaussian/annulus case as arrows acting on $n=1000$ samples for different matrices $A=\diag(u,1-u)$ for $u\in\{0.5,0.9,0.1\}$. The map for $u=0.5$ transports the middle of the Gaussian quite uniformly to the annulus, whereas for $u=0.9$ (resp. $u=0.1$) the transport has a horizontal (resp. vertical) separation. This illustrates, in that case, the dependency of $T_A$ wrt $A$ which was expected due to theoretical identifiability.

\subsubsection{The entropic gap loss for Gaussians}
We now study the case of regularized inverse entropic OT between two Gaussian distributions. We make use of the closed form of entropic OT (see \cite{janati2020entropic} or \cite{galichon2022cupid} for instance) in the Gaussian case to find a closed form for $A_\varepsilon$, the minimizer of \eqref{eq:J_eps} with a specific convex regularizer. We show that in this case the rate of convergence of $A_\varepsilon$ toward a selection $A_0\in\Ss_0(\hat\pi)$ is of order $\varepsilon$ instead of $\sqrt{\varepsilon}$ as shown in Theorem \ref{thm:bias}. The tradeoff is that, in the Gaussian case, indentifiablity fails.

\begin{prop}
Let $\alpha= \Nn(0,\Sigma_\alpha)$ and $\beta = \Nn(0, \Sigma_\beta)$ be two centered Gaussian distribution on $\RR^d$. Let $\hat\pi$ be the  plan between $\alpha$ and $\beta$ with respect to the cost $\hat A$ and let $\hat K = \int xy^\top d\hat \pi(x,y)$. Write   $\hat G=\Sigma_\alpha^{-1/2} \hat K \Sigma_\beta^{-1/2}$. Then, for an invertible matrix $A$,
\begin{equation}\label{eq:L_eps_gaussian}
        \Ll_\varepsilon(A) = -\langle\Sigma_\alpha^{1/2}A\Sigma_\beta^{1/2},\hat G\rangle - \inf_{\norm{G}_{op}\le 1}-\langle \Sigma_\alpha^{1/2}A\Sigma_\beta^{1/2},G\rangle -\frac{\varepsilon}{2}\log(\Id-GG^\top).
    \end{equation}
    Moreover, $\Ll_\epsilon$ is unbounded from below.

On the other hand, the conjugated nuclear norm regularized problem for $\lambda\in(0,1]$ 
    $$
    \min_{A\succ 0}\Jj_\varepsilon(A)\qwhereq\Jj_\epsilon(A) \eqdef \Ll_\epsilon(A) + \lambda \norm{\Sigma_\alpha^{1/2}A\Sigma_\beta^{1/2}}_*
    $$
    admits a unique solution. For $\lambda = \lambda_0 \epsilon$, the minimizer $A_\epsilon$ satisfies $$B_\varepsilon\coloneqq \Sigma_\alpha^{1/2}A_\varepsilon\Sigma_\beta^{1/2} = \frac{(1-\lambda_0 \epsilon)}{\lambda_0(2-\lambda_0\epsilon)} \hat U\hat V^\top $$
    where $\hat U, \hat V$ are the left and right singular vectors of $\Sigma_\alpha^{1/2} \hat A \Sigma_\beta^{1/2}$. 
    
    Moreover, if $A_0$ is defined by 
    $$
    A_0\coloneqq \underset{A\in S_0(\hat\pi)}{\arg\min} -\frac{1}{2}\log\det A + \lambda_0\norm{\Sigma_\alpha^{1/2}A\Sigma_\beta^{1/2}}_*
    $$
    Then $\displaystyle \Sigma_\alpha^{1/2} A_0\Sigma_\beta^{1/2}=\frac{1}{2\lambda_0}\hat U\hat V^\top$
    and hence, $\norm{A_0 - A_\epsilon}\sim \epsilon$.
\end{prop}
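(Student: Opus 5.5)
The plan is to reduce everything to a finite-dimensional problem in the whitened coordinates. Set $M\coloneqq\Sigma_\alpha^{1/2}A\Sigma_\beta^{1/2}$, and parametrize couplings in $\Pi(\alpha,\beta)$ by their cross-covariance.

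\textbf{Step 1: the closed form \eqref{eq:L_eps_gaussian}.}
For Gaussian marginals, the entropic problem \eqref{eq:OT_epsilon} with a bilinear cost has a Gaussian minimizer, as in \cite{janati2020entropic}. The reason is that, among couplings with a prescribed cross-covariance $K$, the Gaussian one minimizes $\KL(\pi|\alpha\otimes\beta)$, since it maximizes entropy. So the infimum reduces to one over $K$ with $\begin{pmatrix}\Sigma_\alpha & K\\ K^\top&\Sigma_\beta\end{pmatrix}\succeq0$.
\begin{itemize}
\item Write $K=\Sigma_\alpha^{1/2}G\Sigma_\beta^{1/2}$. The constraint becomes $\norm{G}_{op}\le1$.
\item The cost term becomes $\dotp{c_A}{\pi}=-\dotp{M}{G}$.
\item A Schur complement gives $\KL=-\frac12\log\det(\Id-GG^\top)$.
\item Similarly $\dotp{c_A}{\hat\pi}=-\dotp{M}{\hat G}$.
\end{itemize}
This yields \eqref{eq:L_eps_gaussian}.

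\textbf{Step 2: unboundedness from below.}
Take $A=t\hat A$. Optimality of $\hat\pi$ at $\epsilon=0$ forces $\hat G=\hat U\hat V^\top$, the maximizer of $\dotp{\hat M}{G}$ over the operator-norm ball. Plugging $G=(1-s)\hat U\hat V^\top$ into the infimum and optimizing over $s\sim\epsilon/t$ gives
\[
\Ll_\epsilon(t\hat A)\le -\tfrac{\epsilon d}{2}\log t+O(\epsilon),
\]
which tends to $-\infty$ as $t\to\infty$.

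\textbf{Step 3: the regularized problem.}
Everything depends on $A$ only through $M$, and $A\mapsto M$ is a bijection. So we minimize over $M$:
\[
-\dotp{M}{\hat U\hat V^\top}+\sup_{\norm{G}_{op}\le1}\Big(\dotp{M}{G}+\tfrac\epsilon2\log\det(\Id-GG^\top)\Big)+\lambda\norm{M}_*.
\]
\begin{itemize}
\item The sup is a convex conjugate $h^*(M)$ of $h(G)=-\frac\epsilon2\log\det(\Id-GG^\top)$, and $h$ is strictly convex and spectral.
\item By von Neumann's trace inequality the problem is orthogonally equivariant. Hence the minimizer has the form $M=\hat U\,\mathrm{diag}(m_i)\hat V^\top$ with optimal $G=\hat U\,\mathrm{diag}(g_i)\hat V^\top$.
\item It then decouples into $d$ scalar problems: minimize $-m+\lambda|m|+\sup_{|g|<1}\big(mg+\frac\epsilon2\log(1-g^2)\big)$.
\end{itemize}

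\textbf{Step 4: solving the scalar problem.}
The inner first-order condition is $m=\epsilon g/(1-g^2)$. Equivalently, the outer problem reads $\min_{g}\; -h^\ast$-dual, i.e. we minimize over $g$
\[
\epsilon\Big(-\tfrac{g}{1-g^2}(1-\lambda-g)\Big)+\tfrac\epsilon2\log(1-g^2)
\]
after substitution. The exact stationarity computation is the delicate algebra. I would do it via the optimality condition $1-\lambda=g$ from envelope arguments: the gradient of $h^*$ at $M$ is $G$, so stationarity reads $\hat U\hat V^\top-G\in\lambda\,\partial\norm{M}_*$, giving $g=1-\lambda$. Then
\[
m=\frac{\epsilon(1-\lambda)}{1-(1-\lambda)^2}=\frac{\epsilon(1-\lambda)}{\lambda(2-\lambda)}.
\]
With $\lambda=\lambda_0\epsilon$ this gives $m=\frac{1-\lambda_0\epsilon}{\lambda_0(2-\lambda_0\epsilon)}$, as claimed. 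Uniqueness follows from strict convexity of $h^*$ along the relevant directions together with the positivity $\lambda\in(0,1]$. I expect this is where care is needed.

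\textbf{Step 5: the limit $A_0$.}
By Proposition~\ref{prop : non identifiability elliptic}, the set $\Ss_0(\hat\pi)$ consists of the $A$ with $M=\hat U\,\mathrm{diag}(x_i)\hat V^\top$, $x_i>0$. On this set,
\[
\log\det A=\sum_i\log x_i+\text{const},\qquad \norm{M}_*=\sum_i x_i.
\]
Minimizing $-\frac12\log x+\lambda_0x$ gives $x=1/(2\lambda_0)$. Comparing with Step 4,
\[
\frac{1-\lambda_0\epsilon}{\lambda_0(2-\lambda_0\epsilon)}-\frac1{2\lambda_0}=-\frac{\epsilon}{2(2-\lambda_0\epsilon)}\sim\epsilon.
\]
Mapping back through the fixed invertible map $M\mapsto A$ gives $\norm{A_0-A_\epsilon}\sim\epsilon$.
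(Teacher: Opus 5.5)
Your Steps 1, 3, 4 and 5 follow the paper's proof essentially step for step:
\begin{itemize}
\item restriction to Gaussian couplings with $K=\Sigma_\alpha^{1/2}G\Sigma_\beta^{1/2}$ and $\KL=-\frac12\log\det(\Id-GG^\top)$;
\item spectral decoupling into scalar problems;
\item the envelope first-order condition $\hat U\hat V^\top-G\in\lambda\,\partial\norm{M}_*$, giving $g=1-\lambda$ and $m=\epsilon(1-\lambda)/(\lambda(2-\lambda))$;
\item the scalar minimization $x=1/(2\lambda_0)$ on $\Ss_0(\hat\pi)$.
\end{itemize}
Your maximum-entropy justification of the Gaussian restriction and your explicit value of $B_\epsilon-B_0$ are, if anything, more complete than the paper.

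The genuine problem is Step 2. The rate you state is correct, but your argument proves the reverse inequality. The inner infimum enters $\Ll_\epsilon$ with a minus sign. Inserting a trial point $G_0=(1-s)\hat U\hat V^\top$ therefore bounds the infimum from above, and hence bounds $\Ll_\epsilon$ from \emph{below}. Writing $\Sigma_\alpha^{1/2}\hat A\Sigma_\beta^{1/2}=\hat U\hat S\hat V^\top$, you get
\[
\Ll_\epsilon(t\hat A)\;\ge\;-st\,\Tr\hat S+\frac{\epsilon d}{2}\log(2s-s^2),
\]
which at $s=\epsilon/t$ is $-\frac{\epsilon d}{2}\log t+O(1)$. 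This says nothing about unboundedness from below. Optimizing over $s$ does not repair it: the true inner optimizer $\hat U\,\mathrm{diag}(g_\epsilon(t\hat s_i))\hat V^\top$ is generally not a multiple of $\hat U\hat V^\top$.

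What is needed is a lower bound on the inner infimum valid for \emph{every} $G$ in the ball, and you already have the tool from Step 3. By von Neumann, $\dotp{M}{G}\le\sum_i\sigma_i(M)\sigma_i(G)$, and $\log\det(\Id-GG^\top)=\sum_i\log(1-\sigma_i(G)^2)$. So the infimum is at least $\sum_i f_\epsilon(\sigma_i(M))$, where $f_\epsilon(b)=\min_{|g|<1}\{-bg-\frac\epsilon2\log(1-g^2)\}$, with equality at the aligned $G$. This is what the paper does: it solves the scalar problem explicitly, $g_\epsilon(b)=2b/(\epsilon+\sqrt{\epsilon^2+4b^2})$. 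That gives the exact value $\Ll_\epsilon(t\hat A)=-\sum_i\bigl(t\hat s_i+f_\epsilon(t\hat s_i)\bigr)$, and $-b-f_\epsilon(b)=-\frac\epsilon2\log(b/\epsilon)-\frac\epsilon2+o(1)\to-\infty$ as $b\to\infty$. So Step 2 has to come after the spectral reduction, not before it.

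On uniqueness, your hedge can be made sharp. The function $h$ is essentially smooth on the open operator-norm ball, so $h^*$ is strictly convex on all of $\RR^{d\times d}$. Hence the regularized objective in $M$ has at most one minimizer. The paper instead reads uniqueness off the first-order conditions, which force $U_\epsilon V_\epsilon^\top=\hat U\hat V^\top$ and $g_\epsilon(b_i)=1-\lambda$. Note also that at $\lambda=1$ the scalar minimizer is $m=0$, so an invertible minimizer exists only for $\lambda<1$; the paper's statement has the same endpoint issue.
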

\begin{proof}
    Let's first study the term $\OT^\varepsilon_{\alpha,\beta}(A)$. It is known that when $\alpha$ and $\beta$ are Gaussian, the optimal coupling is also Gaussian. Therefore, we constraint the optimization on
    $$
    \Gg(\alpha,\beta) \coloneqq \left\{\pi=\Nn(0,\Sigma),\qwhereq \Sigma = \left(\begin{array}{cc}
       \Sigma_\alpha  & K \\
        K^\top & \Sigma_\beta 
    \end{array}\right) \succ 0 \right\}
    $$
    Note that $K=\EE_{\pi}[XY^\top]$ is the cross variance of $\pi$, hence $\langle c_A,\pi\rangle = -\langle A,K\rangle$.
    
    Using a Schur complement argument gives 
    $$
    \left\{K: \left(\begin{array}{cc}
       \Sigma_\alpha  & K \\
        K^\top & \Sigma_\beta 
    \end{array}\right)\succ0\right\} = \left\{\Sigma_\alpha^{1/2}G\Sigma_\beta^{1/2},\; \norm{G}_{op}\le 1 \right\}
    $$
    Moreover, a direct computation of the KL leads to
    $$
   \KL(\pi|\alpha\otimes\beta) = -\frac{1}{2}\log\det(\Id-G G^\top) \qwhereq K=\Sigma_\alpha^{1/2}G\Sigma_\beta^{1/2} \quad \text{with}\quad \pi=\Nn\left(0,\left(\begin{array}{cc}
       \Sigma_\alpha  & K \\
        K^\top & \Sigma_\beta 
    \end{array}\right)\right)
    $$
    Therefore, we have the following reformulation 
    $$
    \OT_{\alpha,\beta}^\epsilon(A) = \inf_{\norm{G}_{op}\le 1}-\langle \Sigma_\alpha^{1/2}A\Sigma_\beta^{1/2},G\rangle -\frac{\varepsilon}{2}\log\det(\Id-GG^\top)
    $$
    which gives the expression for $
    \Ll_\epsilon(A)$ in \eqref{eq:L_eps_gaussian}. 
    
    For the linear term, observe that using proposition \ref{prop : non identifiability elliptic}, one can show that $\hat G = \hat U\hat V^\top$ where $\Sigma_\alpha^{1/2}\hat A\Sigma_\beta^{1/2}\overset{SVD}{=}\hat U\hat S\hat V^\top$. Indeed, since $\hat A$ is supposed to be invertible, we know that 
    $$
    T_{\hat A}(x) = \Sigma_\beta^{1/2}\hat V\hat U^\top \Sigma_\alpha^{-1/2}x \qwhereq \Sigma_\alpha^{1/2}\hat A\Sigma_\beta^{1/2}\overset{SVD}{=}\hat U\hat S\hat V^\top
    $$
    Then, by definition, 
    $$
    \hat K\coloneqq \EE_{X\sim\alpha}[XT_{\hat A}(X)^\top] = \Sigma_\alpha^{1/2}\hat U \hat V^\top \Sigma_\beta^{1/2}
    $$
    hence $\hat G = \Sigma^{-1/2}_\alpha\hat K \Sigma_\beta^{-1/2}=\hat U \hat V^\top$.
    
    Now, to study the minimizer of $\Ll_\varepsilon$, notice that for any $B$ invertible,
    $$
    \Ll_\varepsilon(\Sigma_\alpha^{-1/2}B\Sigma_\beta^{-1/2}) = -\langle B,\hat G\rangle -\inf_{\norm{G}_{op}\le 1}-\langle B,G\rangle -\frac{\varepsilon}{2}\log\det(\Id-GG^\top)
    $$
    Because minimizing $\Ll_\varepsilon$ is equivalent to minimizing $B \mapsto \Ll_\varepsilon(\Sigma_\alpha^{-1/2}B\Sigma_\beta^{-1/2})$, we will define 
    $$
    \tilde \Ll_\varepsilon(B) \coloneqq \Ll_\varepsilon(\Sigma_\alpha^{-1/2}B\Sigma_\beta^{-1/2}) = -\langle B,\hat G\rangle -\inf_{\norm{G}_{op}\le 1}-\langle B,G\rangle -\frac{\varepsilon}{2}\log\det(\Id-GG^\top)
    $$

    Now let's study the minimizer $G_\varepsilon^B$ given by the second term. Write $B=\Sigma_\alpha^{1/2}A\Sigma_\beta^{1/2}$ and its SVD  $\displaystyle B=U\diag(b_i)V^\top$ with $b_i > 0$. By orthogonal invariance, the unique minimizer $G_\varepsilon^B$ giving the second term aligns with $B$ as : 
    $$
    G_\varepsilon^B = U\diag(g_\varepsilon(b_i))V^\top
    $$
    where $g_\varepsilon(b)$ is solution of 
    $$
    \min_{|g|<1}\{-bg - \frac{\varepsilon}{2}\log(1-g^2)\} 
    $$
    First order optimality leads to $-b + \varepsilon g/(1-g^2)=0$, hence for $b>0$
    $$
     g_\varepsilon(b) = \frac{2b}{\varepsilon+\sqrt{\varepsilon^2 + 4b^2}}\in [0,1),\quad g_\varepsilon(0) = 0
    $$
    Putting this expression into the value leads to 
    $$
     \inf_{\norm{G}_{op}\le 1}-\langle B,G\rangle -\frac{\varepsilon}{2}\log\det(\Id-GG^\top)= \sum_{i=1}^d f_\varepsilon(b_i) 
    $$
    where 
    $$
    f_\varepsilon(b) = -\frac{2b^2}{\varepsilon+\sqrt{\varepsilon^2 + 4b^2}} + \frac{\varepsilon}{2}\pa{\log(\varepsilon+\sqrt{\varepsilon^2+4b^2})-\log(2\varepsilon)}
    $$
    Then
    $$
    \tilde\Ll_\varepsilon(B) = -\sum_{i=1}^d(b_i+f_\varepsilon(b_i)) \qwhereq B \overset{SVD}{=} U\diag(b_i)V^\top 
    $$
    which diverges to $-\infty$ as, for instance, $b_1\to\infty$.
    
    Then, to make the optimization problem feasible, we need to regularize. Given $\lambda\in (0,1]$ define
    $$
    \tilde \Jj_\varepsilon(B) = \tilde\Ll_\varepsilon(B) + \lambda \norm{B}_* = \sum_{i=1}^d \left(-b_i-f_\varepsilon(b_i) + \lambda|b_i|\right)
    $$
    so that $\tilde \Jj_\varepsilon(\Sigma_\alpha^{1/2}A\Sigma_\beta^{1/2}) = \Jj_\varepsilon(A)$.
    
    Let's first show that $\tilde\Jj_\varepsilon$ admits an invertible minimizer. 
    Around $+\infty$, the scalar loss behaves as 
    $$
    -b-f_\varepsilon(b)+\lambda|b| \sim \lambda b-\frac{\varepsilon}{2}\log b\to+\infty
    $$
    then $\tilde \Ll_\varepsilon$ is coercive. We have seen that it is convex as well. 
    
    When $\lambda\in (0,1]$, the slope of $b\mapsto \lambda|b|-b-f_\varepsilon(b)$ is strictly negative at $0^+$, therefore the minimum of $\tilde\Jj_\varepsilon$ cannot have a singular value equal to 0. Then by coercivity, convexity, and degeneracy around non-invertible matrices, $\tilde \Ll_\varepsilon$ admits a unique invertible minimizer $B_\varepsilon$. Because it is invertible, at $B_\varepsilon$ the minimizer $G_\varepsilon^{B_\varepsilon}$, giving the second term, is unique and is written as 
    $$
    G_\varepsilon^{B_\varepsilon}=U_\varepsilon\diag(g_\varepsilon(b_{i,\varepsilon}))V_\varepsilon^\top \qwhereq B_\varepsilon \overset{SVD}{=}U_\varepsilon\diag(b_{i,\varepsilon})V_\varepsilon^\top
    $$
    
    Then, thanks to the envelope theorem, the first-order condition is 
    $$
    -\hat G + G_\varepsilon^{B_\varepsilon} + \lambda U_\varepsilon V_\varepsilon^T =0
    $$
    which can be written as 
    $$
    \hat U\hat V^\top = U_\varepsilon\diag(g_\varepsilon(b_{i,\varepsilon})+\lambda)V_\varepsilon^\top
    $$
    Multiplying by $U_\varepsilon^\top$ and $V_\varepsilon$ gives
    $$
    \diag(g_\varepsilon(b_{i,\varepsilon})+\lambda) = U_\varepsilon^\top \hat U\hat V^\top V_\varepsilon
    $$
    Hence $U_\varepsilon^\top \hat U\hat V^\top V_\varepsilon$ is an orthogonal diagonal matrix with positive coefficient (indeed $g_\varepsilon(b_{i,\varepsilon})+\lambda\ge0$) it must be equal to $\Id$.
    Thus 
    $$
    U_\varepsilon V_\varepsilon^\top=\hat U\hat V^\top\qandq g_\varepsilon(b_{i,\varepsilon}) = 1-\lambda\;\forall i=1,\dots,d
    $$
    Now, solving $g_\varepsilon(b_{i,\varepsilon})+\lambda=1$ through
    $$
    \frac{2b_{i,\varepsilon}}{\varepsilon+\sqrt{\varepsilon^2 + 4b_{i,\varepsilon}^2}}=1-\lambda,\quad b_{i,\varepsilon}\ge0
    $$
    gives
    $$
    b_{i,\varepsilon} = \frac{\varepsilon(1-\lambda)}{\lambda(2-\lambda)}
    $$
    Consequently 
    $$
    B_\varepsilon=\frac{\varepsilon(1-\lambda)}{\lambda(2-\lambda)}\hat U\hat V^\top
    $$
    Notice that $B_\varepsilon$ converges if and only if $\lambda=\lambda_0\varepsilon$. If so, it converges toward $B_0\coloneqq \frac{1}{2\lambda_0}\hat U\hat V^\top$. Now let's prove that $B_0 = \Sigma_\alpha^{1/2} A_0 \Sigma_\beta^{1/2}$ where 
    $$
    A_0\coloneqq \underset{A\in S_0(\hat\pi)}{\arg\min} -\frac{1}{2}\log\det A + \lambda_0\norm{\Sigma_\alpha^{1/2}A\Sigma_\beta^{1/2}}_*
    $$
    Note that this problem is equivalent to the following one by cancelling the constant terms :
    $$
    A_0\coloneqq \underset{A\in S_0(\hat\pi)}{\arg\min} -\frac{1}{2}\log\det \Sigma_\alpha^{1/2}A\Sigma_\beta^{1/2} + \lambda_0\norm{\Sigma_\alpha^{1/2}A\Sigma_\beta^{1/2}}_*
    $$
    
    Hence, thanks to the description of $S_0(\hat\pi)$ given by proposition \ref{prop : non identifiability elliptic}, performing the change of variable $B=\Sigma_\alpha^{1/2}A\Sigma_\beta^{1/2}$ gives
    $$
    \Sigma_\alpha^{1/2}A_0\Sigma_\beta^{1/2} = \underset{\substack{B=\hat U\diag(x_i)\hat V^\top\\x_i>0}}{\arg\min} -\frac{1}{2}\log\det B+\lambda_0\norm{B}_*
    $$
    This problem is equivalent to
    $$
    \arg\min_{x_i>0}\sum_{i=1}^d-\frac{1}{2}\log(x_i)+\lambda_0x_i
    $$
    which has a unique solution given by $x_i=\frac{1}{2\lambda_0}\; \forall i=1,\dots,d$. Hence, $\Sigma_\alpha^{1/2}A_0 \Sigma_\beta^{1/2}=\frac{1}{2\lambda_0}\hat U\hat V^\top=B_0$
\end{proof}

\paragraph{Numerical visualization}
In Figure \ref{fig:J_eps}, we provide visualizations of $\Jj_\epsilon$ in the Gaussian closed form setting. The figure displays $\tilde\Jj_\epsilon$ for different values of $\epsilon$. One can observe that adding a nuclear norm regularizer with regularization parameter $\lambda_0\varepsilon$ restores curvature and allows $\tilde\Jj_\varepsilon$ to have a unique minimizer. We also show the convergence of the minimizer $B_\varepsilon$ toward $B_0$ on the same line.

\begin{figure}[H]
    \centering
    \includegraphics[width=\linewidth]{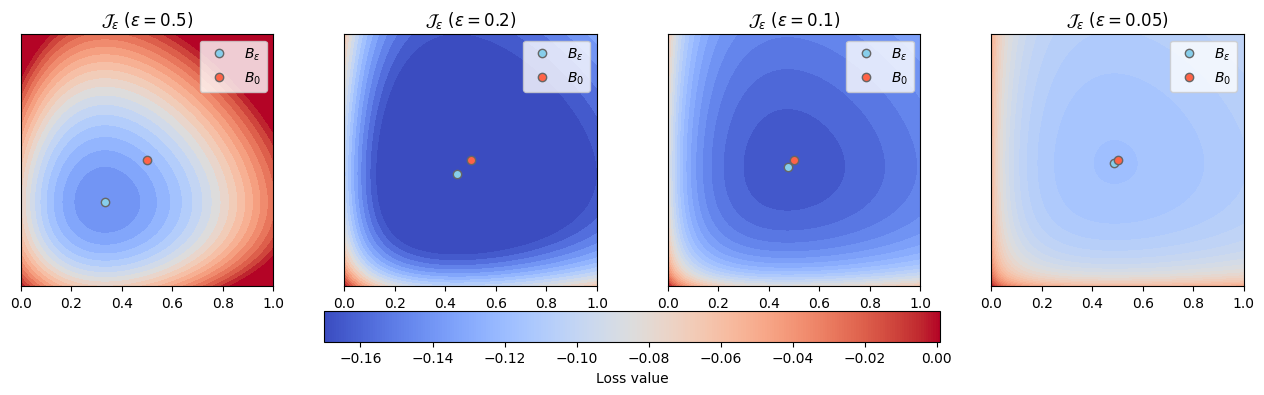}
    \caption{$\Jj_\varepsilon$ on $[0,1]^2$ with $R=\norm{}_*$}
    \label{fig:J_eps}
\end{figure}

\section*{Conclusion}

This work establishes a sharp contrast between inverse OT regimes: from a single coupling, the problem is intrinsically degenerate in discrete/polyhedral or highly symmetric settings, while smooth continuous marginals can restore local well-posedness through transport curvature. Theorem~\ref{thm:curvature} characterizes the second-order geometry of OT with respect to the cost function. As a result, Theorems~\ref{thm:uniqueness}, \ref{thm:local_curvature_L0}, \ref{thm:bias}, and \ref{thm:statistical} provide a complete chain from identifiability to quantitative recovery.

Beyond theory, these findings are directly relevant for practical iOT deployment. In realistic pipelines, one works with finite samples and entropic surrogates; our analysis clarifies how $\epsilon$ controls a bias--conditioning tradeoff and how this interacts with sampling noise. This gives principled guidance for choosing regularization and for interpreting recovered costs.

A natural next direction is to extend the identifiability and statistical theory from bilinear costs to general smooth parametrizations $c_\theta$. The curvature formula suggests that the local geometry of any parametrized inverse-OT problem is obtained by pulling back the cost-space Hessian along the map $\theta\mapsto c_\theta$. In this view, local identifiability is governed by whether tangent directions $\partial_\theta c_\theta[h]$ intersect the transport-invariant kernel characterized in Theorem~\ref{thm:curvature}. This perspective could yield general rank conditions and recovery rates for nonlinear cost families beyond the bilinear setting studied here.

Applications such as single-cell genomics, where OT-type couplings are used to relate cell populations across time or conditions \cite{schiebinger2019optimal,bunne2022proximal}, provide one motivation for such extensions. In these settings, inverse formulations are often used as exploratory tools for learning costs between latent states or features \cite{samaran2024scconfluence}. The present results provide a starting point towards clarifying the mathematical obstructions that any such interpretation must confront, including symmetry, discretization, and regularization-induced degeneracy.


\section*{Acknowledgments}

The work of G. Peyr\'e was supported by the French government under the management of Agence Nationale de la Recherche as part of the ``Investissements d’avenir'' program, reference ANR-19-P3IA-0001 (PRAIRIE 3IA Institute) and by the European Research Council (ERC project WOLF).

\bibliographystyle{plainnat}
\bibliography{references}

\newpage

\appendix

\appendix
\counterwithin{thm}{section}

\counterwithin{prop}{section}
\counterwithin{defn}{section}

\section{Sample complexity of entropic OT}\label{appendix : sample complexity of entropic OT}

The main result of Weed and Mena \cite{mena2019statistical} establishes the sample complexity of entropic optimal transport, and their results are given \textit{in expectation}. In this section, we convert their result to a \textit{high probability} statement and we also include bounds on the linear term of \eqref{eq:eps_loss_n} to handle entirely the sampling complexity.
\begin{thm}[Sample complexity of the entropic gap loss]\label{thm:uniform proba bound of OT cost}
    Assume that $\alpha$ and $\beta$ are sub-Gaussian.
    Then, with probability at least $1-\delta$,
        $$
        \sup_{\norm{A}\leq B}\abs{\Ll_\varepsilon(A) - \Ll_{n,\varepsilon}(A)} \leq  \frac{C_{\epsilon,\delta}}{\sqrt{n}}
        $$
        where $C_{\epsilon,\delta}$ is polynomial in $1/\epsilon$ with exponent proportional to the dimension $d$ and logarithmic in $1/\delta$.
\end{thm}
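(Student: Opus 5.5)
We split the gap loss into its linear and entropic parts:
\[
\Ll_\varepsilon(A)-\Ll_{n,\varepsilon}(A)=\dotp{A}{\Sigma_{\hat\pi_n}-\Sigma_{\hat\pi}}\;-\;\bigl(\OT^\varepsilon_{\alpha,\beta}(c_A)-\OT^\varepsilon_{\hat\alpha_n,\hat\beta_n}(c_A)\bigr).
\]
Each term is controlled uniformly over $\norm{A}\le B$.

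\textbf{Linear term.} This is handled by Cauchy--Schwarz:
\[
\sup_{\norm{A}\le B}\abs{\dotp{A}{\Sigma_{\hat\pi_n}-\Sigma_{\hat\pi}}}\le B\,\norm{\Sigma_{\hat\pi_n}-\Sigma_{\hat\pi}}.
\]
The entries of $x_iy_i^\top$ are products of sub-Gaussian variables, hence sub-exponential. A Bernstein inequality plus a union bound over the $d^2$ entries then gives
\[
\norm{\Sigma_{\hat\pi_n}-\Sigma_{\hat\pi}}\lesssim \sqrt{\log(d^2/\delta)/n}
\]
for $n\gtrsim\log(1/\delta)$.

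\textbf{Entropic term: reduction.} We follow the strategy of Mena--Weed \cite{mena2019statistical}. By duality, $\OT^\varepsilon$ is a supremum over potentials $(f,g)$ of $\int f\,d\alpha+\int g\,d\beta-\varepsilon\int(e^{(f\oplus g-c_A)/\varepsilon}-1)\,d\alpha\otimes\beta$. For sub-Gaussian marginals and the bilinear cost with $\norm{A}\le B$, the optimal potentials for both the population and empirical problems lie in a class $\Ff_{\varepsilon,B}$ with the following properties:
\begin{itemize}
\item they grow at most quadratically, $\abs{f(x)}\lesssim 1+B(1+\norm{x}^2)$;
\item all their derivatives satisfy $\abs{D^kf(x)}\lesssim C_k\,\varepsilon^{1-k}(1+\norm{x})^k$.
\end{itemize}
The key point is that these bounds depend on $A$ only through $B$. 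Consequently,
\[
\sup_{\norm{A}\le B}\abs{\OT^\varepsilon_{\alpha,\beta}(c_A)-\OT^\varepsilon_{\hat\alpha_n,\hat\beta_n}(c_A)}
\]
is bounded by the supremum of an empirical process indexed by the joint class $\{(A,f,g)\}$. The exponential term is a U-statistic-type quantity, treated as in Mena--Weed by conditioning on one marginal.

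\textbf{Entropic term: bound in expectation.} Mena--Weed bound the expectation of this supremum by $C_\varepsilon/\sqrt n$, where $C_\varepsilon\sim\varepsilon^{-\lceil 5d/4\rceil+\dots}$, using Dudley chaining with the metric entropy of smooth function classes on growing balls and a sub-Gaussian tail truncation. The extra parameter $A$ only adds a $d^2$-dimensional Lipschitz family, since $A\mapsto c_A$ is Lipschitz on compact sets weighted by $\norm{x}\norm{y}$. This raises the entropy by $O(d^2\log(1/\eta))$ and does not change the polynomial order in $1/\varepsilon$.

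\textbf{High probability.} To pass from expectation to high probability, we use a bounded-differences argument after truncation, which we expect to be the main obstacle since the functions are unbounded. Replacing one sample $(x_i,y_i)$ changes the supremum by at most $n^{-1}\sup_{\Ff}\abs{f(x_i)}$-type quantities, which are $\lesssim n^{-1}(1+\norm{x_i}^2+\norm{y_i}^2)$; for the entropic part this uses the dual stability of the potentials. We handle the unboundedness in three steps:
\begin{enumerate}
\item On the event $\max_i\norm{x_i},\norm{y_i}\le R_\delta\sim\sqrt{\log(n/\delta)}$, which has probability at least $1-\delta/2$ by sub-Gaussianity, apply McDiarmid's inequality with differences $\lesssim R_\delta^2/n$.
\item This yields a deviation of order $R_\delta^2\sqrt{\log(1/\delta)/n}$.
\item Control the expectation restricted to the truncation event separately.
\end{enumerate}

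\textbf{Conclusion.} Combining the linear bound, the expectation bound and the concentration bound gives the claim with
\[
C_{\varepsilon,\delta}=\mathrm{poly}_d(1/\varepsilon)\cdot\mathrm{polylog}(1/\delta).
\]
The $\log n$ factor coming from $R_\delta$ is absorbed by slightly enlarging the exponent, or removed by a more careful Bernstein-type (Talagrand) inequality for unbounded classes. If only an exactly logarithmic dependence on $1/\delta$ is desired, Talagrand's inequality with sub-Gaussian envelope (Adamczak's version) is the tool we would try first.
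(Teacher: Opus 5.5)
The opening of your proposal matches the paper. You split the loss into a linear term and an entropic term, and you reduce the entropic term to the Mena--Weed comparison $\abs{\OT^\varepsilon_{\alpha,\beta}-\OT^\varepsilon_{\hat\alpha_n,\hat\beta_n}}\lesssim(1+\tilde\sigma^{3s})(\norm{\alpha-\hat\alpha_n}_{\Ff_s}+\norm{\beta-\hat\beta_n}_{\Ff_s})$. The paper reaches general $A,\varepsilon$ by pushing $\alpha$ forward by $A^\top$ and rescaling by $\varepsilon^{-1/2}$. Two side remarks:
\begin{itemize}
\item No U-statistic and no joint $(A,f,g)$ class is needed. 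If you change one marginal at a time and use the canonically extended potentials, the exponential term equals one. Only $\sup_u\abs{\int u\,d(\alpha-\hat\alpha_n)}$ then appears, over a class that already contains the potentials for every $\norm{A}\le B$.
\item Your dual-norm and entrywise-Bernstein treatment of the linear term is genuinely uniform in $A$. The paper applies Bernstein only at a fixed $A$.
\end{itemize}

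The gap is in the high-probability step. As written, it does not give $C_{\varepsilon,\delta}/\sqrt n$ with $C_{\varepsilon,\delta}$ independent of $n$.
\begin{itemize}
\item Truncating at $R_\delta\sim\sqrt{\log(n/\delta)}$ and applying McDiarmid with differences $\lesssim R_\delta^2/n$ gives a deviation of order $\log(n/\delta)\sqrt{\log(1/\delta)/n}$.
\item Your class $\Ff_{\varepsilon,B}$ cannot be deterministic for the empirical potentials. Their derivative bounds depend on the random sub-Gaussian constant $\tilde\sigma$ of $\hat\alpha_n,\hat\beta_n$, which your argument never controls. On your truncation event the only available bound is $\tilde\sigma\lesssim R_\delta$, so the prefactor $(1+\tilde\sigma^{3s})$ adds a further factor $\log^{3s/2}(n/\delta)$.
\end{itemize}
The outcome is $C_{\varepsilon,\delta}\,\mathrm{polylog}(n)/\sqrt n$. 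Enlarging an exponent in $1/\varepsilon$ or $\log(1/\delta)$ cannot absorb a factor that grows with $n$. Enlarging the exponent of $n$ changes the rate to $n^{-1/2+\eta}$. The $\delta$-dependence is not the problem: the paper's own constant is also polylogarithmic in $1/\delta$, with degree proportional to $d$.

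You mention the Talagrand/Adamczak route only as an optional refinement, but it is necessary, and it is the paper's route. On its own it is still not sufficient, because dropping truncation leaves $\tilde\sigma$ uncontrolled. The paper uses two ingredients:
\begin{itemize}
\item Adamczak's inequality for unbounded classes, applied to a countable version of $\Ff_s$. Its leading fluctuation is $\sqrt{v^2\log(1/\delta)}/n$ with $v^2\lesssim n(1+4d\sigma^2)^2$, coming only from second moments of the quadratic envelope, so it carries no $\log n$. The envelope maximum, whose $\psi_1$-norm is $\lesssim(1+d\sigma^2)\log n$, enters only the term $(1+d\sigma^2)\log(n)\log(1/\delta)/n$, which is $O(1/\sqrt n)$ uniformly in $n$.
\item A separate, $n$-free bound on $\tilde\sigma$. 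Apply Jensen and Markov to $\frac1n\sum_i e^{\lambda\norm{X_i}^2/(2d(\sigma+t)^2)}$ with $\lambda=((\sigma+t)/\sigma)^2$. This gives $\tilde\sigma\le\sigma(1+\sqrt{\log(1/\delta)/\log 2})$ with probability at least $1-\delta$.
\end{itemize}
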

Estimating the sample complexity for the entropic gap loss on bounded matrices $\norm{A}\le B$ includes two terms : 
\begin{itemize}
    \item a linear term
    \begin{equation}\label{eq:linear term}
    \abs{\int x^\top Ay\;d\hat\pi(x,y) - \frac{1}{n}\sum_{i=1}^nX_i^\top AY_i}\lesssim d\sigma^2 B\sqrt{\frac{\log(1/\delta)}{n}}
    \end{equation}
    with probability at least $1-\delta$ that we handle via Bernstein inequality in section \ref{subsec:sub-Gaussian tools}
    \item an entropic OT term 
    \begin{equation}\label{eq:OT term}
    \abs{\OT^\epsilon_{\alpha,\beta}(c_A)-\OT^\epsilon_{\alpha_n,\beta_n}(c_A)} \leq \frac{\tilde C_{\varepsilon,\delta}}{\sqrt{n}}
    \end{equation}
    with probability at least $1-\delta$ that we handle in section \ref{subsec:sample complexity entropic OT} by extending the expectation bound of \cite{mena2019statistical} to a probability bound.
\end{itemize}
\subsection{Useful tools on sub-Gaussian and sub-Exponential variables}\label{subsec:sub-Gaussian tools}

This section aims to introduce Orlicz's norm sub-exponential random variables. We use those tools to bound the linear term \eqref{eq:linear term}. The reader can refer to Sections 2.7 and 2.8 of \cite{Vershynin_2018} for more details on Orlicz norms.
\begin{defn}[sub-Gaussian random vector]
Let $Z\in\RR^d$ be a random centered vector. We say that it's sub-Gaussian if there exists a constant $c>0$ such that
$$
\EE\exp\pa{\norm{Z}^2/2dc^2}\le 2
$$
We say that $Z$ is $\sigma^2$-sub-Gaussian if $Z$ is sub-Gaussian with $\sigma$ being the optimal sub-Gaussian constant, ie  
$$\sigma=\inf\{c>0,\; \EE\exp\pa{\norm{Z}^2/2dc^2}\le2\}.$$

A probability $P$ is said to be $\sigma^2$-sub-Gaussian if for any $X\sim P$, $X$ is  $\sigma^2$-sub-Gaussian. 
\end{defn}


We will also handle sub-exponential norm and random variable defined as follow.
\begin{defn}[Orlicz's sub exponential norm]
Let $Z$ be a scalar random variable. We define the Orlicz's sub exponential norm of $Z$ to be 
$$
\norm{Z}_{\psi_1} = \inf\{c>0,\EE[\exp(\abs{Z}/2c)]\le 2\}
$$
$Z$ is said to be $\sigma-$sub Exponential if $\norm{Z}_{\psi_1}<\infty$ and in that case $\norm{Z}_{\psi_1}=\sigma$.
\end{defn}

We state next the well-known Bernstein concentration bound for a sub-exponential random variable. 
\begin{lem}[Bernstein inequality]
    Let $Z_1,\dots,Z_n$ be iid sub Exponential random variable such that for all $i=1,\dots n$, $\norm{Z_i-\EE[Z_i]}_{\psi_1}\le K$. Then with probability at least $1-\delta$, we have in regime $n\gg \log(1/\delta)$
    $$
    \abs{\frac{1}{n}\sum_{i=1}^nZ_i-\EE[Z_1]}\lesssim K\sqrt{\frac{\log(1/\delta)}{n}}
    $$
\end{lem}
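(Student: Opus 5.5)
The statement to prove is the Bernstein inequality for i.i.d.\ sub-exponential variables. I would follow the standard Chernoff route. First I would center the variables: set $W_i = Z_i - \EE[Z_i]$, so that $\norm{W_i}_{\psi_1}\le K$ and $\EE W_i = 0$. The first step is to convert the Orlicz bound into a moment generating function bound. From $\EE\exp(\abs{W}/2K)\le 2$ one gets the moment bounds $\EE\abs{W}^p \le 2\,p!\,(2K)^p$ for all $p\ge 1$. Expanding $\EE e^{\lambda W} = 1 + \sum_{p\ge2}\lambda^p\EE W^p/p!$ and using $\EE W=0$ then yields
\[
\EE e^{\lambda W}\le \exp\bigl(C K^2\lambda^2\bigr)\qquad\text{for all } \abs{\lambda}\le \frac{c}{K},
\]
with absolute constants $c,C>0$.

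The second step is independence plus Markov. For $0<\lambda\le c/K$,
\[
\mathbb{P}\Bigl(\frac1n\sum_i W_i\ge t\Bigr)\le e^{-\lambda n t}\prod_i \EE e^{\lambda W_i}\le \exp\bigl(-\lambda n t + C n K^2\lambda^2\bigr).
\]
I would optimize over $\lambda$ in the admissible range, taking $\lambda=\min\bigl(t/(2CK^2),\,c/K\bigr)$. This gives the two-regime bound
\[
\exp\Bigl(-c' n\min\bigl(t^2/K^2,\; t/K\bigr)\Bigr).
\]
Applying the same argument to $-W_i$ and taking a union bound adds a factor $2$.

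The third step inverts the tail. I would set $t = K\max\bigl(\sqrt{\log(2/\delta)/(c'n)},\ \log(2/\delta)/(c'n)\bigr)$, which makes the failure probability at most $\delta$. In the regime $n\gg\log(1/\delta)$ the square-root term dominates, so $t\lesssim K\sqrt{\log(1/\delta)/n}$. This is exactly the claimed bound; the implicit constant absorbs $c'$ and the gap between $\log(2/\delta)$ and $\log(1/\delta)$, which is harmless for $\delta\le 1/2$.

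I expect the only delicate step to be the first one: getting the MGF bound with the right constants from the paper's normalization of $\norm{\cdot}_{\psi_1}$, which carries a factor $2c$. I would handle it through the moment bound, using $e^{x}\ge x^p/p!$, which gives $\EE\abs{W}^p\le 2\,p!\,(2K)^p$, and then summing the geometric series for $\abs{\lambda}\le 1/(4K)$. Alternatively one can simply cite \cite[Prop.~2.7.1, Thm.~2.8.1]{Vershynin_2018}, since the paper's norm differs from the standard one only by a factor of $2$.
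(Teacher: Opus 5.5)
Your proposal is correct. The moment bound $\EE\abs{W}^p\le 2\,p!\,(2K)^p$, the MGF bound for $\abs{\lambda}\le 1/(4K)$, the two-regime Chernoff tail and its inversion when $n\gg\log(1/\delta)$ all go through, and you handle the paper's factor-$2$ normalization of $\norm{\cdot}_{\psi_1}$ correctly. The paper gives no proof of its own: it calls the lemma well known and defers to \cite{Vershynin_2018}, and your argument is the standard proof found there.
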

Then, bounding the linear term \eqref{eq:linear term} reduces to apply Bernstein concentration bound to $Z_i=X_i^\top AY_i$ which we next prove it's sub-exponential.
\begin{proof}[Proof of \eqref{eq:linear term}]
Using Cauchy Schwarz inequality leads to 
$$
\abs{Z_i}\le\norm{A}_{op} \norm{X_i}\norm{Y_i}
$$
From $2ab\le a^2+b^2$ we get
$$
\EE\exp\pa{\frac{|Z_i|}{2d\sigma^2\norm{A}_{op}}}\le \EE\exp\pa{\frac{\norm{X_i}^2}{2d\sigma^2}}\EE\exp\pa{\frac{\norm{Y_i}^2}{2d\sigma^2}} \le 4
$$
Therefore, by Jensen's inequality, $\norm{Z_i}_{\psi_1}\le2d\sigma^2\norm{A}_{op}$. But if $\norm{A}\le B$, then $\norm{Z_i}_{\psi_1}\le2d\sigma^2B$. Using Bernstein's inequality gives the desired result.
\end{proof}

\subsection{Proof of Theorem \ref{thm:uniform proba bound of OT cost} (Sample complexity of entropic OT)} \label{subsec:sample complexity entropic OT}
In this section we end the proof of Theorem \ref{thm:uniform proba bound of OT cost} by extending the result of \cite{mena2019statistical} to the following concentration bound for \eqref{eq:OT term} 
$$
\sup_{\norm{A}\leq B}\abs{\OT^\epsilon_{\alpha,\beta}(c_A) -\OT^\epsilon_{\alpha_n,\beta_n}(c_A)} \leq \frac{\tilde C_{\epsilon,\delta}}{\sqrt{n}}
$$
They prove for $A=\Id$ and $\varepsilon=1$ the following bound :
$$
\EE_{\alpha,\beta}\left[|\OT^1_{\alpha,\beta}(\Id)-\OT^1_{\alpha_n,\beta_n}(\Id)|\right]\le K_d \pa{1+\sigma^{\lceil5d/2\rceil +6}}\frac{1}{\sqrt{n}}
$$

We can adapt this result easily to our setting. Indeed, we have seen that $\OT^\epsilon_{\alpha,\beta}(A)=\OT^\epsilon_{A^\top\#\alpha,\beta}(\Id)$. So it is sufficient to consider $A^\top\#\alpha$ and $\beta$ as $\sigma_A^2-$sub-Gaussian densities where $\sigma_A=(1\wedge\norm{A}_{op})\sigma^2$. Moreover, if we denote $\alpha^\varepsilon$ and $\beta^\varepsilon$ the pushforward measure by the map $x\mapsto\varepsilon^{-1/2}x$ then $\OT^\epsilon_{\alpha,\beta}(\Id)=\varepsilon \OT^1_{\alpha^\varepsilon,\beta^\varepsilon}(\Id)$ where $\alpha^\varepsilon$ and $\beta^\varepsilon$ are $\sigma^2/\varepsilon-$sub Gaussian. So the result in our setting is : 

$$
\EE_{\alpha,\beta}\left[|\OT^\epsilon_{\alpha,\beta}(A)-\OT^\epsilon_{\alpha_n,\beta_n}(A)|\right]\le K_d \varepsilon\pa{1+\frac{\sigma_A^{\lceil5d/2\rceil +6}}{\varepsilon^{\lceil 5d/4 \rceil +3}}}\frac{1}{\sqrt{n}}
$$

To prove the corresponding high probability result of Theorem \ref{thm:uniform proba bound of OT cost}, it suffices to consider the case of $\varepsilon=1$ and $A=\Id$ and establish the following result : 
\begin{prop}
    Let $\alpha$ and $\beta$ be $\sigma^2$-sub Gaussian measures. Then there exist constants $C_d$ and $K_d$, depending on the dimension, such that with probability at least $1-5\delta$,
    \begin{align*}
        &|\OT^1_{\alpha,\beta}(\Id) -\OT^1_{\alpha_n,\beta_n}(\Id)| \\
        &\le  \pa{1+\pa{\sigma+\sigma\sqrt{\frac{\log(1/\delta)}{\log(2)}}}^{\lceil5d/2\rceil +6}}\pa{\frac{K_d}{\sqrt{n}} + C_d\pa{\sqrt{\frac{(1+4d\sigma^2)^2\log(1/\delta)}{n}}+\frac{(1+d\sigma^2)\log(n)\log(1/\delta)}{n}}}.
    \end{align*}
    \label{prop:intermediate prop on proba bound of OT cost}
\end{prop}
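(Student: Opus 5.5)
The plan is to combine the expectation bound of \cite{mena2019statistical} (with $\varepsilon=1$, $A=\Id$) with a concentration argument around the mean. The obstacle is that sub-Gaussian measures have unbounded support, so McDiarmid's bounded-differences inequality cannot be applied directly to $\OT^1_{\alpha_n,\beta_n}(\Id)$. I would therefore work on a high-probability event on which all samples lie in a ball, and then run a conditional bounded-differences argument there.

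\textbf{Step 1: truncation event.} By the sub-Gaussian definition and Markov's inequality, each sample satisfies $\norm{X_i}^2\le 2d\sigma^2\log(2/\delta')$ with probability at least $1-\delta'$. A union bound with $\delta'=\delta/n$ then shows that, with probability at least $1-2\delta$, every $X_i$ and $Y_i$ lies in a ball of radius $R_{n,\delta}\lesssim \sigma\sqrt{d\log(n/\delta)}$. The factor $\log(n)\log(1/\delta)/n$ in the statement comes from this step.

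\textbf{Step 2: concentration on the truncated event.} On compact support of radius $R$, the entropic dual potentials for the cost $-x^\top y$ are Lipschitz with oscillation of order $1+R^2$; this follows from the Schrödinger system, since $f(x)=-\log\int e^{x^\top y+g(y)}d\beta_n(y)$. Replacing one sample changes $\OT^1_{\alpha_n,\beta_n}$ by at most $(1+dR^2)/n$, because the empirical value is a sup of linear functionals in the empirical measures with bounded potentials. McDiarmid's inequality then gives deviations of order $(1+4d\sigma^2)\sqrt{\log(1/\delta)/n}$, plus a term of order $(1+d\sigma^2)\log n\log(1/\delta)/n$ that accounts for the mismatch between the conditional and unconditional expectations. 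I would handle that mismatch either by applying McDiarmid to the empirical problem built from truncated samples (projected onto the ball) and bounding the difference on the good event, or by invoking a Bernstein-type version of bounded differences for sub-exponential increments. I expect this step to be the main obstacle, specifically the bookkeeping needed to control $\EE[\,\cdot\,]$ versus its truncated version using the sub-Gaussian tails.

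\textbf{Step 3: expectation term.} The expectation is controlled by \cite{mena2019statistical}, $\EE|\OT^1_{\alpha,\beta}-\OT^1_{\alpha_n,\beta_n}|\le K_d(1+\sigma^{\lceil 5d/2\rceil+6})/\sqrt n$. In the statement $\sigma$ is replaced by $\sigma(1+\sqrt{\log(1/\delta)/\log 2})$, which is the effective sub-Gaussian constant of the truncated, conditioned law. Applying their bound to that law keeps $K_d$ unchanged.

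\textbf{Step 4: assembly.} Summing the failure probabilities — two for truncation, two for the deviations of $\alpha_n$ and $\beta_n$ in the bounded-differences step, and one for the conditional-expectation comparison — gives probability at least $1-5\delta$. The common polynomial prefactor can then be factored out of all terms, as in the statement.
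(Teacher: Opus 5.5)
Your Step~2 does not give the rate in the statement, and this is a real gap rather than bookkeeping. On your truncation event the radius satisfies $R^2\asymp d\sigma^2\log(n/\delta)$. Your own bounded-difference constant is therefore of order $(1+dR^2)/n\asymp(1+d^2\sigma^2\log(n/\delta))/n$. McDiarmid's inequality then yields a deviation of order $(1+d^2\sigma^2\log(n/\delta))\sqrt{\log(1/\delta)/n}$.

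The $\log(1/\delta)$ part of this could be absorbed into the polynomial prefactor, but the $\log n$ cannot. In the statement the leading $n^{-1/2}$ term is $\sqrt{(1+4d\sigma^2)^2\log(1/\delta)/n}$, with no $\log n$. The factor $\log n$ appears only in the lower-order term $(1+d\sigma^2)\log(n)\log(1/\delta)/n$. Your assertion that McDiarmid gives $(1+4d\sigma^2)\sqrt{\log(1/\delta)/n}$ is inconsistent with the constant you derived. No choice of truncation level repairs this: plain bounded differences only see the worst-case range. For sub-Gaussian laws such as Gaussians, the largest of $n$ squared norms is of order $\log n$, so any worst-case range bound carries this factor.

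Two further points are left open. First, you never control the bias between $\OT^1_{\alpha,\beta}$ and the value for the truncated laws. Second, the conditional-versus-unconditional expectation correction is governed by the probability of leaving the truncation event and by moments of $\OT^1_{\alpha_n,\beta_n}$. There is no reason for it to take the form $\log(n)\log(1/\delta)/n$.

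The missing ingredient is a Bernstein/Talagrand-type inequality that separates variance from range, which you mention only in passing. The paper's proof has three parts.
\begin{enumerate}
\item It starts from the deterministic inequality of Mena--Weed, $|\OT^1_{\alpha,\beta}-\OT^1_{\alpha_n,\beta_n}|\lesssim(1+\tilde\sigma^{3s})\bigl(\|\alpha-\alpha_n\|_{\mathcal F_s}+\|\beta-\beta_n\|_{\mathcal F_s}\bigr)$. Here $\tilde\sigma$ is the common sub-Gaussian constant of $\alpha,\alpha_n,\beta,\beta_n$.
\item The factor $\sigma\bigl(1+\sqrt{\log(1/\delta)/\log 2}\bigr)$ is a high-probability bound on this random $\tilde\sigma$. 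It is obtained by Jensen and Markov applied to the empirical moment generating function. It is not the sub-Gaussian constant of a truncated law: conditioning on a ball of probability $1-\delta/n$ changes that constant only by a factor close to one.
\item After a reduction to a countable class, the empirical-process suprema are controlled by Adamczak's tail inequality for unbounded empirical processes. The weak variance $v^2\le nC_{s,d}^2(1+4d\sigma^2)^2$ depends only on moments and produces the $\sqrt{(1+4d\sigma^2)^2\log(1/\delta)/n}$ term. The $\psi_1$-norm of the envelope maximum, $\bigl\|\max_i\sup_{f\in\mathcal F_s}|f(X_i)-\mathbb{E} f(X)|\bigr\|_{\psi_1}\lesssim C_{s,d}(1+d\sigma^2)\log n$, produces the $\log(n)\log(1/\delta)/n$ term. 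The expectation $\mathbb{E}\|\alpha-\alpha_n\|_{\mathcal F_s}$ is taken from Mena--Weed.
\end{enumerate}

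If you prefer to keep a bounded-differences argument, you would need two things. One is a variance-sensitive version of the inequality for sub-exponential, not bounded, increments. The other is a proof that replacing one sample changes $\OT^1_{\alpha_n,\beta_n}$ by at most a power of $\tilde\sigma$ times $(1+\|X_i\|^2+\|X_i'\|^2)/n$. That is precisely the step your proposal leaves unproved.
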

The first step is to bound the entropic cost with $\varepsilon=1$ and $A=I_d$ by a dual norm on a space of function with controlled growth. The following proposition comes from \cite{mena2019statistical}.

\begin{prop} Assume that $\alpha$ and $\beta$ are $\sigma^2$-sub Gaussian. Let's write the random sub Gaussian constant $\tilde{\sigma}=\inf\{\sigma>0,\; \textit{s.t. }\; \alpha_n,\alpha,\beta_n \text{ and } \beta \text{ are } \sigma^2\text{-sub Gaussian}\}$. Then with $s=\lceil d/2\rceil +1$ : 
$$
|\OT^1_{\alpha,\beta}(\Id)-\OT^1_{\alpha_n,\beta_n}(\Id)|\lesssim  (1+\tilde{\sigma}^{3s})\pa{\norm{\alpha-\alpha_n}_{\Ff_s} + \norm{\beta-\beta_n}_{\Ff_s}},
$$
where $\displaystyle\norm{\alpha-\alpha_n}_{\Ff_s}=\sup_{u\in\Ff_s}\left|\int u(x)(d\alpha(x)-d\alpha_n(x))\right|$
and where $\Ff_s$ denotes the set of functions satisfying  
\begin{align*}
    |u(x)|&\le C_{s,d}(1+\norm{x}^2), \\
    |D^a u(x)|&\le C_{s,d}(1+\norm{x}^s) \quad \forall a: |a|\le s.
\end{align*}
\end{prop}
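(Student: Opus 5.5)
This bound follows the argument of \cite{mena2019statistical}, which I treat as a deterministic, pathwise statement. The plan is to redo their dual-potential argument with empirical sub-Gaussian constants in place of population ones.

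\textbf{Step 1: reduce to a difference of linear functionals.} Write the entropic value in dual form,
\[
\OT^1_{\mu,\nu}(\Id)=\sup_{f,g}\int f\,d\mu+\int g\,d\nu-\iint e^{f(x)+g(y)-c(x,y)}\,d\mu\,d\nu+1,
\]
with $c(x,y)=-x^\top y$, equivalently $\tfrac12\norm{x-y}^2$ up to marginal terms that are themselves linear in the measures. Let $(f,g)$ be optimal for $(\alpha,\beta)$ and $(f_n,g_n)$ optimal for $(\alpha_n,\beta_n)$. Evaluating each problem at the other's optimizer sandwiches the difference of values between
\[
\int f\,d(\alpha-\alpha_n)+\int g\,d(\beta-\beta_n)
\quad\text{and}\quad
\int f_n\,d(\alpha-\alpha_n)+\int g_n\,d(\beta-\beta_n),
\]
up to the exponential cross terms. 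Those terms vanish at optimality because of the marginal constraints $\int e^{f+g-c}\,d\nu=1$. Their cross versions are controlled by writing the exponential term as the integral of a function of one variable against the other measure.

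\textbf{Step 2: regularity of the potentials.} Next I show that all four potentials, rescaled by a constant, lie in $\Ff_s$. Use the Schrödinger system $f(x)=-\log\int e^{g(y)-c(x,y)}\,d\beta(y)$. First obtain the quadratic growth bound $|f(x)|\lesssim 1+\tilde\sigma^2+\norm{x}^2$ from sub-Gaussianity of the opposite marginal via Jensen. Then differentiate under the integral: $D^a f$ is a polynomial combination of cumulants of $y$ under the Gibbs measure $\propto e^{g(y)-c(x,y)}\,d\beta(y)$. Those moments are bounded by $C_{s,d}(1+\tilde\sigma^{\,\cdot})(1+\norm{x}^{|a|})$, again using sub-Gaussianity of $\beta$ or $\beta_n$ with constant $\tilde\sigma$ and the quadratic lower bound on $g$.

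\textbf{Step 3: collect constants.} Keeping track of the powers of $\tilde\sigma$ gives a total prefactor of order $1+\tilde\sigma^{3s}$. Dividing the potentials by this factor places them in $\Ff_s$, and taking suprema yields the claimed bound by the dual norms $\norm{\alpha-\alpha_n}_{\Ff_s}+\norm{\beta-\beta_n}_{\Ff_s}$. The essential point is that all estimates use only sub-Gaussianity, with the random constant $\tilde\sigma$ covering the empirical measures uniformly.

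\textbf{Main obstacle.} The hardest step is Step 2, the derivative bounds. Obtaining polynomial growth of order $s$ in $\norm{x}$, uniformly over the empirical measures, requires controlling exponential moments under tilted measures. For this I would follow Mena--Weed's Lemma bounding Gibbs-measure moments by those of the base measure times $e^{C\tilde\sigma^2}$-free polynomial factors.
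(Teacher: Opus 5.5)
The paper gives no proof of its own here. It imports the statement from \cite{mena2019statistical}, and your Steps~2--3 (growth and derivative bounds for the extended Schr\"odinger potentials via cumulants of the tilted measures, then rescaling by $1+\tilde\sigma^{3s}$) match that source. The gap is in Step~1, where you change both marginals at once. The exponential terms then do not cancel, and the one-variable function you would integrate against $\beta-\beta_n$ is not one of your four potentials. With $(f_n,g_n)$ the extended empirical potentials, one half of your sandwich carries the error $\iint e^{f_n\oplus g_n-c}\,d\alpha\,d\beta-1=\int k_n\,d(\beta-\beta_n)$, where
\[
k_n(y)=\frac{\int e^{f_n(x)+x^\top y}\,d\alpha(x)}{\int e^{f_n(x)+x^\top y}\,d\alpha_n(x)}
\]
is a ratio of partition functions under two different measures. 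In general $k_n$ lies in no multiple of $\Ff_s$: its numerator outgrows its denominator as soon as $\alpha$ has mass beyond the sample (like $e^{c\norm{y}^2}$ for Gaussian $\alpha$).

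Even on bounded supports the loss is exponential. Take $\alpha_n=\beta_n=\delta_0$ and centered $\alpha,\beta$ supported in a ball of radius $R$. Then $f_n+g_n\equiv 0$, and your upper bound for $\OT^1_{\alpha_n,\beta_n}-\OT^1_{\alpha,\beta}$ is exactly $\iint e^{x^\top y}\,d\alpha\,d\beta-1$. This is exponentially large in $R^2$, whereas the true difference lies in $[0,R^2]$. So this route cannot give the polynomial prefactor $1+\tilde\sigma^{3s}$.

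The missing idea, which is the standard argument behind the cited result, is to change one marginal at a time through the mixed pair $(\alpha_n,\beta)$:
\[
\abs{\OT^1_{\alpha,\beta}-\OT^1_{\alpha_n,\beta_n}}\le\abs{\OT^1_{\alpha,\beta}-\OT^1_{\alpha_n,\beta}}+\abs{\OT^1_{\alpha_n,\beta}-\OT^1_{\alpha_n,\beta_n}}.
\]
Extend the potentials $(f,g)$ of $(\alpha,\beta)$ so that $\int e^{f(x)+g(y)-c(x,y)}\,d\beta(y)=1$ for every $x\in\RR^d$. Since the second marginal has not changed, $\iint e^{f\oplus g-c}\,d\alpha_n\,d\beta=1$ exactly. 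Hence $\OT^1_{\alpha,\beta}-\OT^1_{\alpha_n,\beta}\le\int f\,d(\alpha-\alpha_n)$. The reverse inequality comes the same way from the extended potentials of $(\alpha_n,\beta)$, and the second term is handled symmetrically in $y$.

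No cross term ever appears, only Schr\"odinger potentials of pairs of $\tilde\sigma^2$-sub-Gaussian measures. This is exactly why $\tilde\sigma$ is defined over all four measures. You must also apply Step~2 to the mixed pair $(\alpha_n,\beta)$, which works verbatim, and Step~3 then goes through unchanged.
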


Following \citet{mena2019statistical}, the random variable $\tilde{\sigma}$ can be handled by Lemma \ref{tilde sigma}. Indeed, we can prove that with probability higher than $1-e^{-s}$,
$$
0\le \tilde{\sigma} - \sigma \le \sigma\sqrt{\frac{s}{\log(2)}}.
$$

To bound the quantity $\displaystyle \norm{\alpha-\alpha_n}_{\Ff_s} = \frac{1}{n}\sup_{u\in\Ff_s}\left| \sum_{i=1}^n\pa{u(X_i)-\EE_\alpha[u(X)]}\right|$, we will use \cite[Theorem 4]{adamczak2008tailinequalitysupremaunbounded} stated as below with several adjustments. 

\begin{prop}\cite[Theorem 4]{adamczak2008tailinequalitysupremaunbounded}
Let $X_1, \ldots, X_n$ be independent random variables with values in a measurable space $(S, \mathcal{B})$, and let $\mathcal{F}$ be a countable class of measurable functions $f : S \to \mathbb{R}$. Assume that for every $f \in \mathcal{F}$ and every $i$, $\mathbb{E}f(X_i) = 0$ and all $i$, $\| f(X_i) \|_{\psi_1} < \infty$.  

Let
\[
Z = \sup_{f \in \mathcal{F}} \left| \sum_{i=1}^n f(X_i) \right|.
\]

Define moreover
\[
v^2 = \sup_{f \in \mathcal{F}} \sum_{i=1}^n \mathbb{E} f(X_i)^2.
\]

Then, for all $0 < \eta < 1$ and $\delta > 0$, there exists a constant $C = C(\alpha, \eta, \delta)$ such that for all $t \ge 0$,
\[
\mathbb{P}(Z \ge (1+\eta)\mathbb{E}Z + t)
\le \exp\!\left( - \frac{t^2}{2(1+\delta)v^2} \right)
  + 3 \exp\!\left( - \left( \frac{t}{C \| \max_i \sup_{f \in \mathcal{F}} |f(X_i)| \|_{\psi_\alpha}} \right)^{\!\alpha} \right)
\]
and
\[
\mathbb{P}(Z \le (1-\eta)\mathbb{E}Z - t)
\le \exp\!\left( - \frac{t^2}{2(1+\delta)v^2} \right)
  + 3 \exp\!\left( - \left( \frac{t}{C \| \max_i \sup_{f \in \mathcal{F}} |f(X_i)| \|_{\psi_\alpha}} \right)^{\!\alpha} \right).
\]
\label{thm 4 Adamczak}
\end{prop}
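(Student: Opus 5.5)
The statement is \cite[Theorem 4]{adamczak2008tailinequalitysupremaunbounded}, quoted from the literature, so the plan is to recall how Adamczak's argument works rather than build it from earlier results in this paper. The strategy is a truncation argument. It splits each summand into a bounded part, to which Talagrand's concentration inequality for suprema of bounded empirical processes (in the Bousquet/Klein--Rio form with sharp variance constant) applies, and an unbounded remainder controlled through $\psi_\alpha$ Orlicz norms.

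First fix a truncation level $\rho = 8\,\mathbb{E}\max_i \sup_{f}|f(X_i)|$, or a quantile of that maximum, and write $f(X_i) = f(X_i)\mathbf 1_{\{\sup_f|f(X_i)|\le\rho\}} + f(X_i)\mathbf 1_{\{\sup_f|f(X_i)|>\rho\}}$. Center both pieces, which is possible since $\mathbb{E}f(X_i)=0$ and the centering correction is small. The truncated process $Z_1$ then has summands bounded by $2\rho$ and weak variance at most $v^2$. Talagrand's inequality, with the $(1+\eta)$ slack absorbing the $\mathbb{E}Z_1$-dependent term $\rho\,\mathbb{E}Z_1$ in the variance proxy, gives the sub-Gaussian tail $\exp(-t^2/(2(1+\delta)v^2))$ together with a term $\exp(-t/(C\rho))$.

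Next, the remainder $Z_2 = \sup_f|\sum_i (\text{unbounded part})|$ is bounded by $\sum_i \sup_f|f(X_i)|\mathbf 1_{\{>\rho\}}$ plus its expectation. By the choice of $\rho$ and the Hoffmann-Jørgensen inequality, its $\psi_\alpha$ norm is dominated by $C\|\max_i\sup_f|f(X_i)|\|_{\psi_\alpha}$; the key tool here is Talagrand's $\psi_\alpha$ version of Hoffmann-Jørgensen for $\alpha\le1$. The $\psi_\alpha$ tail of $Z_2$ then yields the second exponential term. We also need $|\mathbb{E}Z - \mathbb{E}Z_1| \le \mathbb{E}Z_2$, which is small relative to the $\psi_\alpha$ norm and is absorbed into the constants, giving the factor $3$.

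The lower-tail statement follows in the same way, using the lower version of Talagrand's bound. Countability of $\mathcal F$ ensures measurability. The delicate step is the Hoffmann-Jørgensen-type $\psi_\alpha$ estimate for the truncated remainder, together with bookkeeping that keeps the variance constant at $2(1+\delta)$. In this paper we would simply invoke the published theorem, applying it with $\alpha=1$ and the class $\mathcal F_s$, restricted to a countable dense subfamily.
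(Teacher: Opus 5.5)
Your proposal is correct and matches the paper, which gives no proof of its own: it cites \cite[Theorem 4]{adamczak2008tailinequalitysupremaunbounded} and applies it with $\alpha=1$ to a countable subfamily of $\Ff_s$, exactly as you describe, and your outline (truncation at $\rho=8\,\mathbb{E}\max_i\sup_f|f(X_i)|$, the Bousquet/Klein--Rio form of Talagrand's inequality for the bounded part, Talagrand's $\psi_\alpha$ Hoffmann--J{\o}rgensen inequality for the remainder) faithfully summarizes Adamczak's original argument. One small caveat: that argument, like Adamczak's theorem, uses the envelope condition $\|\sup_f|f(X_i)|\|_{\psi_\alpha}<\infty$ for some $\alpha\in(0,1]$ rather than the per-function $\psi_1$ condition in the paper's restatement (which also leaves $\alpha$ unquantified), but nothing is lost because the stated bound is vacuous when the envelope norm is infinite.
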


Let's write the centered empirical stochastic process indexed by $\Ff_s$ : $\displaystyle G_n(f)=\sum_{i=1}^n(f(X_i)-\EE f(X))$. The application of Proposition \ref{thm 4 Adamczak}  with $\displaystyle Z_n=\sup_{f\in\Ff_s}|G_n(f)|$ would require $\Ff_s$ to be countable. However, by Proposition \ref{countable supremum}, there exists a countable set $\tilde{\Gg}_s$ such that almost surely, $\displaystyle \sup_{f\in\Ff_s}|G_n(f)| = \sup_{g\in\tilde{\Gg}_s}|G_n(g)|$. We can therefore apply Proposition \ref{thm 4 Adamczak} up to a null set, which doesn't change the probability bound.

Let's check the other assumptions:
\begin{itemize}
    \item For $f\in\Ff_s$, we have by definition $|f(X_i)-\EE f(X_i)|\le C_{s,d}(1+\norm{X_i}^2)$. Therefore for any $c>0$
    $$
        \exp\pa{\frac{|f(X_i)-\EE f(X_i)|}{2c}}\le \exp \pa{\frac{C_{s,d}(1+\norm{X_i}^2)}{2c}}.
    $$
    Choosing $c=C_{s,d}(1+d\sigma^2)$ gives
    $$
    \exp\pa{\frac{|f(X_i)-\EE f(X_i)|}{2C_{s,d}(1+d\sigma^2)}}\le \exp\pa{\frac{1+\norm{X_i}^2}{2(1+d\sigma^2)}} .
    $$
    Therefore, $\EE \exp\pa{\frac{|f(X_i)-\EE f(X_i)|}{2C_{s,d}(1+d\sigma^2)}} \le 2e^{1/2}$ and by Jensens's inequality we get 
    $$
    \norm{f(X_i)-\EE f(X_i)}_{\psi_1}\lesssim C_{s,d}(1+d\sigma^2).
    $$
    \item For $f\in\Ff_s$, we have $(f(X_i)-\EE f(X_i))^2\le C_{s,d}^2\pa{1+\norm{X_i}^2}^2$. But $\EE\norm{X_i}^{2k}\le (2d\sigma^2)^{k}k!$, so
    \begin{align*}
        \sup_{f\in\Ff_s}\sum_{i=1}^n\EE(f(X_i)-\EE f(X_i))^2 &\le nC_{s,d}^2(1+4d\sigma^2 + 8d^2\sigma^4) \\
        &\le nC_{s,d}^2(1+4d\sigma^2)^2 \coloneqq v^2 .
    \end{align*}
\end{itemize}

Finally, to bound the term $\norm{\max_i\sup_{f\in\Ff_s}|f(X_i)-\EE f(X_i)|}_{\psi_1}$ we apply Lemma \ref{max of exp} with the sub-exponential random variable $Y_i \coloneqq \sup_{f\in\Ff_s}|f(X_i)-\EE f(X_i)|$. By definition of $\Ff_s$ we know that 
$\norm{Y_i}_{\psi_1}\le K= C_{s,d}(1+d\sigma^2)$ so 
$$
\norm{\max_i\sup_{f\in\Ff_s}|f(X_i)-\EE f(X_i)|}_{\psi_1} \le C_{s,d}(1+d\sigma^2)\log(n).
$$

With all this information, we apply Proposition \ref{thm 4 Adamczak} and Lemma \ref{max of exp} \& \ref{tilde sigma} to obtain the probability bound of Proposition \ref{prop:intermediate prop on proba bound of OT cost}.

To obtain the probability bound for a generic $A$ and $\varepsilon$ it is sufficient to change $\sigma^2$ by $\sigma_A^2/\varepsilon$ which gives a constant that is polynomial in $1/\varepsilon$ and continuous in $A$ (thus bounded if $\norm{A}\le B$).

\subsection{Useful technical lemmas}
In this section, we  write $P$ as a sub-Gaussian probability measure instead of $\alpha$ and $\beta$ for readability. We also write its empirical mean as $\displaystyle P_n = \frac{1}{n}\sum_{i=1}^n\delta_{X_i}$ with $X_i\sim P$.

\begin{lem}
    Let $P$ be a sub Gaussian probability measure, then $(\Ff_s,L_2(P))$ is separable.
    \label{separability F_s}
\end{lem}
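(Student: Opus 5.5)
We need to prove that $(\Ff_s,L_2(P))$ is separable when $P$ is sub-Gaussian. The plan is to reduce to separability of a space of continuous functions on compact sets, using polynomial growth to control tails.

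\emph{Step 1: embed in $L_2(P)$.}
Every $u\in\Ff_s$ satisfies $|u(x)|\le C_{s,d}(1+\norm{x}^2)$. Since $P$ is sub-Gaussian, all its moments are finite, so $\Ff_s\subset L_2(P)$. The space $L_2(P)$ is itself separable, because $P$ is a Borel probability measure on $\RR^d$ (the Borel $\sigma$-algebra is countably generated). A subset of a separable metric space is separable. So the conclusion follows once we know $\Ff_s$ sits inside $L_2(P)$ with the induced metric.

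\emph{Step 2: quantitative countable net (optional).}
For later use with Proposition~\ref{countable supremum}, I would rather build an explicit countable dense subset. Fix $R>0$ and split each function via the indicator of the ball $B_R$. The tail contribution is bounded uniformly over $\Ff_s$:
$$\int_{\norm{x}>R}(1+\norm{x}^2)^2\,dP \;\to\; 0 \quad (R\to\infty),$$
by sub-Gaussian moment bounds.

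On $\bar B_R$, the derivative bounds with $s\ge1$ make $\Ff_s$ uniformly bounded and equi-Lipschitz. Arzel\`a--Ascoli then gives relative compactness of the restrictions in $C(\bar B_R)$, hence a finite sup-norm $\eta$-net. Taking $R=k$ and $\eta=1/k$ for $k\in\NN$, and choosing net elements inside $\Ff_s$, produces a countable family $\tilde\Gg_s\subset\Ff_s$. For any $u\in\Ff_s$, some $g\in\tilde\Gg_s$ satisfies
$$\norm{u-g}_{L_2(P)}^2\le \eta^2 + 4C_{s,d}^2\int_{\norm{x}>R}(1+\norm{x}^2)^2\,dP,$$
which can be made arbitrarily small.

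\emph{Main obstacle.}
Selecting net points inside $\Ff_s$ is handled by the standard trick of taking $\eta/2$-balls that meet $\Ff_s$. The real subtlety is measurability and pointwise approximation, which the later Proposition~\ref{countable supremum} presumably needs. For this lemma, $L_2$ density suffices.
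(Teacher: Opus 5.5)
Your argument is correct, but it takes a different route from the paper. The paper adapts the entropy computation of Mena--Weed. It applies van der Vaart--Wellner's Corollary 2.7.4 on dyadic shells $B_j$, and uses the sub-Gaussian decay of $P(B_j)$ to argue that the covering-number estimate is unchanged. It then concludes $N(\tau,\Ff_s,L_2(P))<\infty$ for every $\tau>0$: total boundedness, hence separability.

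Your Step~1 bypasses this machinery. It uses only three facts: the quadratic growth bound together with a finite fourth moment (so $\Ff_s\subset L_2(P)$), separability of $L_2(P)$ for a Borel probability measure on $\RR^d$, and heredity of separability in (pseudo)metric spaces. The derivative constraints in $\Ff_s$ are not even needed, and this already proves the lemma as stated.

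Your Step~2 recovers, qualitatively, the same total boundedness the paper asserts. Fix $\tau$, choose $R$ so the uniform tail term is small, then take a finite sup-norm net on $\bar B_R$ from Arzel\`a--Ascoli. This gives no rate. The paper's route yields quantitative entropy bounds, which Mena--Weed need for their expectation bound but which this lemma does not require.

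Conversely, Step~2 gives something the paper has to work harder for. Your nets are uniform on each ball $\bar B_k$, so every $u\in\Ff_s$ is the limit of some $g_k\in\tilde\Gg_s$ both in $L_2(P)$ and pointwise at every $x$. Hence $G_n(g_k)\to G_n(u)$ for every realization of the sample, since $\EE g_k(X)\to\EE u(X)$ by $L_2$ convergence. The identity in Proposition~\ref{countable supremum} then holds surely, without invoking the separable-version theorem of van der Vaart--Wellner.
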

\begin{proof}
    We adapt the proof of Proposition 3 in \cite{mena2019statistical} where they apply \citet{vaart1996weak} Corollary 2.7.4 with the partition $B_0=[-\sigma,\sigma]^d$ and $B_j=[-2^j\sigma,2^j\sigma]^d\setminus[-2^{j-1}\sigma,2^{j-1}\sigma]^d$. Here we work with the full data $L_2(P)$ and not the samples $L_2(P_n)$. By Markov's inequality the mass $P$ assigns to each $B_j$ is at most $2e^{-2^{2j-3}}$. Therefore, the covering number estimate obtained in \cite{mena2019statistical} remains unchanged. In particular, $N(\tau,\Ff_s,L_Z(P))<\infty$ for every $\tau>0$, which implies that $(\Ff_s,L_2(P))$ is separable.
\end{proof}
\begin{prop}
    Let $\displaystyle G_n(f)=\sum_{i=1}^n(f(X_i)-\EE f(X))$ be the empirical stochastic process indexed by $\Ff_s$. There exists a countable $L_2(P)-$approximation of $\Ff_s$ written as $\tilde{\Gg}_s$ such that almost surely
    $$
    \sup_{f\in\Ff_s}|G_n(f)| = \sup_{g\in\tilde{\Gg}_s}|G_n(g)|
    $$
    \label{countable supremum}
\end{prop}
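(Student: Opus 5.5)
The plan is to exploit the fact that $G_n(f)$ is, for fixed samples, a \emph{continuous} functional of $f$ with respect to a suitable topology on $\Ff_s$. We then take $\tilde\Gg_s$ to be a countable dense subset in a topology strong enough to make evaluation at the sample points continuous. By Lemma~\ref{separability F_s}, $(\Ff_s,L_2(P))$ is separable. However, $L_2(P)$-density alone does not control the point evaluations $f(X_i)$, so the separability statement will be strengthened using the specific structure of $\Ff_s$.

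\textbf{Step 1: a separable topology controlling point values.} Fix a weight $w(x)=(1+\norm{x}^2)^{-1}$ and consider the weighted sup-norm $\norm{f}_w=\sup_x w(x)|f(x)|$ restricted to $\Ff_s$. Functions in $\Ff_s$ have derivatives up to order $s\ge 1$ bounded by $C_{s,d}(1+\norm{x}^s)$. By Arzel\`a--Ascoli on each compact cube $[-R,R]^d$, the restrictions of $\Ff_s$ therefore form a relatively compact, hence separable, subset of $C([-R,R]^d)$. Taking a diagonal countable dense family over $R\in\NN$, we obtain a countable $\tilde\Gg_s\subset\Ff_s$ with the following property: for every $f\in\Ff_s$, every $R$, and every $\tau>0$, there is $g\in\tilde\Gg_s$ with $\sup_{[-R,R]^d}|f-g|<\tau$. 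We will also arrange that $\norm{f-g}_{L_2(P)}<\tau$. This can be done because the uniform growth bound $|f|,|g|\le C_{s,d}(1+\norm{x}^2)$, combined with the sub-Gaussian tails of $P$, makes the $L_2(P)$ contribution from outside $[-R,R]^d$ at most $\tau/2$ for $R$ large, uniformly over $\Ff_s$. This gives the claimed $L_2(P)$-approximation.

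\textbf{Step 2: the suprema coincide.} Since $\tilde\Gg_s\subset\Ff_s$, the inequality $\sup_{\tilde\Gg_s}|G_n|\le\sup_{\Ff_s}|G_n|$ is trivial. For the reverse inequality, fix a sample outside a null set; almost surely all $X_i$ are finite. Fix $f\in\Ff_s$ and $\tau>0$, and choose $R\ge\max_i\norm{X_i}_\infty$ large enough that the tail bound $\int_{\norm{x}_\infty>R}C(1+\norm{x}^2)\,dP<\tau$ holds. Now pick $g\in\tilde\Gg_s$ with $\sup_{[-R,R]^d}|f-g|<\tau$. Then
\[
|G_n(f)-G_n(g)|\le \sum_{i=1}^n|f(X_i)-g(X_i)|+n\,|\EE f(X)-\EE g(X)|\le n\tau+n(2\tau),
\]
where we split the expectation into the parts inside and outside the cube. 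Letting $\tau\to0$ yields $|G_n(f)|\le\sup_{\tilde\Gg_s}|G_n|$, and taking the supremum over $f$ gives equality.

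\textbf{Main obstacle.} The main obstacle is Step 1: producing a \emph{single} countable family that is simultaneously locally uniformly dense on every cube, independently of the random sample. The $L_2(P)$ separability of Lemma~\ref{separability F_s} is not enough for this purpose, since $L_2$ convergence does not give pointwise convergence at the sample points. The local equicontinuity coming from the derivative bounds in the definition of $\Ff_s$ is what resolves it, via Arzel\`a--Ascoli and a diagonal argument over $R\in\NN$. The null set arises only in requiring the sample points to be finite.
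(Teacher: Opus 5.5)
Your proof is correct, and it takes a genuinely different route from the paper.

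\textbf{The paper's route.} The paper never uses the smoothness of $\Ff_s$. It relies only on the $L_2(P)$-separability from Lemma~\ref{separability F_s}. It then invokes the abstract theory of separable versions of stochastic processes \cite{vaart1996weak} (Thm.~2.3.17) to obtain a pointwise separable version $\tilde\Ff_s$, whose elements agree $P$-a.e.\ with those of $\Ff_s$. This comes with a countable $\tilde\Gg_s\subset\tilde\Ff_s$ that approximates in $L_2(P)$ and, off a $P^n$-null set, at the sample points.

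\textbf{Your route.} You exploit the derivative bounds that define $\Ff_s$. Local equicontinuity and Arzel\`a--Ascoli give a countable $\tilde\Gg_s\subset\Ff_s$ that is dense for locally uniform convergence. No diagonal argument is needed: the union over $R\in\NN$ of countable subsets of $\Ff_s$ that are dense for $\sup_{[-R,R]^d}$ already works. The growth bound together with sub-Gaussian tails then controls $\EE f-\EE g$.

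\textbf{What each route buys.} Your argument gives more than the paper's in three ways.
\begin{itemize}
\item The equality holds for every realization of the sample. Your ``almost surely finite'' null set is vacuous, since the $X_i$ are $\RR^d$-valued.
\item Your $\tilde\Gg_s$ genuinely lies in $\Ff_s$, so the trivial inequality $\sup_{\tilde\Gg_s}|G_n|\le\sup_{\Ff_s}|G_n|$ is immediate. In the paper's construction, $\tilde\Gg_s$ lives in the version $\tilde\Ff_s$.
\item You avoid the delicate step in the paper's proof where the $f$-dependent null sets $\{\exists i:\tilde f(X_i)\neq f(X_i)\}$ must be discarded for uncountably many $f$ simultaneously before taking the supremum.
\end{itemize}
The price is that your argument is specific to locally equicontinuous, locally bounded classes. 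The abstract route applies to any $L_2(P)$-separable class, up to modification on null sets.

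\textbf{Minor points.}
\begin{itemize}
\item With your tail convention, the expectation term is bounded by $3\tau$ rather than $2\tau$, since $\int_{[-R,R]^d}|f-g|\,dP\le\tau$ and $|f-g|\le 2C(1+\norm{x}^2)$ outside the cube. This is harmless.
\item The weighted norm $\norm{\cdot}_w$ you introduce is never used.
\item The $L_2(P)$-approximation property plays no role in your proof of the equality of suprema. It only serves to match the wording of the statement.
\end{itemize}
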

\begin{proof}
    This proof is essentially based on the separable version theory of stochastic process explained in section 2.3.3. of \cite{vaart1996weak}. 

    Since $(\Ff_s,L_2(P))$ is separable (Lemma \ref{separability F_s}), thanks to theorem 2.3.17 of \cite{vaart1996weak} there exists a pointwise separable version of $\Ff_s$. This means that there exists a pointwise separable class $\tilde{\Ff}_s$ such that any $f\in\Ff_s$ admits a representative $\tilde{f}\in\tilde{\Ff}_s$ satisfying $f=\tilde{f}$,  $P-ae$.

    $\tilde{\Ff}_s$ is a pointwise separable class if there exists a countable set $\tilde{\Gg}_s\subset\tilde{\Ff}_s$ and for all $n\in\NN$ a $P^n-$null set $N_n\subset (\RR^d)^n$ such that : 

    For all $(x_1,\dots,x_n)\notin N_n$ and $f\in\tilde{\Ff}_s$ there exists a sequence $(g_m)$ of function of $\tilde{\Gg}_s$ such that 
    $$
    g_m\overset{L_2(P)}{\to}f \quad \text{and} \quad (g_m(x_1),\dots,g_m(x_n))\to(f(x_1),\dots,f(x_n))
    $$

    Now let $f\in\Ff_s$, 
    there exists $\tilde{f}\in\tilde{\Ff}_s$ such that $f=\tilde{f}\quad P-ae$. Let's write the $P^n-$null set $N_n$ and the approximating sequence $(g_m)$ of $\tilde{f}$ in $\tilde{\Gg}_s$. By definition, the vector $(X_1,\dots,X_n)$ does not belong to $N_n$ almost surely.

    Therefore, almost surely, $g_m(X_i)\to \tilde{f}(X_i)$.

    But also almost surely, $\tilde{f}(X_i)=f(X_i)$. Therefore, by the intersection of almost surely events, $g_m(X_i)\to f(X_i)$ almost surely. Hence $G_n(g_m)\to G_n(f)$ almost surely. 

    To end the proof, it suffices to notice that almost surely $\displaystyle |G_n(f)|\le\sup_m|G_n(g_m)|\le \sup_{g\in\tilde{\Gg}_s}|G_n(g)|$. By taking the supremum over $f\in\Ff_s$ we have $\displaystyle \sup_{f\in\Ff_s}|G_n(f)| \le \sup_{g\in\tilde{\Gg}_s}|G_n(g)|$ almost surely. 

    Since $\tilde{\Gg}_s\subset \Ff_s$, the converse inequality is straightforward. 
\end{proof}
\begin{lem}
    Assume $Y_1,\dots,Y_n$ are \textit{i.i.d.} sub exponential positive random variables. Denote $K\coloneqq\max_i\norm{Y_i}_{\psi_1}$ then
    $$
    \norm{\max_iY_i}_{\psi_1}\lesssim K\log(n)
    $$
    \label{max of exp}
\end{lem}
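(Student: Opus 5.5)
The lemma to prove states: for i.i.d. nonnegative sub-exponential variables $Y_1,\dots,Y_n$ with $K=\max_i\norm{Y_i}_{\psi_1}$, one has $\norm{\max_iY_i}_{\psi_1}\lesssim K\log(n)$.

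The approach is to bound the Orlicz norm directly by bounding $\EE\exp(\max_i Y_i/(2c))$ for a well-chosen $c$, via Jensen/Hölder-type convexity.

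First normalize. By homogeneity of the $\psi_1$ norm we may assume $K=1$. Then for each $i$, $\EE\exp(Y_i/2)\le 2$.

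Next, for $c\ge 1$, use that $t\mapsto t^{1/c}$ is concave and that the $Y_i$ are nonnegative:
\[
\EE\exp\Big(\frac{\max_i Y_i}{2c}\Big)=\EE\Big(\max_i e^{Y_i/2}\Big)^{1/c}\le \Big(\EE\max_i e^{Y_i/2}\Big)^{1/c}.
\]
Then bound the maximum by the sum:
\[
\EE\max_i e^{Y_i/2}\le \sum_{i=1}^n\EE e^{Y_i/2}\le 2n.
\]
Hence
\[
\EE\exp\Big(\frac{\max_i Y_i}{2c}\Big)\le (2n)^{1/c}.
\]

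Now choose $c=\log_2(2n)=1+\log_2 n$. This gives $(2n)^{1/c}=2$, so
\[
\norm{\max_i Y_i}_{\psi_1}\le 1+\log_2 n\lesssim \log n
\]
for $n\ge2$. For $n=1$ the claim is trivial, up to adjusting the constant (reading $\log n$ as $\max(1,\log n)$). Undoing the normalization yields $\norm{\max_iY_i}_{\psi_1}\lesssim K\log n$.

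There is essentially no obstacle. The only point needing care is the degenerate case $n=1$, where $\log n=0$, so the bound must be interpreted with a $\max(1,\cdot)$. Independence and identical distribution are not even needed; only the uniform bound $K$ on the individual norms is used.
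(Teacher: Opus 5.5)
Your argument is correct, and it differs from the paper's in how the factor $n$ from the max-by-sum bound is absorbed.

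The paper stays at the fixed scale $c=K$ and gets $\EE\exp(\max_i Y_i/(2K))\le 2n$. It then recentres: it shows $\EE\exp\bigl((\max_i Y_i-2K\log n)/(2K)\bigr)\le 2$, concludes $\norm{\max_i Y_i-2K\log n}_{\psi_1}\le K$, and finishes with the triangle inequality and $\norm{2K\log n}_{\psi_1}\lesssim K\log n$.

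You instead enlarge the scale. Jensen for the concave map $t\mapsto t^{1/c}$ gives $\EE\exp(\max_i Y_i/(2c))\le(2n)^{1/c}$, and you choose $c=\log_2(2n)$.

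Your route buys two things. First, an explicit constant: $\norm{\max_i Y_i}_{\psi_1}\le K\log_2(2n)$. Second, you only evaluate the Orlicz functional on the nonnegative variable $\max_i Y_i$, so the absolute value in the definition of $\norm{\cdot}_{\psi_1}$ causes no difficulty.

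The paper's recentring step does not have this advantage. The variable $Z=\max_i Y_i-2K\log n$ is typically negative, and the computation controls $\EE\exp(Z/(2K))$, not $\EE\exp(|Z|/(2K))$. The latter can be of order $n$ when the $Y_i$ are mostly small. So that step strictly needs a patch, for example working with $Z_+$ and using $\max_i Y_i\le Z_+ + 2K\log n$.

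Your remarks that independence is never used, and that the bound must be read as $K\max(1,\log n)$ to cover $n=1$, are accurate. Both apply equally to the paper's version.
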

\begin{proof}
    Let $c>0$, then $\displaystyle \exp\pa{\frac{\max_iY_i}{2c}}=\max_i\exp\pa{\frac{Y_i}{2c}}\le\sum_{i=1}^n\exp\pa{\frac{Y_i}{2c}}$. 

    By taking the expectation, one gets 
    $$
    \EE\exp\pa{\frac{\max_iY_i}{2c}}\le n\EE\exp\pa{\frac{Y_1}{2c}}
    $$
    So if we look at the shifted random variable $\max Y_i - 2K\log(n)$ we get
    $$
    \EE\exp\pa{\frac{\max Y_i-2K\log(n)}{2c}}\le \exp\pa{-\frac{K\log(n)}{c}}n\EE\exp\pa{\frac{ Y_1}{2c}}
    $$
    Taking $c=K$ leads to 
    $$
    \EE\exp\pa{\frac{\max Y_i-2K\log(n)}{2c}}\le 2
    $$
    Therefore, $\norm{\max_i Y_i-2K\log(n)}_{\psi_1}\le K$. Since $\norm{\cdot}_{\psi_1}$ is a norm we get 
    $$
    \norm{\max_i Y_i}_{\psi_1}\le K + \norm{2K\log(n)}_{\psi_1} \lesssim K\log(n)
    $$
\end{proof}

\begin{lem}
    \label{tilde sigma}
    Let $P$ be a $\sigma^2-$sub Gaussian probability measure and $X_1,\dots,X_n\sim P$ be $n$ samples of $P$. 

    Let $\tilde{\sigma}=\inf\{\sigma>0,\; \textit{s.t.}\; P \text{ and } P_n \text{ are } \sigma^2\text{-sub Gaussian}\}$. 
    Then with probability higher than $1-\delta$, 
    $$
    0\le \tilde{\sigma}-\sigma \le  \sigma\sqrt{\frac{\log(1/\delta)}{\log(2)}}
    $$
\end{lem}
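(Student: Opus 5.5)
The bound should follow from the definition of the sub-Gaussian constant and a Chernoff-type concentration of an empirical average, as I now describe. Set
\[
\Phi(c) \coloneqq \EE\exp\pa{\norm{X}^2/2dc^2}
\qquad\text{and}\qquad
\Phi_n(c) \coloneqq \frac1n\sum_{i=1}^n \exp\pa{\norm{X_i}^2/2dc^2}.
\]
With this notation, $\tilde\sigma = \max(\sigma,\sigma_n)$, where $\sigma_n = \inf\{c>0:\ \Phi_n(c)\le 2\}$. The lower bound $\tilde\sigma\ge\sigma$ is then immediate, since $\tilde\sigma$ is by definition at least the sub-Gaussian constant of $P$. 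It therefore remains to show that, with probability at least $1-\delta$, we have $\Phi_n(c_\delta)\le 2$ at the level $c_\delta \coloneqq \sigma\pa{1+\sqrt{\log(1/\delta)/\log 2}}$.

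The key step is a moment computation obtained by changing the exponent. Write $\Phi_n(c_\delta)=\frac1n\sum_i W_i$ with $W_i = \exp\pa{\norm{X_i}^2/2dc_\delta^2}$, and set $p\coloneqq c_\delta^2/\sigma^2\ge 1$. Then $W_i^{p} = \exp\pa{\norm{X_i}^2/2d\sigma^2}$, so $\EE W_i^{p}\le 2$. By convexity of $t\mapsto t^p$ (Jensen's inequality),
\[
\pa{\tfrac1n\textstyle\sum_i W_i}^{p}\le \tfrac1n\textstyle\sum_i W_i^{p},
\qquad\text{hence}\qquad
\EE\bigl[\Phi_n(c_\delta)^{p}\bigr]\le 2 .
\]
Markov's inequality then gives
\[
\PP\pa{\Phi_n(c_\delta)>2}\le \frac{\EE\bigl[\Phi_n(c_\delta)^{p}\bigr]}{2^{p}}\le 2^{1-p}.
\]
It remains to make $2^{1-p}\le\delta$, which requires $p\ge 1+\log_2(1/\delta)$. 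We have $p = \pa{1+\sqrt{\log(1/\delta)/\log2}}^2 \ge 1+\log(1/\delta)/\log 2$, since the cross term $2\sqrt{\cdot}$ is nonnegative. This is exactly the required inequality. Hence $\sigma_n\le c_\delta$ with probability at least $1-\delta$, and so $\tilde\sigma-\sigma\le \sigma\sqrt{\log(1/\delta)/\log 2}$, as claimed.

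The main obstacle is only bookkeeping. One point is the monotonicity of $c\mapsto\Phi_n(c)$, which is decreasing; this ensures that $\Phi_n(c_\delta)\le 2$ indeed implies $\sigma_n\le c_\delta$. A second point is the case where the sample vectors are not centered: the definition of sub-Gaussian for $P_n$ is applied to a random draw from $P_n$, and I would interpret it only through the exponential moment condition, exactly as is done for $P$. Note also that the argument requires no independence beyond identical marginals, since Jensen's inequality is applied pointwise before taking expectations.
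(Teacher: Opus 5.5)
Your proposal is correct and is essentially the paper's proof. You raise the empirical average to the power $p=(c/\sigma)^2\ge 1$, apply Jensen pointwise and then Markov to get $\PP(\tilde\sigma> c)\le 2^{1-p}$, and discard the nonnegative cross term in $(1+t/\sigma)^2$. The only cosmetic difference is that the paper first proves the tail bound $2^{1-(1+t/\sigma)^2}\le e^{-t^2\log 2/\sigma^2}$ for general $t$ and then solves for $t$, whereas you plug in the level $c_\delta$ directly.
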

\begin{proof}
    We rephrase the definition by $\tilde{\sigma}=\inf\{\sigma>0,\; \textit{s.t. } \EE_P\left[e^{\frac{\norm{X}^2}{2d\sigma^2}}\right]\le2 \text{ and } \EE_{P_n}\left[e^{\frac{\norm{X}^2}{2d\sigma^2}}\right]\le2\}$

    First of all, $P$ is $\sigma$-sub Gaussian so by definition of $\sigma$, we get $\sigma\le\tilde{\sigma}$.

    Now given $t>0$, we characterize the event $(\tilde\sigma > \sigma+t)$.By definition, this event means that either $P$ or $P_n$ is not $(\sigma+t)^2$-sub-Gaussian. Since $P$ is already $(\sigma+t)^2$-sub-Gaussian, this is equivalent to
    $$
    (\tilde{\sigma}>\sigma +t)=\pa{\frac{1}{n}\sum_{i=1}^ne^{\frac{\norm{X_i}^2}{2d(\sigma+t)^2}}>2}.
    $$

    Therefore, for all $\lambda\ge1$
    \begin{align*}
        \PP(\tilde{\sigma}>\sigma+t) &= \PP\left(\frac{1}{n}\sum_{i=1}^ne^{\frac{\norm{X_i}^2}{2d(\sigma+t)^2}}>2\right) \\
        &=\PP\left(\left(\frac{1}{n}\sum_{i=1}^ne^{\frac{\norm{X_i}^2}{2d(\sigma+t)^2}}\right)^\lambda>2^\lambda\right) \\
        &\le \PP\left(\frac{1}{n}\sum_{i=1}^ne^{\frac{\lambda\norm{X_i}^2}{2d(\sigma+t)^2}}>2^\lambda\right) \tag{by Jensen's inequality} \\
        &\le \frac{\EE_P\left[e^{\frac{\lambda\norm{X}^2}{2d(\sigma+t)^2}}\right]}{2^\lambda}
    \end{align*}
    where $\displaystyle \EE_P\left[e^{\frac{\lambda\norm{X}^2}{2d(\sigma+t)^2}}\right] \le 2$ for $\lambda = \pa{\frac{\sigma+t}{\sigma}}^2$

    So finally 
    $$ 
    \PP(\tilde{\sigma}>\sigma+t) \le 2^{1-\pa{\frac{\sigma+t}{\sigma}}^2} \le e^{-t^2\log(2)/\sigma^2}
    $$
\end{proof}

\section{Properties of the Fisher information}

\begin{lem}[Differentiability of $I(A)$]\label{lem:diff of Fisher}
Assume \ref{ass:domain}, \ref{ass:densities} with $k=1$ and take a positive definite matrix $A_0\succ 0$

Define, for $t\in [0,1]$ and $A$ invertible,
\[
\rho_t^A := ((1-t)I + t T_A)\# \alpha,
\]
where $T_A$ is the Brenier map from $\alpha$ to $\beta$ for the cost $c_A$. Define the integrated Fisher information
\[
I(A) := \int_0^1 \int \big(\nabla \log \rho_t^A(y)\big)^\top A^{-1} \big(\nabla \log \rho_t^A(y)\big) \, d\rho_t^A(y)\, dt.
\]

Then $I(A_0)<\infty$ and there exists a neighborhood of $A_0$ where $I$ is $C^1$.
\end{lem}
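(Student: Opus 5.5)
The plan is to write the displacement interpolant explicitly through the Brenier map and reduce $I(A)$ to an integral over the fixed reference measure $\alpha$. The integrand then depends on $A$ only through $T_A$, $DT_A$, $D^2T_A$ and $A^{-1}$. Set $S_t^A := (1-t)\Id + tT_A$, so that $\rho_t^A = S_t^A\#\alpha$.

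\textbf{Step 1 (the interpolation map is a diffeomorphism).} By Proposition \ref{prop:Brenier map} and Remark \ref{rem: link Kanto-Brenier}, $T_A(x)=\nabla\Psi_A(A^\top x)$, hence $DT_A(x) = \nabla^2\Psi_A(A^\top x)A^\top$. I will first argue that $DS_t^A = (1-t)\Id + tDT_A$ is invertible uniformly in $t\in[0,1]$ and in $A$ near $A_0$.

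\emph{Caveat.} For general $A\succ0$, $DT_A$ is similar to $A^{\top/2}\nabla^2\Psi_A A^{1/2}$, which is symmetric positive definite. I would like to conclude that every convex combination with $\Id$ stays invertible. This is not automatic for similar but non-symmetric matrices, and it is the first delicate point.

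\emph{Fix.} Work in the Riemannian setting of Proposition \ref{prop:Taylor expansion of order 2}, i.e. in the coordinates $z=A^{1/2}x$. There the geodesic interpolation is the Euclidean displacement interpolation of $A^{1/2}\#\alpha$, so $DS_t$ is conjugate to $(1-t)\Id + tH$ with $H\succ0$ symmetric. Its eigenvalues lie in $[\min(1,\nu),\max(1,\mu)]$.

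\textbf{Step 2 (Jacobian formula and finiteness of $I(A_0)$).} The change of variables gives
\[
\rho_t^A(S_t^A(x)) = \frac{\alpha(x)}{\det DS_t^A(x)} .
\]
Differentiating this identity gives
\[
(DS_t^A)^\top\,\nabla\log\rho_t^A\circ S_t^A = \nabla\log\alpha - \nabla_x\log\det DS_t^A ,
\]
which involves $D^2T_A$. Substituting,
\[
I(A)=\int_0^1\!\!\int_X \bigl|(DS_t^A)^{-\top}\bigl(\nabla\log\alpha - \nabla\log\det DS_t^A\bigr)\bigr|^2_{A^{-1}}\,d\alpha\,dt .
\]
With $k=1$, Proposition \ref{prop:regularity_OT} gives $\Psi_A\in C^{3,\kappa}$, so $D^2T_A$ is continuous. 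Also $\alpha\in C^{1,\kappa}$ is bounded below, so $\nabla\log\alpha$ is bounded. Together with Step 1 the integrand is bounded on the compact domain, and $I(A_0)<\infty$.

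\textbf{Step 3 ($C^1$ dependence on $A$).} Apply Proposition \ref{prop:regularity_potential}, which is the main analytic input, with $c=c_A$. Assumption \ref{ass:twist} and \eqref{ass:curved_cphi} hold by the final remark of Section \ref{sec:brenier}. Since $A\mapsto c_A$ is linear and bounded into $C^{k+2,\kappa}$, the map $A\mapsto\phi_A\in C^{k+1,\kappa}(X)/\RR$ is $C^1$. Lemma \ref{lem:Tmap} then makes $A\mapsto T_A=T_{c_A,\phi_A}$ a $C^1$ map into $C^{k,\kappa}$.

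\textbf{Main obstacle.} The integrand needs $D^2T_A$, i.e. third derivatives of $\phi_A$. With $k=1$ the proposition only gives $C^1$ dependence of $\phi_A$ into $C^{2,\kappa}$, and hence of $DT_A$ into $C^{0,\kappa}$. This loss of one derivative is exactly the issue flagged in Remark \ref{rem:regularity_loss}. I would handle it in one of two ways:
\begin{enumerate}
\item invoke the proposition with $k=2$, at the price of one more derivative on the data; or, preferably,
\item integrate by parts in $x$ to move the derivative of $\log\det DS_t^A$ onto smooth test quantities, so that only $DT_A$ and $\nabla\alpha$ appear.
\end{enumerate}
In the second route, the boundary terms are controlled using $T_A(\partial X)=\partial Y$.

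\textbf{Conclusion.} Once the integrand is expressed through quantities depending $C^1$ on $A$ in sup norm, with $\det DS_t^A$ bounded away from zero, differentiation under the integral is justified by dominated convergence. The uniform bounds from Step 1 on a neighborhood of $A_0$ supply the domination. This yields $I\in C^1$ near $A_0$.
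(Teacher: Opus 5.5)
Your Steps 1 and 2 are correct. They give $I(A_0)<\infty$ more directly than the paper, which reduces to the Euclidean cost and cites an external finiteness result. You also make explicit the factor $(DS_t^A)^{-\top}$ and the lower bound on $\det DS_t^A$, both of which the paper leaves implicit. Your Step 1 caveat is harmless: $(1-t)\Id+t\nabla^2\Psi_A A$ is conjugate via $A^{1/2}$ to $(1-t)\Id+tA^{1/2}\nabla^2\Psi_A A^{1/2}$, and continuity handles non-symmetric $A$ near $A_0$.

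\textbf{The gap is the $C^1$ claim at $k=1$, and your integration-by-parts option cannot close it.} $I(A)$ is a Fisher information: first order in $\rho_t^A$, hence second order in $T_A$. The integrand contains
\[
\bigl\lvert (DS_t^A)^{-\top}\nabla\log\det DS_t^A\bigr\rvert^2_{A^{-1}}\,\alpha ,
\]
which is quadratic in $D^2T_A$. Integrating by parts in $\int w\lvert\nabla f\rvert^2$ only trades one derivative on one factor for two on the other. Even the cross term $\int\langle M\nabla\alpha,\nabla\log\det DS_t^A\rangle\,dx$, with $M=(DS_t^A)^{-1}A^{-1}(DS_t^A)^{-\top}$, would produce $\mathrm{div}(M\nabla\alpha)$. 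That needs $D^2\alpha$, unavailable for $\alpha\in C^{1,\kappa}$, and $DM$, which contains $D^2T_A$ again. Moreover, $T_A(\partial X)=\partial Y$ says nothing about $\log\det DS_t^A$ on $\partial X$, so it does not control the boundary terms.

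The underlying obstruction is the linearized problem in a direction $H$, $\mathrm{div}(B\nabla\dot\phi)=\mathrm{div}(BHT_A)$. At $k=1$ its coefficient is $B=\alpha(\nabla^2\phi_A)^{-1}\in C^{1,\kappa}$ and its right-hand side is only in $C^{0,\kappa}$. So $\dot\phi=\partial_A\phi_A[H]$ is in general only $C^{2,\kappa}$, and $\partial_A D^2T_A$ is not a bounded function. Differentiation under the integral sign is then unavailable. A genuine $k=1$ proof would need a different (weak or dual) argument, which you do not supply.

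Only your first option works, and it proves the lemma under $k=2$ (densities in $C^{2,\kappa}$, boundaries in $C^{4,\kappa}$), not as stated. This is exactly the paper's route, taken silently. Its proof asserts that Proposition \ref{prop:regularity_potential} makes $A\mapsto\phi_A$ a $C^1$ map into $C^{3,\kappa}(X)/\RR$. It then deduces that $A\mapsto\rho_t^A\circ F_t^A$ is $C^1$ into $C^{1,\kappa}(X)$ and differentiates under the integral. But at $k=1$ that proposition only yields $C^{k+1,\kappa}=C^{2,\kappa}$, which is precisely the loss recorded in Remark \ref{rem:regularity_loss}. So your diagnosis is accurate, and the paper's argument has the same hole. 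The clean repair is to assume $k=2$ in this lemma, not to integrate by parts.
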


\begin{proof}
Let's first show $I(\Id)<\infty$. In this case we know that $T_{\Id}=\nabla\phi_{\Id}$ where, according to proposition \ref{prop:regularity_OT}, $\phi_{\Id}$ is $\Cc^{3,\kappa}$ with $\nu \Id\preceq \nabla^2 \phi_{\Id}(x)\preceq\mu \Id$. Then, according to \cite[Proposition 1]{chizat2020fasterwassersteindistanceestimation}, we know that
$$
I(\Id)<\infty
$$
Given $A_0\succ 0$, notice that $I(A_0)=\tilde I(\Id)$ where $\tilde I$ is the integrated Fisher information for the euclidean cost between $A_0\#\alpha$ and $\beta$. A simple way to see this is by using the fact that $\OT^\varepsilon_{\alpha,\beta}(A_0)=\OT^\varepsilon_{A_0\#\alpha,\beta}(\Id)$ and using the uniqueness of the Taylor expansion to identify the $\varepsilon^2$ terms.

Now, by proposition \ref{prop:Brenier map}, after performing the previous change of variable, the Brenier map between $A_0^\top\#\alpha$ and $\beta$ for the Euclidean cost is $\nabla\Psi_{A_0}$. According to proposition \ref{prop:regularity_OT}, $\Psi_{A_0}$ is $\Cc^{3,\kappa}$ with $\nu_{A_0}\Id\preceq\nabla^2\Psi_{A_0}(x)\preceq \mu_{A_0} \Id$. Then, $\tilde I(\Id)<\infty$ and therefore $I(A_0)<\infty$.

Now let's prove the regularity. 
For each fixed $t\in[0,1]$, define the map
\[
F_t^A(x) := (1-t)x+tT_A(x) = (1-t)x - tA^{-1}\nabla\phi_A(x).
\]
Observe that $I(A)$ can be written as 
$$
I(A) = \int_0^1\int \pa{\nabla\log \rho_t^A(F_t^A(x))}^\top A^{-1}\pa{\nabla\log\rho_t^A(F_t^A(x))}\;d\alpha(x)\;dt
$$
According the Proposition \ref{prop:regularity_potential} there exists a neighborhood $\Uu$ of $A_0$ such that $A\mapsto\phi_A$ is $\Cc^1$ from $\Uu$ to $\Cc^{3,\kappa}(X)/\RR$. Then $A\mapsto F_t^A$ is $C^1$ from $\Uu$ to $\Cc^{2,\kappa}(X)$.

Define the integrand 
$$
f(A,x,t) := (\nabla \log \rho_t^A(F_t^A(x)))^\top A^{-1} (\nabla \log \rho_t^A(F_t^A(x))) 
$$
By the standard pushforward formula, 
$$
\rho_t^A(F_t^A(x)) = \frac{\alpha(x)}{|\det DF_t^A(x)|},
$$
which shows that $A \mapsto \rho_t^A\circ F_t^A$ is $C^1$ from $\Uu$ to $C^{1,\kappa}(X)$ for each $t$. Since $\alpha$ is bounded away from zero and $DF_t^A$ is uniformly bounded, $\rho_t^A\circ F_t^A$ is bounded away from zero, so the map
$$
A \mapsto \nabla \log \rho_t^A\circ F_t^A = \frac{\nabla \rho_t^A}{\rho_t^A}\circ F_t^A
$$
is $C^1$ in $C^{0,\kappa}(X)$.  

Then, the integrand $f$ is therefore $C^1$ in $A$, and both $f$ and $\partial f/\partial A$ are uniformly bounded on $X \times [0,1]$ for $A$ in a compact set of invertible matrices.  

Hence, by differentiation under the integral,
\[
\frac{\partial}{\partial A} I(A) = \int_0^1 \int_X \frac{\partial f}{\partial A}(A,x,t) \, d\alpha(x) \, dt,
\]
and $A \mapsto I(A)$ is $C^1$.
\end{proof}

\begin{cor}
    \label{cor: homogeneousness of Fisher}
    Assume \ref{ass:domain}, \ref{ass:densities} with $k\ge1$. Then for all invertible matrix $A\in\RR^{d\times d}$
    $$
    0\le I(A)\le \frac{\sup_{\norm{A}=1}I(A)}{\norm{A}} \qwhereq \sup_{\norm{A}=1}I(A)<\infty
    $$
\end{cor}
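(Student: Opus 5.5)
The plan has three parts: a scaling identity that gives the $1/\norm{A}$ factor, a sign argument for $0\le I(A)$, and a uniform bound on $I$ over the whole unit sphere of invertible matrices. The third part is the real content, and I am not sure it holds as stated.

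\textbf{Scaling.} Fix an invertible $A$ and $\lambda>0$. Since $c_{\lambda A}=\lambda c_A$, the optimal plans for $c_{\lambda A}$ and $c_A$ coincide. By uniqueness of the Brenier map (Proposition~\ref{prop:Brenier map}), $T_{\lambda A}=T_A$, and therefore $\rho_t^{\lambda A}=\rho_t^A$ for every $t\in[0,1]$. The only change in the integrand of $I$ is the weight $(\lambda A)^{-1}=\lambda^{-1}A^{-1}$, so
\[
I(\lambda A)=\lambda^{-1}I(A).
\]
Taking $\lambda=\norm{A}$ and writing $\bar A=A/\norm{A}$ gives $I(A)=I(\bar A)/\norm{A}\le \sup_{\norm{B}=1}I(B)/\norm{A}$. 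This step needs $\lambda>0$, which is all that is used.

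\textbf{Nonnegativity.} I would interpret the integrand through the symmetric part of $A^{-1}$. For $A\succ0$ this is a nonnegative quadratic form, so $I(A)\ge0$ is immediate. This is the setting in which $I$ is used in Section~\ref{sec:sample_setting}, where $A\in\mathbb{S}^{d\times d}_+$. For general invertible $A$, nonnegativity is not automatic, and I would restrict the statement to $A\succ0$. Note that scaling by $\lambda>0$ preserves positive definiteness.

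\textbf{Rewriting $I$ as a Euclidean Fisher information.} For $A\succ0$ set $L=A^{1/2}$ and $\sigma_t=L\#\rho_t^A$. Then $\nabla\log\rho_t^A(x)=L^\top\nabla\log\sigma_t(Lx)$, and since $L^\top A^{-1}L=\Id$,
\[
I(A)=\int_0^1\int\norm{\nabla\log\sigma_t}^2\,d\sigma_t\,dt .
\]
So $I(A)$ is the integrated Euclidean Fisher information along the curve $t\mapsto\sigma_t$. This is the form I would bound.

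\textbf{Finiteness of $\sup_{\norm{A}=1}I(A)$.} On any compact set of unit-norm positive definite matrices, Lemma~\ref{lem:diff of Fisher} shows $I$ is $C^1$, hence continuous, hence bounded. The difficulty is therefore entirely near the boundary of invertibility, where $\lambda_{\min}(A)\to0$ with $\norm{A}=1$. For that regime I would try the following, in order.
\begin{itemize}
\item[(a)] Use convexity of Fisher information along the interpolation to bound $\int\norm{\nabla\log\sigma_t}^2d\sigma_t$ by quantities at the endpoints. This should reduce the problem to $\int\nabla\log\alpha^\top A^{-1}\nabla\log\alpha\,d\alpha$ and the analogous term for $\beta$, plus terms in $D^2\Psi_A$ controlled by Proposition~\ref{prop:regularity_OT}.
\item[(b)] Try to show those endpoint terms stay bounded as $\lambda_{\min}(A)\to0$.
\end{itemize}

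Step (b) is the main obstacle, and I expect it to fail. Once the Fisher information of $\alpha$ has a nonzero component along the degenerating eigendirection of $A$, the endpoint term blows up like $\lambda_{\min}(A)^{-1}$. The constants $\nu_A,\mu_A$ in Proposition~\ref{prop:regularity_OT} are also not uniform as $A$ degenerates. So if (b) fails, the honest conclusion is this: the bound $I(A)\le \sup_{\norm{B}=1}I(B)/\norm{A}$ holds with a finite supremum over every compact cone $\{\lambda_{\min}(\bar A)\ge\eta\}$, but finiteness of the supremum over all unit-norm invertible matrices would need an additional hypothesis. That is sufficient for the coercivity argument in Proposition~\ref{prop:uniqueness  A_eps}, but it does not establish the statement as worded, which takes the supremum over all invertible matrices of unit norm.
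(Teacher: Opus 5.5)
Your scaling step is the same as the paper's: $T_{\lambda A}=T_A$, hence $\rho_t^{\lambda A}=\rho_t^A$, and only the weight $(\lambda A)^{-1}$ changes. For the finiteness of $\sup_{\norm{A}=1}I(A)$, the paper's proof only says that $I$ is continuous by Lemma~\ref{lem:diff of Fisher} and \emph{therefore} bounded on the unit ball. That is exactly the inference you declined to make, and you were right to decline it. Lemma~\ref{lem:diff of Fisher} only gives $C^1$ regularity near each $A_0\succ 0$. $I$ is not defined at singular matrices. The set of unit-norm positive definite (or invertible) matrices is not compact, so continuity gives no uniform bound.

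Your step (b) does fail, and there is an explicit counterexample, not just a heuristic. Take $d=2$, $X=Y$ the closed unit disk, and $\alpha=\beta$ with density $(2+x_2)/(2\pi)$; this satisfies both assumptions for every $k$. For symmetric $A\succ0$, the inequality $x^\top Ay\le\frac12(x^\top Ax+y^\top Ay)$ shows that the identity coupling is optimal. By uniqueness $T_A=\mathrm{Id}$, so $\rho_t^A=\alpha$ for all $t$ and $I(A)=\int\nabla\log\alpha^\top A^{-1}\nabla\log\alpha\,d\alpha$. Let $A_\eta=\mathrm{diag}(1,\eta)/\sqrt{1+\eta^2}$, which has unit Frobenius norm (any other norm gives the same conclusion). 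Since $\partial_1\log\alpha=0$,
\[
I(A_\eta)=\frac{\sqrt{1+\eta^2}}{\eta}\int(\partial_2\log\alpha)^2\,d\alpha
=\frac{\sqrt{1+\eta^2}}{2\pi\,\eta}\int_X\frac{dx}{2+x_2}\longrightarrow+\infty
\qquad(\eta\downarrow 0).
\]
So the supremum is infinite, and the corollary is false as worded. You are also right to restrict nonnegativity to $A\succ0$. For indefinite $A$ the weight $A^{-1}$ is indefinite, and the interpolation $(1-t)x+tT_A(x)$ can even collapse, so neither the sign nor the finiteness of $I(A)$ is available. Your restricted version, with the supremum taken over $\{\bar A\succ 0:\ \norm{\bar A}=1,\ \lambda_{\min}(\bar A)\ge\eta\}$, is what the paper's argument actually supports.

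Your final sentence needs a correction. The restricted bound does not by itself rescue the coercivity step for $\mathcal J_\varepsilon$ as written, because that step uses $I(A)\le C/\norm{A}$ for all $A\succ0$. In the example above, $I(\mathrm{diag}(a,b))=b^{-1}\int(\partial_2\log\alpha)^2d\alpha$. Along $A=\mathrm{diag}(n,n^{-2})$, the term $\frac{\varepsilon^2}{8}I(A)\sim n^2$ dominates both $\varepsilon\lambda_0R(A)=O(n)$ and $-\frac{\varepsilon}{2}\log\det A=\frac{\varepsilon}{2}\log n$. Hence the lower bound $\varepsilon F(A)-\frac{\varepsilon^2}{8}I(A)$ tends to $-\infty$, and coercivity in that regime needs a different argument.
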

\begin{proof}
    From Lemma \ref{lem:diff of Fisher} we know that $I$ is continuous and therefore bounded on the unit ball. 

    Moreover, $A\mapsto T_0^A$ is invariant by positive translation: $T_0^{\lambda A}=T_0^A$ provided $\lambda >0$, because positively collinear matrices generate the same classic OT plan. 
    
    Therefore $\rho_t^{\lambda A}=\rho_t^A$ and the inverse $A^{-1}$ appearing in $I(A)$ leads to $I(\lambda A) = \frac{1}{\lambda}I(A)$
    $$
    I(A) = I\left(\norm{A}\frac{A}{\norm{A}}\right)=\frac{1}{\norm{A}}I\left(\frac{A}{\norm{A}}\right)\le \frac{\sup_{\norm{A}=1}I(A)}{\norm{A}}
    $$
\end{proof}

\begin{lem}[Entropic estimation of cross variances]\label{lem:entropic estimation of cross variances}
    Assume \ref{ass:domain}, \ref{ass:densities} with $k=1$ and \ref{ass: regularizer R and positiv A}. Then for all $C>0$
    $$
    \sup_{\substack{\norm{A}\le C \\ A\succ 0}}\norm{\Sigma_{\pi_\varepsilon(A)}-\Sigma_{\pi_0(A)}}\lesssim \varepsilon
    $$
\end{lem}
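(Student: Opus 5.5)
The plan is to compare $\pi_\varepsilon(A)$ with the graph coupling $\pi_0(A)=(\Id,T_A)\#\alpha$ through the cross-covariance difference
\[
\Sigma_{\pi_\varepsilon(A)}-\Sigma_{\pi_0(A)}=\int x\,\bigl(y-T_A(x)\bigr)^\top d\pi_\varepsilon(A)(x,y),
\]
which holds because $\pi_\varepsilon(A)$ has first marginal $\alpha$. Since $X$ is compact, it suffices to show that the conditional mean displacement $m_\varepsilon(x)\eqdef\int (y-T_A(x))\,d\pi_\varepsilon(A)(y|x)$ is $O(\varepsilon)$ in $L^1(\alpha)$, uniformly in $A$.

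First I would use $1$-homogeneity to normalise. One has $\pi_\varepsilon(\lambda A)=\pi_{\varepsilon/\lambda}(A)$ and $T_{\lambda A}=T_A$. So every $A$ with $\|A\|\le C$ is treated as $\bar A=A/\|A\|$ on the unit sphere, at effective temperature $\varepsilon'=\varepsilon/\|A\|$. The work then splits into two parts: (a) a bound $\|m_{\varepsilon'}\|_{L^1(\alpha)}\lesssim \varepsilon'$ uniformly for $\bar A$ in the closure of $\{\bar A\succ 0,\ \|\bar A\|=1\}$, and (b) a separate treatment of $\|A\|\to 0$.

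For (a) I would follow the Conforti--Tamanini picture behind Proposition~\ref{prop:Taylor expansion of order 2}. Write the entropic potentials as $\phi_A+\varepsilon' u^{\varepsilon'}$, with $\phi_A$ the Kantorovich potential, and use the Laplace expansion of the conditional law. By Proposition~\ref{prop:regularity_OT} (with $k=1$) the $c_A$-convexity gap $\phi_A(x)+\psi_A(y)+c_A(x,y)$ is uniformly quadratic, namely $\asymp |y-T_A(x)|^2_{\nabla^2\Psi_A^{-1}}$. Hence $\pi_{\varepsilon'}(\cdot|x)$ is, to leading order, a Gaussian of covariance $\varepsilon'\,\nabla^2\Psi_A$ centred at $T_A(x)$. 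Its mean shift is produced by the first-order corrections: the gradient of $u^{\varepsilon'}$, which equals $\tfrac12\nabla\log$ of the density ratio and lies in $C^{0,\kappa}$ by Lemma~\ref{lem:diff of Fisher}, together with the cubic term of the gap, bounded by $\|\Psi_A\|_{C^{3,\kappa}}$. Both contribute $O(\varepsilon')$, and the Gaussian symmetry kills the $O(\sqrt{\varepsilon'})$ term. The resulting constant depends on $\|\Psi_{\bar A}\|_{C^{3,\kappa}}$, on $\nu_{\bar A},\mu_{\bar A}$, and on the Fisher-information bound of Corollary~\ref{cor: homogeneousness of Fisher}.

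The hard step is uniformity as $\bar A$ approaches a singular matrix on the unit sphere. There I would use the change of variables $A^\top\#\alpha$ of Proposition~\ref{prop:Brenier map}. The relevant quantity becomes $\int x(y-T_A(x))^\top d\pi$ on the pushed-forward side, which is multiplied by the bounded factor $A^{-\top}A^\top$. I would try to show that the collapsing directions contribute nothing at first order, because in those directions the cost is flat and $T_A$ is determined by the non-collapsing directions. I expect this to be the main obstacle: the constants $\nu_{\bar A}$ of Proposition~\ref{prop:regularity_OT} degenerate there. If the cancellation fails, I would fall back on the concavity of $A\mapsto\OT^\varepsilon$ together with the bound~\eqref{ineq : upper bound Ll_eps}, which controls the gradients via the value gap on a neighbourhood.

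For (b), the case $\|A\|\to0$, I would note that $\varepsilon'=\varepsilon/\|A\|$ is no longer small. I would then check whether the intended $\lesssim$ allows the constant to depend on $\inf\|A\|$. Otherwise the trivial bound $\|\Sigma_{\pi_\varepsilon}-\Sigma_{\pi_0}\|\le \mathrm{diam}(X)\,\mathrm{diam}(Y)$ must be combined with part (a), and I would flag explicitly that this regime needs such a qualification. The reason is that $\pi_\varepsilon(A)\to\alpha\otimes\beta$ as $A\to0$, whereas $\pi_0(A)=\pi_0(\bar A)$ stays fixed, so the difference does not vanish.
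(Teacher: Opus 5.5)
Your part (b) is right, and it settles more than a side case: the lemma is false as written, and the paper's proof misses exactly this. Fix $\varepsilon$ and let $\lambda\downarrow 0$. Then $\pi_\varepsilon(\lambda A)=\pi_{\varepsilon/\lambda}(A)\to\alpha\otimes\beta$, while $\pi_0(\lambda A)=\pi_0(A)$ stays fixed. Moreover $\langle A,\Sigma_{\pi_0(A)}-\Sigma_{\alpha\otimes\beta}\rangle>0$, because $\alpha\otimes\beta$ is not the unique optimal plan $\pi_0(A)$. So the supremum is bounded below by a positive constant independent of $\varepsilon$.

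The paper's proof uses the same barycentric identity as yours. It then applies Cauchy--Schwarz and the Pooladian--Niles-Weed rate $\|T^{c_A}_\varepsilon-T^{c_A}_0\|_{L^2(\alpha)}\lesssim\varepsilon\sqrt{1+I(A)}$, transported by $A^\top\#\alpha$. Finally it asserts that $I(A)$ is bounded on $\{\|A\|\le C\}$, which contradicts $I(\lambda A)=I(A)/\lambda$ from Corollary~\ref{cor: homogeneousness of Fisher}.

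The obstacle you flag in (a) is of the same kind, not a technicality. Uniformity on the closure of $\{\bar A\succ 0,\ \|\bar A\|=1\}$ is false, so neither a cancellation in collapsing directions nor the concavity fallback can deliver it. Let $A_*\succeq 0$ be singular.
\begin{itemize}
\item For fixed $\varepsilon$, $\pi_\varepsilon(A)\to\pi_\varepsilon(A_*)$ as $A\to A_*$.
\item Every weak limit of $\pi_0(A_*+\eta B)$ as $\eta\downarrow 0$ (with $A_*+\eta B\succ 0$) minimizes $\langle c_B,\cdot\rangle$ among the optimal plans of $c_{A_*}$.
\item Since $c_{A_*}$ only sees $\mathrm{rank}(A_*)<d$ directions, that optimal set is large, and two admissible choices $B_1,B_2$ in general select different cross-covariances.
\end{itemize}
The triangle inequality then bounds the supremum below by half their distance, for every $\varepsilon$. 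For the same reason the weight $A^{-1}$ in $I(A)$ in general blows up near singular $A$. So the bound $\sup_{\|A\|=1}I(A)<\infty$ that you want to borrow is not available: the unit sphere of the open cone is not compact.

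What is true is the bound uniformly over compact subsets of the open positive-definite cone. That is all Theorem~\ref{thm:bias} needs, since it applies the lemma at $A_\varepsilon\to A_0\succ 0$.

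For that local statement your route genuinely differs from the paper's. You work in $L^1(\alpha)$ and derive the $O(\varepsilon)$ mean displacement from a Laplace expansion, whereas the paper cites an $L^2$ entropic-map rate. As sketched, this step is heuristic:
\begin{itemize}
\item The correction $\nabla u^{\varepsilon'}$ is not bounded uniformly up to $\partial X$, and Lemma~\ref{lem:diff of Fisher} says nothing about entropic potentials.
\item Within distance $O(\sqrt{\varepsilon'})$ of $\partial X$, the conditional law of $\pi_{\varepsilon'}$ is truncated by $\partial Y$. Its mean is pushed inward by order $\sqrt{\varepsilon'}$, and the Gaussian-symmetry cancellation fails there.
\end{itemize}
Your $L^1$ formulation is exactly what makes this layer harmless: a shift of order $\sqrt{\varepsilon'}$ on $\alpha$-mass of order $\sqrt{\varepsilon'}$ costs only $O(\varepsilon')$. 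In $L^2(\alpha)$ the same layer has size of order $(\varepsilon')^{3/4}$. So a Cauchy--Schwarz reduction to an $L^2$ map rate cannot by itself give order $\varepsilon$ for densities bounded below on compact domains, and this is a real advantage of your route.

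Two estimates still have to be proved: the interior $O(\varepsilon')$ bound with a remainder uniform over the compact set, and the boundary-layer bound. These are the substance of the lemma, not consequences of results already in the paper.
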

\begin{proof}
    For $A\succ 0$, define the entropic map for the cost $c_A$
    $$
    T_\varepsilon^{c_A}(x) = \EE_{\pi_\varepsilon(A)}[Y|X=x] = \frac{\displaystyle\int y\,e^{\frac{g(y)-c_A(x,y)}{\varepsilon}}\; d\beta(y)}{\displaystyle\int e^{\frac{g(y)-c_A(x,y)}{\varepsilon}}\; d\beta(y)}
    $$
    Write also $\displaystyle T_0^{c_A}$ the classic Brenier map for the cost $c_A$. Notice that for $A=\Id$, thanks to \cite[Corollary 1]{pooladian2021entropic} 
    $$
    \norm{\Sigma_{\pi_\varepsilon(\Id)}-\Sigma_{\pi_0(\Id)}} \le \pa{\int \norm{x}^2\;d\alpha(x)}^{1/2}\norm{T_\varepsilon^{c_{\Id}}-T_0^{c_{\Id}}}_{L_2(\alpha)} \lesssim \varepsilon\sqrt{1+I(\Id)}
    $$
    Notice that their result can be extended to $c_A$ by the change of variable $A\#\alpha$ giving
    $$
    \norm{T_\varepsilon^{c_A}-T_0^{c_A}}_{L_2(\alpha)} =\norm{T_\varepsilon^{c_{\Id}}-T_0^{c_{\Id}}}_{L_2(A\#\alpha)} \lesssim \varepsilon\sqrt{1+I(A)}
    $$
    where $I(A)$ is uniformly bounded whenever $\norm{A}\le C$
\end{proof}

\section{Useful linear algebra and optimization lemmas}

\begin{lem}\label{lem:projection on A_t}
     Given $A,B\in\RR^{d\times d}$, define   $P_A^\perp B \eqdef  B -\frac{ \dotp{B}{A}}{\norm{A}^2} A$. For $A,A_0\in\RR^{d\times d}$, define for $t\in (0,1]$,  $A_t = A_0 + t(A-A_0)$.  We have
    $$
    \inf_{t\in (0,1]}\norm{P_{A_t}^\perp (A-A_0)}^2 = \min\ens{\norm{P_{A_0}^\perp A}^2, \norm{P_{A_1}^\perp  A_0}^2 }
    = \min\ens{\norm{A}^2,\norm{A_0}^2}\cdot  \pa{1- \pa{ \dotp{\frac{A}{\norm{A}}}{\frac{A_0}{\norm{A_0}}}}^2 }.
    $$
\end{lem}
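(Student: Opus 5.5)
The plan is to rewrite the quantity $\norm{P_{A_t}^\perp (A-A_0)}^2$ as a constant divided by $\norm{A_t}^2$. Once that is done, the infimum over $t$ reduces to maximizing a convex quadratic in $t$, which is attained at an endpoint.

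\textbf{Step 1 (Pythagoras).} For any $A_t\neq 0$,
\[
\norm{P_{A_t}^\perp (A-A_0)}^2=\frac{\norm{A-A_0}^2\norm{A_t}^2-\dotp{A-A_0}{A_t}^2}{\norm{A_t}^2}.
\]
The numerator is the Gram determinant $\mathrm{Gram}(A_t,A-A_0)$ of the pair $(A_t,A-A_0)$.

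\textbf{Step 2 (invariance of the Gram determinant).} The Gram determinant is invariant under the unimodular change of basis
\[
(A_0,\,A-A_0)\mapsto (A_0+t(A-A_0),\,A-A_0)
\]
and also under $(A_0,A-A_0)\mapsto(A_0,A)$, because adding a multiple of one vector to the other leaves the Gram determinant unchanged. Hence for every $t$,
\[
\mathrm{Gram}(A_t,A-A_0)=G:=\norm{A}^2\norm{A_0}^2-\dotp{A}{A_0}^2 .
\]
This can also be checked directly by expanding both sides in $t$. It follows that
\[
\norm{P_{A_t}^\perp (A-A_0)}^2=\frac{G}{\norm{A_t}^2}.
\]

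\textbf{Step 3 (reduce to maximizing $\norm{A_t}^2$).} The infimum over $t\in(0,1]$ equals $G/\sup_{t\in(0,1]}\norm{A_t}^2$. The map $t\mapsto\norm{A_t}^2$ is a convex quadratic, so its supremum over $[0,1]$ is attained at an endpoint and equals $\max\ens{\norm{A_0}^2,\norm{A}^2}$. The endpoint $t=0$ is not included in $(0,1]$, but the supremum over $(0,1]$ is the same by continuity, so the infimum is unchanged.

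\textbf{Step 4 (identify the endpoints).} At $t=0$ we have $P_{A_0}^\perp(A-A_0)=P_{A_0}^\perp A$, whose squared norm is $G/\norm{A_0}^2$. At $t=1$ we have $P_{A}^\perp(A-A_0)=-P_{A_1}^\perp A_0$, whose squared norm is $G/\norm{A}^2$. Therefore the infimum is the minimum of these two endpoint values, which gives the first equality.

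\textbf{Step 5 (rewrite $G$ by homogeneity).} Writing $G=\norm{A}^2\norm{A_0}^2\bigl(1-\dotp{A/\norm{A}}{A_0/\norm{A_0}}^2\bigr)$ gives
\[
\frac{G}{\max\ens{\norm{A}^2,\norm{A_0}^2}}=\min\ens{\norm{A}^2,\norm{A_0}^2}\Bigl(1-\dotp{\tfrac{A}{\norm{A}}}{\tfrac{A_0}{\norm{A_0}}}^2\Bigr),
\]
which is the second equality.

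\textbf{Main obstacle (degenerate cases).} The identity in Step 2 is elementary, so the real care is needed where $A_t=0$ for some $t$ and $P_{A_t}^\perp$ is undefined. This happens only when $A$ and $A_0$ are negatively collinear, in which case $G=0$. With the convention $P_0^\perp=\mathrm{Id}$, the formula $G/\norm{A_t}^2$ must then be read away from that isolated $t$. Since $G=0$, all three expressions vanish, or the infimum is approached as $0$, and I would dispose of this case separately at the start. The cases $A=0$ or $A_0=0$ are excluded because the normalizations in the statement require $A,A_0\neq 0$.
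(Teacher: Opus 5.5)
Your proposal is correct and follows the paper's route. The paper also writes $\norm{P_{A_t}^\perp(A-A_0)}^2$ as a $t$-independent numerator over $\norm{A_t}^2$; its numerator $\norm{A-A_0}^2\norm{P^\perp_{A-A_0}A_0}^2$ is exactly your Gram determinant $G$. It then uses convexity of $t\mapsto\norm{A_t}^2$ to place the supremum at an endpoint. Your Gram-determinant formulation is slightly cleaner because it avoids dividing by $\norm{A-A_0}^2$, and you explicitly handle the open endpoint $t=0$ and the collinear degenerate cases, which the paper passes over.
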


\begin{proof}
    Define $B\eqdef A-A_0$.
    First observe that
    $$
    \norm{P_{A_t}^\perp B}^2 = \norm{B}^2 - \frac{\dotp{B}{A_t}^2}{\norm{A_t}^2} = \frac{\norm{B}^2}{\norm{A_t}^2}\underbrace{\pa{ \norm{A_t}^2 - \frac{\dotp{B}{A_t}^2}{\norm{B}^2} }}_{(*)}.
    $$

    Define $V\eqdef P_B^\perp A_0 = A_0 - \dotp{\frac{B}{\norm{B}^2}}{A_0} B$ and note that since
    $$
    A_t = \pa{ \frac{\dotp{A_t}{B}}{\norm{B}^2} + t  } B + V,
    $$
    we have $\norm{A_t}^2 =\pa{ \frac{\dotp{A_t}{B}}{\norm{B}^2} + t  }^2  + \norm{V}^2$.
    It follows that
    \begin{align*}
       (*)&= \norm{A_t}^2 - \pa{\dotp{\frac{B}{\norm{B}}}{A_t}}^2\\
       &= \norm{A_t}^2 \pa{
            \dotp{\frac{B}{\norm{B}}}{A_0} + t \norm{B} }^2\\
            &= \norm{A_t}^2 - \pa{\dotp{\frac{B}{\norm{B}^2}}{A_0} + t}^2 \norm{B}^2 = \norm{V}^2.
    \end{align*}
    Therefore,
    $$
    \norm{P_{A_t}^\perp B}^2 = \frac{\norm{B}^2}{\norm{A_t}^2} \norm{V}^2.
    $$
    So,
    $$
    \inf_{t\in [0,1]} \norm{P_{A_t}^\perp B}^2 = \frac{\norm{B}^2\norm{V}^2}{\sup_{t\in [0,1]}\norm{A_t}^2}.
    $$
    By convexity of $t\mapsto \norm{A_t}^2$, the supremum is achieved at 0 or 1. So,  $$\inf_{t\in [0,1]} \norm{P_{A_t}^\perp B}^2 = \min\ens{\norm{P_{A_0}^\perp A}^2, \norm{P_{A}^\perp A_0}^2}.$$Finally,
    $\norm{P_{A_0}^\perp A}^2 = \norm{A}^2\pa{1- \pa{ \dotp{\frac{A}{\norm{A}}}{\frac{A_0}{\norm{A_0}}}}^2 }$.
\end{proof}

\begin{lem}
    Write $\Kk$ to be a convex cone of positive definite matrices and $F$ to be the map $ A \mapsto \lambda_0R(A) - \frac{1}{2}\log\det A$. Then $F$ is locally strongly convex and coercive on $\Kk$. Hence, $F$ admits a unique global minimizer on $\Kk$ written $A_0$. 
    
    Furthermore, if $r>0$ is a fixed positive number, for every $A\in\Kk$ such that $\norm{A-A_0}<r$ we have 
    $$
    \frac{1}{2M^2}\norm{A-A_0}^2 \le  F(A)-F(A_0) 
    $$
    where $M = r+\norm{A_0}$
    \label{lem:strong convexity of logdet}
\end{lem}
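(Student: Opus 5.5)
Write $F(A)=\lambda_0R(A)-\tfrac12\log\det A$ on $\Kk$. The plan has three steps: (i) global properties on $\Kk$ (convexity, coercivity, existence and uniqueness of the minimizer); (ii) a uniform Hessian lower bound for $-\tfrac12\log\det$ on a ball of radius $r$ around $A_0$; (iii) a first-order optimality argument on the convex set $\Kk$. Throughout I use Assumption \ref{ass: regularizer R and positiv A}: $R$ is convex and $R\ge\kappa\norm{\cdot}$.

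\textbf{Step (i).} The function $-\log\det$ is convex on positive definite matrices, so $F$ is convex on $\Kk$.

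For coercivity, write $\lambda_i$ for the eigenvalues of $A$ and use $\log\det A\le d\log\lambda_{\max}(A)\le d\log\norm{A}$ (up to a norm-equivalence constant). This gives
\[
F(A)\ge \lambda_0\kappa\norm{A}-\tfrac d2\log\norm{A}\to+\infty \quad\text{as } \norm{A}\to\infty.
\]
Near the boundary of the positive definite cone some $\lambda_i\to0$. On a bounded set the other eigenvalues are bounded, so $-\log\det A\to+\infty$ there. Hence the sublevel sets of $F$ are compact subsets of the open cone, intersected with $\Kk$.

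A minimizer therefore exists, provided the sublevel sets within $\Kk$ are closed. If $\Kk$ is not closed, work with its closure intersected with the positive definite matrices: $F=+\infty$ on singular matrices, so minimizers cannot escape there.

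Uniqueness follows from strict convexity of $-\log\det$. Its Hessian is $D^2(-\log\det)(A)[H,H]=\Tr(A^{-1}HA^{-1}H)>0$ for symmetric $H\ne0$.

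\textbf{Step (ii).} Fix $A\in\Kk$ with $\norm{A-A_0}<r$ and set $A_t=A_0+t(A-A_0)$. Each $A_t$ lies in $\Kk$ by convexity, and $\norm{A_t}\le\norm{A_0}+r=M$, so $\lambda_{\max}(A_t)\le M$ (operator norm bounded by the Frobenius norm). For $H=A-A_0$,
\[
\Tr(A_t^{-1}HA_t^{-1}H)=\norm{A_t^{-1/2}HA_t^{-1/2}}_F^2\ge \lambda_{\max}(A_t)^{-2}\norm{H}_F^2\ge M^{-2}\norm{H}^2.
\]
So $-\tfrac12\log\det$ is $M^{-2}$-strongly convex along the segment, with the factor $\tfrac12$ included. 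Adding the convex term $\lambda_0R$ preserves this. This is the local strong convexity claimed.

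\textbf{Step (iii).} Let $g(t)=F(A_t)$. Because $A_0$ minimizes $F$ over the convex set $\Kk$, the one-sided derivative $g'(0^+)\ge0$; this holds with subgradients of $R$, since $g$ is convex. Then
\[
g(1)-g(0)\ge g'(0^+)+\int_0^1\!\!\int_0^s\tfrac12 M^{-2}\cdot 2\,\norm{H}^2\,du\,ds\ge \tfrac{1}{2M^2}\norm{H}^2 .
\]
Made rigorous, this reads: $g(t)-\tfrac{1}{2M^2}\norm{H}^2t^2$ is convex, hence $g(1)\ge g(0)+g'(0^+)+\tfrac1{2M^2}\norm H^2$.

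\textbf{Main obstacle.} The delicate point is making the optimality condition at $A_0$ valid for the nonsmooth $R$ and a possibly non-closed cone $\Kk$. I would avoid subgradient calculus entirely and use only the one-dimensional convexity of $g$: for a convex $g$ with minimum at $t=0$ on $[0,1]$, $g'(0^+)\ge0$ is automatic. The only remaining detail is the norm convention relating $\lambda_{\max}$ to $\norm{\cdot}$, which is where the constant $M=r+\norm{A_0}$ comes from.
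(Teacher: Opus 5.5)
Your structure is sound, and slightly cleaner than the paper's. Restricting to the segment $A_t$ and using only $g'(0^+)\ge 0$ for the convex function $g(t)=F(A_t)$ avoids the optimality condition ($\xi\in\partial F(A_0)$ with $\langle \xi,A-A_0\rangle\ge 0$ on $\Kk$) that the paper invokes without justification. However, the constant you reach does not follow from your own estimates. Along the segment, the smooth part of $g''$ is $\frac12\Tr(A_u^{-1}HA_u^{-1}H)$, and your Step (ii) bound gives only $g''(u)\ge \frac{1}{2M^2}\norm{H}^2$. So the Hessian of $-\frac12\log\det$ is bounded below by $\frac{1}{2M^2}$, not by $M^{-2}$. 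The factor $2$ in your integrand $\frac12 M^{-2}\cdot 2\,\norm{H}^2$ has no source. Likewise, the claimed convexity of $g(t)-\frac{1}{2M^2}\norm{H}^2t^2$ would require $g''\ge M^{-2}\norm{H}^2$. Integrated correctly, your argument yields $g(1)-g(0)\ge g'(0^+)+\frac{1}{4M^2}\norm{H}^2$.

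This cannot be repaired, because the inequality with $\frac{1}{2M^2}$ is false. Take $d=1$, $\Kk=(0,\infty)$, $R(a)=\abs{a}$ (which satisfies Assumption~\ref{ass: regularizer R and positiv A}) and $\lambda_0=\frac12$, so that $F(a)=\frac a2-\frac12\log a$ and $A_0=1$. With $r=0.1$, $M=1.1$ and $A=1.01$:
\[
F(A)-F(A_0)=\tfrac12(0.01-\log 1.01)\approx 2.48\cdot 10^{-5},\qquad \tfrac{1}{2M^2}\norm{A-A_0}^2\approx 4.13\cdot 10^{-5}.
\]
More generally, in this example $F(1+h)-F(1)\sim h^2/4$, so the bound fails for all small $h$ as soon as $r<\sqrt2-1$.

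The paper's proof makes exactly the same slip. It bounds the Hessian of $-\log\det$ by $M^{-2}$ and then drops the factor $\frac12$ present in $F$. The correct conclusion of both arguments is $\frac{1}{4M^2}\norm{A-A_0}^2\le F(A)-F(A_0)$, which only changes constants where the lemma is used.

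Separately, your fix for a non-closed $\Kk$ does not work. A minimizer over $\overline{\Kk}\cap\{A\succ 0\}$ need not lie in $\Kk$. For instance, with $\Kk=\{\diag(a,b):0<a<b\}$ and $R=\norm{\cdot}_*$, the infimum is approached at $a=b=\frac{1}{2\lambda_0}$ and is not attained. Existence genuinely requires $\Kk$ to be relatively closed in the positive definite cone. The paper assumes this tacitly, and it does hold in the application, since $\Ss_0(\hat\pi)$ is the zero set of the continuous function $\Ll_0$.
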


\begin{proof}
    We show first the local strong convexity of $F$. Take $A\in\Kk$, then for every symmetric matrix $H$, it is known that 
    $$
    \nabla^2 \log\det(A)[H,H] = - \Tr(A^{-1}HA^{-1}H)
    $$
    Fix a reel number $M>0$. Then if $A\preceq M I_d$ we get 
    $$
    \nabla ^2 - \log\det(A)[H,H] \ge \frac{1}{M^2}\norm{H}^2 \quad \forall\; H\in\RR^{d\times d}
    $$
    Hence, for any $A,B\in\Kk$ that has eigenvalues bounded by $M$ we get
    \begin{equation}\label{eq:strong convexity F}
    F(A)-F(B)\ge \langle \xi,A-B\rangle +\frac{1}{2M^2}\norm{A-B}^2 \qwhereq \xi\in \partial F(B).
    \end{equation}
    where we used the convexity of $R$. 
    
    Now we show that the coercivity of $F$ follows from assumption  \ref{ass: regularizer R and positiv A}. Indeed, for every $A\succ 0$, 
    $$
    F(A) \ge \lambda_0\kappa \norm{A} - \frac{d}{2}\log\norm{A} -\frac{1}{2}\log\det\frac{A}{\norm{A}} 
    $$
    which diverges to $+\infty$ whenever $\norm{A}\to+\infty$.
    
    Furthermore, when $A$ approaches the boundary of $\Kk$, ie when $\det A\to0$, then $F(A)\to +\infty$. Therefore, $F$ is coercive on $\Kk$. Hence $F$ admits a global minimizer $A_0\in\Kk$. Assume that $F$ admits another global minimizer $B_0$. Using \eqref{eq:strong convexity F} with $M=\max\{\norm{A_0},\norm{B_0}\}$ gives $B_0=A_0$ since $\dotp{\xi}{A_0-B_0}\ge0$ whenever $\xi\in\partial F(B_0)$ by convexity of $F$ over $\Kk$.
    
    Hence, $F$ admits a unique global minimizer $A_0$ on $\Kk$. Given a positive number $r>0$, every matrix $A\in\Kk$ such that $\norm{A-A_0}<r $ satisfies
    $$
    \frac{1}{2M^2}\norm{A-A_0}^2\le  F(A)-F(A_0) 
    $$
    thanks to \eqref{eq:strong convexity F} with $M=r+\norm{A_0}$
\end{proof}

\section{Gamma convergence of entropic objectives}
\begin{proof}[Proof of Proposition \ref{prop:limit of A_eps}]\label{proof:limit of A_eps}
We will first prove the Gamma convergence of \eqref{eq:J_eps} toward \eqref{limit prob}, then prove that $A_\varepsilon$ is bounded, and finally that \eqref{limit prob} admits a unique solution.
    
    \textbf{\textit{Step 1 :}} Observe that minimizing $\Jj_\varepsilon$ is equivalent to minimize $\Jj_\varepsilon/\varepsilon$. From Proposition \ref{prop : taylor} we know that for every matrix $A\succ 0$ 
    $$
    \frac{\Jj_\varepsilon(A)}{\varepsilon}\to F(A) + \iota _{S_0(\hat\pi)}(A)
    $$
    Now, let's show that this point-wise convergence is actually a Gamma convergence.
    
    Take a sequence $G_\varepsilon\to G\succ 0$, then from \eqref{ineq : lower bound L_eps} we know that
    $$
    -\frac{\varepsilon}{8}I(G_\varepsilon) +F(G_\varepsilon) +\frac{\Ll_0(G_\varepsilon)}{\varepsilon} \le \frac{\Jj_\varepsilon(G_\varepsilon)}{\varepsilon}.
    $$
    We know that $\displaystyle\frac{\varepsilon}{8}I(G_\varepsilon)\to 0$ thanks to Lemma \ref{lem:diff of Fisher}. 
    Moreover, if $G\in S_0(\hat\pi)$ then because $\Ll_0\ge0$ we have $\displaystyle \lim\inf\frac{\Ll_0(G_\varepsilon)}{\varepsilon}\ge0 = \iota_{S_0(\hat\pi)}(G)$ and if $G\notin S_0(\hat\pi)$ then for $\varepsilon$ small enough $\Ll_0(G_\varepsilon)>0$ by continuity so $\displaystyle\lim\inf \frac{\Ll_0(G_\varepsilon)}{\varepsilon}\ge+\infty  = \iota_{S_0(\hat\pi)}(G)$.
    
    Therefore, taking the $\lim\inf$ in the inequality as $\varepsilon\to0$ leads to
    $$
    F(G) + \iota_{S_0(\hat\pi)}(G)\le \lim\inf\frac{\Jj_\varepsilon(G_\varepsilon)}{\varepsilon}
    $$
    For the $\lim\sup$ inequality take $G\succ0$ and define $G_\varepsilon=G$ for all $\varepsilon>0$. Then
    $$
    \frac{\Jj_\varepsilon(G_\varepsilon)}{\varepsilon}=\frac{\Jj_\varepsilon(G)}{\varepsilon}\to F(G) + \iota_{S_0(\hat\pi)}(G)
    $$
    So 
    $$
    \lim\sup \frac{\Jj_\varepsilon(G_\varepsilon)}{\varepsilon} = F(G) + \iota_{S_0(\hat\pi)}(G) 
    $$
    Therefore $\epsilon^{-1} \Jj_\varepsilon$ Gamma-converges to $F+\iota_{S_0(\hat\pi)}$.

    \textbf{\textit{Step 2 :}} The Gamma convergence tells us that every cluster point of $A_\varepsilon = \argmin_A \{\epsilon^{-1}\Jj_\varepsilon(A)\}$ converges to a minimizer of $F$ in $S_0(\hat\pi)$. Because of Lemma \ref{lem:strong convexity of logdet}, we know that $F$ is locally strongly convex and coercive and thus admits a unique minimizer $A_0$ on the convex cone $S_0(\hat\pi)$. Therefore, every cluster point of $A_\varepsilon$ converges to $A_0$. We just need to know that $A_\varepsilon$ admits a cluster point to conclude to convergence: we show that $A_\varepsilon$ is bounded.

    Since $A_\varepsilon$ minimizes $\Jj_\varepsilon$ we know that for all $A\succ0$, 
    $$
    \Jj_\varepsilon(A_\varepsilon)\le \Jj_\varepsilon(A)
    $$
    The inequality (\ref{ineq : lower bound L_eps}) leads to
    $$
    \Ll_0(A_\varepsilon)-\frac{\varepsilon}2{\log\det A_\varepsilon} -\frac{\varepsilon^2}{8}I(A_\varepsilon)+\lambda_0\varepsilon R(A_\varepsilon)\le \Jj_\varepsilon(A).
    $$
    We know that $\Ll_0(A_\varepsilon)\ge0$ so we are left with 
    $$
    F(A_\varepsilon) -\frac{\varepsilon}{8}I(A_\varepsilon)\le \frac{\Jj_\varepsilon(A)}{\varepsilon}.
    $$
    We saw that if $A\in S_0(\hat\pi)$ then $\Jj_\varepsilon(A)/\varepsilon\to F(A)$. Therefore there exists a constant $C>0$ depending on $A$ such that  $\Jj_\varepsilon(A)/\varepsilon\le C$ for all $\varepsilon$.
    Thus, 
    $$
    F(A_\varepsilon) -\frac{\varepsilon}{8}I(A_\varepsilon)\le C
    $$
    
    Now suppose that $A_\varepsilon$ is unbounded. Up to a subsequence, we can suppose that $\norm{A_\varepsilon}\to\infty$. Because $F$ is coercive (Lemma \ref{lem:strong convexity of logdet}) and because $I(A_\varepsilon)\lesssim\frac{1}{\norm{A_\varepsilon}}$ (corollary \ref{cor: homogeneousness of Fisher}), this leads to an absurdity. Therefore, $A_\varepsilon$ is bounded.

    \textbf{\textit{Step 3 :}} The final statement follows from the Gamma-convergence of $\Jj_\varepsilon/\varepsilon$ toward $F+\iota_{S_0(\hat\pi)}$.
\end{proof}

\end{document}